\documentclass[11pt,oneside]{amsart}
\usepackage[numbers]{natbib}
\usepackage[foot]{amsaddr}
\usepackage{listings}
\usepackage[left=0.8in]{geometry}
\usepackage{graphicx}
\usepackage{xcolor}
\usepackage{enumitem}
\usepackage{constants}
\usepackage{mathtools}
\usepackage{amsmath}
\usepackage{amssymb}
\usepackage{amsthm}
\usepackage{amsfonts}
\usepackage{mathrsfs} 
\usepackage{bm}
\usepackage{bbm}
\usepackage{hyperref}
\usepackage{todonotes}

\hypersetup{
    colorlinks=true,
    citecolor=blue,
    linkcolor=purple,
    filecolor=magenta,      
    urlcolor=cyan
}

\usepackage{cleveref}
\usepackage[toc,page]{appendix}
\usepackage{caption}
\usepackage{subcaption}

\numberwithin{equation}{section}

\newtheorem{theorem}{Theorem}[section]
\newtheorem{corollary}[theorem]{Corollary}
\newtheorem{prop}[theorem]{Proposition}
\newtheorem{lemma}[theorem]{Lemma}

\newtheorem{exam}[theorem]{Example}
\newtheorem{assumption}{Assumption}

\DeclareMathOperator{\statdim}{stat.dim}
\DeclareMathOperator{\E}{\mathbb{E}}
\DeclareMathOperator{\PP}{\mathbb{P}}
\DeclareMathOperator{\iid}{\overset{\mathrm{iid}}{\sim}}

\DeclareMathOperator*{\lip}{lip}
\DeclareMathOperator{\prox}{\mathsf{prox}}

\DeclareMathOperator{\N}{\mathcal{N}}
\DeclareMathOperator{\dist}{dist}
\DeclareMathOperator{\law}{law}
\DeclareMathOperator*{\argmin}{argmin}
\DeclareMathOperator{\proj}{\Pi_{G}^\perp}

\DeclareMathOperator{\cone}{cone}

\newcommand{\inner}[2]{\langle #1, #2 \rangle}
\newcommand{\R}{\mathbb{R}}

\newcommand{\bx}{\bm{x}}
\newcommand{\be}{\bm{e}}
\newcommand{\bg}{\bm{g}}
\newcommand{\bA}{\bm{A}}
\newcommand{\bw}{\bm{w}}

\newcommand{\noop}[1]{ }
\newcommand{\ma}{\mathcal{A}}
\newcommand{\ml}{\mathcal{L}}
\newcommand{\mg}{\mathcal{G}}
\newcommand{\mh}{\mathcal{H}}
\newcommand{\mt}{\mathcal{T}}
\newcommand{\ind}{\perp\!\!\!\!\perp} 

\newcommand{\loss}{\mathsf{loss}}
\newcommand{\reg}{\mathsf{reg}}
\newcommand{\env}{\mathsf{env}}

\begin{document}
\title[Existence of solutions to nonlinear equations
characterizing M-estimation error]{
Existence of solutions to the nonlinear equations characterizing
the precise error of M-estimators
}
\date{\today}

\author{Pierre C. Bellec and Takuya Koriyama}
\address{P.C. Bellec:  Rutgers University. 
T. Koriyama: University of Chicago.}
\email{pierre.bellec@rutgers.edu, tkoriyam@uchicago.edu}

\begin{abstract}
Major progress has been made in the previous decade
to characterize the asymptotic behavior of regularized M-estimators
in high-dimensional regression problems
in the proportional asymptotic regime where the sample size $n$ and the number of features $p$ are increasing simultaneously such that $n/p\to \delta \in(0,\infty)$, using powerful tools
such as Approximate Message Passing or
the Convex Gaussian Min-Max Theorem (CGMT).
The asymptotic error and behavior of the regularized M-estimator
is then typically described by a system of nonlinear equations
with a few scalar unknowns, and the solution to this system
precisely characterizes the asymptotic error.
Application of the CGMT and related machinery requires
the existence and uniqueness of a solution to this low-dimensional system
of equations or to a related scalar convex minimization problem.

This paper resolves the question of existence and uniqueness
of solution to this low-dimensional system for the case of linear models
with independent additive noise,
when both the data-fitting loss function and 
regularizer are separable and convex. Such existence result
was previously known under strong convexity or for specific estimators
such as the Lasso.
The main idea behind this existence result is inspired
by an argument developed by
\citet{montanari2019generalization, celentano2020lasso} in different contexts:
By constructing an ad-hoc convex minimization problem in an
infinite dimensional Hilbert space, the existence of
the Lagrange multiplier for this optimization problem
makes it possible to construct explicit solutions to the
low-dimensional system of interest.

The conditions under which we derive this existence result
exactly correspond to the side of the phase transition
where perfect recovery \( \bm{\hat x}=\bm x_0 \) fails,
so that these conditions are optimal.
\end{abstract}

\maketitle
\section{Introduction}
\subsection{Proportional asymptotics: unregularized M-estimation}
Consider the linear model
$$
\bm{y} = \bm{A}\bm{x}_0 + \bm{z}.
$$
where $\bm{A}\in\R^{n\times p}$ is the design matrix, 
$\bm{x}_0\in\R^p$ is the signal of interest, and $\bm{z}\in\R^n$ is the noise vector. To estimate the signal $\bm{x}_0$ from the observed data $(\bm{y}, \bA)$
in high dimensions,
\citet{huber1964robust} proposed to use the robust M-estimator
defined as 
\begin{equation}\label{eq:intro_estimator_noreg}
    \hat{\bm{x}} \in \argmin_{\bm{x}\in\R^p} \sum_{i=1}^n \loss(y_i-\bm{e}_i^\top\bm{A}\bm{x}),
\end{equation}
where $\loss:\R\to\R$ is a loss function, that throughout the paper will be assumed convex. Above, $\bm e_i\in\R^n$ is the $i$-th canonical basis vector.

\subsubsection{A nonlinear system of equations}
Since the seminal work of \cite{el2013robust,donoho2016high},
the asymptotic behavior of the M-estimator \eqref{eq:intro_estimator_reg}
in this proportional regime has received considerable attention.
The works \cite{el2013robust,donoho2016high} established that  in
the high-dimensional regime where the sample size $n$ and the number of features $p$ are increasing simultaneously such that 
$$
n/p \to \delta \in (1, +\infty),
$$
the design $\bm A$ has iid $N(0,\frac1p)$ entries independent of the noise $\bm z$, and {the noise vector}
$\bm z$ are iid components of a random variable $Z$,
the asymptotic
behavior of the robust M-estimator $\hat{\bm x}$
is governed by the system of two equations
\begin{align}\label{eq:intro_system_reg_noreg}
    \begin{split}
        \alpha^2  &= \delta \E\bigl[
        ((\alpha G + Z) - \prox[\kappa \loss] (\alpha G+ Z))^2\bigr] \\
        \alpha &= \delta \E \bigl[((\alpha G + Z) - \prox[\kappa \loss](\alpha G+ Z))G\bigr]
    \end{split}
    \quad \text{with positive unknown} \quad (\alpha, \kappa)
\end{align}
where $G\sim  N(0,1)$ and $Z$ are independent
and $\prox[\kappa\loss](u) = \argmin_{v\in\R} \kappa \loss(v) + (u-v)^2/2$ is the proximal operator of the convex function $\kappa\loss$.

In particular, \cite{el2013robust,donoho2016high,karoui2013asymptotic}
conjectured and established under suitable assumptions that
if $(\alpha_*,\kappa_*)$ is a unique solution to the above system
then $\frac1p\|\hat{\bm x} - \bm x_0\|^2\to\alpha_*^2$ in probability.
This striking result captures the subtle variations in the risk
$\frac1p\|\hat{\bm x} - \bm x_0\|^2$ as a function of $\loss$,
the law of $Z$ and the oversampling ratio $\delta=\lim n/p$.
\citet{donoho2016high} shows the uniqueness of the solution to \eqref{eq:system_noreg} for a strongly convex \( \loss \).
Strong convexity, however,
excludes a large family of robust losses such as L1 loss $|x|$, the Huber loss: $\loss(x)=\int_{0}^{|x|} \min(1, u) du$, and its smooth variants such as the pseudo-Huber loss: $\loss(x)=\sqrt{1+x^2}-1$.
The uniqueness and existence of solution to \eqref{eq:intro_system_reg_noreg}
is also showed in \cite{sur2019likelihood} for the logistic loss and
other twice-differentiable increasing loss functions provided
$\delta>2$.

\subsubsection{The case of $\alpha_*=0$}
\citet{thrampoulidis2018precise} later relaxed the conditions
necessary to provably establish this correspondence between 
the risk of $\hat{\bm x}$ and the scalar solutions to \eqref{eq:intro_system_reg_noreg}, and
gave an alternative statement of this result that we now describe.
If $\env_g:\R\times \R_{>0}\to \R$ is the Moreau envelope defined as
$\env_{g}(x;\tau)=\min_{u\in\R} (x-u)^2/(2\tau) + g(u)$ for any convex function $g$, define the function $\mathsf{L}:\R\times \R_{>0}\to \R$ by
\begin{equation}
    \label{eq:def_noreg_L}
    \forall c\in\R, \ \tau>0,
    \qquad
    \mathsf{L} (c, \tau) := \E[\env_\loss(cG + Z; \tau)-\loss(Z)]
\end{equation}
where $G\sim N(0,1)$ is independent of $Z$ as in \eqref{eq:intro_system_reg_noreg}.
Consider the minimization problem with objective $\mathsf{M}:\R_{\ge 0}\to\R$
defined by
\begin{align}\label{eq:potential_noreg_intro}
    \min_{\alpha\ge 0} \mathsf{M}(\alpha) \quad \text{with}\quad \mathsf{M}(\alpha) := \sup_{\beta {>} 0} \inf_{\tau_g>0} \frac{\beta\tau_g}{2\delta } + \mathsf{L}(\alpha, \frac{\tau_g}{\beta}) - \frac{\alpha\beta}{\delta}.
\end{align}
The definition above of $\mathsf{M}(\alpha)$ is taken from \cite{thrampoulidis2018precise} in the special case where the penalty is 0.
A simpler but equivalent definition of $\mathsf{M}(\alpha)$ using only a
supremum is
$\mathsf{M}(0) = 0$ and $\mathsf{M}(\alpha) = \sup_{b>0} ( \mathsf{L}(\alpha, \frac \alpha b) - \frac{\alpha b}{2\delta})$, as we will prove in \Cref{lm:N_check_N}.
\citet{thrampoulidis2018precise} showed that $\mathsf{M}$ is convex,
and that if $\mathsf{M}(\alpha)$ admits a unique minimizer $\alpha_*\in[0,+\infty)$
then $\frac{1}{p}\|\hat{\bm x} - \bm x_0\|^2\to \alpha_*^2$ in probability.
This result statement is more general than that of the previous paragraph
due to allowing for $\alpha_*=0$ to be solution.
If $\alpha_*$ minimizes $\mathsf{M}$ in $(0,+\infty)$ then one can find $\kappa^*$
such that $(\alpha_*,\kappa_*)$ is solution to \eqref{eq:intro_system_reg_noreg}.
The converse is also true: if $(\alpha_*,\kappa_*)$ is a solution to
\eqref{eq:intro_system_reg_noreg} then $\alpha_*>0$ is a minimizer of $\mathsf{M}$.
This is because, as explained in \cite{thrampoulidis2018precise},
the system \eqref{eq:intro_system_reg_noreg} corresponds to the critical points
of the min-max problem \eqref{eq:potential_noreg_intro},
hence there is a one-to-one correspondence between the solutions to
\eqref{eq:intro_system_reg_noreg} and positive minimizers of $\alpha\mapsto \mathsf{M}(\alpha)$ if $\alpha_*>0$.
This viewpoint with the potential $\mathsf{M}(\alpha)$ allows {the degenerate case $\alpha_*=0$}. In this case the system \eqref{eq:intro_system_reg_noreg}
and the critical point equations 
may not hold with equality since a convex function $\mathsf{M}:\R_{\ge 0}\to \R$
is minimized at 0 if and only if 
the right-derivative of $\mathsf{M}$ at 0 is non-negative (but not necessarily 0).

\subsubsection{On existence of minimizers}
At the other extreme, if all is known is that $\mathsf{M}:\R_{\ge 0}\to \R$ is convex,
the existence of minimizers of $\mathsf{M}$ is not guaranteed.
This corresponds to the phenomenon observed 
for the logistic loss: $\loss(u)=\log(1+\exp(u))$
in \cite{sur2019likelihood}: if $\delta < 2$ then the system
\eqref{eq:intro_system_reg_noreg} has no solution and the minimization problem
\eqref{eq:intro_estimator_noreg} in $\R^p$ has no minimizer;
if {$\delta > 2$} then the system \eqref{eq:intro_system_reg_noreg} has a unique
solution and \eqref{eq:intro_estimator_noreg} admits a unique
minimizer in $\R^p$ satisfying $\frac1p\|\hat{\bm x} - \bm x_0\|^2\to \alpha_*^2$.
Here, this phase transition at $\delta=2$ intuitively holds because the
logistic loss is increasing. By the separability result of
\cite{sur2019likelihood}, any increasing $\loss$ (other than the logistic loss)
also satisfies that \eqref{eq:intro_estimator_noreg} has no minimizer when
$\delta < 2$.

However, for loss functions that are first decreasing and eventually increasing (this excludes the logistic loss and other increasing losses),
it is still unknown whether $\mathsf{M}(\alpha)$ admits a minimizer
and whether the results of \cite{karoui2013asymptotic,donoho2016high,thrampoulidis2018precise} are applicable.

\subsubsection{Summary for unregularized M-estimation}

The picture drawn by the previous discussions is that three
qualitatively different phenomena may occur.

\begin{itemize}
    \item
        $\|\hat{\bm x} - \bm x_0\|^2/p \to 0$ in probability
        (weak perfect recovery) or
        $\PP(\hat{\bm x} = \bm x_0)\to 1$ (perect recovery). 
        Thanks to \cite{thrampoulidis2018precise},
        a sufficient          condition
        $\|\hat{\bm x} - \bm x_0\|^2/p \to^P 0$ (weak perfect recovery)
        is that 0 is a unique
        minimizer of $\mathsf{M}:\R_{\ge 0}\to\R$
        is minimized at 0 only (equivalently, that $\mathsf{M}$ is increasing).
    \item
        $\|\hat{\bm x} - \bm x_0\|^2/p \to \alpha_*^2$ in probability
        for some limit $\alpha_*>0$.
        Thanks to \cite{donoho2016high,thrampoulidis2018precise},
        a sufficient solution condition for
        $\|\hat{\bm x} - \bm x_0\|^2/p \to^P \alpha_*^2$
        is that {$\alpha_*$} is the unique
        minimizer of $\mathsf{M}:\R_{\ge 0}\to\R$. 
        
    \item $\hat{\bm x}$ does not exist in the sense that \eqref{eq:intro_estimator_noreg} admits no minimizer, or the minimizer satisfies that
        $\|\hat{\bm x} - \bm x_0\|^2/p$ is unbounded.
        A sufficient condition for this phenomenon is $\delta=\lim n/p < 2$
        for the logistic loss or any increasing $\loss$.
\end{itemize}

Even though the behavior of such M-estimators
in the proportional regime has
received considerable attention in recent years, and even though
\eqref{eq:intro_estimator_noreg} is perhaps the simplest and most
studied estimator in high-dimensional statistics,
there is still no rigorous results establishing, for a given
loss function, whether the nonlinear system has a solution
or whether $\mathsf{M}(\alpha)$ is minimized at 0 or in $(0,+\infty)$.

Beyond the question of existence of solutions to the nonlinear system
\eqref{eq:intro_estimator_noreg} or of existence of minimizers 
of the potential $\mathsf{M}$, the above three qualitatively different 
phenomena raise the question of whether a clear phase transition
can be established between the first and second bullet points
(transition between perfect recovery and positive limiting risk),
and between the second and the third
(transition between finite limiting risk and no minimizer in \eqref{eq:intro_estimator_noreg}).
The transition between finite limiting risk and no minimizer in \eqref{eq:intro_estimator_noreg} is solved by \cite{sur2019likelihood} under assumptions
on $\loss$, and the transition happens when $\delta$ crosses $2$.
We are not aware of results describing the phase transition
between perfect recovery and positive limiting risk prior to the present paper.

One goal of the paper is to fill these gaps, both regarding
the phase transition between the first two bullet points above,
and regarding whether the nonlinear system \eqref{eq:intro_estimator_noreg}
admits a solution.
The paper focuses on Lipschitz regression
functions $\loss$ minimized at $0$,
which encompass all typical loss functions
used in robust regression, such as the Huber loss and its variants.
In particular,
\Cref{sec:noreg} proves the following concise result:
if $\delta>1$, $\PP(Z\ne 0)>0$ and
$\loss:\R\to\R$ is convex, Lipschitz and $\argmin\loss = \{0\}$ then
\begin{itemize}
    \item $\mathsf{M}:\R_{\ge 0}\to\R$ always admits a unique minimizer $\alpha_*\ge 0$.
    \item The phase transition between positive limiting risk and perfect recovery is located at $\delta_{\mathsf{perfect}}\in(0,+\infty]$ defined by
        \begin{equation}
            \label{eq:delta_threshold_intro_unreg}
    \delta_{\mathsf{perfect}}:= \frac{1}{(1-\inf_{t>0}\E\bigl[\dist(G, t\partial\loss(Z))^2\bigr])_+} \in (0,\infty], 
    \end{equation}
    in the following sense:
    \begin{itemize}
        \item
            If $\delta < \delta_{\mathsf{perfect}}$, then the minimizer $\alpha_*$ is strictly positive, and there exists a unique positive scalar $\kappa_*$ such that $(\alpha_*,\kappa_*)$ is the unique solution to the nonlinear system of equations \eqref{eq:intro_system_reg_noreg}
        \item If $\delta > \delta_{\mathsf{perfect}}$ then $\PP(\hat{\bm x} = \bm x_0) \to 1$.
    \end{itemize}
\end{itemize}
In the definition of $\delta_{\mathsf{perfect}}$, we use the notation
$\dist(\cdot, S)^2=\inf_{x\in S} (\cdot-x)^2 $ is the Euclidean distance to a set $S$ and $\partial g \subset \R$ is the subdifferential of the convex function $g$.

The techniques used in the present paper to establish the existence of 
a minimizer to
$\min_{\alpha\ge 0}\mathsf{M}(\alpha)$ are inspired by the recent works of
\citet{montanari2019generalization,celentano2020lasso},
which establish the existence of solutions to nonlinear systems
in different contexts:
\cite{montanari2019generalization} studies max-margin classifiers
in regression and \cite{celentano2020lasso} studies the Lasso,
i.e., square loss and L1 regularizer.
The gist of these techniques is that while studying the system
\eqref{eq:intro_system_reg_noreg} or the minimization $\min_{\alpha\ge 0}\mathsf{M}(\alpha)$
directly
is difficult due to the min-max structure of $\min_{\alpha\ge 0}\mathsf{M}(\alpha)$,
one can find a convex optimization problem in an infinite-dimensional
Hilbert space associated to $\min_{\alpha}\mathsf{M}(\alpha)$.
Here, lifting the minimization problem to this infinite-dimensional Hilbert space
makes the min-max structure disappear, resulting in a convex optimization
problem with convex constraint. Once this infinite-dimensional minimization
problem is found, tools from convex optimization
in Hilbert spaces (e.g., \cite{bauschke2017correction}) then help
establish the existence of minimizers, although now even simple questions
such as coercivity require careful arguments due to the infinite-dimensional
nature of the problem.

The infinite dimensional minimization problem is intimately related to
the minimization problem $\min_{\alpha\ge 0} \mathsf{M}(\alpha)$ in several ways:
The minimal objective value of this infinite dimensional convex minimization
problem is the same as $\min_{\alpha\ge 0}\mathsf{M}(\alpha)$,
and the Hilbert norm of the infinite-dimensional minimizer $v_*$
is related to the minimizer $\alpha_*$ of $\mathsf{M}(\alpha)$ by
$\alpha_* = \|v_*\|/\sqrt{1-\delta^{-1}}$. 
Furthermore, if the solution $v_*$ to the infinite dimensional optimization problem is {nonzero}, an explicit solution $(\alpha_*, \kappa_*)$ to the nonlinear system \eqref{eq:intro_system_reg_noreg} can be constructed from $v_*$.
{Reciprocally, from a positive solution $(\alpha_*,\kappa_*)$ to \eqref{eq:intro_estimator_noreg} can be constructed a nonzero minimizer $v_*$ to the infinite dimensional problem}. For unregularized M-estimation, this infinite-dimensional
minimization problem is constructed in \Cref{subsec:proof_outline_noreg} (see \Cref{fig:proof_sketch_noreg} for a quick overview). For regularized M-estimation discussed in the next subsection,
the infinite-dimensional minimization problem
is described in \Cref{subsec:proof_outline_reg}.

\begin{figure}
    \includegraphics[width=0.47\linewidth]{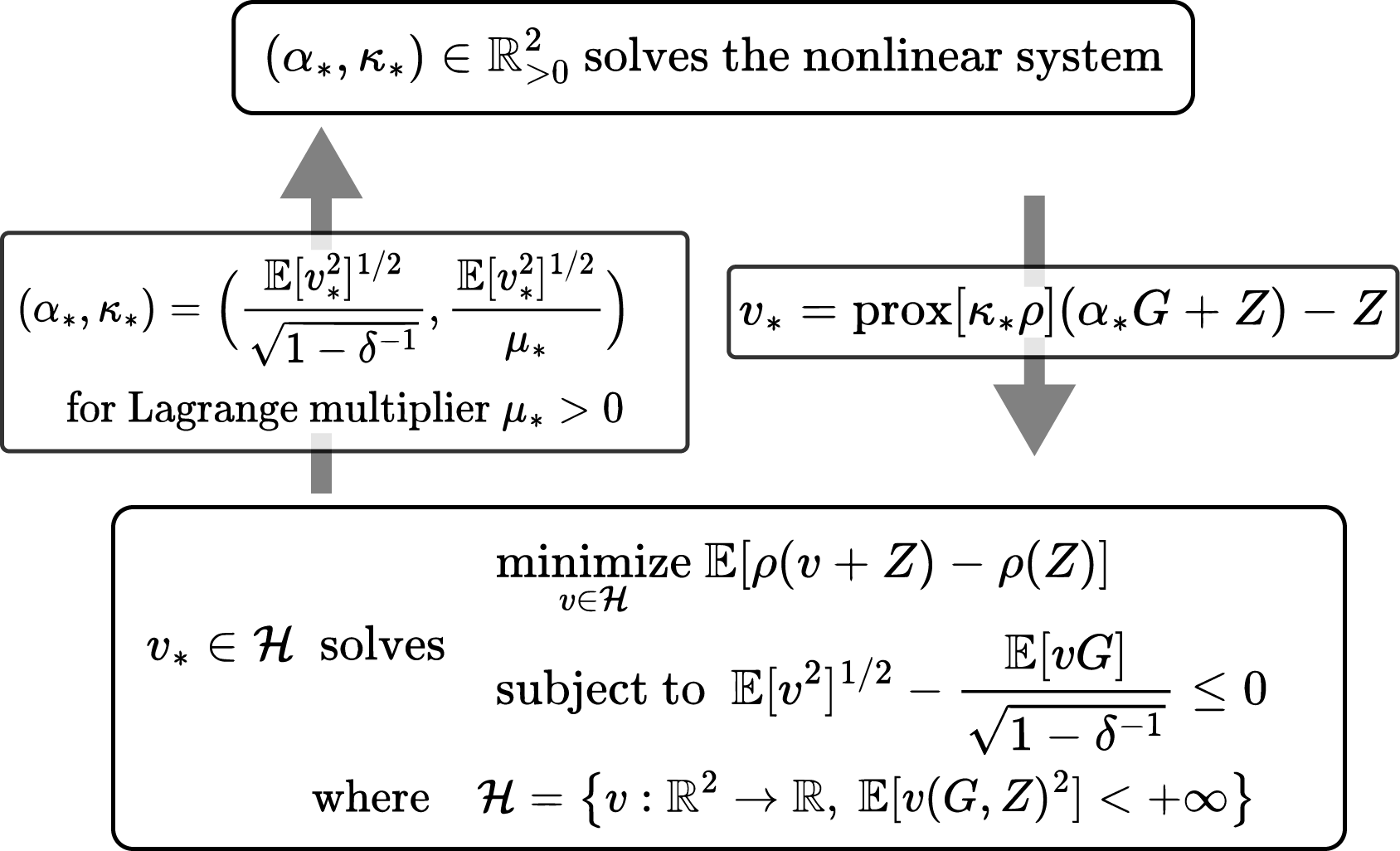}
    \caption{{Correspondence between the infinite dimensional optimization problem and solutions $(\alpha_*,\kappa_*)$ to the nonlinear system in the unregularized case \eqref{eq:intro_estimator_noreg}. Here} $\rho=\loss$, $\delta=\lim{n/p}>1$, and $(Z, G)$ are as in \eqref{eq:intro_system_reg_noreg}. }
    \label{fig:proof_sketch_noreg}
\end{figure}  

\subsection{Regularized M-estimation}
We are also interested in the asymptotic behavior of the
regularized M-estimator $\hat{\bm{x}}$
\begin{equation}\label{eq:intro_estimator_reg}
    \hat{\bm{x}} \in \argmin_{\bm{x}\in\R^p} \sum_{i=1}^n \loss(y_i-\bm{e}_i^\top\bm{A}\bm{x}) + \sum_{j=1}^p \reg(x_j),    
\end{equation}
where $\loss:\R\to\R$ is a convex loss function and $\reg:\R\to\R$ is a convex regularizer, and $n,p$ are increasing simultaneously such that
$$
n/p \to \delta \in (0, +\infty). 
$$
A representative landmark result for this regularized M-estimator from
\cite{thrampoulidis2018precise} is the following:
{
Assume that the design matrix $\bm{A}$ has iid entries
with distribution $\N(0,1/p)$, while the noise $\bm{z}\in\R^n$ and the signal $\bm{x}_0\in\R^p$
are independent with iid entries
\begin{equation}
    \label{iid}
    \bm z = (z_1,...,z_n) \sim^{\text{iid}} Z,
    \qquad
    \bm x_0 = (x_{01},...,x_{0p}) \sim^{\text{iid}} X, 
\end{equation}
for some scalar random variables $(Z, X)$. 
Given $\loss$, the regularizer $\reg$, and these marginal distributions $(Z, X)$ of noise and signal vectors, we define functions $\mathsf{L}, \mathsf{R}:\R\times \R_{>0}\to \R$ by
\begin{align}\label{eq:def_LF}
    \forall c\in\R, \ \tau>0, \quad \begin{split}
        \mathsf{L}(c, \tau) &:= \E[\env_\loss(cG + Z; \tau)-\loss(Z)]\\
        \mathsf{R}(c, \tau) &:= \E[\env_\reg(cH + X; \tau)-\reg(X)]
    \end{split}
    \quad 
    \text{for} \quad 
    \begin{cases}
        G \sim \N(0,1) \ind Z\\
        H \sim N(0,1) \ind X
    \end{cases}, 
\end{align}
Then, the asymptotic squared risk of the regularized M-estimator $\hat{\bm{x}}$ in \eqref{eq:intro_estimator_reg} is known to be characterized by
$
p^{-1}\|\hat{\bm{x}}-\bm{x}_0\|_2^2\to^p \alpha_*^2,
$
where $\alpha_*$ is a solution to the following optimization problem
 \begin{align}\label{eq:convex-concave}
  \min_{\alpha\ge 0} \mathsf{M}(\alpha) \quad \text{with}\quad \mathsf{M}(\alpha) := \sup_{\beta> 0, \tau_h>0} \inf_{\tau_g>0} \frac{\beta\tau_g}{2} + \delta \mathsf{L}(\alpha, \frac{\tau_g}{\beta}) - \frac{\alpha\tau_h}{2} - \frac{\alpha\beta^2}{2\tau_h} +  \mathsf{R}\Bigl(\frac{\alpha\beta}{\tau_h},\frac{\alpha}{\tau_h}\Bigr).
\end{align}
This characterization of the limit of $\|\hat{\bm{x}}-\bm{x_0}\|_2^2/p$ 
 captures 
the interplay between $\loss$, $\reg$, the marginal distribution $(Z, X)$ of noise and signal, and the oversampling ratio $\delta = \lim n/p$. 
However, as in the unregularized case,
the convergence result $p^{-1}\|\hat{\bm{x}}-\bm{x}_0\|_2^2\to^p\alpha_*^2$ by \cite{thrampoulidis2018precise} relies on the important assumption that 
\begin{equation}\label{as:intro_unique_exist}
    \textit{
        the minization problem in \eqref{eq:convex-concave} admits a unique solution $\alpha_*\in [0,+\infty)$.
    }
\end{equation}
This is necessary to apply
the main result of \cite{thrampoulidis2018precise}
and its generalizations (e.g., \citep{loureiro2021learning, koriyama2024precise}).

One approach to verify \eqref{as:intro_unique_exist} is to 
look at the first-order optimality condition of \eqref{eq:convex-concave}; 
using the differentiability of $(\mathsf{L}, \mathsf{R})$  \cite[Lemma 4.3]{thrampoulidis2018precise}, letting $\kappa = \tau_g/\beta$ and $\nu = \tau_h/\alpha$, one can verify that \eqref{as:intro_unique_exist} is satisfied for some $\alpha_*>0$ if there exists associated positive scalars $(\beta_*, \kappa_*, \nu_*)$ such that the pair $(\alpha_*, \beta_*, \kappa_*, \nu_*)$ is the unique solution to the nonlinear system of equations 
\begin{align}\label{eq:intro_system_reg}
    \begin{split}
        \alpha^2 &= \E \bigl[
            \bigl(
            \prox[\nu^{-1} \reg] (\nu^{-1}\beta H + X) - X
            \bigr)^2
          \bigr] \\
          \delta^{-1}\beta^2 \kappa^2 &=  \E\bigl[\bigl(\alpha G + Z - \prox[\kappa\loss](\alpha G + Z)\bigr)^2\bigr]
          \\
          \delta^{-1} \nu\alpha\kappa &= \E\bigl[G\cdot \bigl(\alpha G + Z - \prox[\kappa\loss](\alpha G + Z)\bigr)\bigr]\\
          \kappa\beta &= \E\bigl[
          H \cdot \bigl(\prox[\nu^{-1}\reg](\nu^{-1}\beta H + X) - X\bigr)
          \bigr],
    \end{split}
\end{align}
However, the uniqueness and existence of the solution to this nonlinear system are only known for a limited pair of loss and regularizer as explained below. 
}

When the loss is the least square $\loss(x)=x^2/2$, then thanks to the formula $\env_\loss(x;\kappa)=\tfrac{x^2}{2(1+\kappa)}$ 
{the first and the second equations in \eqref{eq:intro_system_reg} can be solved with respect to $\kappa$ and $\nu$ explicitly} \citep[Section 5.4]{thrampoulidis2018precise}. Substituting these to the remaining equations results in
two equations with unknowns \( (\alpha,\beta) \) and recovers
the system found in works studying the square loss
\citep{bayati2011lasso,miolane2021distribution}.
If \( \loss \) is the square loss and
$\reg$ is the Ridge regularizer, i.e., $\reg(x)={\loss(x)=x^2/2}$, a solution to \eqref{eq:intro_system_reg} uniquely exists
provided that $X$ and $Z$ have finite second moments. Indeed, random matrix theory results, such as
the Marchenko--Pastur law, are sufficient to characterize the limit of
\( p^{-1}\|\bm{\hat x}-\bm x_0\|^2 \) and the system \eqref{eq:intro_system_reg}
is equivalent to the limiting equation satisfied by the Stieltjes transform
of \( \bm A^\top \bm A \).
For regularizer other than {Ridge}, the existence and the uniqueness were established by \cite{miolane2021distribution}
by careful univariate calculus
analysis \citep[Appendix A]{miolane2021distribution},
and by \cite{celentano2020lasso} for the Lasso regularizer $\reg(x)={|x|}$ and its
smoothed variants
using techniques that inspired the argument used in the present paper.

On the other hand, for loss functions \( \loss \) other than the square loss, 
previous results on the existence and uniqueness of the solution to the nonlinear system \eqref{eq:intro_system_reg} are limited, even in the unregularized case
with regularizer $\reg=0$ {as discussed in previous sections}. 

Moreover, since  $\alpha=0$ is the boundary of the range of $\inf_{\alpha\ge 0}$ in \eqref{eq:convex-concave}, 
the approach of verifying \eqref{as:intro_unique_exist} by showing the existence and uniqueness of the nonlinear system \eqref{eq:intro_system_reg} misses the important case of the optimization problem in \eqref{eq:convex-concave} achieving the minimum at $\alpha=0$. By \cite{thrampoulidis2018precise},
if $\alpha_*=0$ is the unique minimizer then \textit{weak perfect recovery} holds in the sense that $p^{-1}\|\hat{\bm{x}}-\bm{x}_0\|_2^2\to^p \alpha_* = 0$. {
As in the case of unregularized estimation in \eqref{eq:delta_threshold_intro_unreg},
it is of interest to determine the
phase transition for perfect recovery
(or weak perfect recovery) in the context
of regularized M-estimators 
of the form \eqref{eq:intro_estimator_reg}.
}

Our contribution for the regularized M-estimation problem is summarized as follows:
\begin{itemize}
    \item $\mathsf{M}:\R_{\ge 0}\to\R$ always admits a unique minimizer $\alpha_*\ge 0$.
    \item The phase transition between positive limiting risk and perfect recovery is located at $\delta_{\mathsf{perfect}}\in(0,+\infty]$ defined by
        \begin{equation}
            \label{eq:delta_threshold_intro_reg}
    {\delta_{\mathsf{perfect}}:= \frac{\inf_{t>0} \E[\dist(G, t\partial \reg(X))^2]}{(1-\inf_{t>0}\E\bigl[\dist(G, t\partial\loss(Z))^2\bigr])_+} \in (0,\infty], }
    \end{equation}
    in the following sense:
    \begin{itemize}
        \item
            If $\delta < \delta_{\mathsf{perfect}}$, then the minimizer $\alpha_*$ is strictly positive, and there exists a unique positive scalar $\beta_*, \kappa_*, \nu_*$ such that $(\alpha_*,\beta_*, \kappa_*, \nu_*)$ is the unique solution to the nonlinear system of equations \eqref{eq:intro_system_reg}
        \item If $\delta > \delta_{\mathsf{perfect}}$ then $\PP(\exists\lambda>0 \text{ s.t. } 
            \hat{\bm x}_\lambda = \bm x_0) \to 1$ where $\hat\bx_\lambda$ is the regularized M-estimator \eqref{eq:intro_estimator_reg} computed by $\loss$ and regularizer $\lambda \cdot \reg$. 
    \end{itemize}
\end{itemize}



\subsection{Organization}
In \Cref{sec:noreg}, we discuss the unregularized case $\reg=0$ with
a robust Lipschitz loss function \( \loss \),
characterize the phase transition threshold \( \delta_{\mathsf{perfect}} \). 
In \Cref{sec:reg}, we extend the analysis to the regularized case
with \( \reg\ne 0 \),
with the characterization of \( \delta_{\mathsf{perfect}} \). 
Most proofs are delayed to the appendix.

\subsection{Notation}
For any convex function $f$, let $\partial f$ be the subdifferential, and 
let $\prox[f](x) = \argmin_{u} f(u) + (x-u)^2/2$ be the proximal operator. Given a random variable $v$, we denote by $\|v\|$ the L2 norm $\E[v^2]^{1/2}$. For any set $S\subset \R^n$, let $\dist(\cdot, S):\R^n\to\R$ be the map $\bm{x}\mapsto \inf\{
\|\bm{x}-\bm{y}\|_2
: \bm{y}\in S\}$, i.e., the Euclidean distance to the set $S$.  
For any real \( x \), denote \( x_+=\max(0,x) \).

\section{Unregularized {M-estimation}}\label{sec:noreg}
Let us start with the unregularized case ($\reg=0$). We are interested in the squared risk of the unregularized M-estimator
\begin{align}\label{eq:estimator_noreg}
    \hat{\bx} \in \argmin_{\bx \in\R^p} \sum_{i=1}^n \loss(y_i - \be_i^\top \bA \bx). 
\end{align}
Substituting $\reg=0$ to \eqref{eq:convex-concave} and dividing the objective function by $\delta$, the minimization problem characterizing the risk of unregularized M-estimator is given by
\begin{align}\label{eq:potential_noreg}
    \min_{\alpha\ge 0} \mathsf{M}(\alpha) \quad \text{with}\quad \mathsf{M}(\alpha) := \sup_{\beta > 0} \inf_{\tau_g>0} \frac{\beta\tau_g}{2\delta } + \mathsf{L}(\alpha, \frac{\tau_g}{\beta}) - \frac{\alpha\beta}{\delta}    
\end{align}
in the sense that if this optimization problem admits a unique minimizer $\alpha_*\ge 0$, then the convergence in probability 
$p^{-1} \|\hat{\bx}-\bx_0\|^2 \to^p \alpha_*^2$ holds. 
It was already observed in \cite{thrampoulidis2018precise} that
$(\alpha,\beta,\tau_g)\mapsto
\frac{\beta\tau_g}{2\delta } + \mathsf{L}(\alpha, \frac{\tau_g}{\beta}) - \frac{\alpha\beta}{\delta}$ is convex in $(\alpha,\tau_g)$ and concave in $\beta$
thanks to the Moreau envelope being jointly convex in both arguments.
As mentioned in the introduction, 
we prove in \Cref{lm:N_check_N} below that an equivalent definition of $\mathsf{M}(\alpha)$  is
$$\mathsf{M}(0) = 0,\qquad  \mathsf{M}(\alpha) = \sup_{b>0}  \mathsf{L}\Bigl(\alpha, \frac \alpha b\Bigr) - \frac{\alpha b}{2\delta} \quad\text{ for } \alpha >0.
$$
This alternative definition using just a supremum is new as far as we know.
It better captures the dimensionality of the corresponding system of equations
in the unregularized case which has dimension 2, whereas
writing the stationary equations of \eqref{eq:potential_noreg}
give three equations in $(\beta,\tau_g,\alpha)$ (one of which reveals
that a stationary point must satisfy $\alpha=\tau_g$).
The proof in \Cref{lm:N_check_N} uses the infinite-dimensional
optimization constructed in \Cref{subsec:proof_outline_noreg} below;
we could not find a proof of this equivalence of the two expressions using only simple calculus arguments.

Our aim in this section is to show that the  optimization problem
\eqref{eq:potential_noreg} admits a unique minimizer $\alpha_*\ge 0$ and characterize the condition under which the minimizer $\alpha_*$ is $0$, i.e., perfect recovery holds. 
Below we state our working assumption. 
\begin{assumption}\label{as:noreg}
    Assume that the $\loss:\R\to\R$, the oversampling ratio $\delta=\lim_{n\to+\infty} {n/p}$, and the marginal distribution $Z$ of noise vectors satisfy the conditions below. 
      \begin{itemize}
          \item $\loss:\R\to\R$ is convex, Lipschitz, and $\argmin_x\loss(x)=\{0\}$.
          \item $\delta>1$. 
          \item $\PP(Z\ne 0)>0$. 
      \end{itemize}
  \end{assumption}
  We assume $\argmin_{x}\loss(x)=\{0\}$ for simplify. It also guarantees that $\loss$ is coercive (see \Cref{lm:convex_coercive}). The condition $\delta>1$ is necessary for the unregularized M-estimator \eqref{eq:estimator_noreg} to be well-defined. 
   The condition $\PP(Z\ne 0) > 0$ is assumed to avoid the trivial case where the perfect recovery $\hat{\bx}=\bx_0$ holds with probability $1$. Indeed, if $\PP(Z\ne 0)=0$, then combined with $\{0\}=\argmin_x\loss(x)$, this gives 
    $\partial \loss(\bm{z})=\{\bm{0}_n\}$ with probability $1$, so that  $\bA^\top \partial \loss(\bm{z})=\{\bm{0}_p\}$ for the design matrix $\bA\in\R^{n\times p}$ with probability $1$. By the KKT condition for the unregularized M-estimator \eqref{eq:estimator_noreg}, this means $\hat{\bx} = \bx_0$ with probability $1$. 
  The Lipschitz condition of $\loss$ is necessary for the solution $(\alpha_*,\kappa_*)$ to the nonlinear system \eqref{eq:system_noreg} below to be finite for any noise distribution $Z$ (see \Cref{prop:lipschitz_necessary} for details). 

  With this condition, we have the following claim. 
  \begin{theorem}\label{th:main_noreg}
    Let \Cref{as:noreg} be satisfied. 
    \begin{enumerate}
        \item The minimization problem $\min_{\alpha\ge 0}\mathsf{M}(\alpha)$ \eqref{eq:potential_noreg} admits a unique minimizer $\alpha_{*}\ge 0$.
        \item 
        If the oversampling ratio $\delta=\lim n/p$ is smaller than the threshold $\delta_{\mathsf{perfect}}$ as 
        \begin{align}\label{eq:threshold_noreg}
           \delta <  \delta_{\mathsf{perfect}} := \frac{1}{\bigl(1-\inf_{t>0}\E[\dist(G, t \partial\loss(Z))^2]\bigr)_+} \in (0, +\infty],
        \end{align}
        then the minimizer $\alpha_*$ is strictly positive. In this case, there exists a unique positive scalar $\kappa_*$ such that $(\alpha_*,\kappa_*)\in\R_{>0}^2$ is a unique solution to the nonlinear system of equations
        \begin{align}\label{eq:system_noreg}
            \begin{split}
                \alpha^2  &= \delta \E\bigl[
                ((\alpha G + Z) - \prox[\kappa \loss] (\alpha G+ Z))^2\bigr] \\
                \alpha &= \delta \E \bigl[((\alpha G + Z) - \prox[\kappa \loss](\alpha G+ Z))G\bigr]
            \end{split}
            \quad \text{with positive unknown} \quad (\alpha, \kappa)
        \end{align}
        where $G\sim  N(0,1)$ and $Z$ are independent. 
    \end{enumerate}
  \end{theorem}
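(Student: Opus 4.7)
The plan is to follow the strategy already outlined in the introduction: recast the min-max problem $\min_{\alpha\ge 0}\mathsf{M}(\alpha)$ as a plain convex minimization on the Hilbert space $H = L^2$ of the joint law of $(G, Z)$, solve it there using standard Hilbert-space convex analysis, and then transfer the minimizer back to the scalars $(\alpha_*,\kappa_*)$. The payoff is that the lifted problem has no min-max structure, so existence reduces to weak lower semicontinuity plus coercivity, and uniqueness to strict convexity along the directions that matter; the KKT multipliers then manufacture $(\alpha_*, \kappa_*)$ essentially for free.

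First I would construct the companion problem. Motivated by the identity $v = (\alpha G + Z) - \prox[\kappa\loss](\alpha G+Z)$ and by the reformulation $\mathsf{M}(\alpha)=\sup_{b>0} \mathsf{L}(\alpha,\alpha/b)-\alpha b/(2\delta)$ from \Cref{lm:N_check_N}, the natural candidate is
\[
  \min_{v\in H} \,\E[\loss(Z-v)-\loss(Z)] \quad\text{subject to}\quad \E[vG] \ge \sqrt{1-\delta^{-1}}\,\|v\|,
\]
with the optimal value equal to $\min_{\alpha\ge 0}\mathsf{M}(\alpha)$ and the matching $\alpha_*=\|v_*\|/\sqrt{1-\delta^{-1}}$. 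I would verify this identification by performing a Fenchel-type duality on the constraint (i.e.\ introducing a nonnegative multiplier that will become $\alpha/\delta$ up to rescaling) and directly comparing the resulting univariate supremum with the definition of $\mathsf{M}$.

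Next I would establish existence of $v_*$. Convexity and Lipschitz continuity of $\loss$ make $v\mapsto \E[\loss(Z-v)]$ convex and continuous in $L^2$-norm, hence weakly lower semicontinuous, so any norm-bounded minimizing sequence admits a weak cluster point that is a minimizer. Coercivity is the main obstacle: in infinite dimensions one cannot simply appeal to compactness. To overcome it I would combine (i) coercivity of $\loss$, which follows from convexity together with $\argmin\loss=\{0\}$ (\Cref{lm:convex_coercive}), (ii) the assumption $\PP(Z\ne 0)>0$, and (iii) the constraint, to deduce that any sequence with bounded objective is $L^2$-bounded. Strict convexity of the objective in the direction orthogonal to $\ker(\partial\loss)$-related degeneracies, together with the condition $\delta>1$, then yields uniqueness of $v_*$.

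Finally I would extract $(\alpha_*,\kappa_*)$ from $v_*$ via the KKT conditions of the lifted problem. The stationarity condition reads $v_*\in \kappa_*\,\partial\loss(Z-v_*+\alpha_* G)$ almost surely for some nonnegative multipliers $\kappa_*\ge 0$ and $\alpha_*\ge 0$, with the two scalar equations of \eqref{eq:system_noreg} appearing as the primal feasibility and complementary slackness. The dichotomy $v_*=0$ versus $v_*\ne 0$ then follows from evaluating the subgradient inequality at $v=0$: after optimizing over the scalar scaling this reduces exactly to comparing $\delta$ with $1/(1-\inf_{t>0}\E[\dist(G,t\partial\loss(Z))^2])_+$, yielding the threshold $\delta_{\mathsf{perfect}}$. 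If $\delta>\delta_{\mathsf{perfect}}$ then $v_*=0$ and $\alpha_*=0$; if $\delta<\delta_{\mathsf{perfect}}$ then $v_*\ne 0$, both multipliers are strictly positive, and uniqueness of $v_*$ transfers to uniqueness of $(\alpha_*,\kappa_*)$ through the injective map $(\alpha,\kappa)\mapsto (\alpha G+Z)-\prox[\kappa\loss](\alpha G+Z)$. The hardest technical steps will be (a) proving the coercivity bound in the Hilbert lifting, where one must leverage $\PP(Z\ne 0)>0$ delicately, and (b) verifying that the optimal value of the lifted problem genuinely matches $\min_{\alpha\ge 0}\mathsf{M}(\alpha)$ including in the degenerate regime where $\mathsf{M}$ is minimized at the boundary and the critical-point equations hold only as inequalities.
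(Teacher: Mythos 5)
Your high-level plan is the same as the paper's: lift $\min_{\alpha\ge 0}\mathsf{M}(\alpha)$ to a convex problem over the Hilbert space $\mh=L^2(G,Z)$, prove existence by coercivity, uniqueness by convexity considerations, and extract $(\alpha_*,\kappa_*)$ from the KKT multiplier. The lifted problem you write, $\min_v \E[\loss(Z-v)-\loss(Z)]$ subject to $\E[vG]\ge\sqrt{1-\delta^{-1}}\|v\|$, has a sign slip in the objective (the paper uses $\E[\loss(v+Z)-\loss(Z)]$ with $v=\prox[\kappa\loss](\alpha G+Z)-Z$, not the residual); your motivation ``$v=(\alpha G+Z)-\prox[\kappa\loss](\alpha G+Z)$'' would put the $\alpha$-dependence into the objective, which defeats the purpose of the lifting. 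Also, the KKT of the lifted problem produces only one Lagrange multiplier $\mu_*$ for the single constraint $\mg(v)\le 0$, not two: the paper then sets $\alpha_*=\|v_*\|/\sqrt{1-\delta^{-1}}$ (read off $v_*$) and $\kappa_*=\|v_*\|/\mu_*$. These are repairable.

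The genuine gap is uniqueness. You claim ``strict convexity of the objective in the direction orthogonal to $\ker(\partial\loss)$-related degeneracies, together with $\delta>1$, yields uniqueness of $v_*$.'' This would fail for exactly the losses the theorem is designed for: $\loss=|\cdot|$, Huber, pseudo-Huber are not strictly convex, so $v\mapsto\E[\loss(v+Z)-\loss(Z)]$ has flat directions of positive dimension in $\mh$, and neither these flat directions nor the constraint $\mg(v)\le 0$ (a half-cone, also not strictly convex) forces uniqueness by convexity alone. The paper's argument (\Cref{lm:unique_v_*}) is considerably more delicate: it first shows any two minimizers are proportional, $v_{**}=tv_*$ for some $t>0$, via the explicit lower bound $\ml(v)-\ml(v_*)\ge\kappa_*^{-1}(-\E[v_*v]+\alpha_*\E[vG])$ deduced from the proximal characterization, combined with the binding constraint and the equality case of Cauchy--Schwarz; it then rules out $t\ne 1$ by a sign/monotonicity argument on the proximal operator (exploiting $\argmin\loss=\{0\}$ and that $\prox[\kappa\loss]$ preserves sign) on events of positive probability built from the independence of $(G,Z)$ and $\PP(Z\ne 0)>0$. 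That second step has nothing to do with strict convexity, and $\delta>1$ plays no direct role in it either (it only matters for the setup and coercivity). Without this mechanism, your plan does not deliver uniqueness.

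A secondary imprecision: in the coercivity step, the load-bearing ingredients are the constraint $\mg(v)\le 0$ (giving an $L^1$--$L^2$ comparison and hence a small-ball property via Paley--Zygmund) and the coercivity of $\loss$; the hypothesis $\PP(Z\ne 0)>0$ is used elsewhere, namely to force the Lagrange multiplier to be strictly positive so the constraint binds (\Cref{lm:lagrange_noreg}), not for coercivity itself.
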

  \Cref{th:main_noreg} solves the existence and uniqueness problem of the minimization problem $\min_{\alpha\ge 0} \mathsf{M}(\alpha)$, which is necessary for the main theorem in \cite{thrampoulidis2018precise} to be applied. 
  
  We emphasize that the condition $\delta<\delta_{\mathsf{perfect}}$ in \eqref{eq:threshold_noreg} is optimal in the sense below. 
  \begin{prop}\label{prop:phase_transition_noreg}
    Let $\hat{\bm{x}}$ be the unregularized M-estimator in \eqref{eq:estimator_noreg} computed by a convex $\loss$ with $\argmin_x\loss(x)=\{0\}$, and  
    assume that the marginal distribution $Z$ of the noise $\bm{z}\in\R^{n}$ satisfies $\PP(Z\ne 0)>0$. Then, as $n, p\to+\infty$ with $n/p \to \delta$, we have 
    \begin{align*}
    \PP\Bigl(\bx_0 \in  \argmin_{\bm x\in\R^p}\sum_{i=1}^n \loss(y_i - \be_i^\top \bA \bx)\Bigr) \to \begin{cases}
            1 & \delta>\delta_{\mathsf{perfect}}\\
            0 & \delta < \delta_{\mathsf{perfect}}
        \end{cases}
    \end{align*}
    where $\delta_{\mathsf{perfect}}\in (0,+\infty]$ is the threshold defined by \eqref{eq:threshold_noreg}. 
    \end{prop}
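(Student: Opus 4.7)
The plan is to reformulate the event $\{\bx_0 \in \argmin\}$ as a Gaussian feasibility problem and analyze it via the Convex Gaussian Min-Max Theorem (CGMT). By KKT applied to the convex optimization \eqref{eq:estimator_noreg}, $\bx_0 \in \argmin$ if and only if $\bm{0} \in \bA^\top \partial L(\bz)$, where $L(\bu) := \sum_i \loss(u_i)$ and $\bz := \by - \bA\bx_0$ is independent of $\bA$. Writing $K(\bz) := \prod_{i=1}^n \partial \loss(z_i) \subset \R^n$ for the random compact convex set (independent of $\bA$ given $\bz$), this is the feasibility condition $\Phi(\bA, \bz) := \min_{\bv \in K(\bz)} \|\bA^\top \bv\|_2 = 0$.

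Conditionally on $\bz$, I would apply CGMT to $\Phi = \min_{\bv \in K(\bz)} \max_{\|\bu\| \le 1} \bu^\top \bA^\top \bv$ to reduce to the auxiliary quantity $\phi$ which, up to a $\sqrt{p}$ normalization, equals $\min_{\bv \in K(\bz)} (\|\bv\|\|\bg\| + \bh^\top \bv)_+$ for independent $\bg \sim \N(0, I_p)$ and $\bh \sim \N(0, I_n)$. Using $\|\bv\|\|\bg\| = \inf_{\tau > 0} \bigl\{\tfrac{\tau}{2}\|\bg\|^2 + \tfrac{1}{2\tau}\|\bv\|^2\bigr\}$, interchanging the two infima, and exploiting the product structure of $K(\bz)$ yields
$$
\min_{\bv \in K(\bz)} \|\bv\|\|\bg\| + \bh^\top \bv
\;=\; \inf_{t > 0} \frac{1}{2t}\Bigl[\|\bg\|^2 - \|\bh\|^2 + \sum_{i=1}^n \dist(h_i, t\,\partial \loss(z_i))^2\Bigr].
$$
By the law of large numbers (with uniform-in-$t$ arguments on compact subsets of $(0,\infty)$ and tail control as $t \to 0,\ +\infty$), dividing by $n$ and taking the infimum over $t$ converges in probability to $(1/\delta - 1) + D$ with $D := \inf_{t>0} \E[\dist(H, t\partial\loss(Z))^2]$, whose sign matches $\delta \lessgtr \delta_{\mathsf{perfect}}$ by the definition in \eqref{eq:threshold_noreg}.

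This yields the two cases. For $\delta < \delta_{\mathsf{perfect}}$, the auxiliary is bounded below by $c\sqrt{n}$ for some $c > 0$ with probability tending to one, and CGMT transfers this to $\Phi \ge c' > 0$, giving $\PP(\bx_0 \in \argmin) \to 0$. For $\delta > \delta_{\mathsf{perfect}}$, the auxiliary reaches $-cn$ with probability tending to one, so $\phi = 0$; to upgrade this to the \emph{exact} primary feasibility $\Phi = 0$, I would apply CGMT to the paired minimum-norm feasibility problems
$$
\inf\{\|\bv\|^2 : \bv \in K(\bz),\ \bA^\top \bv = 0\}
\qquad \text{and} \qquad
\inf\{\|\bv\|^2 : \bv \in K(\bz),\ \|\bv\|\|\bg\| + \bh^\top \bv \le 0\}.
$$
The strict negativity of the auxiliary guarantees that the auxiliary feasible set is nonempty with finite minimum value, and the convex CGMT transfers this to nonemptiness of the primary feasible set, yielding $\Phi = 0$ and $\PP(\bx_0 \in \argmin) \to 1$. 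The main obstacle is precisely this feasibility-transfer step: the standard CGMT concerns convergence of extremal values, whereas here one needs a statement about nonemptiness of the primal feasible set, which must be obtained by a tailored CGMT variant or by a direct Gordon comparison exploiting the strict slack on the auxiliary side.
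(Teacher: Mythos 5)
Your approach via CGMT is genuinely different from the paper's, and you have correctly diagnosed a real gap in it that you do not close. The paper instead reduces to a cone-intersection question and invokes the Amelunxen--Lotz--McCoy--Tropp (ALMT) phase transition: on $\{\bm z\ne\bm 0\}$, perfect recovery is equivalent to $\ker(\bm A^\top)\cap\cone(\partial\loss(\bm z))\ne\{0\}$, i.e., to a uniformly random $(n-p)$-dimensional subspace nontrivially intersecting a fixed closed convex cone, and Theorem~I of \citet{amelunxen2014living} gives a sharp two-sided phase transition for exactly this nonemptiness event in terms of the statistical dimension. The probabilistic ingredient you would still need is the paper's \Cref{lm:limit_dist_cone}: $n^{-1}\statdim(\cone(\partial\loss(\bm z)))\to^P 1-\inf_{t>0}\E[\dist(G,t\partial\loss(Z))^2]$, which is essentially the law-of-large-numbers calculation you sketch (with Gaussian Poincar\'e handling the fluctuation around the conditional expectation).

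The gap you identify is fatal to your $\delta>\delta_{\mathsf{perfect}}$ direction as written. After reducing to $\Phi(\bm A,\bm z):=\min_{\bm v\in K(\bm z)}\|\bm A^\top\bm v\|_2$, the convex CGMT gives $\PP(\Phi\ge c)\le 2\PP(\phi\ge c)$, so if $\phi<\epsilon$ with high probability for every fixed $\epsilon>0$ then $\Phi\to^P 0$. But $\Phi\to^P 0$ does \emph{not} imply $\PP(\Phi=0)\to 1$ (consider $\Phi_n=|Z|/n$ with $Z$ standard normal). The event you actually need, $\{\bm x_0\in\argmin\}=\{\Phi=0\}$, is a boundary event to which CGMT assigns no mass. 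Your proposed remedy---comparing minimum-norm feasibility problems---hits the same wall: the primal min-norm problem takes value $+\infty$ precisely when infeasible, and CGMT's guarantee is about the value of a min-max with compact constraint sets, not about nonemptiness of a constraint set whose infeasibility is the very thing at issue. There is no off-the-shelf ``CGMT variant'' delivering nonemptiness; a direct Gordon comparison gives only one side. Indeed, Gordon's escape-through-a-mesh (and its CGMT repackaging) does yield your $\delta<\delta_{\mathsf{perfect}}$ direction, since $\PP(\Phi<c)\le 2\PP(\phi\le c)$ pushes a positive auxiliary lower bound to a positive primary lower bound; but the $\delta>\delta_{\mathsf{perfect}}$ direction is precisely what requires the approximate kinematic formula behind ALMT, which your proposal never reaches.
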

We emphasize that the threshold $\delta_{\mathsf{perfect}}$  can be $+\infty$; in this case, the minimizer $\alpha_*$ is always strictly positive no matter how large the oversampling ratio $\delta=\lim n/p$ is. By the definition \eqref{eq:threshold_noreg} of $\delta_{\mathsf{perfect}}$, the threshold $\delta_{\mathsf{perfect}}$ is $+\infty$ if and only if
  $\inf_{t>0} \E[\dist(G, t\partial\loss(Z))^2] \ge 1$. A simple sufficient condition for this is
  \begin{align} \label{eq:nonzero_sufficient_noreg}
    \PP(\text{$\loss:\R\to\R$ is differentiable at $Z$})=1. 
  \end{align}
  Indeed, if \eqref{eq:nonzero_sufficient_noreg} is true, then $t\partial\loss(Z)$ is the singleton $\{t\loss'(Z)\}$ for all $t>0$ with probability $1$. Combined with the independence of $(G, Z)$, we have 
  \begin{align*}
    \E[\dist(G, t\partial\loss(Z))^2] = \E[(G-t\loss'(Z))^2]
    = 1 + t^2 \E[\loss'(Z)^2],
  \end{align*}
so taking $\inf_{t>0}$ on the both sides gives  $\inf_{t>0}\E[\dist(G, t\partial\loss(Z))^2]=1$. The condition \eqref{eq:nonzero_sufficient_noreg} is always satisfied if $\loss:\R\to\R$ is differentiable, or if $Z$ has no point masses since the set of non-differentiable point of Lipschitz function is countable. We will verify the phase transition in \Cref{prop:phase_transition_noreg} by numerical simulation in \Cref{subsec:simulation_noreg}. 

\subsection{Construction of solutions from an infinite-dimensional optimization problem}\label{subsec:proof_outline_noreg}
To show \Cref{th:main_noreg}, 
we consider an infinite-dimensional convex optimization problem. To begin with, let us consider the almost sure equivalent classes $\mh$
of squared integrable measurable functions of $(G, Z)$
$$
\mh=\{v:\R^2\to\R: \E[v(G,Z)^2]<+\infty\}, 
$$
with independent standard normal $G\sim  N(0,1)$ and the noise distribution $Z$. {Almost sure equivalence classes of $\mh$ form} a Hilbert space 
equipped with the usual inner product $\langle u,v\rangle := \E[u(G,Z)v(G,Z)]$.
{We will sometimes refer to $\mh$ itself as the Hilbert space,
in this case we implicitly identify random variables
$v(G,Z)$ that are equal almost surely.
Now we define the real-valued functions $\ml$ and $\mg$ over $\mh$ as  
\begin{equation}
    \label{noreg_ml_mg}
\begin{aligned}
   &\ml:\mh\to\R, \quad  v \mapsto \E[\loss(v(G,Z)+Z)-\loss(Z)], \\
    &\mg: \mh\to\R, \quad v \mapsto \E[v(G, Z)^2]^{1/2} - \E[v(G,Z)G]/\sqrt{1-\delta^{-1}},
\end{aligned}
\end{equation}
where $\loss:\R\to\R$ is the Lipschitz and convex loss function used for the unregularized M-estimator \eqref{eq:estimator_noreg} and $\delta>1$ is the oversampling ratio $\lim_{n\to+\infty} n/p$.
 Note that the above functions $(\ml,\mg)$ are Lipschitz and finite-valued convex functions for any noise distribution $Z$
(see \Cref{lm:lg_basic_noreg}).

{
For notational brevity, we identify the function $v\in\mh$ with the random variable $v(G,Z)$, and we write for instance $\E[v^2]$ for $\E[v(G,Z)^2]$.
Any $v$ inside an expectation denotes the random variable $v(G,Z)$,
while $v$ outside an expectation sign, for instance in $\mg(v)$, $\ml(v)$ or $\|v\|$,
refers to the element of $\mh$.
The norm $\|v\|$ refers to the Hilbert norm
$\E[v(G,Z)^2]^{1/2}$.
With this convention, the inequality $\mg(v)\le 0$ can be rewritten
$\|v\| \le \E[Gv]/\sqrt{1-\delta^{-1}}$ where the arguments $(G,Z)$ of $v(G,Z)$
are omitted inside the expectation sign. Similarly, we write
$\mathcal L(v)=\E[\loss(v+Z) - \loss(Z)]$.
}

With these functions $(\ml, \mg)$, we claim that the minimization problem $\min_{\alpha\ge 0}\mathsf{M}(\alpha)$ in \eqref{eq:potential_noreg} admits a unique minimizer $\alpha_*\ge 0$ if and only if the following infinite-dimensional convex optimization problem over the Hilbert space $\mh$
\begin{align}\label{eq:const_optim_noreg}
    \min_{v\in\mh} \ml(v) \quad \text{subject to}\quad \mg(v) \le 0          
\end{align}
admits a unique minimizer $v_*\in \mh$.

\begin{lemma}\label{lm:equivalence_potential_noreg}
  Let \Cref{as:noreg} be fulfilled. 
    \begin{enumerate}
        \item For any $\alpha\ge 0$, the potential $\mathsf{M}(\alpha)$ defined in \eqref{eq:potential_noreg} equals {the minimal objective value}
        \begin{align}\label{eq:equivalence_potential_noreg}
            \min_{v\in\mh} \ml(v) \quad \text{subject to}\quad \|v-\alpha G\|\le \delta^{-1/2}\alpha. 
        \end{align}
        \item The potential $\mathsf{M}(\alpha)$ is minimized at $\alpha=\alpha_*$ if and only if there exists a minimizer $v_*$ of the  infinite-dimensional optimization problem \eqref{eq:const_optim_noreg}
        with $\alpha_* = \|v_*\|/\sqrt{1-\delta^{-1}}$. 
    \end{enumerate}
\end{lemma}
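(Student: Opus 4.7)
The plan is to prove part (1) by rewriting $\mathsf{M}(\alpha)$ via the variational form of the Moreau envelope and Lagrangian duality, and then to deduce part (2) by a short feasibility analysis linking the two constraint sets. The hard parts will be the strong-duality swap in part (1) and the attainment of the minimum in \eqref{eq:equivalence_potential_noreg}: both are convex analysis in an infinite-dimensional Hilbert space and rely on Slater's condition, weak compactness of closed bounded convex sets, and weak lower semicontinuity of the Lipschitz convex function $\ml$ (which follows from \Cref{lm:lg_basic_noreg}).

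For part (1), I would first unfold $\env_\loss(\alpha G + Z;\tau_g/\beta)$ as an infimum over $u = v+Z$ with $v\in\mh$ and exchange infimum and expectation by measurable selection via $\prox$, obtaining
\[
\mathsf{L}\Bigl(\alpha,\tfrac{\tau_g}{\beta}\Bigr) \;=\; \inf_{v\in\mh}\Bigl(\tfrac{\beta}{2\tau_g}\|v-\alpha G\|^2 + \ml(v)\Bigr).
\]
Substituting into the definition of $\mathsf{M}(\alpha)$ and minimizing the convex function $\tau_g\mapsto \beta\tau_g/(2\delta) + \beta\|v-\alpha G\|^2/(2\tau_g)$ in closed form at $\tau_g=\sqrt{\delta}\,\|v-\alpha G\|$ yields
\[
\mathsf{M}(\alpha) \;=\; \sup_{\beta>0}\inf_{v\in\mh}\Bigl[\ml(v) + \tfrac{\beta}{\sqrt{\delta}}\bigl(\|v-\alpha G\| - \alpha/\sqrt{\delta}\bigr)\Bigr],
\]
which is the Lagrangian dual of \eqref{eq:equivalence_potential_noreg} with multiplier $\beta/\sqrt{\delta}\ge 0$. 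For $\alpha>0$, $v=\alpha G$ is a strict interior point of the feasible set, so Slater's condition holds and strong convex duality delivers the claimed equality; the case $\alpha=0$ reduces to $\mathsf{M}(0)=\ml(0)=0$.

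For part (2), the key observation is that for fixed $v\in\mh$ the quadratic inequality
\[
\|v\|^2 - 2\alpha\,\E[Gv] + \alpha^2(1-\delta^{-1}) \;\le\; 0
\]
in $\alpha\ge 0$ is solvable if and only if $\mg(v)\le 0$ (by a discriminant and sign computation), and when $\mg(v)=0$ the inequality forces the single value $\alpha=\|v\|/\sqrt{1-\delta^{-1}}$. Combined with part (1), this immediately yields the equality of minimal values $\min_{\alpha\ge 0}\mathsf{M}(\alpha)=\min\{\ml(v):\mg(v)\le 0\}$. To lift this to the minimizer correspondence, I would show that every minimizer $v_*$ of \eqref{eq:const_optim_noreg} satisfies $\mg(v_*)=0$: since $\argmin\loss=\{0\}$, the unique unconstrained minimizer of $\ml$ over $\mh$ is $v=-Z$, and this is infeasible because $\mg(-Z)=\|Z\|>0$ by \Cref{as:noreg}; a short convexity argument (move from $v_*$ a small step towards $-Z$) then forces the constraint to be active at any constrained minimizer. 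Setting $\alpha_*=\|v_*\|/\sqrt{1-\delta^{-1}}$ makes $(\alpha_*,v_*)$ feasible in \eqref{eq:equivalence_potential_noreg} with objective $\ml(v_*)$, proving $\alpha_*$ minimizes $\mathsf{M}$. For the reverse direction, produce a minimizer $v_*$ of \eqref{eq:equivalence_potential_noreg} at $\alpha=\alpha_*$ by weak compactness of the Hilbert ball $\{v:\|v-\alpha_* G\|\le \alpha_*/\sqrt{\delta}\}$ together with weak lower semicontinuity of $\ml$, and read off $\|v_*\|=\alpha_*\sqrt{1-\delta^{-1}}$ from the same quadratic identity.
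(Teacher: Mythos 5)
Your approach to part (1) is genuinely different from the paper's: you go directly through Lagrangian duality (unfold the Moreau envelope into an infimum over $v\in\mh$, eliminate $\tau_g$ in closed form, recognize the resulting $\sup_\beta\inf_v$ as the dual of the constrained problem, and invoke Slater). The paper instead introduces a third expression $\bar{\mathsf M}(\alpha)=\sup_{b>0}\bigl(\mathsf L(\alpha,\alpha/b)-\alpha b/(2\delta)\bigr)$ and proves the chain $\bar{\mathsf M}\le\mathsf M\le\check{\mathsf M}\le\bar{\mathsf M}$, the last inequality being obtained from a Lagrange multiplier for the constrained problem together with an explicit proximal-operator test function. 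Both are legitimate; the paper's route also yields the extra identity $\mathsf M=\bar{\mathsf M}$ used later in the text, which your argument does not produce (but that identity is not required for this lemma). One technical note on your version: the exchange $\mathsf L(\alpha,\tau_g/\beta)=\inf_{v\in\mh}\bigl(\tfrac{\beta}{2\tau_g}\|v-\alpha G\|^2+\ml(v)\bigr)$ requires verifying that the pointwise minimizer $v=\prox[\kappa\loss](\alpha G+Z)-Z$ lies in $\mh$; this follows from $|x-\prox[\kappa\loss](x)|\le\kappa\|\loss\|_{\lip}$, not from Lipschitzness of the prox alone (which would only give $|v|\lesssim|\alpha G|+|Z|$, problematic when $Z\notin L^2$).

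Two genuine gaps in part (2). First, at $\alpha=0$ you cannot invoke Slater (the constraint set is $\{0\}$, which has empty interior), so "reduces to $\mathsf M(0)=\ml(0)=0$" is not immediate from duality; the paper spends the whole of \Cref{lm:N_check_N}-(1) establishing $\mathsf M(0)=0$ from the min--max definition, using $\lim_{\tau\to0^+}\mathsf L(0,\tau)=0$ and a monotonicity argument in $\tau_g$ for large $\beta$. Your argument needs a separate weak-duality computation there. Second, and more seriously, the claim that "the unique unconstrained minimizer of $\ml$ over $\mh$ is $v=-Z$" is false in general: \Cref{as:noreg} imposes no moment condition on $Z$, so $-Z$ need not belong to $\mh$, and when $\E[\loss(Z)]=+\infty$ the map $\ml$ is unbounded below and has no minimizer at all. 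The paper's \Cref{lm:lagrange_noreg} avoids this by working with the truncation $v_n=-Z\,I\{\loss(Z)\le n\}\in\mh$ and the monotone convergence theorem, splitting into the cases $\E[\loss(Z)]=+\infty$ and $<+\infty$ to derive the contradiction with $\mg(v_*)\le0$. Your "short convexity argument moving towards $-Z$" cannot be carried out as stated when $-Z\notin\mh$; you should instead invoke \Cref{lm:lagrange_noreg} (which proves $\mu_*>0$, hence $\mg(v_*)=0$) rather than reprove it. Once $\mg(v_*)=0$ is established, your quadratic/discriminant argument for $\alpha_*=\|v_*\|/\sqrt{1-\delta^{-1}}$ matches the paper's orthogonal-decomposition computation and is correct.
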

See \Cref{proof:lm:equivalence_potential_noreg} for the proof. 
\Cref{lm:equivalence_potential_noreg}-(1) implies that the map $[0,+\infty)\ni \alpha\mapsto \mathsf{M}(\alpha)$ is finite valued since the objective value of \eqref{eq:equivalence_potential_noreg} is finite for any $\alpha\ge 0$. Indeed, the set of $v\in\mh$ satisfying the condition $\|v-\alpha G\| \le \delta^{-1/2}\alpha$ is a closed, convex and bounded subset of $\mh$, and for any continuous functions $f:\mh\to \R$ and any convex, closed, and bounded subset $\mathcal{D}\subset \mh$, the optimization problem $\min_{v\in \mathcal{D}} f(v)$ admits a minimizer
(see
\cite[proposition 11.15]{bauschke2017correction}). 
\Cref{lm:equivalence_potential_noreg}-(2) implies that the minimization problem $\min_{\alpha\ge 0}\mathsf{M}(\alpha)$ admits a unique minimizer if the infinite-dimensional optimization problem $\min_{\mg(v)\le 0}\ml(v)$ admits a unique minimizer $v_*$.

Note that \Cref{lm:equivalence_potential_noreg}-(2) easily follows from \Cref{lm:equivalence_potential_noreg}-(1). Indeed, \Cref{lm:equivalence_potential_noreg}-(1) implies 
$$
\min_{\alpha\ge 0} \mathsf{M}(\alpha) = \min_{\alpha\ge 0} \min_{v\in \mh: \|v-\alpha G\|\le \delta^{-1/2}\alpha } \ml(v).
$$
Letting $\proj(v):=v-\E[vG]G$ be the projection of $v$ onto the orthogonal complement of the linear span of $G$, the constraint $\|v-\alpha G\|\le  \delta^{-1/2}\alpha$ can be rewritten as the following norm constraint of the projection $\proj(v)$:
$$
\|\proj(v)\|^2 \le -(1-\delta^{-1})\alpha^2 + 2\alpha \E[vG] - \E[vG]^2.
$$ 
Since the right-hand side is maximized at $\alpha=\E[vG]/(1-\delta^{-1})$ for fixed $\E[vG]$, we have $\alpha_* = \E[v_*G]/(1-\delta^{-1})$. Furthermore, we will argue that the constraint is binding at the minimizer $v_*$, i.e., $\mg(v_*)= 0$ (see \Cref{lm:lagrange_noreg}), which gives $\alpha_* = \E[v_*G]/(1-\delta^{-1})=\|v_*\|/\sqrt{1-\delta^{-1}}$. 

Next, we will establish the relationship between the optimization problem $\min_{\mg(v)\le 0}\ml(v)$ and the nonlinear system of equations \eqref{eq:system_noreg}. The key to this is the existence of the Lagrange multiplier associated with the constraint $\mg(v)\le 0$. For any $v_*\in\mh$, $v_*$ solves the constrained optimization problem $\min_{\mg(v)\le0}\ml(v)$ if and only if there exists a  Lagrange multiplier $\mu_*\ge 0$ such that the KKT condition 
\begin{align}\label{eq:KKT_noreg_main}
    -\mu_* \partial \mg(v_*) \cap \partial \ml(v_*) \ne \emptyset, \quad \mu_*\mg(v_*)=0, \quad \text{and} \quad \mg(v_*) \le 0 
\end{align}
is satisfied, where $\partial \mg, \partial\ml \subset \mh$  are subdifferentials of the convex functions $\mg, \ml:\mh\to\R$. Importantly, we will argue that the Lagrange multiplier $\mu_*$ is always strictly positive  if $\PP(Z\ne 0)>0$ and $\{0\}=\argmin_x \loss(x)$ (see \Cref{lm:lagrange_noreg}). Then, combined with $\mu_*\mg(v_*)=0$ in the KKT condition \eqref{eq:KKT_noreg_main}, we have always $\mg(v_*)=0$, i.e., the constraint $\mg(v)\le 0$ is binding. 
With this positive Lagrange multiplier $\mu_*>0$ and the binding condition $\mg(v_*)=0$, we claim the following equivalence between the minimizer of $\min_{\mg(v)\le 0}\ml(v)$ and the solution to the nonlinear system of equations \eqref{eq:system_noreg}. 
\begin{lemma}\label{lm:equivalence_system_noreg}
    Let \Cref{as:noreg} be fulfilled.
    \begin{enumerate}
        \item If $v_*\in \mh$ solves the optimization problem $\min_{\mg(v)\le 0}\ml(v)$ with $\|v_*\|>0$, then for any Lagrange multiplier $\mu_*>0$ satisfying the KKT condition \eqref{eq:KKT_noreg_main}, 
        the positive scalars $(\alpha_*, \kappa_*)$ defined by 
        $$
        \alpha_* = \|v_*\|/\sqrt{1-\delta^{-1}} > 0, \quad \kappa_* = \|v_*\|/\mu_* > 0,
        $$
        solve the nonlinear system of equations \eqref{eq:system_noreg}. 
        \item         
        If $(\alpha_*, \kappa_*)\in\R_{>0}^2$ is a solution to the nonlinear system of equations \eqref{eq:system_noreg}, then $v_*\in\mh$ defined by 
        $$
            v_* = \prox[\kappa_* \loss](\alpha_* G + Z)-Z 
        $$
        solves the constrained optimization problem \eqref{eq:const_optim_noreg} with $\|v_*\|=\alpha_*\sqrt{1-\delta^{-1}}>0$, and the KKT condition \eqref{eq:KKT_noreg_main} is satisfied by the Lagrange multiplier $\mu_*=\alpha_* \sqrt{1-\delta^{-1}}/\kappa_*>0$. 
    \end{enumerate}
\end{lemma}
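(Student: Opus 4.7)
The plan is to use the proximal-operator identity ``$p=\prox[\kappa f](u)\Longleftrightarrow u-p\in\kappa\partial f(p)$'' to translate between KKT stationarity in the Hilbert space $\mh$ and the scalar system \eqref{eq:system_noreg}. Both directions will then reduce to a short algebraic manipulation together with the binding condition $\mg(v_*)=0$, which is automatic since \Cref{lm:lagrange_noreg} supplies a strictly positive Lagrange multiplier.

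First I would compute the subdifferentials at the relevant points. Since $\mg(v)=\|v\|-\E[Gv]/\sqrt{1-\delta^{-1}}$ and $v\mapsto\|v\|$ is Fr\'echet differentiable away from $0$, at any $v\ne 0$ the subdifferential is the singleton $\partial\mg(v)=\{v/\|v\|-G/\sqrt{1-\delta^{-1}}\}$. For the integral functional $\ml(v)=\E[\loss(v+Z)-\loss(Z)]$ generated by the Lipschitz integrand $\loss$, the standard interchange of subdifferentiation and expectation (Rockafellar's measurable-selection theorem) gives $\partial\ml(v)=\{w\in\mh:w\in\partial\loss(v+Z)\text{ almost surely}\}$.

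For part~(1), let $v_*$ minimize $\ml$ over $\{\mg\le 0\}$ with $\|v_*\|>0$ and let $\mu_*>0$ realize the KKT condition \eqref{eq:KKT_noreg_main}. The active constraint gives $\E[Gv_*]=\|v_*\|\sqrt{1-\delta^{-1}}$, and stationarity provides a measurable selection $w\in\partial\loss(v_*+Z)$ with $w=-\mu_*(v_*/\|v_*\|-G/\sqrt{1-\delta^{-1}})$. Setting $\alpha_*=\|v_*\|/\sqrt{1-\delta^{-1}}$ and $\kappa_*=\|v_*\|/\mu_*$, this rewrites as $\kappa_* w=\alpha_* G-v_*$, i.e.\ $(\alpha_* G+Z)-(v_*+Z)\in\kappa_*\partial\loss(v_*+Z)$, so the proximal identity yields $v_*+Z=\prox[\kappa_*\loss](\alpha_* G+Z)$. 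Denoting $T=(\alpha_* G+Z)-\prox[\kappa_*\loss](\alpha_* G+Z)=\alpha_* G-v_*$ and substituting $\|v_*\|^2=\alpha_*^2(1-\delta^{-1})$ and $\E[Gv_*]=\alpha_*(1-\delta^{-1})$ into $\E[T^2]$ and $\E[TG]$ produces $\delta\E[T^2]=\alpha_*^2$ and $\delta\E[TG]=\alpha_*$, which is \eqref{eq:system_noreg}.

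For part~(2), define $v_*:=\prox[\kappa_*\loss](\alpha_* G+Z)-Z$; the $1$-Lipschitz property of the proximal operator combined with Lipschitzness of $\loss$ gives $|v_*-\alpha_* G|\le\kappa_*\lip(\loss)$, so $v_*\in\mh$. A direct computation using the two equations of \eqref{eq:system_noreg} yields $\|v_*\|^2=\alpha_*^2(1-\delta^{-1})$ and $\E[Gv_*]=\alpha_*(1-\delta^{-1})$, hence $\mg(v_*)=0$ and $\|v_*\|>0$. With $\mu_*:=\|v_*\|/\kappa_*>0$, the proximal identity now delivers $(\alpha_* G-v_*)/\kappa_*\in\partial\loss(v_*+Z)$ almost surely, which rearranges to $-\mu_*\partial\mg(v_*)\cap\partial\ml(v_*)\ne\emptyset$; combined with $\mg(v_*)=0$, this is the full KKT system, and convex sufficiency gives optimality of $v_*$. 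The main obstacle is the rigorous identification of $\partial\ml$ in the infinite-dimensional setting; once that is established, everything else is bookkeeping between the proximal identity and the two scalar equations.
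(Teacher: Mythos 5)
Your plan is correct and lands in the same place as the paper's proof (\Cref{lm:optimization_to_system_noreg} and \Cref{lm:system_to_optimization_noreg}), but you take a technically heavier route for part~(1). You anticipate the issue yourself: to convert ``$-\mu_*\nabla\mg(v_*)\in\partial\ml(v_*)$'' into a pointwise inclusion ``$-\mu_*\nabla\mg(v_*)\in\partial\loss(v_*+Z)$ a.s.'', you invoke the exact identification $\partial\ml(v)=\{w\in\mh:w\in\partial\loss(v+Z)\text{ a.s.}\}$, i.e.\ the hard direction of a Rockafellar-type interchange of subdifferentiation and expectation. The paper avoids needing that theorem: it observes that the stationarity $\kappa_*^{-1}(\alpha_*G-v_*)\in\partial\ml(v_*)$ is precisely $0\in\partial\bigl[\ml(\cdot)+\E[(\cdot-\alpha_*G)^2/(2\kappa_*)]\bigr](v_*)$, so $v_*$ globally minimizes this strictly convex functional; and since the pointwise minimizer $\bar v(g,z)=\prox[\kappa_*\loss](\alpha_*g+z)-z$ belongs to $\mh$ and achieves the pointwise infimum, it is the unique global minimizer, forcing $v_*=\bar v$. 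This only requires the elementary (Jensen) inclusion ``measurable selection of $\partial\loss(v_*+Z)\Rightarrow$ element of $\partial\ml(v_*)$'', which is the same ingredient both you and the paper use in part~(2). Everything else in your proposal — the binding constraint computation $\E[Gv_*]=\|v_*\|\sqrt{1-\delta^{-1}}$, the algebra giving $\delta\E[T^2]=\alpha_*^2$ and $\delta\E[TG]=\alpha_*$, the $L^2$ integrability of $v_*$ via Lipschitzness in part~(2), and the verification of the KKT system — matches the paper and is correct. (Incidentally, your sign $-\mu_*\nabla\mg(v_*)=\kappa_*^{-1}(\alpha_*G-v_*)$ is the right one; the paper's appendix has a small typo with the opposite sign at that step.)
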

See \Cref{proof:lm:equivalence_system_noreg} for the proof. 
\Cref{lm:equivalence_system_noreg} implies that the nonlinear system of equations \eqref{eq:system_noreg} admits a unique solution $(\alpha_*, \kappa_*)\in\R_{>0}^2$ if and only if the optimization problem $\min_{\mg(v)\le0}\ml(v)$ admits a unique nonzero solution $v_*\in\mh$ with $v_*\ne 0$ and a unique Lagrange multiplier $\mu_*>0$ satisfying the KKT condition \eqref{eq:KKT_noreg_main}. 
The lemma below gives the uniqueness of the minimizer $v_*$ and the Lagrange multiplier $\mu_*$. 
\begin{lemma}[Uniqueness]\label{lm:unique_noreg}
    Let $v_{*}\in\mh$ be a minimizer of the optimization problem $\min_{\mg(v)\le 0}\ml(v)$. 
     \begin{enumerate}
        \item $v_{*}$ is unique. 
        \item If $v_{*}\ne 0$, then the Lagrange multiplier $\mu_*>0$ satisfying the KKT condition \eqref{eq:KKT_noreg_main} is also unique. 
     \end{enumerate}
\end{lemma}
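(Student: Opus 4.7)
The plan is to prove both uniqueness claims via the explicit pointwise form of the minimizer from \Cref{lm:equivalence_system_noreg} combined with complementary slackness. By \Cref{lm:lagrange_noreg}, every KKT Lagrange multiplier is strictly positive, so $\mg(v_*) = 0$ at every minimizer.

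For part~(1), suppose $v_1, v_2$ are two minimizers. The segment $v_t := (1-t)v_1 + tv_2$ consists of minimizers by convexity, and each $v_t$ satisfies $\mg(v_t) = 0$. Since $\mg(v) = \|v\| - \E[vG]/\sqrt{1-\delta^{-1}}$ is a Hilbert norm minus a linear functional, the identity $\mg(v_t) = (1-t)\mg(v_1) + t\mg(v_2)$ forces $\|(1-t)v_1 + tv_2\| = (1-t)\|v_1\| + t\|v_2\|$, the equality case of the triangle inequality, hence $v_1 = c\,v_2$ for some $c \ge 0$. (The case where one of $v_1, v_2$ vanishes can be reduced to the two-nonzero case by comparing the nonzero minimizer with an interior point of the segment.) Assuming $v_1, v_2$ both nonzero with $c \ne 1$, \Cref{lm:equivalence_system_noreg} provides $v_i + Z = \prox[\kappa_i\loss](\alpha_i G + Z)$ with $\alpha_i = \|v_i\|/\sqrt{1-\delta^{-1}}$, so $\alpha_1 = c\alpha_2$, and the relation $v_1 = cv_2$ yields the functional equation
\begin{equation*}
\prox[\kappa_1\loss](cu + (1-c)Z) = c\prox[\kappa_2\loss](u) + (1-c)Z, \qquad u := \alpha_2 G + Z.
\end{equation*}

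I would then derive a contradiction by case analysis on $\mathrm{supp}(Z)$. If $\mathrm{supp}(Z)$ contains two distinct points $z_1, z_2$, subtracting the equation at $Z = z_1$ and $Z = z_2$ yields $\prox[\kappa_1\loss](w + h) - \prox[\kappa_1\loss](w) = h$ for $h := (1-c)(z_1 - z_2) \ne 0$ and every $w \in \R$; the $1$-Lipschitz property of $\prox$ forces $\prox[\kappa_1\loss](w) = w + k$ for some constant $k$, hence $\partial\loss \equiv \{-k/\kappa_1\}$ and $\loss$ is affine, contradicting $\argmin \loss = \{0\}$. If instead $Z \equiv z_0 \ne 0$ a.s., asymptotics of the functional equation as $u \to \pm\infty$ force $\kappa_1 = c\kappa_2$; combined with the identity $x - \prox[\kappa\loss](x) \in \kappa\partial\loss(\prox[\kappa\loss](x))$ and surjectivity of $\prox[\kappa_2\loss]$ onto $\R$, the functional equation then produces $\partial\loss(y) \cap \partial\loss(cy + (1-c)z_0) \ne \emptyset$ for every $y \in \R$. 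Monotonicity of $\partial\loss$ then forces $\loss$ to be affine on each of $(-\infty, z_0)$ and $(z_0, \infty)$, again contradicting $\argmin \loss = \{0\}$ since $0 \ne z_0$.

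For part~(2), fix $v_* \ne 0$. Then $\partial\mg(v_*) = \{\eta_*\}$ with $\eta_* := v_*/\|v_*\| - G/\sqrt{1-\delta^{-1}}$, and any KKT multiplier $\mu > 0$ satisfies $\mu\,\eta_*(G,Z) \in -\partial\loss(v_*(G,Z)+Z)$ a.s. On $\{\eta_* = 0\}$ we would have $v_* = \alpha_* G$, making $\alpha_* G + Z$ a fixed point of $\prox[\kappa_*\loss]$ and hence $\alpha_* G + Z \in \argmin\loss = \{0\}$, a null event since $\alpha_* G + Z$ has a density; therefore $\eta_* \ne 0$ a.s. Let $D\subset\R$ denote the countable non-differentiability set of $\loss$; the set $\prox[\kappa_*\loss]^{-1}(D) = \bigcup_{d\in D}(d + \kappa_*\,\partial\loss(d))$ has Lebesgue measure at most $2\kappa_*\lip(\loss) < \infty$, while $\alpha_* G + Z$ has a density on $\R$, so $\PP(v_* + Z \notin D) > 0$. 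On this positive-probability event $\partial\loss(v_*+Z) = \{\loss'(v_*+Z)\}$ is a singleton, so $\mu\,\eta_* = -\loss'(v_*+Z)$ determines $\mu$ uniquely. The main obstacle is the functional-equation step in part~(1): since $\loss$ need not be strictly convex (e.g.\ $\loss(x)=|x|$ or the Huber loss), strict convexity of $\ml$ does not rule out two proportional minimizers, and the case analysis on $\mathrm{supp}(Z)$, where the assumption $\PP(Z \ne 0) > 0$ is used decisively, carries the main technical content.
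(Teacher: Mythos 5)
Your proof is correct and reaches the conclusion by a genuinely different route at each major step. For the proportionality step in part~(1), you exploit the binding constraint $\mg(v_t)=0$ along the whole minimizing segment together with strict convexity of the Hilbert norm, so that equality in the triangle inequality forces nonnegative proportionality; the paper's \Cref{lm:propotional_noreg} instead derives a quadratic lower bound on $\ml(v)-\ml(v_*)$ from the prox form of $v_*$ and invokes the equality case of Cauchy--Schwarz. To rule out $c\neq 1$, you derive a functional equation for $\prox$ and force $\loss$ to be affine through a two-case analysis on the support of $Z$; \Cref{lm:unique_v_*} instead uses the sign structure of $x\mapsto x-\prox[\kappa\loss](x)$ to exhibit a null event that would have positive probability when $c\neq 1$. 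For part~(2), you use countability of the non-differentiability set $D$, the observation that $\prox[\kappa_*\loss]^{-1}(D)=\bigcup_{d\in D}\bigl(d+\kappa_*\partial\loss(d)\bigr)$ has Lebesgue measure at most $2\kappa_*\|\loss\|_{\lip}$ (the intervals $\partial\loss(d)$ have disjoint interiors by monotonicity), and the strictly positive density of $\alpha_*G+Z$ to isolate a positive-probability event on which $\mu$ is pinned down pointwise; \Cref{lm:unique_lagrange_noreg} avoids a.e.\ differentiability altogether by sending $Y\to+\infty$ and comparing $\kappa_*,\kappa_{**}$ via the limit $L_+=\sup_{x>0}\max\partial\loss(x)$. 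Your proportionality and measure-theoretic steps are slicker; the paper's tail argument for part~(2) is arguably more robust since it needs nothing beyond monotonicity. One small point to tighten: the functional equation in Case~1 holds a priori only for $\law(Z)$-a.e.\ $z$ and Lebesgue-a.e.\ $u$, so you should use continuity of $\prox$ (first in $u$, then in $z$) to upgrade to all $u\in\R$ and all $z$ in the support of $Z$ before subtracting at two support points.
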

By \Cref{lm:unique_noreg}-(1) and \Cref{lm:equivalence_potential_noreg}, the minimizer of $\min_{\alpha\ge 0}\mathsf{M}(\alpha)$ is unique if it exists. Furthermore, \Cref{lm:unique_noreg}-(2) and \Cref{lm:equivalence_system_noreg}-(2) imply that the solution to the nonlinear system of equations is also unique if $v_*$ is not degenerate, i.e., $v_*\ne 0$. The next lemma gives us a sufficient condition for $v_*\ne 0$. 
\begin{lemma}[Non-degeneracy]\label{lm:nonzero_noreg}
    Let \Cref{as:noreg} be fulfilled and 
    let $\delta_{\mathsf{perfect}}$ be the threshold defined by \eqref{eq:threshold_noreg}. 
    If $\delta < \delta_{\mathsf{perfect}}$ then $0\in\mh$ is not a minimizer of the optimization problem $\min_{\mg(v)\le 0}\ml(v)$. 
\end{lemma}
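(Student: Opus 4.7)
The plan is to argue by contradiction: suppose $0\in\mh$ minimizes $\min_{\mg(v)\le 0}\ml(v)$. I will derive from the KKT optimality conditions at $v=0$ that $\inf_{t>0}\E[\dist(G,t\partial\loss(Z))^2]\le 1-\delta^{-1}$, which rearranges to $\delta_{\mathsf{perfect}}\le\delta$ and contradicts the hypothesis $\delta<\delta_{\mathsf{perfect}}$. First I would verify Slater's condition so that KKT is necessary: taking $v=G$ yields $\mg(G)=1-1/\sqrt{1-\delta^{-1}}<0$ since $\delta>1$. Combined with the convexity and finite-valuedness of $\ml,\mg$ on $\mh$ (established earlier in the paper), Slater's condition guarantees a multiplier $\mu\ge 0$ and elements $s\in\partial\ml(0)$, $g\in\partial\mg(0)$ with $s+\mu g=0$.

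Next I would identify the two subdifferentials. By Rockafellar's theorem for integral functionals with finite-valued Lipschitz integrand, $\partial\ml(0)$ consists of all $s\in\mh$ with $s(G,Z)\in\partial\loss(Z)$ almost surely. Since $\mg$ is the sum of the Hilbert norm $\|\cdot\|$ and the continuous linear map $v\mapsto -\E[vG]/\sqrt{1-\delta^{-1}}$, the sum rule combined with the identity $\partial\|\cdot\|(0)=\{u\in\mh:\|u\|\le 1\}$ gives $\partial\mg(0)=\{u-G/\sqrt{1-\delta^{-1}}:\|u\|\le 1\}$. The KKT relation then rewrites as $s=\mu G/\sqrt{1-\delta^{-1}}-\mu u$ for some $u$ with $\|u\|\le 1$ and $s$ with $s(G,Z)\in\partial\loss(Z)$ a.s. The case $\mu=0$ can be ruled out: it would force $s\equiv 0$, hence $0\in\partial\loss(Z)$ a.s. and so $Z\in\argmin\loss=\{0\}$ a.s., contradicting $\PP(Z\ne 0)>0$.

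For $\mu>0$, squaring $\|u\|\le 1$ produces $\E[(\mu G/\sqrt{1-\delta^{-1}}-s)^2]\le\mu^2$; after setting $t:=\sqrt{1-\delta^{-1}}/\mu>0$ and $\tilde s:=ts$ and multiplying through by $(1-\delta^{-1})/\mu^2$, this becomes $\E[(G-\tilde s)^2]\le 1-\delta^{-1}$ with $\tilde s(G,Z)\in t\partial\loss(Z)$ a.s. The pointwise bound $(G-\tilde s)^2\ge\dist(G,t\partial\loss(Z))^2$ then gives $\E[\dist(G,t\partial\loss(Z))^2]\le 1-\delta^{-1}$, and hence $\inf_{t>0}\E[\dist(G,t\partial\loss(Z))^2]\le 1-\delta^{-1}$, which is exactly $\delta_{\mathsf{perfect}}\le\delta$, the desired contradiction. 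The main technical subtlety I anticipate is justifying the formula for $\partial\ml(0)$: although it is a classical identity for integral functionals with Lipschitz integrand, it requires a careful measurable selection argument to lift almost-sure pointwise selections $s(G,Z)\in\partial\loss(Z)$ to bona fide elements of $\mh$.
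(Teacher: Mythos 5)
Your proposal is correct, and it takes a genuinely different route from the paper's proof. The paper also argues by contradiction from the KKT condition $0\in\partial\ml(0)+\mu\partial\mg(0)$ with $\mu>0$, but then bypasses any explicit description of $\partial\ml(0)$: it rescales by $\lambda=\sqrt{1-\delta^{-1}}/\mu$, sets $f(\cdot)=\lambda(\loss(\cdot+Z)-\loss(Z))$, and constructs the competitor $v_t=\prox[tf](tG)$. The contradiction is obtained by showing $\ml(v_t)+\mu\mg(v_t)<0$ for small $t>0$, which requires a careful limit $\E[(v_t/t)^2]\to\E[\dist(G,\lambda\partial\loss(Z))^2]$ as $t\to 0^+$ — established via the monotonicity of $v_t^2/t^2$ (from joint convexity of the Moreau envelope), a case-by-case identification of the pointwise limit of $v_t/t$ depending on the position of $G$ relative to $\partial f(0)$, and dominated convergence. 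You instead characterize $\partial\ml(0)$ explicitly via the Rockafellar subdifferential formula for convex integral functionals, characterize $\partial\mg(0)$ by the sum rule with $\partial\|\cdot\|(0)$ being the unit ball, and then a two-line algebraic manipulation of the resulting stationarity identity gives $\E[\dist(G,t\partial\loss(Z))^2]\le 1-\delta^{-1}$ directly for $t=\sqrt{1-\delta^{-1}}/\mu$. Your argument is shorter and more transparent, at the cost of invoking the Rockafellar integral-subdifferential theorem (for the inclusion $\partial\ml(0)\subseteq\{s:\ s(G,Z)\in\partial\loss(Z)\ \text{a.s.}\}$, which is the direction you actually need here, not the measurable-selection lifting you flag); the paper's argument is more self-contained, using only elementary proximal-operator facts, but the asymptotic analysis is more delicate. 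As a side benefit your $\mu=0$ case ($s\equiv 0\Rightarrow 0\in\partial\loss(Z)$ a.s. $\Rightarrow Z=0$ a.s.) is cleaner than the truncation argument the paper uses in its Lagrange-multiplier lemma.
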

See \Cref{proof:lm:nonzero_noreg} for the proof. \Cref{lm:nonzero_noreg} claims that when the oversampling ratio $\delta=\lim n/p$ is smaller than the threshold $\delta_{\mathsf{perfect}}$, then the degenerate case $v_*=0$ cannot happen.  

Finally, we prove {in the next lemma} the existence of the optimization problem $\min_{v\in\mh: \mg(v)\le 0}\ml(v)$ and thereby conclude the proof of \Cref{th:main_noreg}. The key to this existence is the coercivity of the objective function $\mh\ni v\mapsto \ml(v)$ over the subset of constraint $\{v\in\mh:\mg(v)\le 0\}\subset \mh$, as coercivity implies the existence of minimizers from standard results
in convex optimization in Hilbert spaces.
We recall for convenience the definition of coercivity:
a function $f:\mh\to\R$ is coercive over a convex set $C$ if its level sets are bounded, in the sense that
$\sup \{\|v\|, v\in C: f(v) \le t\}$ is bounded for all
real $t$.
Here, even though the function $\loss:\R\to\R$ is coercive
thanks to $\argmin_x\loss(x)=\{0\}$,
elements of $\mh$ are random variables (more precisely, squared integrable
measurable functions of $(G,Z)$),
and the function $\ml:\mh\to\R$ in \eqref{noreg_ml_mg} is 
not necessarily coercive over $\mh$: if $\PP(Z=0)=1$ and $\loss(x)=|x|$
the function 
$v_n(G,Z)=\min(n, |G|^{-1/2})$ has $\ml(v_n) = \E[|v_n|]\le \E[|G|^{-1/2}]<+\infty$ 
but $\E[v_n^2]\to+\infty$ as $n\to+\infty$.
In our case, coercivity holds over the set $\{v\in\mh: \mg(v)\le 0\}$
thanks to the constraint
$\mg(v)\le 0$ and the following argument:
with $t>0$ deterministic, $\mg(v)\le 0$ gives
by the Cauchy--Schwarz inequality
$$
\sqrt{1-\delta^{-1}}\|v\| \le 
\E\Bigl[G v\Bigr]
=
\E\Bigl[G v I\{|G|>t\}\Bigr] + \E\Bigl[G v I\{|G|\le t\}\Bigr]
\le \E\Bigl[G^2 I\{|G|>t\}\Bigr]^{1/2} \|v\| + t \E\Bigl[|v|\Bigr].
$$
Choosing $t>0$ large enough gives $\|v\| \le C(\delta) \E[|v|]$
for some constant depending on $\delta$ only.
By the Paley--Zygmund inequality, this gives a small-ball property of the form
$\PP(v(G,Z) > c(\delta) \|v\|) \ge c(\delta)$ for some constant $c(\delta)>0$
depending only on $\delta$.
As detailed in the proof of the next lemma,
this small ball property allows to deduce the coercivity of $\ml$ over the set $\{v\in\mh: \mg(v)\le 0\}$ 
from the coercivity of $\loss:\R\to\R$.

\begin{lemma}[Coercivity]\label{lm:coercive_noreg}
    Let \Cref{as:noreg} be fulfilled. Then
    $$
    \ml(v) \ge C_1 \|v\| - C_2 \quad \text{for all $v\in\mh$ such that $\mg(v)\le 0$}
    $$ 
    where $(C_1, C_2)$ are positive constants that only depends on $(\loss, \law(Z), \delta)$. 
\end{lemma}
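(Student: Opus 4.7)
The plan is to combine the small-ball property derived in the preamble with the coercivity of the scalar loss $\loss$, together with a truncation of $Z$ to circumvent the fact that $Z$ is not assumed integrable. Throughout I let $L$ denote the Lipschitz constant of $\loss$ and take as granted the constants $c_1,c_2>0$ depending only on $\delta$ with $\PP(|v|\ge c_1\|v\|)\ge c_2$ for every $v\in\mh$ satisfying $\mg(v)\le 0$.

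\textbf{Linear lower bound on $\loss$.} Convexity of $\loss$ and $\argmin\loss=\{0\}$ give, for any fixed $R_0>0$, $c_0:=\min(\loss(R_0),\loss(-R_0))-\loss(0)>0$; convexity combined with $\loss\ge\loss(0)$ then propagates this to $\loss(x)-\loss(0)\ge (c_0/R_0)|x|$ on $|x|\ge R_0$. Together with the trivial bound $\loss(x)-\loss(0)\ge 0$ on $|x|<R_0$ this yields constants $a,b>0$ depending only on $\loss$ such that
$$
\loss(x)-\loss(0)\ge a|x|-b\quad\text{for all }x\in\R.
$$

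\textbf{Truncation and assembly.} Pick $M_0>0$ with $\PP(|Z|>M_0)\le \epsilon$, where $\epsilon>0$ will be fixed later. On $\{|Z|\le M_0\}$ the display above together with the Lipschitz upper bound $\loss(Z)\le \loss(0)+LM_0$ gives $\loss(v+Z)-\loss(Z)\ge a|v+Z|-b-LM_0$; on $\{|Z|>M_0\}$ Lipschitz alone gives $\loss(v+Z)-\loss(Z)\ge -L|v|$. Taking expectation and applying Cauchy--Schwarz to the tail produces
$$
\ml(v)\ge a\,\E\bigl[|v+Z|\,I\{|Z|\le M_0\}\bigr]-b-LM_0-L\epsilon^{1/2}\|v\|.
$$
Restrict the first expectation to the event $E:=\{|v|\ge c_1\|v\|\}\cap\{|Z|\le M_0\}$, on which $|v+Z|\ge c_1\|v\|/2$ as soon as $\|v\|\ge 2M_0/c_1$. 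The union bound with $\epsilon\le c_2/2$ gives $\PP(E)\ge c_2/2$, hence $\E[|v+Z|\,I\{|Z|\le M_0\}]\ge c_1c_2\|v\|/4$, so that
$$
\ml(v)\ge \Bigl(\tfrac{ac_1c_2}{4}-L\epsilon^{1/2}\Bigr)\|v\|-(b+LM_0).
$$
Choosing $\epsilon$ small enough (for instance $\epsilon\le (ac_1c_2/(8L))^2$) makes the leading coefficient a positive constant $C_1$; for the remaining range $\|v\|<2M_0/c_1$ the crude Lipschitz bound $\ml(v)\ge -L\|v\|$ suffices after enlarging the additive constant $C_2$.

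The main obstacle is the need to choose $\epsilon$ independently of $v$ so that the positive small-ball contribution $ac_1c_2/4$ strictly dominates the Lipschitz error $L\epsilon^{1/2}$ produced by the tail of $Z$. Because $Z$ is not assumed integrable, one cannot simply use $\E[|Z|]<\infty$ as a crutch, which is why the truncation parameter $M_0$ and the tail probability $\epsilon$ must be calibrated simultaneously. Once this quantitative balance is struck, the resulting constants $C_1,C_2$ depend only on $(\loss,\law(Z),\delta)$, as required.
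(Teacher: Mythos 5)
Your proposal is correct and follows essentially the same route as the paper's proof (cf.\ \Cref{lm:coercive_noreg_detail}): truncate $Z$ at a quantile level $M_0=\mathsf q(\epsilon)$, control the tail $\{|Z|>M_0\}$ by Cauchy--Schwarz and the Lipschitz constant, use the scalar coercivity $\loss(\cdot)-\loss(0)\ge a|\cdot|-b$ together with the small-ball bound $\PP(|v|\ge c_1\|v\|)\ge c_2$ on the bulk, and finally tune $\epsilon$ to make the leading coefficient positive. The only cosmetic difference is that the paper avoids your final case split on $\|v\|\gtrless 2M_0/c_1$ by writing $|v+Z|\ge |v|-|Z|\ge |v|-\mathsf q(\epsilon)$ inside the expectation and absorbing the constant $\mathsf q(\epsilon)$ into the additive term, which yields the same bound uniformly over $\|v\|$.
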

See \Cref{proof:lm:coercive_noreg} for the proof. 
The existence of a minimizer of $\min_{\mg(v)\le 0}\ml(v)$ immediately follows from \Cref{lm:coercive_noreg}. Indeed, by $\mg(0)=0$ and $\ml(0)=0$, the objective value of $\min_{\mg(v)\le 0}\ml(v)$ will not change even if the constraint set $\{\mg(v)\le 0\}$ is restricted to the intersection $\mathcal{D}=\{\mg(v)\le 0\} \cap \{\ml(v)\le 0\}$. Here, the subset $\mathsf{D}\subset \mh$ is closed and convex by the convexity and Lipschitz condition of $\ml$ and $\mg$ (see \Cref{lm:lg_basic_noreg}), while the set $\mathcal{D}$ is bounded since \Cref{lm:coercive_noreg} implies that the level set $\{\ml(v)\le 0\}$ is included in the closed ball of radius $C_2/C_1 <+\infty$. 
Thus, by \cite[Proposition 11.15]{bauschke2017correction}, $\min_{\mg(v)\le 0} \ml(v)=\min_{v\in \mathcal{D}}\ml(v)$ admits a minimizer.  

By the representation of the solution  $\alpha_*=\|v_*\|/\sqrt{1-\delta^{-1}}$ established in \Cref{lm:equivalence_potential_noreg} and the bound of $L_2$ norm $\|v_*\|\le C_2/C_1$ as we discussed in the above paragraph, it holds that $\alpha_* \le C_2/(C_1 \sqrt{1-\delta^{-1}})$, so 
the explicit expression for the constants $(C_1, C_2)$ in \Cref{lm:coercive_noreg} gives an explicit upper bound of $\alpha_*$ (see \Cref{lm:coercive_noreg_detail} and \Cref{subsec:upper_bound_alpha_noreg}). 


\subsection{Numerical simulation}\label{subsec:simulation_noreg}
In this section, we compare the risk behaviors of the unregularized M-estimators computed by the L1 loss and the Huber loss
\begin{align*}
    \rho_{\text{L1}}(x) = |x|, \quad \rho_{\text{Huber}}(x) = \left\{
        \begin{array}{ll}
            x^2/2 & |x| \le 1\\
            |x| - 1/2 & |x| \ge 1
        \end{array}
    \right., 
\end{align*}
as we change the oversampling ratio $\delta=n/p$. Here, the marginal distribution $Z$ of the noise vector $\bm{z}$ is fixed to the mixture of point mass $\delta_{0}$ and standard normal $N(0,1)$:
$$
\bm{z}  \iid Z = 0.9 \delta_{0} + 0.1 N(0,1)
$$
In this case, the threshold $\delta_{0}$ defined in \eqref{eq:threshold_noreg} is $+\infty$ for the Huber loss since the Huber loss is differentiable (see the discussion around \eqref{eq:nonzero_sufficient_noreg}), which means that the perfect recovery is impossible for the Huber loss. 
We see from \Cref{fig:risk_compare_noreg} that the risk of the Huber is strictly positive for all $\delta$. On the other hand, the risk of L1 loss achieves $0$ for sufficiently large $\delta$, and the point at which the risk achieves $0$ coincides with the theoretical threshold $\delta_{\mathsf{perfect}}$. 

Next, we focus on the L1 loss and consider the  following family of noise distributions
$$
\bm{z} \iid (1-s) \delta_{0} + s \cdot \text{Cauchy}(0,1) \quad \text{for} \quad s\in (0,1). 
$$
where the parameter $s=\PP(z_i\ne 0)$ controls the probability that the noise entry $z_i$ takes a nonzero value. In \Cref{fig:perfect_recovery_noreg}, we plot the 2D heatmap of the empirical probability of the perfect recovery $\hat{\bx}=\bx_0$ for different $s=\PP(z_i\ne 0)$ and oversampling ratio $\delta=n/p$ with $n$ fixed to $100$. We also compute the theoretical threshold $\delta_{\mathsf{perfect}}$ for each $s\in (0,1)$ and plot the curve $s\mapsto \delta_{\mathsf{perfect}}$ on the same figure. We observe that the curve $s\mapsto \delta_{\mathsf{perfect}}$ perfectly separates the region where the perfect recovery holds and the other region in which the perfect recovery is impossible. 
\begin{figure}
    \begin{subfigure}{0.48\textwidth}
        \includegraphics[width=\textwidth]{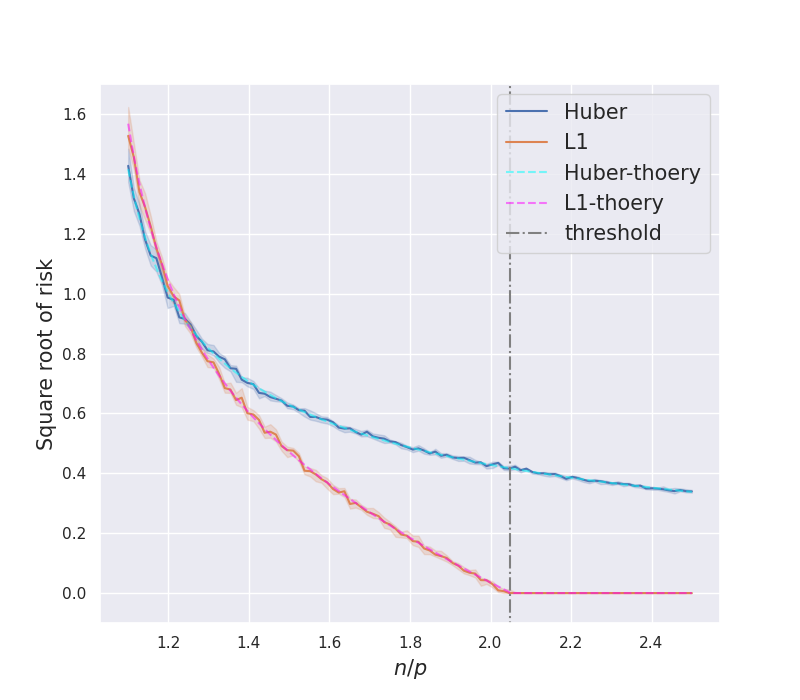}        
    \end{subfigure}
    \begin{subfigure}{0.48\textwidth}
        \includegraphics[width=\textwidth]{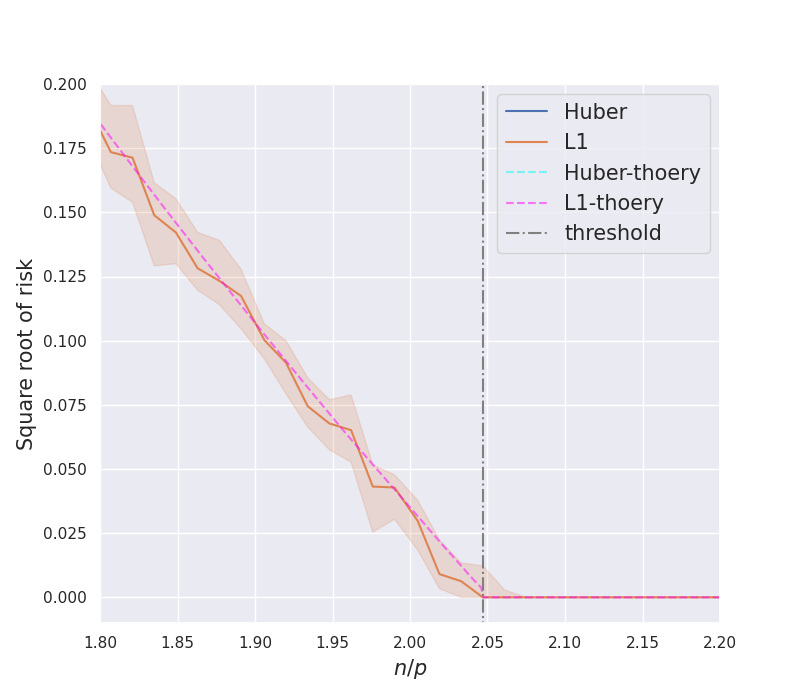}        
    \end{subfigure}
    \caption{Risk behavior of the unregularized M-estimator computed by L1 loss and Huber loss. The vertical dashed line is the threshold given by \Cref{as:reg}-(3) for the L1 loss. 
    The noise distribution is fixed to $0.9 {\delta_0} + 0.1 N(0,1)$, $p=500$ and $20$ repetitions. }
     \label{fig:risk_compare_noreg}
\end{figure}

\begin{figure}
    \includegraphics[width=\linewidth]{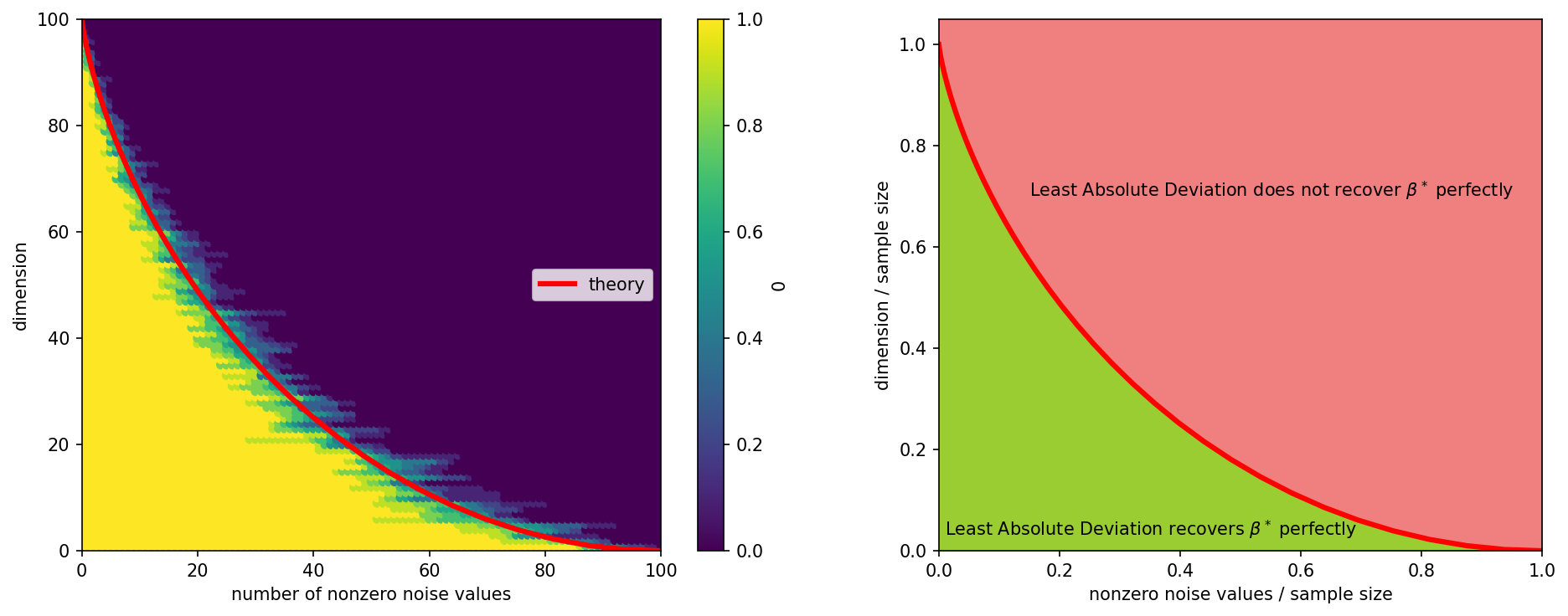}
    \caption{Empirical probabilities for perfect recovery and theoretical phase transition for the unregularized M-estimator computed by the L1 loss. }
    \label{fig:perfect_recovery_noreg}
\end{figure}

\section{Regularized {M-estimation}}\label{sec:reg}
We now extend the previous results with no regularizer ($\reg=0$)
to the regularized case $(\reg\ne 0)$. We consider the squared risk of the regularized M-estimator: 
\begin{align}\label{eq:estimator_reg}
    \hat{\bm{x}} \in \argmin_{\bx\in\R^p} \sum_{i=1}^n \loss(y_i-\bm{e}_i^\top \bA \bx) + \sum_{j=1}^p \reg(x_j).     
\end{align}
Let us define $\mathsf{M}(\alpha)$ by
\begin{align}\label{eq:potential_reg}
  \mathsf{M}(\alpha) =  \sup_{\beta > 0, \tau_h>0} \inf_{{\tau_g}>0} \frac{\beta\tau_g}{2} + \delta \mathsf{L}(\alpha, \frac{\tau_g}{\beta}) - \frac{\alpha\tau_h}{2} - \frac{\alpha\beta^2}{2\tau_h} + \mathsf{R}\Bigl(\frac{\alpha\beta}{\tau_h},\frac{\alpha}{\tau_h}\Bigr) \quad \text{for all $\alpha \ge 0$}. 
\end{align}
Recall that if $\min_{\alpha\ge 0}\mathsf{M}(\alpha)$ admits a unique minimizer $\alpha_*\ge 0$, this minimizer $\alpha_*$ characterizes the squared risk of the regularized M-estimator $\hat{\bm{x}}$ as $p^{-1}\|\hat{\bm{x}}-\hat{\bm{x}}_0\|^2\to \alpha_*^2$. Below, we state our working assumption. Our goal in this section is to show the uniqueness and existence of the minimizer $\alpha_*$. 
\begin{assumption}\label{as:reg}
    Assume that $\loss$ and $\reg$ are convex and Lipschitz with $\{0\}=\argmin_x\loss(x)=\argmin_x \reg(x)$. We also assume that the conditions below are fulfilled. 
    \begin{enumerate}
        \item $\PP(Z\ne 0)>0$. 
        \item Either one of the following conditions holds:
        \begin{enumerate}
            \item  $\reg$ is differentiable,
            \item  $\reg$ has a finite number of nondifferentiable points
            and $X$ is unbounded, i.e., $\PP(|X|>M)>0$ for all $M>0$. 
        \end{enumerate}
    \end{enumerate}
\end{assumption}

\begin{theorem}\label{th:main_reg}
Let \Cref{as:reg} be satisfied. Then, we have the following. 
\begin{enumerate}
    \item The optimization problem $\min_{\alpha\ge 0} \mathsf{M}(\alpha)$  admits a unique minimizer $\alpha_{*}\ge 0$.
    \item The minimizer $\alpha_{*}$ is strictly positive if 
    \begin{equation}\label{eq:threshold_reg}
        \delta < \delta_{\mathsf{perfect}} := \frac{\inf_{t>0} \E[\dist(H, t \partial \reg(X))^2]}{(1-\inf_{t>0}\E[\dist(G, t\partial\loss(Z))^2])_+} \in (0,\infty].
    \end{equation}
    In this case, there exists some positive scalars $(\beta_*, \kappa_*, \nu_*)$ such that $(\alpha_*, \beta_*, \kappa_*, \nu_*) \in \R_{>0}^4$ is the unique solution to the nonlinear system of equations
    \begin{equation}\label{eq:system_reg}
      \begin{split}
          \alpha^2 &= \E \bigl[
              \bigl(
                \prox[\nu^{-1} \reg] (\nu^{-1}\beta H + X) - X
              \bigr)^2
            \bigr] \\
            \delta^{-1}\beta^2 \kappa^2 &=  \E\bigl[\bigl(\alpha G + Z - \prox[\kappa\loss](\alpha G + Z)\bigr)^2\bigr]
            \\
            \delta^{-1} \nu\alpha\kappa &= \E\bigl[G\cdot \bigl(\alpha G + Z - \prox[\kappa\loss](\alpha G + Z)\bigr)\bigr]\\
            \kappa\beta &= \E\bigl[
              H \cdot \bigl(\prox[\nu^{-1}\reg](\nu^{-1}\beta H + X) - X\bigr)
            \bigr]
      \end{split} \quad \text{for unknown $(\alpha, \beta, \kappa, \nu)\in\R_{>0}^4$}. 
    \end{equation} 
\end{enumerate}
\end{theorem}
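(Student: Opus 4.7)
The plan is to extend the infinite-dimensional Hilbert space argument of \Cref{subsec:proof_outline_noreg} to the regularized setting by introducing a second Hilbert space that captures the contribution of the regularizer. Let $\mh_L$ denote the Hilbert space of squared-integrable functions of $(G,Z)$ and $\mh_R$ the Hilbert space of squared-integrable functions of $(H,X)$. In analogy with \eqref{noreg_ml_mg}, define $\ml(v)=\E[\loss(v+Z)-\loss(Z)]$ for $v\in\mh_L$, $\mathcal{R}(w)=\E[\reg(w+X)-\reg(X)]$ for $w\in\mh_R$, and a coupling functional $\mg\colon \mh_L\times \mh_R\to\R$ encoding, in terms of the four $L^2$-moments $\|v\|,\|w\|,\E[Gv],\E[Hw]$, a single scalar constraint obtained by eliminating $(\beta,\tau_g,\tau_h)$ from \eqref{eq:potential_reg} via Legendre transforms of the Moreau envelopes inside $\mathsf{L}$ and $\mathsf{R}$. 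The candidate infinite-dimensional problem is
\begin{align*}
\min_{v\in\mh_L,\, w\in\mh_R}\ \delta\,\ml(v)+\mathcal{R}(w)\quad\text{subject to}\quad \mg(v,w)\le 0.
\end{align*}
The first step is to prove the analog of \Cref{lm:equivalence_potential_noreg}: $\mathsf{M}(\alpha)$ equals the minimum of this problem with the extra constraint $\|w\|=\alpha$, so that $\min_{\alpha\ge 0}\mathsf{M}(\alpha)$ equals the full minimum and every minimizer $(v_*,w_*)$ produces $\alpha_*=\|w_*\|$.

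Once this equivalence is in hand, existence follows by coercivity, mirroring \Cref{lm:coercive_noreg} but now on both blocks. The constraint $\mg(v,w)\le 0$ combined with Cauchy--Schwarz forces $\|v\|\le C_\delta \E[|v|]$ and $\|w\|\le C_\delta' \E[|w|]$ up to a controllable boundary term. Combined with the Paley--Zygmund small-ball inequality and the coercivity of $\loss,\reg$ implied by $\argmin\loss=\argmin\reg=\{0\}$, this yields $\delta\ml(v)+\mathcal{R}(w)\ge C_1(\|v\|+\|w\|)-C_2$ on the feasible set, so minimizers exist by \cite[Proposition 11.15]{bauschke2017correction}. Uniqueness of $(v_*,w_*)$ is obtained by a strict convexity argument block-by-block, parallel to \Cref{lm:unique_noreg}, and uniqueness of $\alpha_*\ge 0$ follows from $\alpha_*=\|w_*\|$.

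The correspondence with the nonlinear system \eqref{eq:system_reg} is then obtained through KKT. A Lagrange multiplier $\mu_*\ge 0$ exists for the constraint $\mg(v_*,w_*)\le 0$; arguing as in \Cref{lm:lagrange_noreg}, $\PP(Z\ne 0)>0$ together with $\argmin\loss=\{0\}$ forces $\mu_*>0$ and hence the constraint is binding. The subgradient inclusion $-\mu_*\partial\mg(v_*,w_*)\cap\partial(\delta\ml(v_*)+\mathcal{R}(w_*))\ne\emptyset$ factors into two proximal identities $v_*+Z=\prox[\kappa_*\loss](\alpha_*G+Z)$ and $w_*+X=\prox[\nu_*^{-1}\reg](\nu_*^{-1}\beta_*H+X)$, where the positive scalars $(\alpha_*,\beta_*,\kappa_*,\nu_*)$ are read off from $(\|w_*\|,\E[Gv_*],\E[Hw_*],\mu_*)$; expanding the four moments defining $\mg$ together with the binding condition reproduces exactly the four equations of \eqref{eq:system_reg}. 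For non-degeneracy under $\delta<\delta_{\mathsf{perfect}}$, I would compute the directional derivative of $\delta\ml+\mathcal{R}$ at $(0,0)$ along feasible Hilbert-space directions and reformulate the result using the duality identities $\inf_{t>0}\E[\dist(G,t\partial\loss(Z))^2]$ and $\inf_{t>0}\E[\dist(H,t\partial\reg(X))^2]$; the existence of a feasible descent direction is exactly the rearranged inequality $\delta<\delta_{\mathsf{perfect}}$.

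The main obstacle will be identifying the precise form of the coupling constraint $\mg(v,w)\le 0$: its Fenchel conjugate must reproduce the three-variable $\sup_{\beta,\tau_h}\inf_{\tau_g}$ in \eqref{eq:potential_reg}, and the correct bookkeeping of the $\delta^{-1}$ factors, the role of $\tau_h$ in the regularizer block, and the interaction between $\beta$ and $\tau_g$ in the loss block demands a careful Legendre-transform calculation. A second, more technical obstacle is coercivity of $\mathcal{R}$ on the $w$-block when $\reg$ fails to be strictly convex (e.g., the $L^1$ penalty): this is exactly why \Cref{as:reg}(2) splits into two cases, and each subcase will require a tailored small-ball argument---either directly from differentiability of $\reg$, or from unboundedness of $X$ combined with the finite number of nondifferentiable points of $\reg$, ensuring that the Paley--Zygmund step still yields strict positivity of $\E[\reg(w+X)-\reg(X)]$ for $\|w\|$ large.
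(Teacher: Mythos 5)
Your high-level strategy matches the paper's: lift to a product Hilbert space, find a single convex constraint $\mg(v,w)\le 0$ encoding the min-max structure, prove coercivity, uniqueness, KKT, and non-degeneracy. The overall skeleton is sound. There are, however, two genuine issues.

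First, you misattribute the role of \Cref{as:reg}(2). You claim the split (differentiable $\reg$ versus finitely-many kinks plus unbounded $X$) is ``exactly why'' coercivity holds on the $w$-block. It is not: coercivity of $\ml$ over $\{\mg\le 0\}$ (the analog of \Cref{lm:coercive_noreg}) only uses $\argmin\reg=\{0\}$ (which gives $\reg(x)\ge a|x|-b$), the Lipschitz condition of $\reg$, and the small-ball bound derived from $\mg\le 0$ and Cauchy--Schwarz. No differentiability or unboundedness of $X$ enters there. \Cref{as:reg}(2) is actually needed in the KKT step: when $(v_*,w_*)$ is a minimizer with $\|w_*\|>0$, you must rule out the degenerate case $\mt(v_*,w_*)=0$ (equivalently $v_*=\|w_*\|G$), because otherwise $\mg$ fails to be Fr\'echet differentiable and the subgradient inclusion does not yield the two clean proximal identities you need. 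Ruling out $\mt=0$ requires a local perturbation argument that only works if $\reg$ is differentiable at $w_*+X$ on a positive-probability event, and \Cref{as:reg}(2) is precisely the hypothesis that guarantees this. If you present coercivity as the place where \Cref{as:reg}(2) is used, you will not notice the real gap in the KKT argument.

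Second, you correctly flag the form of the coupling constraint as the main obstacle, but the difficulty there is more specific than ``Legendre-transform bookkeeping.'' Eliminating $(\beta,\tau_g,\tau_h)$ naturally produces the constraint $\|\alpha G-v\|\le \E[Hw]/\sqrt\delta$ with $\alpha=\|w\|$, which is \emph{not} jointly convex in $(v,w)$ because $\|\|w\|G-v\|$ involves the nonlinear term $\|w\|\E[vG]$. The paper's resolution is to replace $(\|w\|-\E[vG])$ by its positive part $(\|w\|-\E[vG])_+$ inside the square-root, making the constraint a composition of convex maps and a nondecreasing convex outer function, hence convex; one must then show this relaxation does not change the optimal value, which requires an averaging/rotation lemma (Gaussian conditional expectation under rotation in the $G$-direction) showing that any feasible $v$ with $\E[vG]>\|w\|$ can be replaced by a $\tilde v$ with $\E[\tilde v G]=\|w\|$ without increasing the loss. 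Without this step, the infinite-dimensional problem you write down is not a convex program and the whole KKT/uniqueness machinery does not apply. Your outline gives no indication of how you would discover and handle this, and it is the piece that actually makes the Hilbert-space lift work in the regularized case.
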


The next proposition discusses the optimality of the threshold $\delta_{\mathsf{perfect}}$ in \eqref{eq:threshold_reg}. 
\begin{prop}\label{prop:phase_transition_reg}
    Suppose $\PP(Z\ne 0)>0$, $\PP(X\ne 0)>0$, and $\argmin_{x}\loss(x)=\argmin_x \reg(x)=\{0\}$. For any $\lambda>0$, we denote by $\hat{\bm{x}}_\lambda$ the regularized M-estimator  \eqref{eq:estimator_reg} computed by $\loss$ and regularizer $\lambda \reg$.  Then, as $n/p\to \delta$, we have
    \begin{equation}\label{eq:phase_transition}
      \PP\bigl(
      \text{There exists some $\lambda>0$ such that }  
      \hat{\bm{x}}_\lambda =\bm{x}_0\bigr) \to \begin{cases}
        1 & \delta > \delta_{\mathsf{perfect}}\\
        0 & \delta < \delta_{\mathsf{perfect}}
      \end{cases}
    \end{equation}
    where $\delta_{\mathsf{perfect}}\in (0, +\infty]$ is the threshold  defined by \eqref{eq:threshold_reg}. 
\end{prop}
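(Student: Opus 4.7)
The plan is to translate the exact-recovery event into a random convex feasibility problem via KKT, apply Gordon's Convex Gaussian Min-Max Theorem (CGMT) to obtain a tractable Auxiliary Optimization (AO) involving only low-dimensional Gaussians, and identify the threshold by asymptotic analysis of the AO. By the first-order conditions for the convex problem \eqref{eq:estimator_reg}, the event $\{\exists\lambda>0:\hat{\bm x}_\lambda=\bm x_0\}$ equals $\{\Phi_n=0\}$ with
\[\Phi_n:=\min_{\bm u\in\partial\loss(\bm z)}\dist(\bA^\top\bm u,K_R),\qquad K_R:=\{t\bm v:t>0,\ \bm v\in\partial\reg(\bm x_0)\}.\]
Moreau's decomposition plus the cone-projection identity $\|\Pi_C(\bm x)\|=\max_{\bm w\in C\cap B_1}\bm w^\top\bm x$ rewrites this as the convex-concave saddle $\Phi_n=\min_{\bm u\in\partial\loss(\bm z)}\max_{\bm w\in\mathcal D_\reg(\bm x_0)\cap B_1^p}\bm w^\top\bA^\top\bm u$ over compact convex sets, where $\mathcal D_\reg(\bm x_0):=\overline{K_R}^\circ$ denotes the descent cone of $\reg$ at $\bm x_0$.

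Gordon's CGMT applied to this saddle yields stochastic comparison between $\sqrt p\,\Phi_n$ and the AO
\[\tilde\Phi_n^{\mathrm{AO}}:=\min_{\bm u\in\partial\loss(\bm z)}\max_{\bm w\in\mathcal D_\reg\cap B_1^p}\|\bm u\|\bm g^\top\bm w+\|\bm w\|\bm h^\top\bm u,\qquad \bm g\sim \N(0,I_p),\ \bm h\sim \N(0,I_n),\]
with $\bm g\ind\bm h\ind(\bm z,\bm x_0)$. The inner max has the closed form $(M\|\bm u\|+\bm h^\top\bm u)_+$ with $M:=\|\Pi_{\mathcal D_\reg}(\bm g)\|=\dist(\bm g,K_R)$. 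Using $\|\bm u\|=\sup_{\|\bm\beta\|\le 1}\bm\beta^\top\bm u$, Sion-swapping against the compact box $\partial\loss(\bm z)$, Lagrange-dualising the $L^2$ ball constraint on $\bm\beta$, and the Moreau-envelope identity $\env_{\sigma_I}(y;\tau)=(y^2-\dist(y,\tau I)^2)/(2\tau)$ for support functions together convert the inequality $\min_{\bm u}(M\|\bm u\|+\bm h^\top\bm u)\le 0$ into
\[\inf_{\mu>0}\sum_{i=1}^n\dist\bigl(-h_i,\partial\loss(z_i)/(2\mu)\bigr)^2\le\|\bm h\|^2-M^2.\]

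Lipschitz Gaussian concentration and iid laws of large numbers in $(z_i,h_i,x_{0,j},g_j)$ give, in the proportional regime $n/p\to\delta$,
\[\frac{\|\bm h\|^2}{n}\to 1,\qquad \frac{M^2}{n}\xrightarrow{P}\frac{d_R}{\delta},\qquad \frac{1}{n}\inf_{\mu>0}\sum_i\dist\bigl(-h_i,\partial\loss(z_i)/(2\mu)\bigr)^2\xrightarrow{P}d_L,\]
where the second limit follows from the separable formula $\dist(\bm g,K_R)^2=\inf_{t\ge 0}\sum_j\dist(g_j,t\partial\reg(x_{0,j}))^2$, and the third uses the substitution $s=1/(2\mu)$ together with $h_i\stackrel{d}{=}-h_i$ to reduce to $\inf_{s>0}\E[\dist(G,s\partial\loss(Z))^2]=d_L$. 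The limiting condition $d_L\le 1-d_R/\delta$ is equivalent to $\delta\ge d_R/(1-d_L)_+=\delta_{\mathsf{perfect}}$, so $\PP(\tilde\Phi_n^{\mathrm{AO}}=0)\to 1$ when $\delta>\delta_{\mathsf{perfect}}$ and $\PP(\tilde\Phi_n^{\mathrm{AO}}>0)\to 1$ when $\delta<\delta_{\mathsf{perfect}}$. The two-sided nature of CGMT on convex saddle problems transfers this dichotomy from $\tilde\Phi_n^{\mathrm{AO}}$ to $\Phi_n$, proving \eqref{eq:phase_transition}.

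The hardest step is making the asymptotic analysis of the AO uniform in the dual parameter $\mu>0$: one needs uniform concentration of $\mu\mapsto n^{-1}\sum_i\dist(-h_i,\partial\loss(z_i)/(2\mu))^2$ on compact subintervals, plus coercivity/monotonicity arguments at the endpoints $\mu\to 0,\infty$ (using $\PP(Z\ne 0)>0$, which makes $\mu D(\mu)\to+\infty$ as $\mu\downarrow 0$), to justify commuting $\inf_\mu$ with the probabilistic limit. A secondary technical point is ruling out the boundary case $\lambda\downarrow 0$ in the original event, where $\bA^\top\bm u=\lambda\bm v$ degenerates to $\bA^\top\bm u=0$; this should have asymptotically vanishing probability via genericity of $\bA^\top$ combined with $\PP(Z\ne 0)>0$.
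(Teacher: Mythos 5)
Your proposal takes a genuinely different route from the paper.\ Both begin from the same KKT reduction, but the paper then restates exact recovery as the nontrivial two-cone intersection $\bm A^\top\cone(\partial \loss(\bm z))\cap\cone(\partial\reg(\bm x_0))\ne\{\bm 0_p\}$, invokes the approximate kinematic formula of Han (2022), Theorem~1.1 (generalizing Amelunxen--Lotz--McCoy--Tropp to two cones linked by a Gaussian map) for the sharp 0-1 transition conditional on $(\bm z,\bm x_0)$, and then applies \Cref{lm:limit_dist_cone} to identify the conditional statistical dimensions with their limits.\ You instead encode the event via $\Phi_n=0$ with $\Phi_n=\min_{\bm u\in\partial\loss(\bm z)}\dist(\bA^\top\bm u,\overline{K_R})$, rewrite $\Phi_n$ as a bilinear saddle over compact convex sets, and apply the CGMT.\ On this point I want to note the transfer from AO to PO does actually survive the boundary $t=0$: the comparison $\PP(\Phi_n>c)\le 2\PP(\tilde\Phi^{\mathrm{AO}}_n\ge c)$ holds for all $c>0$, and letting $c\downarrow 0$ with monotone convergence on both sides yields $\PP(\Phi_n>0)\le 2\PP(\tilde\Phi^{\mathrm{AO}}_n>0)$, while the failure direction $\PP(\Phi_n<c)\le 2\PP(\tilde\Phi^{\mathrm{AO}}_n\le c)$ bites at $c\asymp n$.\ So this transfer is not a gap.\ Your route is more elementary and self-contained than citing Han's theorem, but considerably more work, and the ``hardest step'' you flag (uniform control of the dual parameter $\mu$) is precisely the content that \Cref{lm:limit_dist_cone} packages via Gaussian Poincar\'e plus \cite[Lemma B.1]{thrampoulidis2018precise} in the paper's shorter route.

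The place where the proposal has a genuine gap is the item you wave away as a ``secondary technical point.''\ The event $\{\Phi_n=0\}$ is \emph{not} equivalent to $\{\exists\lambda>0:\hat{\bm x}_\lambda=\bm x_0\}$: since $\overline{K_R}=K_R\cup\{\bm 0_p\}$ (when $\bm 0\notin\partial\reg(\bm x_0)$), the zero level set of $\Phi_n$ additionally contains $\{\exists\,\bm u\in\partial\loss(\bm z):\bA^\top\bm u=\bm 0_p\}$, which is exactly the \emph{unregularized} perfect-recovery event of \Cref{prop:phase_transition_noreg}.\ That event is not a null set one can dismiss by ``genericity of $\bA^\top$''; it undergoes its own phase transition at the larger threshold $1/(1-\inf_{t>0}\E[\dist(G,t\partial\loss(Z))^2])_+\ge\delta_{\mathsf{perfect}}$.\ Below this larger threshold your argument does give $\PP(\text{regularized recovery})\to 1$ by exclusion, but above it you would need a separate argument that $\bm 0_p\in\bA^\top\partial\loss(\bm z)$ forces the convex body $\bA^\top\partial\loss(\bm z)$ to also meet the open cone $K_R$ with high probability --- a nontrivial geometric claim that you have not supplied.\ The paper's formulation sidesteps this entirely because the kinematic formula is stated for the ``nontrivial intersection'' event directly, which already excludes the degenerate $\bm 0_p$-only case.
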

\Cref{prop:phase_transition_reg} implies that when the oversampling ratio $\delta=\lim n/p$ is larger than the threshold $\delta_{\mathsf{perfect}}$ then there exists some regularization parameter $\lambda>0$ such that the perfect recovery holds with high probability.

\subsection{Construction of solutions from an infinite-dimensional optimization problem}\label{subsec:proof_outline_reg}
We use the same technique as in \Cref{subsec:proof_outline_noreg} to show \Cref{th:main_reg}; we derive a \textit{dual} optimization problem on a Hilbert space such that the analysis of the potential $\mathsf{M}(\alpha)$ in \eqref{eq:potential_reg} and the nonlinear system \eqref{eq:system_reg} can be reduced to the optimization problem over the Hilbert space. 

Now we consider the product of Hilbert spaces
\begin{align*}
  \mh := \mh_Z\times \mh_X, \quad 
  \left\{
  \begin{array}{l}
    \mh_Z :=  \{v: \R^2\to \R, \ \E[v(G, Z)^2] <+\infty\}\\
    \mh_X := \{w: \R^2\to \R, \ \E[w(H, X)^2] <+\infty\}
  \end{array}
  \right.
\end{align*}
where $\mh_{Z}$ and $\mh_X$ are the almost sure equivalent classes of squared integrable measurable functions of $(G, Z)$ and $(H, X)$, respectively. Note that $\mh$ is also a Hilbert space equipped with the inner product $\inner{(v,w)}{(v',w')}_{\mh}=\E[vv']+\E[ww']$. Now we define the functions $\ml$ and $\mg$ on this Hilbert space $\mh$ by
\begin{align*}
& \ml: \mh \to \R, \quad (v, w) \mapsto \delta \E[ \loss(v+Z)-\loss(Z)] + \E[\reg(w+X)-\reg(X)],\\
  &\mg: \mh \to \R, \quad (v, w) \mapsto  \mt(v,w) - \delta^{-1/2} \E[Hw], 
\end{align*}
where $\loss$ and $\reg$ are data-fitting loss and regularizer, and $\mt:\mh\to\R$ is the convex function
$$
\mt:\mh\to\R, \quad  (v,w) \mapsto \sqrt{\bigl(\E[w^2]^{1/2}-\E[vG]\bigr)_+^2 + \E[\proj(v)^2]}
$$
where $\proj(v) := v - \E[vG]G$ is the projection onto the orthogonal complement of the linear span of $G$. With the above notation, we claim that the minimization problem $\min_{\alpha\ge 0} \mathsf{M}(\alpha)$ admits a solution if and only if the infinite-dimensional convex optimization problem
\begin{align}\label{eq:const_optim_reg}
    \min_{v,w\in \mh} \ml(v,w) \quad \text{subject to}\quad  \mg(v, w)\le 0 
\end{align}
admits a minimizer $(v_*, w_*)$. 

\begin{lemma}\label{lm:equivalence_potential_reg}
  Let \Cref{as:reg} be satisfied. 
  \begin{enumerate}
    \item  For all $\alpha\ge 0$, the potential $\mathsf{M}(\alpha)$ defined in \eqref{eq:potential_reg} is equal to the objective value of 
    \begin{align*}
      \min_{v,w} \ml(v,w) \quad \text{subject to} \quad \begin{cases}
        \|w\| \le \alpha\\
        \|v-\alpha G\| \le \E[Hw]/\sqrt{\delta}. 
      \end{cases}
    \end{align*}
    \item The solution to this optimization problem also solves the optimization problem with the relaxed condition below:
    \begin{align*}
        \min_{v,w} \ml(v,w) \quad \text{subject to} \quad \begin{cases}
            \|w\| \le \alpha\\
           \{(\alpha-\E[vG])_+^2 + \|\proj(v)\|^2\}^{1/2} \le \E[Hw]/\sqrt{\delta}
          \end{cases} 
    \end{align*}
    \item $\mathsf{M}(\alpha)$ is minimized at $\alpha_*\ge 0$ if and only if there exists a minimizer $(v_*,w_*)$ to the constrained optimization problem \eqref{eq:const_optim_reg} with $\|w_*\|=\alpha_*$. 
\end{enumerate} 
\end{lemma}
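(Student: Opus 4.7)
The overall strategy is to rewrite $\mathsf{M}(\alpha)$ as a saddle-point value by expressing the Moreau envelopes as infima over the Hilbert spaces $\mh_Z$ and $\mh_X$, and then exchange the supremum and infimum to read off a constrained optimization problem. For part (1), writing $\env_\loss(\alpha G + Z; \tau_g/\beta) = \min_u \beta(\alpha G + Z - u)^2/(2\tau_g) + \loss(u)$ and substituting $u = v + Z$ for $v \in \mh_Z$ gives $\delta\,\mathsf{L}(\alpha, \tau_g/\beta) = \inf_{v}\{\tfrac{\delta\beta}{2\tau_g}\|v - \alpha G\|^2 + \delta\,\E[\loss(v+Z) - \loss(Z)]\}$, and analogously for $\mathsf{R}$ with the substitution $u = w + X$. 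After substitution, the terms $\pm \alpha\beta^2/(2\tau_h)$ cancel, and the explicit AM--GM minimum over $\tau_g$ collapses $\tfrac{\beta\tau_g}{2} + \tfrac{\delta\beta}{2\tau_g}\|v-\alpha G\|^2$ into $\beta\sqrt{\delta}\,\|v - \alpha G\|$. The resulting expression is linear (hence concave) in $(\beta, \tau_h)$ and convex in $(v, w)$, so after swapping supremum and infimum, $\sup_{\tau_h > 0}$ enforces $\|w\| \le \alpha$ and $\sup_{\beta > 0}$ enforces $\|v - \alpha G\| \le \E[Hw]/\sqrt{\delta}$, each contributing $0$ on the feasible set and $+\infty$ otherwise; this yields the stated constrained form.

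For part (2), the relaxed feasible set contains that of part (1), so the relaxed infimum is at most $\mathsf{M}(\alpha)$. For the reverse, I take a relaxed-feasible $(v, w)$ with $\E[vG] > \alpha$ (otherwise the two constraints coincide). The reflection $\tilde v(G, Z) := v(-G, Z)$ satisfies $\|\tilde v\| = \|v\|$, $\E[\tilde v G] = -\E[vG]$, $\|\proj(\tilde v)\| = \|\proj(v)\|$, and $\ml(\tilde v, w) = \ml(v, w)$ by the symmetry of $G$ and independence of $G$ and $Z$. Choosing $\lambda = \tfrac{1}{2}(1 + \alpha/\E[vG]) \in (1/2, 1)$ so that the convex combination $v_\lambda := \lambda v + (1 - \lambda)\tilde v$ has $\E[v_\lambda G] = \alpha$, convexity of $\ml$ in its first argument gives $\ml(v_\lambda, w) \le \ml(v, w)$, and linearity of $\proj$ combined with the triangle inequality gives $\|\proj(v_\lambda)\| \le \|\proj(v)\| \le \E[Hw]/\sqrt{\delta}$ (using $(\alpha - \E[vG])_+ = 0$ in the relaxed constraint). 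Hence $\|v_\lambda - \alpha G\| = \|\proj(v_\lambda)\| \le \E[Hw]/\sqrt{\delta}$, so $(v_\lambda, w)$ is feasible for part (1) with no greater objective, and the two problems share the same value and admit common optimizers.

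Part (3) is then obtained by swapping $\min_{\alpha \ge 0}$ with the inner minimization after substituting the relaxed form from part (2). For any $(v, w)$ with $\mg(v, w) \le 0$, a direct case analysis on the sign of $\|w\| - \E[vG]$ and $\alpha - \E[vG]$ shows that the set of $\alpha \ge 0$ making $(v, w)$ feasible in the relaxed problem is a nonempty interval whose left endpoint is $\|w\|$; conversely, any $(v, w)$ feasible at some $\alpha$ satisfies $\mg(v, w) \le 0$. This gives the identity
\[\min_{\alpha \ge 0} \mathsf{M}(\alpha) = \min_{(v, w):\, \mg(v, w)\le 0} \ml(v, w),\]
so any minimizer $(v_*, w_*)$ of \eqref{eq:const_optim_reg} yields $\alpha_* := \|w_*\| \in \argmin \mathsf{M}$ (taking $\alpha_*$ at the left endpoint of the feasible interval). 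Combining this identity with the uniqueness of the minimizer of \eqref{eq:const_optim_reg} (a regularized analogue of \Cref{lm:unique_noreg}, to be proved separately) forces the set of attainable $\|w_*\|$ to coincide with $\argmin \mathsf{M}$, yielding the reverse implication and completing the equivalence.

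The main obstacle is the supremum--infimum exchange in part (1): both $(\beta, \tau_h)$ and $(v, w)$ range over unbounded sets, so Sion's minimax theorem does not apply directly. I plan to first restrict $(\beta, \tau_h) \in [\varepsilon, M]^2$ and apply Sion's theorem on this compact convex subset, using the convex--concave structure inherited from joint convexity of the Moreau envelope and weak lower-semicontinuity of $\ml$, and then pass to the limits $\varepsilon \to 0$ and $M \to \infty$, using the coercivity estimates of the companion lemmas (analogues of \Cref{lm:coercive_noreg}) to ensure no value is lost at the limits.
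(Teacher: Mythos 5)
Your overall strategy — lift the Moreau envelopes to infima over the Hilbert spaces, eliminate $\tau_g$, and identify $\mathsf{M}(\alpha)$ as a saddle value — does match the spirit of the paper, and your treatment of part (2) is correct. But there is a genuine gap in part (1), which is precisely the step where the paper's proof does its hardest work.

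For part (2), your reflection argument ($\tilde v(G,Z):=v(-G,Z)$, then convex combination) is correct and in fact more elementary than the paper's construction. The paper instead uses a Gaussian interpolation $v_t(G,Z) := \E[v(G\cos t + \tilde G\sin t, Z)\mid G,Z]$ (Lemma \ref{lm:vG_decrease}) and chooses $t=\arccos(\|w\|/\E[vG])$; both arguments exploit the symmetry of $G$ and the conditional Jensen inequality, but your version only needs the sign flip and convexity of $\ml$. One small difference: the paper reduces $\E[vG]$ to $\|w\|$, whereas you reduce it to $\alpha$. Both work, but with your normalization you also need to observe (as you do implicitly) that any relaxed-feasible $(v,w)$ with $\E[vG]\le\alpha$ is already feasible for the original problem.

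For part (1), however, your minimax-exchange plan is not sound as stated. The inequality $\mathsf{M}(\alpha)\le\check{\mathsf{M}}(\alpha)$ is weak duality and easy; the genuine content is $\mathsf{M}(\alpha)\ge\check{\mathsf{M}}(\alpha)$. Your plan is to truncate $(\beta,\tau_h)\in[\varepsilon,M]^2$, apply Sion, and pass to the limit $\varepsilon\to 0$, $M\to\infty$ ``using the coercivity estimates of the companion lemmas.'' Two problems. First, the cited coercivity (the analogue of Lemma \ref{lm:coercive_noreg}, i.e.\ Lemma \ref{lm:coercive_reg_detail}) concerns the behavior of $\ml$ restricted to the cone $\{\mg(v,w)\le 0\}$; it says nothing about the penalized objective $g_{[\varepsilon,M]}(v,w)=\sup_{(\beta,\tau_h)\in[\varepsilon,M]^2}f(\beta,\tau_h,v,w)$, which is what you need to control to show $\lim\inf_{v,w}g_{[\varepsilon,M]}=\inf_{v,w}g_\infty$. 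Second, and more seriously, equicoercivity of $g_{[\varepsilon,M]}$ actually \emph{fails} in the $\varepsilon\to 0$ regime: the penalty in the $v$-direction, $\sup_{\beta\in[\varepsilon,M]}\beta(\sqrt\delta\|v-\alpha G\|-\E[Hw])$, is only of order $\varepsilon\sqrt\delta\|v\|$ when the constraint is violated mildly, which is dominated by the downward slope $\ml(v,w)\ge -\delta\|\loss\|_{\lip}\|v\|-\ldots$ once $\varepsilon\sqrt\delta<\delta\|\loss\|_{\lip}$. So the level sets of $g_{[\varepsilon,M]}$ are unbounded for small $\varepsilon$ and the limit-passage plan does not go through as described. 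The paper instead bypasses the compactification entirely: it applies Lagrangian duality (Lemma \ref{lm:Lagrange}) to the bounded constrained problem \eqref{eq:M1_alpha} to obtain $\mu_1\ge 0$, $\mu_2>0$, sets $\beta_*=\mu_2/\sqrt\delta$, and then — this is the nontrivial step — \emph{constructs} a finite $\tau_h^\star>0$ with $\|w(\tau_h^\star)\|=\alpha$ by a concavity argument on $\mathsf{N}(\tau_h)$, which in turn requires showing $\tau_h\|w(\tau_h)\|\to\|\prox[\reg_0](\beta_*H)\|>0$ as $\tau_h\to 0^+$, using the recession function of $\reg$ and the unboundedness of $H\sim N(0,1)$. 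Your sketch does not produce, or replace, this multiplier; you would either need to fill in a strong-duality argument that exhibits finite multipliers $(\beta,\tau_h)$ (and justify passing to the boundary when $\mu_1=0$, where $\tau_h\to 0$), or reproduce the $\tau_h^\star$ argument.

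Your account of part (3) is also looser than the paper's: the forward implication (a minimizer $(v_*,w_*)$ of \eqref{eq:const_optim_reg} yields $\alpha_*=\|w_*\|\in\argmin\mathsf{M}$) is fine, but the reverse direction ``forces the set of attainable $\|w_*\|$ to coincide with $\argmin\mathsf{M}$ by uniqueness'' is not a proof; one must rule out a minimizer of $\mathsf{M}$ at some $\alpha_*>\|w_*\|$. The paper does this cleanly by observing that $\mathcal V(\alpha,w)$ is decreasing in $\alpha$, so $\alpha\mapsto\min_{v\in\mathcal V(\alpha,w)}\ml(v,w)$ is nondecreasing on $[\|w\|,\infty)$ for each fixed $w$, and the optimal $\alpha$ is necessarily $\|w\|$.
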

See \Cref{proof:lm:equivalence_potential_reg} for the proof. Note in passing that \Cref{lm:equivalence_potential_reg}-(1) implies that 
the map $[0, +\infty)\ni \alpha \mapsto \mathsf{M}(\alpha)$ is finite valued since the set of $(v,w)$ satisfying the condition in (1) is closed, convex, and bounded.  
\Cref{lm:equivalence_potential_reg}-(3) immediately follows from \Cref{lm:equivalence_potential_reg}-(2) since for fixed $w\in\mh_X$, the subset 
$\mathcal{V}_\alpha \subset \mh_Z$ indexed by $\alpha\ge 0$ that consists of $v$ satisfying the second condition in (2), i.e., 
$$
\mathcal{V}_\alpha
:= \Bigl\{
    v\in \mh_Z:  \bigl\{(\alpha-\E[vG])_+^2 + \|\proj(v)\|^2\bigr\}^{1/2} \le \E[Hw]/\sqrt{\delta}
\Bigr\} \quad \text{for $\alpha\ge 0$}, 
$$ 
is decreasing in $\alpha\ge 0$ in the sense of $\mathcal{V}_\alpha \supset  \mathcal{V}_{\alpha'}$ for all $\alpha\le \alpha'$. 

The infinite-dimensional optimization problem \eqref{eq:const_optim_reg} can also be related to the nonlinear system of equations \eqref{eq:system_reg} as well. The key is again the existence of Lagrange multiplier; $(v_*,w_*)\in \mh$ solves the constrained optimization problem \eqref{eq:const_optim_reg} if and only if there exists $\mu_*\ge0$ such that
\begin{align}\label{eq:KKT_reg_main}
    -\mu_* \partial\mg (v_*, w_*)\cap \partial \ml(v_*, w_*)\ne \emptyset, \quad  \mu_* \mg(v_*, w_*)=0 \quad  \text{and} \quad \mg(v_*, w_*)\le 0.     
\end{align} 
Furthermore, we will argue that $\mu_*$ is always strictly positive so that the constraint is binding, i.e., $\mg(v_*, w_*)=0$ (see \Cref{lm:lagrange_reg}). This existence of the positive Lagrange multiplier $\mu_*>0$ and the binding condition $\mg(v_*, w_*)=0$ lead to an explicit construction of the solution to the nonlinear system of equations \eqref{eq:system_reg}. 
\begin{lemma}\label{lm:equivalence_system_reg}
    Let \Cref{as:reg} be fulfilled. Then
    \begin{enumerate}
        \item If $(v_*, w_*)$ is a solution to \eqref{eq:const_optim_reg} with $w_*\ne 0$, then we must have $\mt(v_*, w_*)>0$ and $\E[w_*^2]< \E[v_*G]$. For any Lagrange multiplier $\mu_*>0$ satisfying \eqref{eq:KKT_reg_main}, the pair of positive scalars $(\alpha_*,\beta_*,\kappa_*,\nu_*)$ defined by 
        \begin{align*}
          \alpha_* = \E[w_*^2]^{1/2}, \quad \beta_*= \frac{\mu_*}{\sqrt{\delta}},\quad \kappa_* = \frac{\delta}{\mu_*}\mt(v_*, w_*), 
          \quad \nu_* = \mu_* \frac{1- \E[w_*^2]^{-1/2}\E[v_* G]}{\mt(v_*, w_*)},
        \end{align*}
        solves  the nonlinear system of equations \eqref{eq:system_reg}. 
        \item If $(\alpha_*, \beta_*, \kappa_*, \nu_*)\in\R_{>0}^4$ is a solution to the nonlinear system of equations \eqref{eq:system_reg}, then $(v_*, w_*)\in\mh$ defined by
        \begin{align*}
            v_* = \prox[\kappa_*\loss](\alpha_* G+Z)-Z, \quad w_* = \prox[\nu_*^{-1} \reg](\nu_*^{-1}\beta_* H + X)-X
        \end{align*}
        solves the constrained optimization problems \eqref{eq:const_optim_reg} with $\|w_*\|=\alpha_{*}>0$ and Lagrange multiplier $\mu_{*}=\beta_* \sqrt{\delta}$.  
    \end{enumerate}
\end{lemma}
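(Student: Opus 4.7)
The plan is to read the nonlinear system \eqref{eq:system_reg} off the KKT conditions of the infinite-dimensional program \eqref{eq:const_optim_reg} via the proximal characterization of subgradients. For Part (1), I first invoke the positivity of the Lagrange multiplier $\mu_*>0$ guaranteed by \Cref{lm:lagrange_reg}, so complementary slackness yields the binding identity $\mt(v_*,w_*)=\delta^{-1/2}\E[Hw_*]$. The non-degeneracy conditions $\mt(v_*,w_*)>0$ and $\|w_*\|>\E[v_*G]$ (the latter is needed to ensure $\nu_*>0$ in the formulas below) are obtained by contradiction: in the degenerate branch $\|w_*\|\le \E[v_*G]$ one has $0\in\partial_w\mt$, so the $w$-component of the KKT inclusion forces $\mu_*\delta^{-1/2}H\in\partial\reg(w_*+X)$; together with $w_*\ne 0$ and \Cref{as:reg}, this produces a contradiction via a direct variational argument perturbing $w_*$ in the direction of $H$.

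With non-degeneracy in hand, the subdifferentials of $\mt$ at $(v_*,w_*)$ are the smooth expressions
\[
\partial_v\mt=\tfrac{1}{\mt}\bigl(v_*-\|w_*\|G\bigr),\qquad \partial_w\mt=\tfrac{\|w_*\|-\E[v_*G]}{\mt\,\|w_*\|}\,w_*.
\]
Writing the stationarity $0\in\partial\ml(v_*,w_*)+\mu_*\partial\mg(v_*,w_*)$ componentwise, the $v$-component reads $\mu_*(\|w_*\|G-v_*)/(\delta\mt)\in\partial\loss(v_*+Z)$, which is exactly the proximal identity $v_*=\prox[\kappa_*\loss](\alpha_*G+Z)-Z$ with $\alpha_*=\|w_*\|$ and $\kappa_*=\delta\mt/\mu_*$. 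The $w$-component similarly produces $w_*=\prox[\nu_*^{-1}\reg](\nu_*^{-1}\beta_*H+X)-X$ with $\beta_*=\mu_*/\sqrt{\delta}$ and $\nu_*=\mu_*(\|w_*\|-\E[v_*G])/(\mt\,\|w_*\|)$, matching the formulas in the statement. I then verify the four equations of \eqref{eq:system_reg} by direct computation: equation~1 is $\|w_*\|^2=\alpha_*^2$; equation~2 uses the Pythagorean identity $\|\alpha_*G-v_*\|^2=(\|w_*\|-\E[v_*G])^2+\|\proj(v_*)\|^2=\mt^2$ together with $\delta^{-1}\beta_*^2\kappa_*^2=\mt^2$; equation~3 uses $\E[G(\alpha_*G-v_*)]=\|w_*\|-\E[v_*G]$ matched against $\delta^{-1}\nu_*\alpha_*\kappa_*=\|w_*\|-\E[v_*G]$; and equation~4 uses the binding identity $\sqrt{\delta}\,\mt=\E[Hw_*]$ together with $\kappa_*\beta_*=\sqrt{\delta}\,\mt$.

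For Part (2), I reverse the construction. Given $(\alpha_*,\beta_*,\kappa_*,\nu_*)\in\R_{>0}^4$ solving \eqref{eq:system_reg}, defining $v_*,w_*$ by the stated proximal formulas yields the distinguished subgradients $\zeta_L:=(\alpha_*G-v_*)/\kappa_*\in\partial\loss(v_*+Z)$ and $\zeta_R:=\beta_*H-\nu_*w_*\in\partial\reg(w_*+X)$ by the standard proximal characterization. Equations~1 and~4 give $\|w_*\|=\alpha_*$ and $\E[Hw_*]=\kappa_*\beta_*$, while equations~2 and~3 combine to give $\|\alpha_*G-v_*\|^2=(\kappa_*\beta_*/\sqrt{\delta})^2=\mt(v_*,w_*)^2$ and $\|w_*\|>\E[v_*G]$, producing the binding identity $\mg(v_*,w_*)=0$. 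Setting $\mu_*:=\beta_*\sqrt{\delta}>0$ and matching coefficients shows $(\delta\zeta_L,\zeta_R)=-\mu_*(\partial_v\mt,\partial_w\mt-\delta^{-1/2}H)$, certifying the KKT inclusion \eqref{eq:KKT_reg_main} and hence the optimality of $(v_*,w_*)$ by convexity. The main obstacle is the subdifferential calculus for the piecewise-smooth function $\mt$, whose non-smooth strata $\{\|w\|=\E[vG]\}$ and $\{\proj(v)=0\}$ must be excluded at the optimum to justify the smooth formulas above; this is exactly where the non-degeneracy arguments using $w_*\ne 0$ and $\mu_*>0$ enter.
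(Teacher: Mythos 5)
Your overall strategy---read off the nonlinear system from the KKT conditions of the infinite-dimensional problem via the proximal characterization of subgradients---matches the paper's, and your algebraic verification of the four equations as well as the reverse direction (Part~2) essentially reproduce \Cref{lm:optimization_to_system_reg} and \Cref{lm:system_to_optimization_reg}. You also correctly identify the crux: the non-smooth strata of $\mt$ must be ruled out at the optimum.

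The gap is in how you rule them out. You claim that in the degenerate branch $\|w_*\|\le\E[v_*G]$, the KKT inclusion forces $\mu_*\delta^{-1/2}H\in\partial\reg(w_*+X)$ because $0\in\partial_w\mt$. That forcing only follows when the $w$-projection of $\partial\mg(v_*,w_*)$ is the singleton $\{-\delta^{-1/2}H\}$, which indeed holds in the strict interior $\|w_*\|<\E[v_*G]$ and at the boundary $\|w_*\|=\E[v_*G]$ provided $\mt(v_*,w_*)>0$, but \emph{not} when $\|w_*\|=\E[v_*G]$ and $\mt(v_*,w_*)=0$ (equivalently $\proj(v_*)=0$, $v_*=\|w_*\|G$). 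There $\partial_w\mt(v_*,w_*)=\{t\,w_*/\|w_*\|: t\in[0,1]\}$ is an interval, so the KKT condition may pick a subgradient other than $-\delta^{-1/2}H$ and the bounded-subgradient contradiction does not apply. Your fallback---``a direct variational argument perturbing $w_*$ in the direction of $H$''---cannot close this case either: a perturbation of $w_*$ alone yields a vanishing first-order change in $\ml$, since once $w_*$ is identified as a function of $X$ alone one has $\E[H\reg'(w_*+X)]=0$ by independence under \Cref{as:reg}(2). Excluding this exact case is the entire content of the paper's \Cref{lm:t_positive}, which proceeds in two genuinely necessary steps: Step~1 restricts the optimization to $\mathcal{H}_X^\perp$ and applies first-order optimality to establish $w_*=\prox[\xi_*^{-1}\reg](X)-X$ with $\xi_*>0$ (so $w_*$ is $\sigma(X)$-measurable and bounded); Step~2 then constructs a \emph{joint} perturbation $(v_t,w_t)$ in which the proximal perturbation of $v_*$ supplies the strictly negative first-order decrease $-t\,\delta\|\proj(\loss'(\alpha_*G+Z))\|^2$ in $\ml$, while the $w$-perturbation in the $H$-direction is used only to restore feasibility of the constraint $\mg\le 0$. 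Neither the identification of $w_*$ nor the essential $v$-perturbation appears in your sketch; without them the case $v_*=\|w_*\|G$ is not excluded, so the non-degeneracy claim $\E[v_*G]<\|w_*\|$ (hence $\nu_*>0$) is not established and Part~(1) is incomplete.
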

See \Cref{proof:lm:equivalence_system_reg} for the proof. 
\Cref{lm:equivalence_system_reg} implies that the nonlinear system of equations \eqref{eq:system_reg} admits a unique positive solution $(\alpha_*,\beta_*,\kappa_*,\nu_*)$ if and only if the infinite-dimensional optimization problem \eqref{eq:const_optim_reg}  admits a unique solution $(w_*, v_*)\in\mh$ with $w_*\ne 0$, and the Lagrange multiplier $\mu_*>0$ satisfying the KKT condition \eqref{eq:KKT_reg_main} is unique. 

The lemma below gives the uniqueness. 
\begin{lemma}[Uniqueness]\label{lm:unique_reg}
   Suppose $(v_*, w_*)$ solves the optimization problem $\min_{\mg(v,w)\le 0}\ml(v,w)$. Then, 
    \begin{enumerate}
        \item $w_{*}$ is unique. 
        \item If $w_*$ is non-degenerate $w_*\ne 0$, then $v_*$ and the Lagrange multiplier $\mu_*>0$ satisfying the KKT condition \eqref{eq:KKT_reg_main} are also unique. 
    \end{enumerate}
\end{lemma}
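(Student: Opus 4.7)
My approach is to combine a midpoint argument with the KKT conditions, exploiting the fact that strict convexity absent from the objective $\ml$ (since $\loss,\reg$ need not be strictly convex) can be extracted from the constraint function $\mg$, whose $\mt$-piece contains the norm $\|w\|$ and the projection $\proj(v)$.

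For step (1), I would take two minimizers $(v_1, w_1)$ and $(v_2, w_2)$ and consider their midpoint $(v_m, w_m)$. By convexity of $\ml$ and of $\{\mg \le 0\}$, the midpoint is also a minimizer, and \Cref{lm:lagrange_reg} (which supplies a strictly positive Lagrange multiplier) ensures the constraint is binding at every minimizer. Separability $\ml = \delta\ml_1 + \ml_2$ then forces $\ml_1$ and $\ml_2$ to each be affine along their respective midpoint segments, while the binding condition together with linearity of $w \mapsto \delta^{-1/2}\E[Hw]$ forces $\mt$ to be affine along $[(v_1,w_1),(v_2,w_2)]$. The main obstacle in this step is extracting $w_1 = w_2$, because $\ml_2$ can fail to be strictly convex (e.g., L1 regularizer). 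My plan is to square the affineness of $\mt$: since $\mt^2 = (\|w\|-\E[vG])_+^2 + \|\proj(v)\|^2$ is the square of an affine function and $\E[v_tG]$ is already affine in $t$, strict convexity of $w\mapsto\|w\|^2$ and of $v\mapsto\|\proj(v)\|^2$ propagates through and forces $\|w_t\|$ and $\|\proj(v_t)\|$ each to be affine in $t$. This pins $w_1, w_2$ (respectively $\proj(v_1), \proj(v_2)$) to a common ray from the origin. Combining this with the fact that $\|w_*\|$ coincides across minimizers---via \Cref{lm:equivalence_potential_reg}-(3) and uniqueness of the minimizer of $\mathsf{M}$ (itself obtained by the same midpoint reasoning, together with strict convexity of $\alpha\mapsto\alpha^2$)---yields $w_1 = w_2$ through the equality case of Cauchy--Schwarz.

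For step (2), once $w_* \ne 0$ is uniquely determined, the remaining problem is to minimize $\ml_1(v)$ over $v \in \mh_Z$ subject to a constraint that depends only on $\E[vG]$ and $\|\proj(v)\|$, which is structurally the unregularized problem of \Cref{sec:noreg}; the uniqueness of $v_*$ then follows from the analogue of \Cref{lm:unique_noreg}-(1) applied to this sub-problem (by the same midpoint-plus-strict-convexity-of-$\|\proj(\cdot)\|^2$ argument, using that the constraint is binding in the regime $\|w_*\| > \E[v_*G]$ forced by $w_* \ne 0$). Finally, for the uniqueness of $\mu_* > 0$: at the now-unique $(v_*, w_*)$ with $w_*\ne 0$, the inclusion $-\mu_*\,\partial_w \mg(v_*, w_*) \cap \partial_w \ml_2(w_*) \ne \emptyset$ identifies $\mu_*$ as the scalar needed to match the nontrivial linear piece $-\delta^{-1/2}H$ appearing in $\partial_w \mg$ to a subgradient of $\ml_2$ at $w_*$; two different values of $\mu_*$ would give two subgradients of $\ml_2$ at $w_*$ whose difference is a nonzero multiple of $(\delta^{-1/2}H - c_* w_*/\|w_*\|)$, which cannot be a subgradient difference unless it is zero. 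Equivalently one may read off $\mu_* = \beta_*\sqrt{\delta}$ from the construction in \Cref{lm:equivalence_system_reg}-(2), yielding a unique value.
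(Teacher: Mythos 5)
Your midpoint argument for establishing proportionality is valid and is a genuinely different route from the paper's: you exploit affineness of $\mt$ along the segment (which follows from the binding constraint and linearity of $\E[Hw]$) plus strict monotonicity of $(a,b)\mapsto\sqrt{a^2+b^2}$ to force $\|w_t\|$ and $\|\proj(v_t)\|$ to be affine in $t$, hence collinear. The paper instead derives the lower bound $\ml(v,w)-\ml(v_*,w_*)\ge -\beta_*\mg(v,w)$ pointwise and tracks equality cases in a chain of Cauchy--Schwarz inequalities (\Cref{lm:proportional_reg}). Both routes reach the same conclusion: $(v_{**},w_{**})=t(v_*,w_*)$ for some $t>0$.

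However, the key step of your proof contains a genuine circularity. To conclude $w_1=w_2$ from collinearity, you invoke ``uniqueness of the minimizer of $\mathsf{M}$ (itself obtained by the same midpoint reasoning, together with strict convexity of $\alpha\mapsto\alpha^2$).'' But uniqueness of the minimizer of $\mathsf{M}$ is exactly what this lemma, via \Cref{lm:equivalence_potential_reg}, is supposed to deliver: it is stated right after the lemma as a corollary, not a prerequisite. Moreover, $\mathsf{M}(\alpha)$ is a convex function of $\alpha$ with no built-in strict convexity, and there is no $\alpha^2$ term anywhere in its definition --- the $\alpha$-variable of \eqref{eq:potential_reg} carries only convexity, not strict convexity, so there is no free strictness to extract. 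The proportionality constant $t$ cannot be pinned down by any convexity argument alone. What the paper actually does is pass to the proximal-operator representation $v_*=\prox[\kappa_*\loss](\alpha_*G+Z)-Z$ from \Cref{lm:optimization_to_system_reg} and then run the \emph{distributional} argument of \Cref{lm:unique_v_*}: under $\PP(Z\ne 0)>0$, if $t\ne1$ the event $\{\alpha_*tG+Z<0\}\cap\{\alpha_*G+Z>0\}$ has positive probability and forces a sign contradiction in the prox identity \eqref{eq:partial_prox_identity}. This is the missing mechanism, and without it your proof cannot close.

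Two smaller points. For uniqueness of $v_*$ in part (2), the paper does not need to re-run an ``unregularized sub-problem'' argument as you propose: once $w_*=w_{**}$ is known and the two minimizers are proportional with factor $t$, taking norms of the $w$-components gives $t=1$ immediately, so $v_*=v_{**}$ falls out of the proportionality. For uniqueness of $\mu_*$, your argument via the $w$-side subgradient is incomplete: the difference $(\mu_{**}-\mu_*)\nabla_w\mg(v_*,w_*)$ is \emph{bounded} (since each $-\mu\,\nabla_w\mg$ lies in $\partial\reg(w_*+X)$ which is bounded by $\|\reg\|_{\lip}$), so boundedness alone does not rule it out, and whether it can be a subgradient difference depends on the nondifferentiability structure of $\reg$ along $w_*+X$. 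The paper sidesteps this by working on the $v$-side: both multipliers give $v_*=\prox[\kappa_*\loss](\alpha_*G+Z)-Z=\prox[\kappa_{**}\loss](\alpha_*G+Z)-Z$ with $\kappa_*=\delta\mt(v_*,w_*)/\mu_*$, and the tail argument of \Cref{lm:unique_lagrange_noreg} (monotonicity of $\partial\loss$ and the fact that $\alpha_*G+Z$ has unbounded support) forces $\kappa_*=\kappa_{**}$, hence $\mu_*=\mu_{**}$.
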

See \Cref{proof:lm:unique_reg} for the proof. 
From  \Cref{lm:unique_reg}-(1) and \Cref{lm:equivalence_potential_reg}, we see that the minimizer of $\min_{\alpha\ge 0}\mathsf{M}(\alpha)$ is unique if it exists. Furthermore, \Cref{lm:unique_reg}-(2) and \Cref{lm:equivalence_system_reg} imply that the solution to the nonlinear system of equations \eqref{eq:system_reg} is also unique if $w_*$ is always non-degenerate $w_*\ne 0$. 
The next lemma gives a sufficient condition for $w_*\ne 0$. 
\begin{lemma}[Non-degeneracy]\label{lm:nonzero_reg}
    Let \Cref{as:reg} be satisfied and let $\delta_{\mathsf{perfect}}$ be the threshold defined by \eqref{eq:threshold_reg}. Then, if $\delta < \delta_{\mathsf{perfect}}$, we must have $w_*\ne 0$. 
\end{lemma}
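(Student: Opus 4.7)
The plan follows the structure of the unregularized non-degeneracy lemma (\Cref{lm:nonzero_noreg}): argue by contradiction. Suppose $(v_*, w_*)$ minimizes $\min_{\mg(v, w) \le 0}\ml(v, w)$ with $w_* = 0$. Since $\mt \ge 0$ always, the feasibility $\mg(v_*, 0) = \mt(v_*, 0) \le 0$ forces $\sqrt{(-\E[v_* G])_+^2 + \|\proj(v_*)\|^2} = 0$, so $\E[v_* G] \ge 0$ and $\proj(v_*) = 0$, i.e., $v_* = c_* G$ for some $c_* \ge 0$. The subcase $c_* > 0$ is quick to rule out: locally (for $\|w\|$ small and $\E[vG]$ near $c_* > 0$) the term $(\|w\| - \E[vG])_+$ in $\mt$ vanishes, so $\mt(v, w) = \|\proj(v)\|$ in a neighborhood of $(c_* G, 0)$, and $\partial_w \mg(v_*, w_*)$ is the singleton $\{-\delta^{-1/2} H\}$. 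The KKT condition with positive Lagrange multiplier $\mu_* > 0$ (provided by \Cref{lm:lagrange_reg}) then forces the almost-sure inclusion $\mu_* \delta^{-1/2} H \in \partial \reg(X)$, which is impossible because $\partial \reg(X)$ is essentially bounded (Lipschitz $\reg$) while $H \sim N(0, 1)$ is unbounded.

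The main case is $(v_*, w_*) = (0, 0)$. By positive homogeneity of $\mt$ and $\mg$, optimality at the origin is equivalent to
\[
\inf_{(\tilde v, \tilde w):\ \mt(\tilde v, \tilde w) \le \delta^{-1/2}\E[H\tilde w]} \bigl[\delta \E[\loss'(Z; \tilde v)] + \E[\reg'(X; \tilde w)]\bigr] \ge 0.
\]
Expanding $\loss'(Z; \tilde v) = \sup_{s \in \partial\loss(Z)} s\tilde v$ and $\reg'(X; \tilde w) = \sup_{t\in\partial\reg(X)} t\tilde w$, I would apply Sion's minimax theorem on the bounded set $\|(\tilde v, \tilde w)\|\le 1$ to exchange $\inf$ and $\sup$. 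The inner minimization of a linear functional over the intersection of the constraint cone $K$ with the unit ball evaluates, via Moreau's orthogonal decomposition, to $-\|P_K(-\delta s, -t)\|$. The problem reduces to showing
\[
\inf_{s(G, Z) \in \partial\loss(Z),\, t(H, X) \in \partial\reg(X)} \dist\bigl((-\delta s, -t),\, K^\circ\bigr) > 0.
\]

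To execute this, I would first compute $\partial \mt(0, 0)$ via the decomposition $\mt = \sqrt{\phi_1^2 + \phi_2^2}$ with $\phi_1(v, w) = (\|w\| - \E[vG])_+$ and $\phi_2(v) = \|\proj(v)\|$, then describe $K^\circ = \mathbb{R}_{\ge 0}\cdot \partial \mg(0, 0)$ using $\partial \mg(0, 0) = \partial \mt(0, 0) - (0, \delta^{-1/2} H)$. Next, evaluate the distance at the canonical selections $s_L^* = \mathrm{proj}_{\partial \loss(Z)}(G/t_L^*)$ and $\sigma_R^* = \mathrm{proj}_{\partial \reg(X)}(H/t_R^*)$, where $t_L^*, t_R^*$ are the optimizers in the definitions of $D_L^* := \inf_t \E[\dist(G, t\partial\loss(Z))^2]$ and $D_R^* := \inf_t \E[\dist(H, t\partial\reg(X))^2]$. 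The envelope-theorem identities at the optima, namely $\E[G s_L^*] = (1 - D_L^*)/t_L^*$, $\E[(s_L^*)^2] = (1 - D_L^*)/(t_L^*)^2$, and their $\reg$-analogues for $\sigma_R^*$, should reduce the infimum (up to a positive multiplicative constant) to a quantity proportional to $D_R^* - \delta(1 - D_L^*)_+$, whose strict positivity is exactly the hypothesis $\delta < \delta_{\mathsf{perfect}}$.

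The main obstacle is this explicit projection calculation. The coupling of the $v$- and $w$-spaces through $(\|w\| - \E[vG])_+$ prevents $K$ from being a product cone, so $K^\circ$ has non-trivial geometry requiring case analysis depending on whether the projection of $(-\delta s, -t)$ onto $K^\circ$ activates the $\phi_1$ boundary or the $\phi_2$ boundary. The unregularized \Cref{lm:nonzero_noreg} provides a blueprint using only the $\phi_2$ piece; the regularized case adds the $\phi_1$ cross terms that carry the dependence on $D_R^*$. A supplementary argument may be needed to ensure that the canonical pair $(s_L^*, \sigma_R^*)$ attains the infimum (and is not merely a critical point) — a monotonicity or strict convexity argument on the map $(s, t) \mapsto \dist((-\delta s, -t), K^\circ)^2$ should suffice.
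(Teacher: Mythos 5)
Your split into the two subcases $v_* = c_*G$ with $c_*>0$ versus $(v_*,w_*)=(0,0)$ is a genuine (and welcome) sharpening: the paper asserts that the constraint $\mg(v_*,0)\le 0$ forces $v_*=0$, but as you observe the constraint only forces $v_*=c_*G$ with $c_*\ge 0$, and the reduction to $(0,0)$ requires either the Jensen-type monotonicity of $c\mapsto \E[\loss(cG+Z)]$ or the KKT argument you give. Your treatment of the $c_*>0$ subcase via $\partial_w\mg(c_*G,0)=\{-\delta^{-1/2}H\}$ and the Lipschitz bound on $\partial\reg(X)$ is correct and closes that branch cleanly.

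For the main case $(v_*,w_*)=(0,0)$ you take a genuinely different route from the paper. The paper perturbs by $v_s=\prox[sa\loss](sG+Z)-Z$, $w_t=\prox[tb\reg](tH+X)-X$ with coupled ratio $t/s$, uses firm non-expansiveness and dominated convergence to identify the first-order behavior, and optimizes over $(a,b,\tau)$ by calculus to arrive at the sign condition $\mathsf{d}_\loss^2(a)+\delta^{-1}\mathsf{d}_\reg^2(b)-1>0$. Your route through the positively-homogeneous cone $K=\{\mg\le 0\}$, Sion's minimax, Moreau decomposition against $K^\circ$, and canonical selections $s_L^*,\sigma_R^*$ is conceptually attractive because it exposes the convex-geometric structure, but it has three concrete gaps. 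First, you acknowledge yourself that the Moreau projection onto $K^\circ$ is the main obstacle; the coupling through $(\|w\|-\E[vG])_+$ means $K^\circ$ is not a product, and no computation is offered. Second, Sion's theorem needs one side compact: the unit ball of an infinite-dimensional Hilbert space is only weakly compact, and the sup over measurable selections $s(G,Z)\in\partial\loss(Z)$, $t(H,X)\in\partial\reg(X)$ needs a topology (e.g.\ weak-$*$ in $L^\infty$) and a continuity argument that is not provided. Third, and most importantly, your envelope identities $\E[Gs_L^*]=(1-D_L^*)/t_L^*$ etc.\ require the infimum $\inf_{t>0}\E[\dist(G,t\partial\loss(Z))^2]$ to be \emph{attained} at a finite $t_L^*>0$, but this can fail: when $\loss$ is differentiable at $Z$ a.s., $D_L^*=1$ is achieved only in the limit $t\to 0^+$ and $\delta_{\mathsf{perfect}}=+\infty$, so your hypothesis $\delta<\delta_{\mathsf{perfect}}$ is automatically true yet the canonical selection $s_L^*$ does not exist. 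The paper's perturbation argument avoids this entirely by working with arbitrary $a,b>0$ and using only $\mathsf{d}_\loss^2(a)\ge D_L^*$, $\mathsf{d}_\reg^2(b)\ge D_R^*$ without demanding attainment. Absent a resolution of these three points, the main case remains open in your proposal.
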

See \Cref{proof:lm:nonzero_reg} for the proof. \Cref{lm:nonzero_reg} implies that if $\delta$ is smaller than the threshold $\delta_{\mathsf{perfect}}$ 
then the degenerate case $w_*=0$ cannot occur. 

Finally, we claim the existence of the minimizer of the infinite-dimensional optimization problem $\min_{\mg(v,w)\le 0}\ml(v,w)$, and thereby conclude the proof of \Cref{th:main_reg}. The key to this existence is the coercivity of the objective function $\ml(v,w)$ over the constraint $\mg(v,w)\le 0$. 
\begin{lemma}[Coercivity]\label{lm:coercive_reg}
    If $\loss$ and $\reg$ are coercive, then the map $\ml:\mh\to\R$ is coercive when restricted to the subset of constraint $\mg(v,w)\le 0$, in the sense that 
      $$
      \ml(v,w) \ge C_1(\|v\|+\|w\|) - C_2 \quad \text{for all} \quad (v,w)\in \{(v,w)\in \mh: \mg(v,w)\le 0\},
      $$
      where $C_1$ and $C_2$ are some positive constants that only depend on $(\delta, \reg, \loss, \law(W), \law(X)).$
\end{lemma}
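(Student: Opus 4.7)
The plan mirrors the unregularized coercivity argument of \Cref{lm:coercive_noreg} while handling the two variables $v$ and $w$ at once. The first move is to unpack the constraint $\mg(v,w)\le 0$. Since $\mt(v,w)\ge 0$, this forces $\E[Hw]\ge 0$ and both
$$\|\proj(v)\| \le \delta^{-1/2}\E[Hw], \qquad \|w\| \le \E[vG] + \delta^{-1/2}\E[Hw],$$
the second inequality holding in both cases of the positive part because $\delta^{-1/2}\E[Hw]\ge 0$. Using $\|v\|^2 = \E[vG]^2 + \|\proj(v)\|^2$ together with $\sqrt{a^2+b^2}\le |a|+b$, these collapse to
$$\|v\|+\|w\| \le 2|\E[vG]|+2\delta^{-1/2}\E[Hw].$$

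Next I would apply the Cauchy--Schwarz splitting trick used for \Cref{lm:coercive_noreg} to each of the two inner products: for parameters $t,s>0$,
$$|\E[vG]| \le t\E[|v|] + \E[G^2 I\{|G|>t\}]^{1/2}\|v\|, \qquad \E[Hw] \le s\E[|w|] + \E[H^2 I\{|H|>s\}]^{1/2}\|w\|.$$
Substituting into the bound above and choosing $t,s$ large enough so that each of $2\E[G^2 I\{|G|>t\}]^{1/2}$ and $2\delta^{-1/2}\E[H^2 I\{|H|>s\}]^{1/2}$ is at most $1/2$, one absorbs the $\|v\|,\|w\|$ coefficients on the right into the left hand side, yielding
$$\|v\|+\|w\| \le C_\delta\bigl(\E[|v|]+\E[|w|]\bigr),$$
with $C_\delta$ depending only on $\delta$. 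Crucially, this step is uniform in $\delta>0$, since the truncated Gaussian second moments $\E[G^2 I\{|G|>t\}]$ and $\E[H^2 I\{|H|>s\}]$ can be driven below any prescribed threshold by enlarging the cutoffs.

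To finish, I would invoke the fact (which follows from convexity, Lipschitzness, and $\argmin=\{0\}$) that both $\loss$ and $\reg$ admit linear minorants $\loss(x)\ge a|x|-b$ and $\reg(x)\ge a'|x|-b'$ for positive constants depending on the corresponding function. Combining with the pointwise bound $|v+Z|\ge |v|-|Z|$ and with $\loss(Z)\le L_\loss |Z|$ (from Lipschitzness and $\loss(0)=0$), integration gives
$$\E[\loss(v+Z)-\loss(Z)] \ge a\E[|v|] - C_{\loss,\law(Z)}, \qquad \E[\reg(w+X)-\reg(X)] \ge a'\E[|w|] - C_{\reg,\law(X)}.$$
Summing with the weight $\delta$ dictated by the definition of $\ml$ and combining with the display of the previous paragraph yields $\ml(v,w)\ge C_1(\|v\|+\|w\|)-C_2$, as claimed.

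The main obstacle is the joint splitting in the second paragraph: in \Cref{lm:coercive_noreg} only $\|v\|$ is involved, whereas here the constraint couples $\|v\|$ and $\|w\|$ through $\E[Hw]$, and one must verify that a single pair $(t,s)$ suffices to absorb both norm terms simultaneously. The observation resolving this is that the threshold $s$ controls only $\E[H^2 I\{|H|>s\}]$, which is independent of both $t$ and $\delta$, so $s$ may be chosen after $\delta$ to absorb the $\|w\|$ coefficient, and then $t$ is chosen independently to absorb the $\|v\|$ coefficient, ensuring the bound holds for every $\delta>0$.
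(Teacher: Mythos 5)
Your reduction of the constraint $\mg(v,w)\le 0$ to the one-sided $L^1$--$L^2$ inequality $\|v\|+\|w\|\le C_\delta(\E[|v|]+\E[|w|])$ is correct and in fact a bit cleaner than the paper's argument: by bounding $\|v\|\le|\E[vG]|+\|\proj(v)\|$ and $\|w\|\le\E[vG]+\delta^{-1/2}\E[Hw]$ directly, you avoid the case split on $\E[w^2]^{1/2}\gtrless\E[vG]$ and the square-expansion in the paper's \Cref{lm:l1_l2}.

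The gap is in the final step. You claim
$$\E[\loss(v+Z)-\loss(Z)]\ge a\E[|v|]-C_{\loss,\law(Z)},\qquad
\E[\reg(w+X)-\reg(X)]\ge a'\E[|w|]-C_{\reg,\law(X)},$$
with finite constants, via $\loss(v+Z)\ge a|v|-a|Z|-b$ and $\loss(Z)\le\|\loss\|_{\lip}|Z|$. Unwinding this gives $C_{\loss,\law(Z)}\asymp\E[|Z|]$, and similarly $C_{\reg,\law(X)}\asymp\E[|X|]$. But \Cref{as:reg} does not impose finite first moments on $Z$ or $X$, and the paper explicitly allows heavy-tailed noise (the unregularized simulations use Cauchy, and the regularized claim imposes no moment condition). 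When $\E[|Z|]=\infty$ your bound is vacuous and no finite $C_2$ can be extracted. Concretely, for Huber $\loss$ and Cauchy $Z$ the sequence $v_n=-ZI\{\loss(Z)\le n\}$ has $\E[\loss(v_n+Z)-\loss(Z)]\to-\E[\loss(Z)]$ bounded while $\E[|v_n|]\to+\infty$, so the displayed inequality fails outright; only the constraint $\mg(v,w)\le 0$ saves coercivity, and your last step does not use it.

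The paper circumvents this in \Cref{lm:l1_l2} and \Cref{lm:coercive_reg_detail} by upgrading your $L^1$--$L^2$ inequality, via Paley--Zygmund, to a small-ball estimate $\PP\bigl(|v|+|w|\ge\tilde C_\delta(\|v\|+\|w\|)\bigr)\ge\tilde C_\delta$, and then splitting the expectation on $\{|Z|\le\mathsf q(\epsilon),\,|X|\le\mathsf q(\epsilon)\}$ versus its complement. On the bad event only the Lipschitz bound plus Cauchy--Schwarz is used, contributing $-\sqrt\epsilon\,(\delta\|\loss\|_{\lip}+\|\reg\|_{\lip})(\|v\|+\|w\|)$; on the good event $|Z|,|X|$ are bounded by the finite quantiles $\mathsf q(\epsilon)$, so the linear minorant is applied safely, and the small-ball property guarantees a fixed fraction of the mass of $|v|+|w|$ survives the truncation. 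This yields finite $C_1,C_2$ with no moment assumption. Your proposal reaches the right intermediate inequality but would need exactly this quantile-truncation plus small-ball step to close, rather than the unconditional linear-minorant bound.
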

See \Cref{proof:lm:coercive_reg} for the proof. 
The existence of the minimizer $(v_*, w_*)$ immediately follows from \Cref{lm:coercive_reg} by the same argument developed right after \Cref{lm:coercive_noreg}. One can also get an explicit upper bound of $\alpha_*$ by calculating the constant $(C_1, C_2)$ explicitly (see \Cref{lm:coercive_reg_detail} and \Cref{subsec:upper_bound_alpha_reg}). 

\subsection{Numerical simulation}
In this section, we simulate the risk behavior of the regularized M-estimator
with the L1 loss L1 regularizer. Let $\hat{\bm{x}}_\lambda$ be the L1-penalized LAD estimator
$$
\hat{\bm{x}}_\lambda \in \argmin_{\bm{x}\in\R^p} \|\bm y-\bA \bm{x}\|_1  + \lambda \|\bm{x}\|_1
$$
with the regularization parameter $\lambda>0$. 
The marginal distribution of the noise vector and the signal are taken as 
$$
\bm{x}_0 \overset{\text{iid}}{\sim} 0.9 \delta_{0} + 0.1 N(0,1), \quad \bm{z} \overset{\text{iid}}{\sim} 0.7 \delta_{0} + 0.3 N(0,1)
$$
so that they have some point mass at the nondifferentiable point $0$ of the map $\R\ni x\mapsto |x|$. 
As we can see from \Cref{fig:reg_phase_transition}, the risk curve $\lambda\mapsto \|\hat{\bm{x}}_\lambda-\bm{x}_0\|^2$ is bounded from away from $0$ for $\delta>\delta_{\mathsf{perfect}}$, tangent for $\delta=\delta_{\mathsf{perfect}}$, and it takes $0$ in a certain region of $\lambda$ for $\delta > \delta_{\mathsf{perfect}}$.

Next, we plot the theoretical threshold $\delta_{\mathsf{perfect}}$ in \eqref{eq:threshold_reg} for different sparsity of noise $\PP(z_i = 0)$ and sparsity of signal $\PP(x_i =  0)$. We change the law of noise and signal as
$$
\bm{x}_{0}\overset{\text{iid}}{\sim} (1-s)\delta_{0} + s N(0,1), \quad \bm{z} \overset{\text{iid}}{\sim} (1-t) \delta_{0} + tN(0,1), \quad s, t \in (0,1),
$$
where $s=\PP(x_{0j}\ne 0)$ and $t=\PP(z_{i}\ne 0)$ are the probability that signal and noise take nonzero value, respectively. For each $t\in[0.2, 0.3, 0.5, 0.7, 1.0]$, we plot the inverse of threshold $1/\delta_{\mathsf{perfect}}$ as a function of $s$ in \Cref{fig:reg_threshold}. We observe that the curve $s\mapsto \delta_{\mathsf{perfect}}^{-1}$ shifts down as $t$ increases, which means that as the signal becomes dense, the oversampling ratio $\delta=n/p$ needed for perfect recovery increases. 
\begin{figure}
        \includegraphics[width=0.5\textwidth]{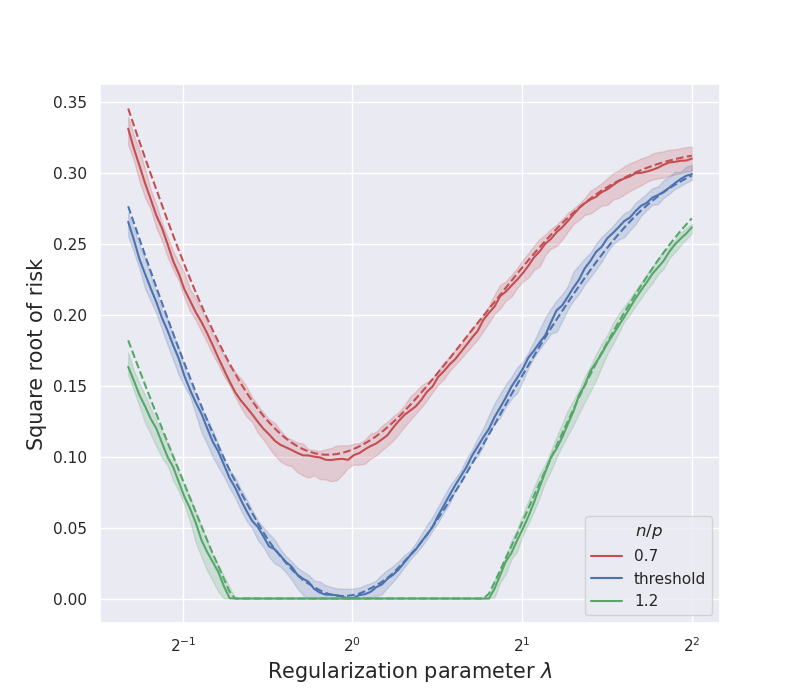}        
    \caption{Risk behavior of the regularized M-estimator computed by L1 loss and L1 regularizer for different ratio $n/p$ with $p$ fixed to $1200$. }
    \label{fig:reg_phase_transition}
\end{figure}

\begin{figure}
    \includegraphics[width=0.5\linewidth
    ]{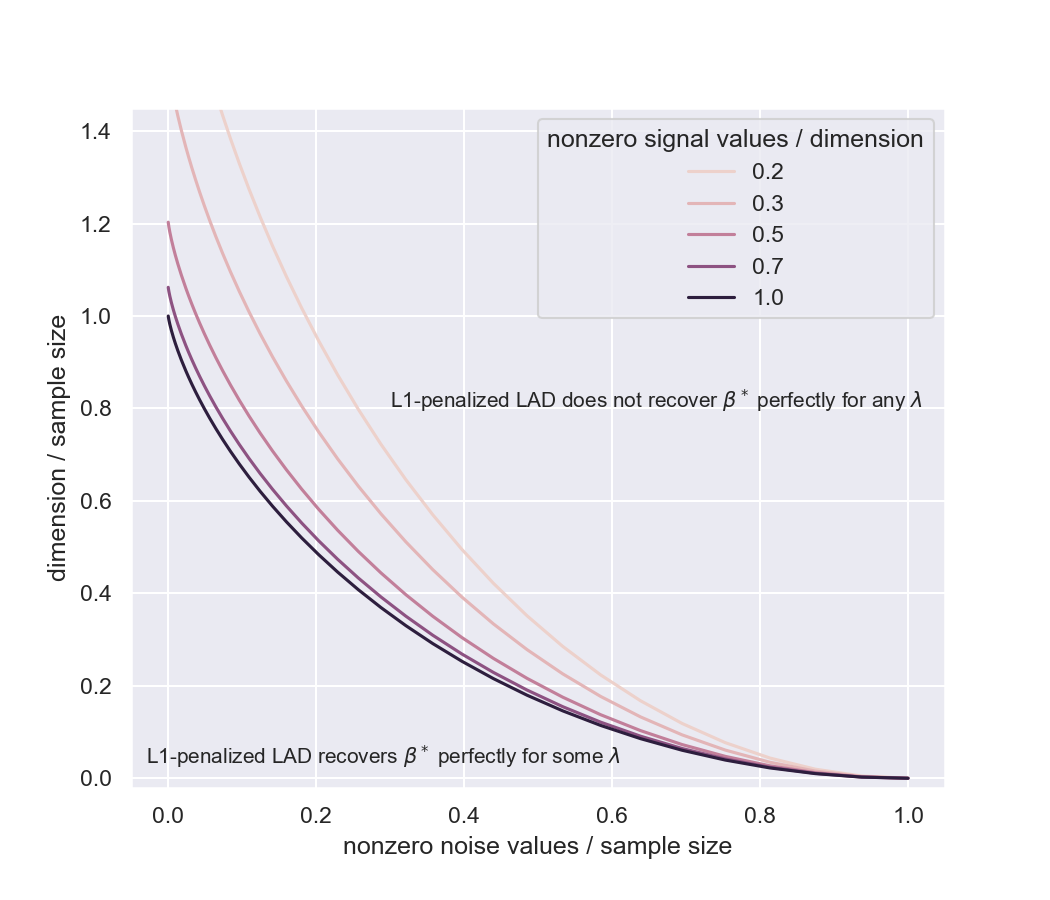}
    \caption{The theoretical threshold $1/\delta_{\mathsf{perfect}}$ for perfect recovery.}
    \label{fig:reg_threshold}
\end{figure}

\bibliographystyle{plainnat}
\bibliography{reference}

\appendix

\section{Proof for \Cref{sec:noreg}}
\subsection{Set up for infinite-dimensional optimization problem}
\begin{lemma}\label{lm:lg_basic_noreg}
    Let $
    \mh=\{v:\R^2\to\R: \E[v(G,Z)^2]<+\infty\}
    $  be the almost sure equivalent classes 
of squared integrable measurable functions of $(G, Z)$ equipped with the usual $L^2$ scalar product $\langle u,v\rangle_{\mathcal H}= \E[u(G,Z)v(G,Z)]$. 
    Suppose $\loss:\R\to\R$ is convex and Lipschitz, and $\delta (=\lim n/p)$ satisfies $\delta>1$. 
   Then
    the maps $\ml$ and $\mg$ defined by 
    \begin{align*}
        \ml: \mh \to \R, \quad &v\mapsto \E[\loss(v+Z)-\loss(Z)]\\
        \mg: \mh\to \R, \quad &v\mapsto \|v\|-\E[vG]/\sqrt{1-\delta^{-1}}
    \end{align*}
    are both convex, Lipschitz, and finite valued. 
    Furthermore, 
    $\mg$ is Fr\'echet differentiable at $\mh \setminus \{0\}$ in the sense that 
    $$
    \mg(v+h) = \mg(v) + \E[\nabla\mg(v) h] + o(\|h\|) \quad \text{for all} \quad \|v\|>0, 
    $$ 
    where the gradient $\nabla \mg:\mh\to\mh$ is  given by 
    $$
    \nabla \mg: v\mapsto {v}/{\|v\|} - {G}/{\sqrt{1-\delta^{-1}}}.
    $$
\end{lemma}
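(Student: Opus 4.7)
The proof proposal splits naturally into three independent pieces: finiteness, Lipschitz/convexity, and Fréchet differentiability of $\mg$ away from $0$. All three hinge on the Lipschitz assumption on $\loss$ plus the Cauchy--Schwarz inequality in the Hilbert space $\mh$.

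First, for finite-valuedness and Lipschitz continuity of $\ml$: denoting by $L$ the Lipschitz constant of $\loss$, the pointwise bound $|\loss(v+Z) - \loss(Z)| \le L|v|$ and Jensen/Cauchy--Schwarz yield $|\ml(v)| \le L \E[|v(G,Z)|] \le L \|v\|$, which is finite for all $v \in \mh$. The same pointwise argument applied to $\loss(u+Z) - \loss(v+Z)$ gives $|\ml(u)-\ml(v)| \le L \|u-v\|$, so $\ml$ is $L$-Lipschitz. Convexity follows by applying convexity of $\loss$ pointwise and then taking expectations. For $\mg$, both terms are finite: $\|v\|<\infty$ by definition of $\mh$, and $|\E[vG]| \le \|v\|\,\E[G^2]^{1/2}=\|v\|$ by Cauchy--Schwarz. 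The triangle inequality makes $v\mapsto \|v\|$ convex and $1$-Lipschitz, the map $v\mapsto -\E[vG]/\sqrt{1-\delta^{-1}}$ is linear and has Lipschitz constant $1/\sqrt{1-\delta^{-1}}$ (again by Cauchy--Schwarz), so $\mg$ is convex and Lipschitz with constant $1+(1-\delta^{-1})^{-1/2}$.

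For Fréchet differentiability of $\mg$ at any $v \ne 0$, I would use the standard fact that the Hilbert norm is Fréchet differentiable away from the origin with gradient $v/\|v\|$. To verify this cleanly, expand
\begin{equation*}
\|v+h\|^2 = \|v\|^2 + 2\langle v,h\rangle + \|h\|^2,
\end{equation*}
divide by $\|v+h\|+\|v\|$, and use $\|v+h\|+\|v\| = 2\|v\| + O(\|h\|)$ to obtain
\begin{equation*}
\|v+h\| - \|v\| = \langle v/\|v\|, h\rangle + o(\|h\|)
\end{equation*}
as $\|h\|\to 0$, where the error control uses $|\langle v,h\rangle| \le \|v\|\|h\|$ to absorb the $\|h\|^2$ term and the $O(\|h\|)$ perturbation of the denominator. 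The linear term $v \mapsto -\E[vG]/\sqrt{1-\delta^{-1}}$ has constant Fréchet derivative $-G/\sqrt{1-\delta^{-1}} \in \mh$. Summing the two gradients yields the claimed expression $\nabla\mg(v) = v/\|v\| - G/\sqrt{1-\delta^{-1}}$.

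None of the steps here are genuine obstacles; the only care needed is in the expansion of $\|v+h\|$, where one must keep track of the denominator $\|v+h\|+\|v\|$ to extract the leading linear term without assuming anything beyond $v \ne 0$. Everything else is an immediate application of Cauchy--Schwarz and pointwise convexity/Lipschitz properties of $\loss$.
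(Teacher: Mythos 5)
Your proposal is correct and follows essentially the same route as the paper's proof: finite-valuedness and Lipschitz continuity of $\ml$ and $\mg$ from the pointwise Lipschitz bound $|\loss(v+Z)-\loss(Z)|\le \|\loss\|_{\lip}|v|$ together with Jensen/Cauchy--Schwarz, convexity from the pointwise convexity of $\loss$ and of $\|\cdot\|$, and Fréchet differentiability of $\mg$ from the standard differentiability of the Hilbert norm away from the origin. The only divergence is a cosmetic one in the last step: the paper expands $\|v+h\| = \|v\|\sqrt{1+x}$ with $x=(2\E[vh]+\|h\|^2)/\|v\|^2$ and invokes the Taylor-type bound $|\sqrt{1+x}-(1+x/2)|\le x^2/8$, whereas you use the algebraic identity $\|v+h\|-\|v\| = (2\langle v,h\rangle+\|h\|^2)/(\|v+h\|+\|v\|)$ with $\|v+h\|+\|v\|=2\|v\|+O(\|h\|)$. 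Both verifications are valid; yours has the small advantage of not needing to worry about the sign of the expansion parameter $x$ (the paper's stated bound is for $x\ge 0$, but the applied $x$ can be negative when $\E[vh]<0$), so if anything your version is cleaner on that point.
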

\begin{proof}
    $\ml$ is convex since $\loss$ is convex, while $\mg$ is convex since the L2 norm $\|v\|=\E[v^2]^{1/2}$ is convex and $\E[vZ]/\sqrt{1-\delta^{-1}}$ is linear in $v$. For all $v, \tilde{v} \in \mh$, Jensen's inequality, the triangle inequality, and Cauchy--Schwarz inequality yield the inequalities 
\begin{align*}
    |\ml(v)-\ml(\tilde{v})| \le  \|\loss\|_{\lip} \|v-\tilde{v}\| \quad \text{and}\quad 
    |\mg(v)-\mg(v')| \le \|v-v'\| + \|v-v'\|/\sqrt{1-\delta^{-1}}, 
\end{align*}
so combined with $\ml(0)=\mg(0)=0$, the above display implies that the maps $\mg$ and $\ml$ are finite valued and Lipschitz. The Fr\'echet differentiability of $\mg$ over the set $\mh/\{0\}$ immediately follows from the first order approximation
\( \|v+h\|=\|v\|+\E[vh]/\|v\| + o(\|h\|) \) for any $v\ne 0$. Note that this approximation follows from the inequality $|\sqrt{1+x}-(1+x/2)| \le x^2/8$ for all \( x\ge 0 \)
applied with \( x=(2\E[vh]+\|h\|^2)/\|v\|^2 \).
\end{proof}

\begin{lemma}\label{lm:lagrange_noreg}
    Suppose that $\loss$ is convex and Lipschitz. Then, $v_*\in\mh$ is a solution to the constrained optimization problem 
    $$\min_{v\in \mh} \ml(v) \quad 
    \text{subject to} \quad \mg(v)\le 0, 
    $$
    if and only if there exists a nonnegative Lagrange multiplier $\mu_*\ge 0$ such that the KKT condition
    \begin{align}\label{eq:KKT_noreg}
    -\mu_* \partial \mg(v_*) \cap \partial \ml(v_*) \ne \emptyset, \quad \mg(v_*)\le 0, \quad \mu_*\mg(v_*) = 0
    \end{align}
    is satisfied. Furthermore, if $Z$ and $\loss$ satisfy $\PP(Z\ne 0) > 0$ and $\{0\} = \argmin_x \loss(x)$, then the Lagrange multiplier $\mu_*$ must be strictly positive so that the constraint is binding, i.e., $\mg(v_*) = 0$. 
\end{lemma}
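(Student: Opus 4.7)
The overall strategy is to verify a Slater-type constraint qualification, invoke the standard Lagrange duality theorem for convex minimization with a single convex inequality constraint, and then exploit the assumption $\argmin_x \loss(x)=\{0\}$ to rule out $\mu_*=0$.

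\textbf{Step 1: Slater's condition.} First I would exhibit a strictly feasible point. Taking $v=G\in\mh$ gives $\|v\|=1$ and $\E[vG]=1$, hence
\[
\mg(G) = 1 - \tfrac{1}{\sqrt{1-\delta^{-1}}} < 0
\]
since $\delta>1$. Thus the feasible region has nonempty interior.

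\textbf{Step 2: KKT characterization.} Both $\ml$ and $\mg$ are convex, finite, and continuous on $\mh$ by \Cref{lm:lg_basic_noreg}. Applying the standard Lagrange multiplier theorem for convex programs on Hilbert spaces under Slater's condition (e.g., \cite[Theorem 27.2]{bauschke2017correction} or its equivalents), a feasible $v_*$ solves $\min_{\mg(v)\le 0}\ml(v)$ if and only if there exists $\mu_*\ge 0$ with $-\mu_*\partial\mg(v_*)\cap \partial\ml(v_*)\ne \emptyset$ and $\mu_*\mg(v_*)=0$. This proves the first assertion.

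\textbf{Step 3: Subdifferential of $\ml$.} For the second assertion I need a workable description of $\partial \ml(v_*)$. Since $\loss$ is convex and Lipschitz, by standard results on integral functionals (Rockafellar's theorem on subdifferentials of convex integrands),
\[
\partial \ml(v_*) = \bigl\{g\in\mh : g(G,Z)\in \partial\loss\bigl(v_*(G,Z)+Z\bigr)\text{ almost surely}\bigr\},
\]
with all such $g$ automatically in $L^\infty\subset \mh$ because $\partial \loss$ is uniformly bounded by $\|\loss\|_{\mathrm{lip}}$.

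\textbf{Step 4: Ruling out $\mu_*=0$.} Suppose for contradiction that $\mu_*=0$. Then $0\in\partial \ml(v_*)$, so by Step~3 there exists an $L^2$ random variable which is almost surely both $0$ and in $\partial\loss(v_*+Z)$. Hence $0\in\partial\loss(v_*(G,Z)+Z)$ almost surely, which by $\argmin_x\loss(x)=\{0\}$ forces $v_*(G,Z)+Z=0$ almost surely, i.e.\ $v_*=-Z$ in $\mh$. Using independence $G\ind Z$ I then compute
\[
\mg(v_*) = \|{-Z}\| - \frac{\E[-ZG]}{\sqrt{1-\delta^{-1}}} = \E[Z^2]^{1/2}>0
\]
since $\PP(Z\ne 0)>0$. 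This violates the feasibility condition $\mg(v_*)\le 0$, a contradiction.

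Therefore $\mu_*>0$ and the complementary slackness condition $\mu_*\mg(v_*)=0$ forces $\mg(v_*)=0$, i.e.\ the constraint is binding. The main subtlety in the argument is the interchange of subdifferentiation and expectation in Step~3, which is handled by the standard theory of normal convex integrands using the Lipschitz bound on $\partial\loss$.
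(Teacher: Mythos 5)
Your proposal is correct, and Steps 1--2 match the paper: the Slater point $v=G$ with $\mg(G)=1-(1-\delta^{-1})^{-1/2}<0$, and the KKT characterization from convex duality (the paper isolates this as \Cref{lm:Lagrange}, a consequence of \cite[Proposition 27.21]{bauschke2017correction}). Where you genuinely diverge is in ruling out $\mu_*=0$. The paper does not invoke Rockafellar's theorem on subdifferentials of convex integral functionals; instead it takes the truncations $v_n=-Z\,I\{\loss(Z)\le n\}$, shows $v_n\in\mh$ via coercivity of $\loss$, and uses monotone convergence to deduce $\E[\loss(v_*+Z)-\loss(0)]\le 0$, from which nonnegativity of the integrand forces $v_*=-Z$ a.s.; this requires an explicit case split on whether $\E[\loss(Z)]$ is finite (the infinite case gives $\ml(v_n)\to-\infty$, already a contradiction). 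Your route goes through the harder direction of the subdifferential interchange $\partial\ml(v_*)\subset\{g: g\in\partial\loss(v_*+Z)\text{ a.s.}\}$ to read $\mu_*=0$ as $0\in\partial\loss(v_*+Z)$ a.s. directly. This is shorter and avoids the case split, since the conclusion $v_*=-Z\in\mh$ automatically implies $Z\in L^2$; the trade-off is that it rests on a standard but nontrivial theorem about normal convex integrands, where the Lipschitz bound on $\partial\loss$ is needed to justify that all subgradient selections lie in $L^\infty\subset\mh$ (a point you rightly flag). Both arguments are sound; the paper's is more self-contained, yours is more economical given the external theorem.
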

\begin{proof}
Note that the relative interior of the set of constraint $\{v\in\mh: \mg(v)\le 0\}$ is not empty since $\mg(G)= 1 - (1-\delta^{-1})^{-1/2} < 0$. Thus, the if and only if part follows from \Cref{lm:Lagrange}. 

It remains to show the positiveness $\mu_*>0$. We proceed by contradiction. If  $\mu_* = 0$, then the minimizer $v_*$ must satisfy $v_*\in \argmin_{v\in \mh} \ml(v)$ and $\mg(v_*) \le 0$.
Let us take $v_n=-Z I\{\loss(Z)\le n\}$. Note that $v_n$ belongs to the Hilbert space $\mh$, i.e., $\E[v_n^2]^{1/2}<+\infty$. Indeed, the assumption $\{0\}=\argmin_x\loss(x)$ implies $\loss$ is coercive in the sense that $\loss(x)\ge a |x| - b$ for some constant $a>0$ and $b\ge 0$ (see \Cref{lm:convex_coercive}). This gives $|v_n|\le (n+b)/a$ with probability $1$ so in particular $\E[|v_n|^2]^{1/2} <+\infty$. Now, the objective function $\ml(v)=\E[\loss(v+Z)-\loss(Z)]$ evaluated at $v=v_n$ can be written as 
\begin{align*}
    \ml(v_n) =- \E[I\{\loss(Z)\le n\} (\loss(Z)-\loss(0))]. 
\end{align*}
Now we consider the two cases: (1) $\E[\loss(Z)]=+\infty$ and (2) $\E[\loss(Z)]<+\infty$. 
If $\E[\loss(Z)]=+\infty$, the monotone convergence yields $\ml(v_n) \to  - \infty$ as $n\to\infty$, a contradiction since by the optimality condition for $v_*$ we must have $\inf_n \ml(v_n)\ge \ml(v_*) >-\infty$. 
If $\E[\loss(Z)]<+\infty$, then again the monotone convergence theorem gives $\lim_{n\to\infty} \ml(v_n) = \E[\loss(0)-\loss(Z)]$.  Since $v_*$ minimizes $\ml(v)$, we must have $\ml(v_*) = \E[\loss(v_*+Z)-\loss(Z)] \le \E[\loss(0)-\loss(Z)]$, or equivalently, 
$$
\E
[\loss(v_*+Z)-\loss(0)]\le 0.
$$
Since $\loss(v_*+Z)-\loss(0)$ is always non-negative by $\argmin_x\loss(x)=\{0\}$, the above inequality implies  $\loss(v_*+Z)= \loss(0)$ and $v_* = -Z$. 
Substituting $v_*=-Z$ to the constraint $\mg(v_*)\le 0$ and using the independence of $Z$ and $G$, we are left with
$$
0 \ge \mg(v_*) = \E[v_*^2]^{1/2}-\E[v_*G]/\sqrt{1-\delta^{-1}} = \E[Z^2]^{1/2},
$$
and hence $\PP(Z=0)=1$. This is a contradiction with $\PP(Z\neq 0)>0$. Therefore, the Lagrange multiplier $\mu_*$ must be strictly positive. 
\end{proof}

\subsection{Proof of \Cref{lm:equivalence_potential_noreg}}\label{proof:lm:equivalence_potential_noreg}
For each $\alpha\ge 0$, let $\mathsf{M}(\alpha)$ be the potential in \eqref{eq:potential_noreg}. We redefine it here for convenience:
\begin{align*}
    \forall\alpha\ge 0, \quad 
 \mathsf{M}(\alpha) = \sup_{\beta >  0} \inf_{\tau_g>0}  \mathsf{D}(\alpha, \beta, \tau_g),  \quad \mathsf{D}(\alpha, \beta, \tau_g) := 
    \frac{\beta\tau_g}{2\delta } + \mathsf{L}(\alpha, \frac{\tau_g}{\beta}) - \frac{\alpha\beta}{\delta}
\end{align*}
where $\mathsf{L}(c, \tau) = \E[\env_\loss(cG+Z;\tau)-\loss(Z)]$. We also define 
{
$\bar{\mathsf{M}}(\alpha)$ for $\alpha \ge 0$ as
$$
\bar{\mathsf{M}}(0)=0,
\qquad
\text{and for }\alpha>0,
\qquad
\bar{\mathsf{M}}(\alpha)=\sup_{b>0}
\Bigl(\mathsf{L}(\alpha, \frac{\alpha}{b}) - \frac{\alpha b}{2\delta}\Bigr).
$$
}
Next, we define $\check{\mathsf{M}}(\alpha)$ as the optimal value of the following constrained optimization problem over the Hilbert space $\mathcal{H}$:
$$
\forall \alpha\ge0, \quad 
\check{\mathsf{M}}(\alpha) := \Bigl(
\min_{v\in \mh} \ml(v) \quad \text{subject to}\quad \|v-\alpha G\|\le \alpha/\sqrt{\delta} \Bigr)
$$
with $\ml(v)=\E[\loss(v+Z)-\loss(Z)]$ defined in  \Cref{lm:lg_basic_noreg}. The above optimization problem admits a minimizer $v_\alpha\in\mh$ for all $\alpha\ge 0$ since the set of constraint $\{v\in \mh:\|v-\alpha G\|\le \alpha/\sqrt{\delta}\}$ is convex, closed, and bounded.
\begin{lemma}\label{lm:N_check_N}
    Let \Cref{as:noreg} be fulfilled. Then, the following holds:
    \begin{enumerate}
        \item $\mathsf{M}(0)=0$, i.e., 
        $$
        \mathsf{M}(0) = \sup_{\beta > 0}
            \inf_{\tau_g>0} \frac{\beta\tau_g}{2\delta } + \mathsf{L}(0, \frac{\tau_g}{\beta}) =  0.
        $$
        \item $\mathsf{M}(\alpha) = {\bar{\mathsf{M}}(\alpha)} = \check{\mathsf{M}}(\alpha)$ for all $\alpha \ge 0$. 
    \end{enumerate}
\end{lemma}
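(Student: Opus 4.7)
The plan is to prove both equalities $\mathsf{M}(\alpha)=\check{\mathsf{M}}(\alpha)$ and $\bar{\mathsf{M}}(\alpha)=\check{\mathsf{M}}(\alpha)$ by recognizing each of $\mathsf{M}$ and $\bar{\mathsf{M}}$ as a Lagrangian dual of $\check{\mathsf{M}}$ and invoking strong duality in the Hilbert space $\mh$. The bridge between the scalar problems and the Hilbert-space problem is the identity
$$
\mathsf{L}(\alpha,\tau)=\min_{v\in\mh}\ml(v)+\frac{1}{2\tau}\|v-\alpha G\|^2,
$$
which follows from $\env_\loss(x;\tau)=\min_u\loss(u)+(x-u)^2/(2\tau)$ and the substitution $u=v+Z$. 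This realizes the scalar function $\mathsf{L}$ as an inner infimum over $\mh$ and allows replacing $\mathsf{L}$ in the definitions of $\mathsf{M}$ and $\bar{\mathsf{M}}$ by an explicit optimization over $\mh$.

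To obtain $\check{\mathsf{M}}(\alpha)=\bar{\mathsf{M}}(\alpha)$, I would rewrite the constraint of $\check{\mathsf{M}}(\alpha)$ in squared form $\|v-\alpha G\|^2\le \alpha^2/\delta$ and take the Lagrangian dual with multiplier $\mu\ge 0$. The inner minimization in $v$ of $\ml(v)+\mu\|v-\alpha G\|^2$ equals $\mathsf{L}(\alpha,\tfrac{1}{2\mu})$ by the identity above. Strong duality holds via Slater's condition at $v=\alpha G$ for $\alpha>0$, yielding $\check{\mathsf{M}}(\alpha)=\sup_{\mu>0}\mathsf{L}(\alpha,\tfrac{1}{2\mu})-\mu\alpha^2/\delta$; the change of variables $b=2\alpha\mu$ converts this to $\bar{\mathsf{M}}(\alpha)$. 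The case $\alpha=0$ is handled separately since the feasible set of $\check{\mathsf{M}}(0)$ collapses to $\{0\}$, giving $\check{\mathsf{M}}(0)=\ml(0)=0=\bar{\mathsf{M}}(0)$.

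For $\mathsf{M}(\alpha)=\check{\mathsf{M}}(\alpha)$, the plan is to substitute the envelope identity into the definition of $\mathsf{M}(\alpha)$, commute the two infima in $\tau_g$ and $v$, and then apply the AM-GM identity
$$
\inf_{\tau_g>0}\frac{\beta\tau_g}{2\delta}+\frac{\beta}{2\tau_g}\|v-\alpha G\|^2=\frac{\beta}{\sqrt\delta}\|v-\alpha G\|.
$$
This produces
$$
\mathsf{M}(\alpha)=\sup_{\beta>0}\min_{v\in\mh}\ml(v)+\frac{\beta}{\sqrt\delta}\bigl(\|v-\alpha G\|-\alpha/\sqrt\delta\bigr),
$$
which after setting $\lambda=\beta/\sqrt\delta$ is exactly the Lagrangian dual of $\check{\mathsf{M}}(\alpha)$ with the non-squared constraint. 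Strong duality then gives $\mathsf{M}(\alpha)=\check{\mathsf{M}}(\alpha)$ for $\alpha>0$. For $\alpha=0$ (part~(1)), the same manipulation yields $\mathsf{M}(0)=\sup_{\lambda>0}\min_v\ml(v)+\lambda\|v\|$; the upper bound $\mathsf{M}(0)\le 0$ comes from taking $v=0$, while for any $\lambda\ge\|\loss\|_{\mathrm{lip}}$ the Lipschitz estimate $\ml(v)\ge -\|\loss\|_{\mathrm{lip}}\|v\|$ forces the inner minimum to be $0$, so $\mathsf{M}(0)=0$.

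The main obstacle is the rigorous invocation of strong duality in the infinite-dimensional Hilbert space $\mh$: one needs to check Slater's condition (trivially satisfied by $v=\alpha G$ when $\alpha>0$) together with the convexity, continuity, and finiteness of $\ml$ and of the constraint function (provided by \Cref{lm:lg_basic_noreg}). The other steps—commuting two infima and a scalar AM-GM minimization—are routine once the envelope identity is in place, and the $\alpha=0$ case requires only the direct Lipschitz estimate rather than any duality theorem since Slater fails there.
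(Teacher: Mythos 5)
Your proof is correct and takes a genuinely different, arguably cleaner route than the paper's. The paper establishes the closed cycle of inequalities $\bar{\mathsf{M}}(\alpha)\le\mathsf{M}(\alpha)\le\check{\mathsf{M}}(\alpha)\le\bar{\mathsf{M}}(\alpha)$: the first by specializing $\tau_g=\alpha$, the second by upper-bounding the Moreau envelope pointwise at any feasible $v$ and then applying AM-GM, and the third via existence of a positive Lagrange multiplier for the squared-constraint version of $\check{\mathsf{M}}(\alpha)$. You instead recognize both $\mathsf{M}$ and $\bar{\mathsf{M}}$ as Lagrangian duals of $\check{\mathsf{M}}$ — with the non-squared constraint $\|v-\alpha G\|\le\alpha/\sqrt\delta$ and the squared constraint $\|v-\alpha G\|^2\le\alpha^2/\delta$, respectively — and invoke strong duality twice via Slater's condition (trivially satisfied at $v=\alpha G$ for $\alpha>0$). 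The bridging identity $\mathsf{L}(\alpha,\tau)=\min_{v\in\mh}\ml(v)+\|v-\alpha G\|^2/(2\tau)$, which makes both dualizations transparent, is a good observation that the paper uses only implicitly. Your approach is more symmetric and makes clear why the two scalar representations are equivalent. Your Part~(1) argument, via $\mathsf{M}(0)=\sup_{\lambda>0}\min_v\{\ml(v)+\lambda\|v\|\}$ and the Lipschitz bound $\ml(v)\ge-\|\loss\|_{\lip}\|v\|$, is also simpler than the paper's derivative-bound argument.

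One small gap you should close: strong duality gives $\check{\mathsf{M}}(\alpha)=\sup_{\mu\ge 0}d(\mu)$ for the dual function $d$, whereas your change of variables produces $\sup_{\mu>0}d(\mu)$; the step $\sup_{\mu\ge 0}d(\mu)=\sup_{\mu>0}d(\mu)$ needs a word of justification. You can either invoke the positivity of the optimal Lagrange multiplier exactly as in \Cref{lm:lagrange_noreg} (which is where $\PP(Z\ne 0)>0$ enters), or argue directly from concavity: $d$ is a pointwise infimum of affine functions of $\mu$, hence concave and upper semicontinuous, and $d(\mu)>-\infty$ for $\mu\ge\|\loss\|_{\lip}$ by the Lipschitz bound on $\ml$, so either $d(0)=-\infty$ (and the claim is trivial) or concavity forces $d(0)=\lim_{\mu\to0+}d(\mu)\le\sup_{\mu>0}d(\mu)$. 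The paper makes this point explicit by appealing to the Lagrange-multiplier lemma; you should do one of the two.
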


\begin{proof}
Using $\lim_{\tau \to 0+} \mathsf{L}(0, \tau) = 0$ (see \cite[Lemma 4.1]{thrampoulidis2018precise}), we have 
\begin{align*}
    \mathsf{M}(0) \le \sup_{\beta > 0} 
        \lim_{\tau_g\to 0+}  \frac{\beta\tau_g}{2\delta } + \mathsf{L}(0, \frac{\tau_g}{\beta})  = 0. 
\end{align*}
On the other hand, for all $\beta>0$ and $\tau_g>0$, the derivative of objective function with respect to $\tau_g$ is bounded from below as 
$$
\frac{\partial}{\partial \tau_g} \Bigl(
    \frac{\beta\tau_g}{2\delta} +\mathsf{L}(0, \frac{\tau_g}{\beta})
\Bigr) = 
\frac{\beta}{2\delta} - \frac{1}{2\beta}\E[\env_\loss'(0; \frac{\tau_g}{\beta})^2]\ge \frac{\beta}{2\delta } - \frac{1}{2\beta}\|\loss\|_{\lip}^2 = \frac{\beta^2 - \delta \|\loss\|_{\lip}^2}{2\beta\delta}
$$
thanks to $\env_\loss'(x, \tau) \in \partial \loss(\prox[\tau\loss](x))$. This lower bound implies that the map $\tau_g\mapsto 
    \frac{\beta_0\tau_g}{2\delta} +  \mathsf{L}(0, \frac{\tau_g}{\beta_0})
$ over $\R_{>0}$ is strictly increasing when $\beta=\beta_0 = \sqrt{2\delta} \|\loss\|_{\lip}>0$. Therefore, $\mathsf{M}(0)$ is bounded from below as 
\begin{align*}
    \mathsf{M}(0) \ge \inf_{\tau_g>0} \Bigl(
    \frac{\beta_0\tau_g}{2\delta} + \mathsf{L}(0, \frac{\tau_g}{\beta_0})\Bigr) = \lim_{\tau_g\to 0+} \Bigl(
        \frac{\beta_0\tau_g}{2\delta} + \mathsf{L}(0, \frac{\tau_g}{\beta_0})\Bigr) = 0 + \lim_{\tau\to 0+} \mathsf{L}(0, \tau) = 0.
\end{align*}
This gives the reverse inequality $\mathsf{M}(0)\ge 0$ and finishes the proof of $(1)$. 

Next, we show the equality $\mathsf{M}(\alpha)= {\bar{\mathsf{M}}(\alpha)} =\check{\mathsf{M}}(\alpha)$ in $(2)$. {Note that this equality is obvious for $\alpha=0$ since we have shown $\mathsf{M}(0)=0$ in the previous paragraph  and $\bar{\mathsf{M}}(0)=0$ by definition. As for $\check{\mathsf{M}}$, the constraint $\|v-\alpha G\| \le \alpha/\sqrt{\delta}$ with $\alpha=0$ gives $v=0$ and $\ml(0)= \E[\loss(0+Z)-\loss(Z)]=0$ so that $\check{\mathsf{M}}(0)=0$.} Below, we will prove the equality for $\alpha>0$ by showing the three inequalities:
$$
\bar{\mathsf{M}}(\alpha) \le \mathsf{M}(\alpha) \le \check{\mathsf{M}}(\alpha) \le \bar{\mathsf{M}}(\alpha), \quad \forall \alpha >0
$$
Note that the first inequality $\bar{\mathsf{M}}(\alpha) \le \mathsf{M}(\alpha)$ immediately follows by taking $\tau_g=\alpha$ for $\inf_{\tau_g}$ in the objective function $\mathsf{D}$ of $\mathsf{M}$. 

Proof of $\mathsf{M}(\alpha) \le \check{\mathsf{M}} (\alpha)$. 
For all $\beta >  0$ and $\tau_g>0$, if $v\in\mh$ satisfies the constraint $\|v-\alpha G\|\le \alpha/\sqrt{\delta}$, the objective function $\mathsf{D}(\alpha, \beta, \tau_g)$ in $\mathsf{M}$ is bounded from above by 
   \begin{align*}
    \mathsf{D}(\alpha, \beta, \tau_g)
    &= \frac{\beta\tau_g}{2\delta} + \E\Bigl[\min_{x\in\R} \Bigl(\frac{\beta}{2\tau_g}(x-\alpha G)^2 + \loss(x+Z)-\loss(Z)\Bigr)\Bigr] - \frac{\alpha\beta}{\delta}\\
    &\le \beta\Bigl(\frac{\tau_g}{2\delta } + \frac{\|v-\alpha G\|^2}{2 \tau_g}\Bigr) +  \E\Bigl[\loss(v+Z)-\loss(Z)\Bigr] - \frac{\alpha\beta}{\delta}\\
    &\le \frac{\beta}{\delta} \Bigl(\frac{\tau_g}{2} + \frac{\alpha^2}{2\tau_g}-\alpha \Bigr) +  \E\Bigl[\loss(v+Z)-\loss(Z)\Bigr]  && \text{by } \|v-\alpha G\|\le \frac{\alpha}{\sqrt{\delta}}.
\end{align*}
Using the identity $\inf_{\tau_g>0} \frac{\tau_g}{2} + \frac{\alpha^2}{2\tau_g}=\alpha$ for the last line, we obtain $\sup_{\beta > 0} \inf_{\tau_g>0}\mathsf{D}(\alpha, \beta, \tau_g) \le \E[\loss(v+Z)-\loss(Z)]$ for all $v\in\mh$ satisfying the constraint $\|v-\alpha G\|\le \alpha/\sqrt{\delta}$. This means $\mathsf{M}(\alpha) \le \check{\mathsf{M}}(\alpha)$.

Proof of $\check{\mathsf{M}}(\alpha)\le \bar{\mathsf{M}}(\alpha)$. 
{
The two optimization problems
$$\min_{v\in\mh: \|v-\alpha G\|\le \alpha/\sqrt{\delta}} \ml(v),
\qquad
\min_{v\in\mh: \frac12\|v-\alpha G\|^2/\alpha\le \frac12 \alpha/\delta} \ml(v)$$
are the same, with the constraint being convex in both cases,
and $v_\alpha$ minimizing the former also minimizes the latter.
Let us work with the latter problem where the norm is squared.
By the same argument in the proof of \Cref{lm:lagrange_noreg}, there exists an associated Lagrange multiplier {$b_* >0$} such that the minimizer $v_\alpha$ of the constrained optimization problem also solves the unconstrained optimization problem as in 
$$v_\alpha \in \argmin_{v\in \mh} \ml(v) + b_* \Bigl(\frac{\|v-\alpha G\|^2}{2\alpha}-\frac{\alpha}{2\delta}\Bigr)$$
and 
the constraint is binding, that is, $\|v_\alpha-\alpha G\| = \alpha/\sqrt{\delta}$. 
We thus have 
$$\check{\mathsf{M}}(\alpha) = \ml(v_\alpha)
=
\min_{v\in \mh} 
\Bigl\{
\ml(v) + b_* \Bigl(\frac{\|v-\alpha G\|^2}{2\alpha}-\frac{\alpha}{2\delta}\Bigr) \Bigr\}
\le
\mathsf{L}(\alpha, \frac{\alpha}{b_*}) - \frac{b_*\alpha}{2\delta}
$$
by choosing
$v=\prox[\tfrac{\alpha}{b_*} \loss](\alpha G+Z)-Z \in \mh$
for the rightmost upper bound.
Note that $v_*\in \mh$ follows from the inequality $|v_*| \le |\alpha G| + \frac{\alpha}{b_*}\|\loss\|_{\lip}$ and $b_*>0$. Taking $\sup_{b > 0}$ of the right-hand side, we obtain $\check{\mathsf{M}}(\alpha)\le \bar{\mathsf{M}}(\alpha)$.
}
\end{proof}

\begin{lemma}
    $\mathsf{M}(\alpha)$ is minimized at $\alpha_*$ if and only if $\min_{v\in \mh: \mg(v)\le 0}\ml(v)$ admits a minimizer $v_*$ with $\|v_*\|=\alpha_* \sqrt{1-\delta^{-1}}$. 
\end{lemma}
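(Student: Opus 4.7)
The plan is to combine the already-established equality $\mathsf{M}(\alpha)=\check{\mathsf{M}}(\alpha)$ from \Cref{lm:N_check_N} with a swap of minima, and then use the binding of the Lagrange constraint (\Cref{lm:lagrange_noreg}) to pin down the correspondence $\alpha_*=\|v_*\|/\sqrt{1-\delta^{-1}}$. First I would write
\[
\min_{\alpha\ge 0}\mathsf{M}(\alpha)
=\min_{\alpha\ge 0}\check{\mathsf{M}}(\alpha)
=\min_{\alpha\ge 0}\min_{v\in \mh,\,\|v-\alpha G\|\le \alpha/\sqrt{\delta}}\ml(v)
=\min_{v\in \mh,\,S(v)\ne\emptyset}\ml(v),
\]
where $S(v):=\{\alpha\ge 0: \|v-\alpha G\|\le \alpha/\sqrt{\delta}\}$. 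The key calculation is to identify $\{v: S(v)\ne\emptyset\}$ with $\{v: \mg(v)\le 0\}$. Decomposing $v=\E[vG]G+\proj(v)$ in the orthogonal decomposition with respect to $G$, the constraint $\|v-\alpha G\|^2\le \alpha^2/\delta$ becomes the quadratic
\[
(1-\delta^{-1})\alpha^2-2\E[vG]\alpha+\|v\|^2\le 0.
\]
Since the leading coefficient is positive (as $\delta>1$), the set $S(v)$ is non-empty iff the discriminant is non-negative and the vertex $\E[vG]/(1-\delta^{-1})$ is non-negative, equivalently $\E[vG]\ge \sqrt{1-\delta^{-1}}\,\|v\|$, i.e.\ $\mg(v)\le 0$. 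This identifies
$\min_{\alpha\ge 0}\mathsf{M}(\alpha)=\min_{v\in \mh:\mg(v)\le 0}\ml(v)$.

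For the correspondence between minimizers, observe that when the constraint is binding ($\mg(v)=0$) the quadratic has a unique root and $S(v)=\{\|v\|/\sqrt{1-\delta^{-1}}\}$. For the ``$\Leftarrow$'' direction, assume $v_*$ minimizes $\min_{\mg(v)\le 0}\ml(v)$; by \Cref{lm:lagrange_noreg} we have $\mg(v_*)=0$, so setting $\alpha_*:=\|v_*\|/\sqrt{1-\delta^{-1}}$ gives $\alpha_*\in S(v_*)$, hence $\mathsf{M}(\alpha_*)=\check{\mathsf{M}}(\alpha_*)\le \ml(v_*)=\min_{\alpha\ge 0}\mathsf{M}(\alpha)$, so $\alpha_*$ is a minimizer of $\mathsf{M}$. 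For ``$\Rightarrow$'', assume $\alpha_*\ge 0$ minimizes $\mathsf{M}$. Since the feasible set of $\check{\mathsf{M}}(\alpha_*)$ is closed, convex and bounded in $\mh$, a minimizer $v_*$ exists with $\ml(v_*)=\mathsf{M}(\alpha_*)$ and $\mg(v_*)\le 0$. By the previous paragraph's equality of optima, $v_*$ also minimizes $\min_{\mg(v)\le 0}\ml(v)$; then \Cref{lm:lagrange_noreg} again forces $\mg(v_*)=0$, and $\alpha_*\in S(v_*)=\{\|v_*\|/\sqrt{1-\delta^{-1}}\}$ forces $\alpha_*=\|v_*\|/\sqrt{1-\delta^{-1}}$.

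The main obstacle is the boundary case $\alpha_*=0$ (equivalently $v_*=0$), which must be handled carefully because $\check{\mathsf{M}}(0)$ imposes $v=0$ and the above quadratic analysis degenerates. This is fine because $\mg(0)=0$ and $\ml(0)=0$, so the formula $\alpha_*=\|v_*\|/\sqrt{1-\delta^{-1}}$ still holds trivially; nevertheless one should verify explicitly that the only element of $\{v:\mg(v)\le 0\}$ with $\|v\|=0$ is $v=0$ and cross-check consistency with $\mathsf{M}(0)=0$ proved in \Cref{lm:N_check_N}(1). Once this is in place, the equivalence of minimizers (and, invoking uniqueness later in \Cref{lm:unique_noreg}, of the minimizer) follows.
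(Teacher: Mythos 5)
Your proof is correct and takes essentially the same approach as the paper: both swap the order of minimization, reduce to the decomposition of $v$ along $G$ and its orthogonal complement, and invoke the binding of the constraint from \Cref{lm:lagrange_noreg} to pin down $\alpha_*$. The only stylistic difference is that you characterize $S(v)\ne\emptyset$ by analyzing the quadratic in $\alpha$ (discriminant and vertex), whereas the paper obtains the same conclusion by observing that the right-hand side $-(1-\delta^{-1})\alpha^2+2\alpha\E[vG]-\E[vG]^2$ is maximized at $\alpha=\E[vG]/(1-\delta^{-1})$; your explicit observation that $\mg(v_*)=0$ collapses $S(v_*)$ to a singleton makes the identification $\alpha_*=\|v_*\|/\sqrt{1-\delta^{-1}}$ slightly more airtight than the paper's phrasing.
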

\begin{proof}
    The Hilbert space $\mh=\{v:\R^2\to\R, \ \E[v(G, Z)^2]<+\infty\}$ is one-to-one to the product sets $\R\times \mh^\perp$ where $\mh^\perp =\{v\in\mh: \E[vG]=0\}$ is the orthogonal complement of $G$: the bijection map is given by 
    $$
    \mh\ni v\mapsto (\E[vG], \proj(v))\in \R\times \mh^\perp, \quad \R\times \mh^\perp \ni (t, u)\mapsto tG+u\in \mh
    $$
    where $\proj(v)=v-\E[vG] G$ is the projection onto the orthogonal complement $\mh^\perp$. Using the Pythagorean theorem
    $\E[v^2] = \E[vG]^2 + \E[\proj(v)^2]$, the constraint $\|v-\alpha G\|\le \alpha/\sqrt{\delta}$ is equivalent to
    $$
     \|\proj(v)\|^2 \le - (1-\delta^{-1})\alpha^2 + 2\alpha \E[vG] - \E[vG]^2. 
    $$
    Using the identity $\mathsf{M}(\alpha)= \check{\mathsf{M}}(\alpha) := \min_{v\in\mh: \|v-\alpha G\| \le \alpha/\sqrt{\delta}} \ml(v)$ by \Cref{lm:N_check_N}-(2), it holds that
    \begin{align*}
        \min_{\alpha\ge 0} \mathsf{M}(\alpha) 
        &= \min_{\alpha\ge 0} \min_{t\in\R} \min_{u\in \mh^\perp: \|u\|^2 \le - (1-\delta^{-1})\alpha^2 + 2\alpha t - t^2} \ml(tG + u)\\
        &= \min_{t\in\R} \min_{\alpha\ge 0} \min_{u\in \mh^\perp: \|u\|^2 \le - (1-\delta^{-1})\alpha^2 + 2\alpha t - t^2} \ml(tG + u)
    \end{align*}
    Since the upper bound $- (1-\delta^{-1})\alpha^2 + 2\alpha t - t^2$ of the norm of $u\in \mh^\perp $ is maximized at $\alpha = t/(1-\delta^{-1})$ for each $t\in\R$, we obtain
    \begin{align*}
        \min_{\alpha\ge 0} \mathsf{M}(\alpha) &= \min_{t\in\R} \min_{u\in \mh^\perp: \|u\|^2 + t^2 \le t^2/(1-\delta^{-1})} \ml(tG + u)
        = \min_{v\in \mh: \|v\|^2 \le \E[vG]^2/(1-\delta^{-1})} \ml(v)
    \end{align*}
    so that $\min_{\alpha \ge 0} \mathsf{M}(\alpha)$ is minimized at $\alpha=\alpha_*$ if and only if $\min_{v\in\mh: \mg(v)\le 0}\ml(v)$ is minimized at $v=v_*$ with $\alpha_* = \E[v_* G]/(1-\delta^{-1})$. Since the constraint $\mg(v)\le 0$ is binding (see \Cref{lm:lagrange_noreg}), $\mg(v_*)=\|v_*\| - \E[v_*G]/\sqrt{1-\delta^{-1}}=0$ always holds and this gives $\|v_*\|= \alpha_*\sqrt{1-\delta^{-1}}$. 
\end{proof}

\subsection{Proof of \Cref{lm:equivalence_system_noreg}}\label{proof:lm:equivalence_system_noreg}

\begin{lemma}\label{lm:optimization_to_system_noreg}
    Suppose $v_*\in \mh$ solves $\min_{v\in \mh: \mg(v)\le 0} \ml(v)$ with $v_*\ne 0$. Let us take a Lagrange multiplier $\mu_*>0$ satisfying the KKT condition in \Cref{lm:lagrange_noreg}. Define the positive scalars $(\alpha_*, \kappa_*)\in\R_{>0}^2$ by 
    \begin{equation*}
        \alpha_* =  \|v_*\|/\sqrt{1-\delta^{-1}}, \qquad 
    \kappa_* = \|v_*\| / \mu_*. 
    \end{equation*}
    Then $v_*$ takes the form of 
    $$
    v_* = \prox[\kappa_*\loss](\alpha_* G + Z)-Z
    $$
    and 
    $(\alpha_*, \kappa_*)$ solves the nonlinear system of equations:
    \begin{align}
        \alpha^2  &= \delta \E\bigl[
        ((\alpha G + Z) - \prox[\kappa \loss] (\alpha G+ Z))^2\bigr], \label{eq:system_noreg_1}\\
        \alpha &= \delta \E \bigl[((\alpha G + Z) - \prox[\kappa \loss](\alpha G+ Z))G\bigr]. \label{eq:system_noreg_2}
\end{align}
\end{lemma}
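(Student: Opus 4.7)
The plan is to unpack the KKT condition from \Cref{lm:lagrange_noreg} at the nonzero minimizer $v_*$ using the Fréchet differentiability of $\mg$ given in \Cref{lm:lg_basic_noreg}, identify the subdifferential of $\ml$ pointwise in terms of $\partial \loss$, and then read off the proximal identity. The computations for the two equations of the nonlinear system should then be a routine verification using the binding condition $\mg(v_*)=0$.

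First, since $v_*\ne 0$, \Cref{lm:lg_basic_noreg} guarantees that $\mg$ is Fréchet differentiable at $v_*$ with
\[
\nabla\mg(v_*) = \frac{v_*}{\|v_*\|} - \frac{G}{\sqrt{1-\delta^{-1}}},
\]
so $\partial\mg(v_*) = \{\nabla \mg(v_*)\}$ is a singleton. The KKT condition gives some $\xi\in\partial\ml(v_*)$ with $\xi = -\mu_*\nabla\mg(v_*)$. By standard integration-of-subgradients arguments for separable convex integral functionals on $L^2$, $\partial\ml(v_*)$ consists of those $\xi\in\mh$ such that $\xi(G,Z)\in\partial\loss(v_*(G,Z)+Z)$ almost surely (using the Lipschitz and convex hypotheses on $\loss$ to ensure everything is $L^2$-integrable). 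Thus
\[
-\mu_*\Bigl(\frac{v_*}{\|v_*\|}-\frac{G}{\sqrt{1-\delta^{-1}}}\Bigr) \in \partial\loss(v_*+Z) \quad\text{a.s.}
\]
Substituting $\mu_*/\|v_*\|=1/\kappa_*$ and $\mu_*/\sqrt{1-\delta^{-1}}=\alpha_*/\kappa_*$ (both immediate from the definitions of $\alpha_*,\kappa_*$), this rearranges to
\[
(\alpha_* G + Z) - (v_*+Z) \;\in\; \kappa_*\,\partial\loss(v_*+Z) \quad\text{a.s.,}
\]
which is exactly the first-order optimality condition for $v_*+Z = \prox[\kappa_*\loss](\alpha_* G+Z)$, proving the proximal representation of $v_*$.

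Next, from \Cref{lm:lagrange_noreg} the Lagrange multiplier $\mu_*$ is strictly positive, so the constraint is binding: $\mg(v_*)=0$, i.e.\ $\E[v_* G] = \|v_*\|\sqrt{1-\delta^{-1}} = \alpha_*(1-\delta^{-1})$. Combined with $\|v_*\|^2 = \alpha_*^2(1-\delta^{-1})$ from the definition of $\alpha_*$, and using $(\alpha_* G+Z)-\prox[\kappa_*\loss](\alpha_* G+Z) = \alpha_* G - v_*$, the second equation evaluates to
\[
\delta\,\E[(\alpha_* G - v_*)G] = \delta(\alpha_* - \E[v_* G]) = \delta(\alpha_* - \alpha_*(1-\delta^{-1})) = \alpha_*,
\]
and the first equation evaluates to
\[
\delta\,\E[(\alpha_* G - v_*)^2] = \delta(\alpha_*^2 - 2\alpha_*\E[v_* G] + \|v_*\|^2) = \delta\alpha_*^2\bigl(1 - 2(1-\delta^{-1}) + (1-\delta^{-1})\bigr) = \alpha_*^2,
\]
completing the verification.

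The only step that requires any real care is the pointwise characterization of $\partial\ml(v_*)$; everything else is direct algebra once the KKT equation and the binding of the constraint are in hand. I would state the subdifferential characterization as a short preliminary lemma (it follows from the Lipschitz assumption on $\loss$ via Rockafellar's interchange of subdifferential and integral for normal convex integrands on $L^2$), which I anticipate being the only non-mechanical ingredient of the proof.
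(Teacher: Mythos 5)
Your proof is correct and takes essentially the same route as the paper's: use Fr\'echet differentiability of $\mg$ at $v_*\ne0$, plug in the definitions of $(\alpha_*,\kappa_*)$ to rewrite the KKT inclusion as $\kappa_*^{-1}(\alpha_* G - v_*)\in\partial\ml(v_*)$, extract the proximal identity, and then verify the two equations by direct algebra using the binding condition $\mg(v_*)=0$ (which gives $\E[v_*G]=\alpha_*(1-\delta^{-1})$ and $\|v_*\|^2=\alpha_*^2(1-\delta^{-1})$). The only stylistic divergence is in how you obtain the proximal form: you invoke Rockafellar's interchange theorem to characterize $\partial\ml(v_*)$ pointwise as $\{\xi:\xi\in\partial\loss(v_*+Z)\ \text{a.s.}\}$, while the paper avoids that citation by observing that the KKT inclusion means $v_*$ minimizes $v\mapsto\ml(v)+\E[(v-\alpha_*G)^2/(2\kappa_*)]$ over $\mh$ and that this minimization can be performed inside the expectation (the pointwise minimizer is strictly-convex-unique and lies in $\mh$ by the Lipschitz property). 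Both are standard and equivalent; your version is cleaner to state but leans on a nontrivial external theorem, whereas the paper's is slightly more self-contained. If you keep the Rockafellar route, you should make the one-line check that the integrand $(v,g,z)\mapsto\loss(v+z)-\loss(z)$ is a normal convex integrand and that the set of a.s.\ selections is nonempty in $\mh$ (both immediate from the Lipschitz hypothesis), since these are the hypotheses of the interchange theorem.
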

\begin{proof}
Since $\mg$ is Fr\'echet differentiable at $v=v_*\ne 0$ (see \Cref{lm:lg_basic_noreg}) and the constraint $\mg(v)\le 0$ is binding at $v=v_*$ (see \Cref{lm:lagrange_noreg})
 we have 
    $$
    - \mu_* \nabla \mg(v_*) \in \partial \ml(v_*) \quad  \text{and} \quad \mg(v_*) = 0 \quad \text{where} \quad  \nabla\mg(v_*) = {v_*}/{\|v_*\|} - {G}/{\sqrt{1-\delta^{-1}}} 
    $$
    By the definition of $(\alpha_*, \kappa_*)$, the condition $- \mu_* \nabla \mg(v_*) \in \partial \ml(v_*)$ reads to 
    $$
    \partial \ml(v_*) \ni -\mu_* \nabla \mg(v_*) = \kappa_*^{-1} (v_* - \alpha_* G). 
    $$
    This means that $v_*$ also minimizes the convex function:
    $$
    \mh \ni v \mapsto 
        \ml(v) + \E\Bigl[\frac{(v-\alpha_*G)^2}{2\kappa_*}\Bigr] = \E\Bigl[\loss(v+Z)-\loss(Z) + \frac{(v-\alpha_*G)^2}{2\kappa_*}\Bigr].
    $$
    Since the minimization can be performed inside the expectation, we have 
    $$
    v_* = \prox[\kappa_*\loss](\alpha_* G + Z) - Z \in \mh. 
    $$
    Note that the definition $\alpha_* = \|v_*\|_2/ \sqrt{1-\delta^{-1}} 
    $ and the binding condition $\mg(v_*)=\|v_*\|_2- \E[v_* G]/\sqrt{1-\delta^{-1}} = 0$ yield
    $$
    \E[v_* G] = \sqrt{1-\delta^{-1}} \|v_*\|_2 =  \alpha_*(1-\delta^{-1}), \quad \E[v_*^2] = \alpha_*^2(1-\delta^{-1}).
    $$ 
    Thus, we have
    \begin{align*}
    \E[(\alpha_*G-v_*)G] &= \alpha_* -\E[v_* G] = \alpha_* -(1-\delta^{-1})\alpha_* = \alpha_* \delta^{-1} \\
    \E[(\alpha_* G- v_*)^2] 
    &=
    \|v_*\|_2^2 + \alpha_*^2 - 2\alpha_* \E[v_*G]
    =
    \alpha_*^2 (1-\delta^{-1}) + \alpha_*^2 - 2\alpha_*^2 (1-\delta^{-1})
    = \alpha_*^2 \delta^{-1}.
    \end{align*}
    Since $v_*$ $= \prox[\kappa_*\loss](\alpha_* G + Z) - Z$, the above display means that $(\alpha_*, \kappa_*)$  satisfies \eqref{eq:system_noreg_1}-\eqref{eq:system_noreg_2}. This completes the proof. 
\end{proof}

\begin{lemma}\label{lm:system_to_optimization_noreg} If $(\alpha_*, \kappa_*)\in \R_{>0}^2$ solves the nonlinear system of equations \eqref{eq:system_noreg_1}-\eqref{eq:system_noreg_2}, then $v_*\in\mh$ defined by 
    $$
    v_* = \prox[\kappa_*\loss](\alpha_* G + Z)-\loss(Z)
    $$
    satisfies $\|v_*\|= \alpha_*\sqrt{1-\delta^{-1}}>0$ and 
     the KKT condition \eqref{eq:KKT_noreg} with the Lagrange multiplier $\mu_* =\alpha_* \sqrt{1-\delta^{-1}}/\kappa_*>0$. Thus, $v_*$ solves $\min_{v\in\mh:\mg(v)\le 0}\ml(v)$ with $v_*\ne 0$.  
\end{lemma}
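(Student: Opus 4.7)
The plan is to verify the three KKT conditions of \Cref{lm:lagrange_noreg} directly, which, together with the \emph{if-and-only-if} part of that lemma, immediately yields the conclusion. Throughout, I will use the identity $\alpha_* G + Z - \prox[\kappa_*\loss](\alpha_* G + Z) = \alpha_* G - v_*$ (note the likely typo $-\loss(Z)$ in the statement should be $-Z$), which converts the system \eqref{eq:system_noreg_1}--\eqref{eq:system_noreg_2} into
\begin{align*}
    \E[(\alpha_* G - v_*)^2] = \alpha_*^2/\delta, \qquad \E[G(\alpha_* G - v_*)] = \alpha_*/\delta.
\end{align*}

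\textbf{Step 1: Moment identities and binding constraint.} From the second equation I get $\E[v_* G] = \alpha_*(1-\delta^{-1})$, and substituting into the expansion $\E[(\alpha_* G - v_*)^2] = \alpha_*^2 - 2\alpha_*\E[v_*G] + \E[v_*^2]$ of the first equation yields $\E[v_*^2] = \alpha_*^2(1-\delta^{-1})$. Since $\alpha_*>0$ and $\delta>1$, this gives $\|v_*\| = \alpha_*\sqrt{1-\delta^{-1}}>0$. Plugging into the definition of $\mg$ from \Cref{lm:lg_basic_noreg}, $\mg(v_*) = \|v_*\| - \E[v_*G]/\sqrt{1-\delta^{-1}} = \alpha_*\sqrt{1-\delta^{-1}} - \alpha_*\sqrt{1-\delta^{-1}} = 0$, so the primal feasibility $\mg(v_*)\le 0$ and the complementary slackness $\mu_*\mg(v_*)=0$ hold.

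\textbf{Step 2: Subgradient inclusion.} Since $v_*\ne 0$, \Cref{lm:lg_basic_noreg} tells me $\mg$ is Fr\'echet differentiable at $v_*$ with $\nabla\mg(v_*) = v_*/\|v_*\| - G/\sqrt{1-\delta^{-1}}$, so $\partial\mg(v_*)=\{\nabla\mg(v_*)\}$. With $\mu_* = \alpha_*\sqrt{1-\delta^{-1}}/\kappa_*$, a direct computation using $\|v_*\|=\alpha_*\sqrt{1-\delta^{-1}}$ gives
\begin{align*}
    -\mu_*\nabla\mg(v_*) = -\frac{v_*}{\kappa_*} + \frac{\alpha_* G}{\kappa_*} = \frac{\alpha_* G - v_*}{\kappa_*}.
\end{align*}
The remaining task is to show that this element lies in $\partial\ml(v_*)$ in the Hilbert sense. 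The first-order optimality of $\prox[\kappa_*\loss](\alpha_* G + Z) = v_* + Z$ gives pointwise $(\alpha_* G - v_*)/\kappa_* \in \partial\loss(v_* + Z)$ almost surely. Lifting this to $\mh$: by convexity of $\loss$, for any $h\in\mh$, $\loss(v_*+h+Z)-\loss(v_*+Z) \ge [(\alpha_* G - v_*)/\kappa_*]\cdot h$ a.s., and taking expectation yields $\ml(v_*+h)\ge \ml(v_*) + \langle (\alpha_* G - v_*)/\kappa_*, h\rangle$. The ambient integrability $(\alpha_* G - v_*)/\kappa_*\in\mh$ follows from $\|\loss\|_{\lip}<+\infty$, since any subgradient of a Lipschitz convex function is bounded in absolute value by its Lipschitz constant. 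This establishes $(\alpha_* G - v_*)/\kappa_*\in\partial\ml(v_*)$, so the stationarity condition $-\mu_*\partial\mg(v_*)\cap\partial\ml(v_*)\ne\emptyset$ holds.

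\textbf{Step 3: Conclusion.} All three KKT conditions in \eqref{eq:KKT_noreg} are satisfied, so by the equivalence in \Cref{lm:lagrange_noreg}, $v_*$ solves the constrained optimization problem $\min_{v\in\mh:\mg(v)\le 0}\ml(v)$, with $v_*\ne 0$ and Lagrange multiplier $\mu_*>0$. I expect no serious obstacle: the only non-mechanical point is the passage from the pointwise subgradient relation to the Hilbert subgradient inclusion, and this is standard for Lipschitz convex $\loss$ so that all relevant expectations are finite.
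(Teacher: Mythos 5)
Your proof is correct and follows essentially the same route as the paper: derive the moment identities from the system, deduce $\|v_*\|=\alpha_*\sqrt{1-\delta^{-1}}>0$ and the binding constraint, compute $-\mu_*\nabla\mg(v_*)=(\alpha_*G-v_*)/\kappa_*$, and recognize this as a pointwise subgradient of $\loss(v_*+Z)$ that lifts to a subgradient of $\ml$ in $\mh$. The only difference is cosmetic: the paper asserts $\partial\loss(v_*+Z)\subset\partial\ml(v_*)$ in one line, whereas you spell out the lifting (taking expectations of the pointwise subgradient inequality, and noting integrability via the Lipschitz bound), which is a welcome addition of rigor but not a different argument. You also correctly flag the typo in the statement, where $-\loss(Z)$ should read $-Z$.
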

\begin{proof}
By the definition of $v_*$, the nonlinear system of equations \eqref{eq:system_noreg_1}-\eqref{eq:system_noreg_2} reads to 
$$
\E[(\alpha_* G - v_*)^ 2] = \alpha_*^2 \delta^{-1}, \quad \E[(\alpha_* G - v_*)G] = \alpha_* \delta^{-1}.
$$
Expanding the square inside the expectation, we are left with 
$$
\E[v_* G] = \alpha_* (1-\delta^{-1}), \quad
\E[v_*^2] = (1-\delta^{-1}) \alpha_*^2>0.
$$
Thus, the constraint is binding $\mg(v_*) = 0$ and $v_*\ne 0$. Using the differentiability of the map $\mh\ni v \mapsto \mg(v)$ at $v\ne 0$ (see \Cref{lm:lg_basic_noreg}), it holds that 
$$
-\frac{\alpha_* \sqrt{1-\delta^{-1}}}{\kappa_*} \cdot \nabla\mg(v_*) = -\frac{\|v_*\|}{\kappa_*} \bigl(
\frac{v_*}{\|v_*\|} - \frac{G}{\sqrt{1-\delta^{-1}}} 
\bigr) = \kappa_{*}^{-1} (\alpha_* G - v_*).
$$
Noting 
$\kappa_*^{-1}  (\alpha_* G-v_*) \in \partial \loss(v_*+Z)$ and $\partial \loss(v_*+Z)\subset \partial \ml(v_*)$ by the definition of $v_*$ and $\ml(v)=\E[\loss(v+Z)-\loss(Z)]$, the above display gives $-\mu_*\nabla\mg(v_*)\in \partial\ml(v_*)$ with $\mu_*=\alpha_*\sqrt{1-\delta^{-1}}/\kappa_*>0$. This means that the KKT condition \eqref{eq:KKT_noreg} is satisfied with this $\mu_*>0$ and $v_*$ solves the constrained optimization problem $\min_{\mg(v)\le 0}\ml(v)$.  
\end{proof}

\subsection{Proof of \Cref{lm:unique_noreg}}\label{proof:lm:unique_noreg}

\begin{lemma}\label{lm:propotional_noreg}
    If $v_{*}$ and $v_{**}$ solve $\min_{\mg(v)\le 0}\ml(v)$, then there exists a positive scalar $t>0$ such that $v_{*}=tv_{**}$.
\end{lemma}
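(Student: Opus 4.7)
The plan is a midpoint argument combined with the rigidity of equality in the triangle inequality in a Hilbert space. First I would invoke convexity: if $v_*$ and $v_{**}$ both minimize $\ml$ subject to $\mg(v)\le 0$, the midpoint $\bar v := (v_* + v_{**})/2$ is feasible (the constraint set is convex) and satisfies $\ml(\bar v) \le \tfrac12 \ml(v_*) + \tfrac12 \ml(v_{**})$, hence equals the minimum; so $\bar v$ is itself a minimizer. Then I would apply \Cref{lm:lagrange_noreg}, which under \Cref{as:noreg} (in particular $\PP(Z\ne 0)>0$ and $\argmin\loss=\{0\}$) guarantees every minimizer admits a strictly positive Lagrange multiplier and therefore saturates the constraint: $\mg(v_*) = \mg(v_{**}) = \mg(\bar v) = 0$.

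Unfolding $\mg(v) = \|v\| - \E[vG]/\sqrt{1-\delta^{-1}}$ and using the linearity of $v\mapsto \E[vG]$, the three binding identities chain together to give
\begin{equation*}
\|v_* + v_{**}\| = 2\|\bar v\| = \frac{2\E[\bar v G]}{\sqrt{1-\delta^{-1}}} = \frac{\E[v_* G]+\E[v_{**} G]}{\sqrt{1-\delta^{-1}}} = \|v_*\|+\|v_{**}\|.
\end{equation*}
This is equality in the triangle inequality in the Hilbert space $\mh$, equivalent to $\E[v_* v_{**}] = \|v_*\|\cdot\|v_{**}\|$; that is, Cauchy--Schwarz is saturated, so $v_*$ and $v_{**}$ are non-negatively aligned. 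In the generic regime when both are nonzero, I read off $v_* = t v_{**}$ with $t = \|v_*\|/\|v_{**}\| > 0$, as claimed; and if both vanish I take $t=1$.

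The only edge case not immediately delivered is when exactly one of $v_*, v_{**}$ is zero, say $v_* = 0$ and $v_{**} \ne 0$. I would rule this out by using convexity of the minimizer set to conclude that $\lambda v_{**}$ is a minimizer for every $\lambda\in[0,1]$, and then applying \Cref{lm:equivalence_system_noreg}-(1) to the nonzero minimizers $\lambda v_{**}$ (for $\lambda\in (0,1]$) to produce a one-parameter family of positive scalar pairs $(\alpha_\lambda, \kappa_\lambda)$ solving the nonlinear system \eqref{eq:system_noreg_1}-\eqref{eq:system_noreg_2}, with $\alpha_\lambda = \lambda\|v_{**}\|/\sqrt{1-\delta^{-1}}\to 0$. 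Letting $\lambda\to 0^+$ and tracking the residual $(\alpha_\lambda G+Z) - \prox[\kappa_\lambda\loss](\alpha_\lambda G + Z)$ via the Lipschitz bound on $\loss'$ and the nonexpansiveness of $\prox$, the two equations \eqref{eq:system_noreg_1}-\eqref{eq:system_noreg_2} cannot both persist as $\alpha_\lambda\to 0$, yielding the contradiction. The midpoint plus triangle-equality step is essentially automatic once activeness of the constraint is in hand; the main obstacle I expect is this final edge-case analysis, i.e., ensuring that a zero minimizer cannot coexist with a nonzero minimizer.
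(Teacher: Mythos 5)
Your argument is correct on the main path and takes a genuinely different, more elementary route than the paper. The paper's proof passes through \Cref{lm:optimization_to_system_noreg} to represent the nonzero minimizer as $v_*=\prox[\kappa_*\loss](\alpha_*G+Z)-Z$, extracts from the prox first-order condition a pointwise bound, and takes expectations to obtain the affine minorant $\ml(v)-\ml(v_*)\ge\kappa_*^{-1}(-\E[v_*v]+\alpha_*\E[vG])$, which it then saturates at $v=v_{**}$ via Cauchy--Schwarz and the constraint. You bypass the prox representation entirely: convexity of the minimizer set puts the midpoint $\bar v=(v_*+v_{**})/2$ in play, \Cref{lm:lagrange_noreg} forces $\mg$ to bind at all three minimizers, and the special shape of $\mg$ (Hilbert norm minus a linear functional of $v$) turns the three binding identities into $\|v_*+v_{**}\|=\|v_*\|+\|v_{**}\|$, i.e.\ equality in the triangle inequality, which in a Hilbert space is exactly Cauchy--Schwarz saturation. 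This is shorter and isolates what is really doing the work (the linearity of $v\mapsto\E[vG]$ plus constraint binding); the trade-off is that you do not obtain the quantitative affine lower bound that the paper carries forward. On the edge case of exactly one of $v_*,v_{**}$ being zero: you are right that neither argument resolves it inside this lemma --- the paper's closing line ``the equality case gives $v_*=tv_{**}$ for some $t\ge 0$'' is unjustified when $v_{**}=0$ --- but the paper sidesteps it downstream in \Cref{lm:unique_v_*} by replacing a zero minimizer with the nonzero midpoint $v_{**}/2$ before ever invoking proportionality, so the lemma is only applied in the both-nonzero regime. You could do exactly the same and drop the limiting argument you sketch; as it stands that sketch would need extra care, since $\kappa_\lambda$ has no a priori reason to converge as $\lambda\to 0^+$, so ``tracking the residual'' via Lipschitz bounds is not as automatic as it may appear.
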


\begin{proof}
If $v_*=v_{**}=0$ then the claim is obvious, so we consider the case where at least $v_*$ or $v_{**}$ is nonzero and assume $v_*\ne 0$ without loss of generality. Since $v_*$ is nonzero, \Cref{lm:optimization_to_system_noreg} implies that $v_*$ takes the form of $v_*=\prox[\kappa_*\loss](\alpha_* G + Z)-Z$ where $(\alpha_*, \kappa_*)\in\R_{>0}^2$ is the associated solution to the nonlinear system satisfying $\E[(v_*-\alpha_* G)^2]=\alpha_*\delta^{-1}$ and $\E[v_*^2]=\alpha_*^2 (1-\delta^{-1})$. 
By the definition of proximal operator, it holds that $\kappa_*^{-1}(\alpha_* G - v_*)\in \partial\loss(v_*+Z)$, which means that 
pointwise, the scalar realization of
$v_*$ also minimizes the convex function 
$$
x \mapsto  \loss(x+Z) + (2\kappa_*)^{-1}((x-\alpha_* G)^2 - (x-v_*)^2),
$$
since the derivative of $x\mapsto (x-v_*)^2$ is $0$ at $v_*$. Thus,
again pointwise, we have 
$$
\loss(v+Z) + (2\kappa_*)^{-1}\bigl[
    (v-\alpha_* G)^2 - (v-v_*)^2
\bigr] \ge \loss(v_*+Z) + (2\kappa_*)^{-1}(v_*-\alpha_* G)^2,
$$
for all $v\in\mh$.
Taking expectations,
$\ml(v) - \ml(v_*)=\E[\loss(v+Z)-\loss(v_*+Z)]$ is bounded from below as 
$$
\ml(v) - \ml(v_*)
\ge 
(2\kappa_*)^{-1} \E[(v_*-\alpha_* G)^2 + (v-v_*)^2 - (v-\alpha_* G)^2].
$$
Noting that $\E[v^2]$ cancel out,
expanding the square and using $\E[(v_*-\alpha_* G)^2]=\alpha_*\delta^{-1}$ and $\E[v_*^2]=\alpha_*^2 (1-\delta^{-1})$, we are left with 
\begin{align*}
    \ml(v)-\ml(v_*)\ge \kappa_*^{-1} \bigl(-\E[v_* v] + \alpha_* \E[v G]\bigr). 
\end{align*}
Applying this inequality with $v=v_{**}$, noting that $v_*$ and $v_{**}$ are minimizer of  $\min_{\mg(v)\le 0}\ml(v)$ so that $\ml(v_{**})=\ml(v_*)$, 
\begin{align*}
    0 & \ge \kappa_*^{-1} \bigl(-\E[v_* v_{**}] + \alpha_* \E[v_{**} G]\bigr) \\
    &\ge \kappa_*^{-1}\bigl(
        - \|v_*\| \|v_{**}\|
        +\alpha_* \E[v_{**} G]\bigr)
          &&\text{by the Cauchy--Schwarz for $\E[v_*v_{**}]\le \|v_*\|\|v_{**}\|$}\\
    &\ge  \kappa_*^{-1}\bigl(
   -\|v_*\| \|v_{**}\|
      + \alpha_* \|v_{**}\|\sqrt{1-\delta^{-1}}\bigr)
      &&\text{by $\mg(v_{**}) = \|v_{**}\| - \E[v_{**}G]/\sqrt{1-\delta^{-1}}\le 0$,}\\
      &=0
      &&\text{by $\|v_*\| =\alpha_*\sqrt{1-\delta^{-1}}$}
\end{align*}
Thus, the inequality $\E[v_{*}v_{**}]\le \|v_*\|\|v_{**}\|$ must hold with equality. 
The equality case gives $v_{*}= t v_{**}$ for some $t\ge 0$, where $t$ must be strictly positive as $v_*\ne 0$. This finishes the proof. 
\end{proof}

\begin{lemma}[Uniqueness of solution $v_*$]\label{lm:unique_v_*}
    If $v_{*}$ and $v_{**}$ solve $\min_{\mg(v)\le 0}\mg(v)$ then we must have $v_{*}=v_{**}$. 
\end{lemma}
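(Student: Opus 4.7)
The plan is to combine \Cref{lm:propotional_noreg} with the explicit proximal form of minimizers from \Cref{lm:optimization_to_system_noreg} to derive a functional identity, which the assumptions $\argmin\loss=\{0\}$ and $\PP(Z\ne 0)>0$ then force to be trivial. If $v_*=v_{**}=0$ there is nothing to prove, so after relabeling assume $v_*\ne 0$. \Cref{lm:propotional_noreg} yields some $t>0$ with $v_*=tv_{**}$, whence $v_{**}\ne 0$, and since the binding constraint gives $\alpha_\bullet=\|v_\bullet\|/\sqrt{1-\delta^{-1}}$ we also have $\alpha_*=t\alpha_{**}$. The task reduces to showing $t=1$.

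Applying \Cref{lm:optimization_to_system_noreg} to both minimizers yields positive $\kappa_*,\kappa_{**}$ and the almost sure representations $v_\bullet+Z=\prox[\kappa_\bullet\loss](\alpha_\bullet G+Z)$ for $\bullet\in\{*,**\}$. Substituting $v_*=tv_{**}$ and $\alpha_*=t\alpha_{**}$ produces the a.s.\ functional identity
$$
\prox[\kappa_*\loss](t\alpha_{**}G+Z)\;=\;t\prox[\kappa_{**}\loss](\alpha_{**}G+Z)+(1-t)Z.
$$
Both sides are continuous in the $G$-argument with $Z$ held fixed. Since $G\sim N(0,1)$ is independent of $Z$, Fubini combined with the fact that two continuous functions on $\R$ agreeing Lebesgue-a.e.\ agree everywhere upgrades this to the following pointwise statement: for $\law(Z)$-a.e.\ $z\in\R$, the identity holds for \emph{every} $g\in\R$ after substituting $(G,Z)\leftarrow(g,z)$.

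Evaluating the pointwise identity at $g=-z/\alpha_{**}$ gives the contradiction. The right-hand side becomes $t\prox[\kappa_{**}\loss](0)+(1-t)z=(1-t)z$, using $\prox[\kappa\loss](0)=0$, a direct consequence of $\argmin\loss=\{0\}$. The left-hand side becomes $\prox[\kappa_*\loss]((1-t)z)$. Thus $(1-t)z$ is a fixed point of $\prox[\kappa_*\loss]$ for $\law(Z)$-a.e.\ $z$, and since the fixed-point set of $\prox[\kappa\loss]$ coincides with $\argmin\loss=\{0\}$ (the first-order condition $y=\prox[\kappa\loss](y)$ becomes $0\in\partial\loss(y)$), we obtain $(1-t)z=0$ for $\law(Z)$-a.e.\ $z$. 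The hypothesis $\PP(Z\ne 0)>0$ then forces $t=1$ and hence $v_*=v_{**}$.

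The main subtlety is the upgrade from an a.s.\ identity of random variables to a pointwise identity valid on the zero-probability slice $\{g=-z/\alpha_{**}\}$; this is handled cleanly by continuity of the proximal operator. Importantly, only the assumption $\argmin\loss=\{0\}$, and not strict convexity of $\loss$, is used, which is essential since the paper treats losses such as the $\ell_1$ and Huber losses.
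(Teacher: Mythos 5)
Your proof is correct, and it takes a genuinely different route from the paper's. Both proofs start identically, reducing via \Cref{lm:propotional_noreg} and \Cref{lm:optimization_to_system_noreg} to the almost-sure functional identity
$\prox[\kappa_*\loss](t\alpha_{**}G+Z) = t\prox[\kappa_{**}\loss](\alpha_{**}G+Z)+(1-t)Z$, but they diverge from there. The paper rewrites this as an identity between proximal \emph{residuals}, uses the sign structure $0\le\prox[\kappa\loss](x)\le x$ for $x>0$ (and symmetrically for $x<0$) to show that on the event $\{t\alpha_*G+Z<0,\ \alpha_*G+Z>0\}$ the two sides have opposite signs and hence must both vanish, derives a contradiction from that vanishing, concludes the event is null, and finally constructs explicit $K$-intervals exploiting $\PP(Z>0)>0$ or $\PP(Z<0)>0$ and the $\sqrt{t}$-scaling to show the event is nonnull unless $t=1$. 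Your proof instead fixes $Z=z$, upgrades the Lebesgue-a.e.-in-$g$ identity to an everywhere-in-$g$ identity by continuity (nonexpansiveness) of the proximal map together with Fubini, evaluates at the collapsing point $g=-z/\alpha_{**}$, and reads off a fixed-point equation $\prox[\kappa_*\loss]((1-t)z)=(1-t)z$. Since the fixed-point set of $\prox[\kappa\loss]$ is $\argmin\loss=\{0\}$, this forces $(1-t)z=0$ for $\law(Z)$-a.e.\ $z$, and $\PP(Z\ne0)>0$ gives $t=1$. Your argument is shorter and avoids the paper's casework on the sign of $Z$ and the ad hoc choice of $K$-intervals; it also makes the role of $\argmin\loss=\{0\}$ transparent (both through $\prox[\kappa\loss](0)=0$ and through the fixed-point characterization), and it uses only that $G$ has a density rather than the specific form of the Gaussian. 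One minor imprecision: the relation $\alpha_\bullet=\|v_\bullet\|/\sqrt{1-\delta^{-1}}$ is the \emph{definition} of $\alpha_\bullet$ in \Cref{lm:optimization_to_system_noreg}, not a consequence of the binding constraint (which gives $\E[v_\bullet G]=\sqrt{1-\delta^{-1}}\,\|v_\bullet\|$); this does not affect the conclusion $\alpha_*=t\alpha_{**}$.
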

\begin{proof}
    We proceed by contradiction. 
    Suppose that there exist some distinct minimizers $v_*\ne v_{**}$. We may assume $v_*$ and $v_{**}$ are nonzero. Indeed, if one of them is $0$, say $v_{*}=0$, then the other one $v_{**}$ must be nonzero, and by the convexity of the objective function $\ml$ and the constraint set $\{\mg(v)\le 0\}$, the intermediate point $v_{***}=(v_* + v_{**})/2=v_{**}/2$ is also a minimizer with $v_{***}\ne 0$ and $v_{***}\ne v_{*}$.

    Since $v_*$ and $v_{**}$ are nonzero, \Cref{lm:optimization_to_system_noreg} implies that $v_*$ and $v_{**}$ take the form 
    $$
    v_* = \prox[\kappa_*\loss](\alpha_*G +Z)-Z, \quad v_{**} = \prox[\kappa_{**}\loss](\alpha_{**}G +Z)-Z
    $$
    where $(\alpha_*, \kappa_*)$ and $(\alpha_{**}, \kappa_{**})$ are associated positive scalars satisfying \eqref{eq:system_noreg_1}-\eqref{eq:system_noreg_2}. 
     By \Cref{lm:propotional_noreg}, $v_{*}$ and $v_{**}$ are proportional in the sense that $v_{**}=t v_*$ for a positive scalar $t$. 
    Since $\E[(v_{**})^2]=\alpha_{**}^2 (1-\delta^{-1})$ and $\E[v_*^2]=\alpha_*^2 (1-\delta^{-1})$,
    we must have $\alpha_{**}=t \alpha_*$ and 
    \begin{equation}
        \label{eq:prox_rewritten}
        \prox[\kappa_{**} \loss] (t\alpha_{*} G + Z) - Z = t\prox[\kappa_*\loss](\alpha_* G + Z) - tZ, 
    \end{equation}
    or equivalently
    \begin{equation}
        \label{eq:partial_prox_identity}
        \prox[\kappa_{**}\loss](t\alpha_{*} G + Z)-(t\alpha_{*} G + Z) = t \bigl\{\prox[\kappa_{*}\loss](\alpha_* G + Z)- (Z+\alpha_* G) \bigr\}.
    \end{equation}
    Since $f(x)=\prox[\kappa_*\loss](x)$ is firmly non-expansive
    and $f(0)=0$ by $\{0\} = \argmin_x\loss(x)$, we have 
    $$
    f(x) x = \bigl(f(x) - f(0)\bigr) (x-0) \ge (f(x)-f(0))^2 =f(x)^2, 
    $$
    or equivalently $f(x)(x-f(x))\ge 0$ for all $x\in \R$. This gives that $0 \le f(x)\le x$ if $x>0$ and $x\le f(x) \le 0$ if $x<0$. 
    This also applies to $x\mapsto \prox[\kappa_{**}\loss](x)$.
    Thus, in the event
    \begin{equation}
    \{ \alpha_* t G + Z < 0\}\cap \{\alpha_* G + Z > 0 \}.
    \label{event}
    \end{equation}
    the left-hand side of \eqref{eq:partial_prox_identity}
    is non-negative while the right-hand side is non-positive
    so both equal 0, i.e,
    $$
    0\in \partial \loss(\prox[\kappa_*\loss](\alpha_* G+Z)),
    \qquad
    0\in \partial \loss(\prox[\kappa_{**}\loss](t\alpha G+Z)).
    $$
    Since $\argmin\loss=\{0\}$, this implies
    $0=\prox[\kappa_*\loss](\alpha_* G+Z)=\prox[\kappa_{**}\loss](t\alpha_* G+Z)$ under the event \eqref{event}.
    In turn, this means that the left-hand side
    of \eqref{eq:partial_prox_identity} is strictly positive whereas the right-hand side is strictly negative: a contradiction.
    This contradiction means that the event \eqref{event} is empty. Exchanging the role of the LHS and RHS in the above argument also implies that $\{ \alpha_* t G + Z > 0\}\cap\{\alpha_* G + Z < 0 \}$ is empty. Thus, we must have
    \begin{equation}\label{eq:event_null}
        \PP\bigl( \alpha_* t G + Z < 0 \text{ and } \alpha_* G + Z > 0\bigr) = \PP\bigl(\{\alpha_* t G + Z > 0 \text{ and } \alpha_* G + Z < 0\}\bigr) = 0.    
    \end{equation}
    Now we verify that \eqref{eq:event_null} does not hold
    unless $t=1$. We proceed by contradiction. Switching the role of $v_*$ and $v_{**}$,  we assume $t>1$. By \Cref{as:noreg}-(2), at least 
    $\PP(Z>0)$ or $\PP(Z<0)$ is strictly positive. 
    If $\PP(Z>0)>0$, then we can always find 
    $K>0$ such that $\PP(Z\in(K,\sqrt t K))>0$, and the event
    $\{Z\in(K,\sqrt t K),-K< \alpha_* G < -\tfrac{K}{\sqrt t}\}$
    has a positive probability by the independence of $(Z,G)$ and $G\sim N(0,1)$. In the event $\{Z\in(K,\sqrt t K),-K< \alpha_* G < -\tfrac{K}{\sqrt t}\}$, we have
    \begin{align*}
        \alpha_* t G + Z  < \alpha_* t G + \sqrt t K < -\sqrt t K + \sqrt t K =0,
        \qquad
        \alpha_* G + Z> \alpha_* G + K > -K + K =0,
    \end{align*}
    so that $\PP( \alpha_* t G + Z < 0 \text{ and } \alpha_* G + Z > 0) > 0$. By the same argument, if $\PP(Z<0) >  0$, taking $K>0$ such that $\PP(Z \in (-\sqrt{t} K, - K))>0$, we have
    $$
    \PP( \alpha_* t G + Z> 0 \text{ and } \alpha_* G + Z < 0) \ge \PP\bigl(Z \in (-\sqrt{t} K, - K)\bigr)\PP\bigl(G\in (\tfrac{K}{\sqrt{t}\alpha_*}, \tfrac{K}{\alpha_*})\bigr) > 0.
    $$
    Thus, $t\ne 1$ leads to a contradiction with \eqref{eq:event_null}, so $t=1$ must hold. 
\end{proof}

\begin{lemma}[Uniqueness of Lagrange multiplier]\label{lm:unique_lagrange_noreg}
If $v_{*} \ne 0$ then the Lagrange multiplier $\mu_{*}>0$ satisfying the KKT condition \eqref{eq:KKT_noreg}  is also unique.  
\end{lemma}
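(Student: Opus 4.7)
The plan is to argue by contradiction, translating two distinct positive Lagrange multipliers into the coincidence of two proximal operators on a random variable whose distribution is absolutely continuous, and then ruling this out by a Lebesgue measure argument. Suppose two positive multipliers $\mu_*^{(1)}\ne\mu_*^{(2)}$ both satisfy \eqref{eq:KKT_noreg} at $v_*\ne 0$. Applying \Cref{lm:optimization_to_system_noreg} to each, the scalars $\kappa_*^{(i)}:=\|v_*\|/\mu_*^{(i)}>0$ share the common $\alpha_*:=\|v_*\|/\sqrt{1-\delta^{-1}}>0$ and satisfy
$$
v_* + Z = \prox[\kappa_*^{(i)}\loss](\alpha_* G + Z) \quad \text{almost surely}, \qquad i=1,2.
$$
In particular, writing $X:=\alpha_* G + Z$, we have $\prox[\kappa_*^{(1)}\loss](X)=\prox[\kappa_*^{(2)}\loss](X)$ almost surely, with $\kappa_*^{(1)}\ne\kappa_*^{(2)}$; without loss of generality $\kappa_*^{(1)}>\kappa_*^{(2)}>0$.

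Next I would characterize the deterministic set $T\subseteq\R$ on which $\prox[\kappa_1\loss]$ and $\prox[\kappa_2\loss]$ coincide for fixed $\kappa_1>\kappa_2>0$. If $u$ denotes the common value at $x\in T$, the proximal identity $x-u\in\kappa_i\partial\loss(u)$ for $i=1,2$ forces either $x=u$, in which case $0\in\partial\loss(u)$ combined with $\argmin\loss=\{0\}$ yields $u=0$ and thus $x=0$; or else $\partial\loss(u)$ contains the two distinct values $(x-u)/\kappa_i$, making $u$ a nondifferentiable point of $\loss$. Letting $S$ denote the countable set of such points and writing $\partial\loss(u_k)=[a_k,b_k]$ for $u_k\in S$, we obtain
$$
T \subseteq \{0\}\cup\bigcup_{u_k\in S}\bigl(u_k+\kappa_*^{(2)}[a_k,b_k]\bigr),
$$
whose Lebesgue measure is at most $\kappa_*^{(2)}\sum_k(b_k-a_k)\le 2\kappa_*^{(2)}\|\loss\|_{\lip}<\infty$, since the jumps of the nondecreasing function $\loss'$ valued in $[-\|\loss\|_{\lip},\|\loss\|_{\lip}]$ sum to at most its total variation.

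To conclude, since $G\sim N(0,1)$ is independent of $Z$ and $\alpha_*>0$, the random variable $X=\alpha_* G + Z$ has a Lebesgue density strictly positive on all of $\R$ (being the convolution of a nondegenerate Gaussian density with the law of $Z$). Consequently $\PP(X\in A)<1$ for every Borel $A\subset\R$ of finite Lebesgue measure, and in particular $\PP(X\in T)<1$, contradicting $\PP(X\in T)=1$ established above. Therefore $\mu_*^{(1)}=\mu_*^{(2)}$, proving uniqueness. The main technical point is the Lebesgue-measure bound on $T$, which relies on both convexity (monotonicity of $\loss'$) and Lipschitz continuity (boundedness of $\loss'$) to control the total jump size of $\loss'$; without either hypothesis, $T$ could a priori have infinite Lebesgue measure and the contradiction would not go through.
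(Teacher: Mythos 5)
Your proof is correct, but it takes a genuinely different route from the paper's. Both arguments begin the same way: two positive multipliers $\mu_*^{(1)}\ne\mu_*^{(2)}$ give, via \Cref{lm:optimization_to_system_noreg}, two scalars $\kappa_*^{(1)}\ne\kappa_*^{(2)}$ with $\prox[\kappa_*^{(1)}\loss](Y)=\prox[\kappa_*^{(2)}\loss](Y)$ a.s., where $Y=\alpha_*G+Z$. From there the two proofs diverge. The paper argues via the tail: on the event $\{Y>n+\max(\kappa_*^{(1)},\kappa_*^{(2)})\|\loss\|_{\lip}\}$, which has positive probability because $Y$ has a continuous full-support distribution, one has $\prox[\kappa_*^{(i)}\loss](Y)\ge n$; combining with monotonicity of the subdifferential and the common element $Y-\prox[\kappa_*^{(1)}\loss](Y)$ in the two subdifferential sets, then sending $n\to\infty$ with $\lim_n\min\partial\loss(n)=L_+:=\sup_{x>0}\max\partial\loss(x)\in(0,\|\loss\|_{\lip}]$, yields $\kappa_*^{(2)}L_+\le\kappa_*^{(1)}L_+$ and then $\kappa_*^{(1)}=\kappa_*^{(2)}$ by symmetry. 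Your proof is instead a global Lebesgue-measure argument: you characterize the deterministic agreement set $T=\{x:\prox[\kappa_*^{(1)}\loss](x)=\prox[\kappa_*^{(2)}\loss](x)\}$ as contained in $\{0\}$ union a countable family of intervals $u_k+\kappa_*^{(2)}[a_k,b_k]$ indexed by nondifferentiable points of $\loss$, bound $|T|\le\kappa_*^{(2)}\sum_k(b_k-a_k)\le 2\kappa_*^{(2)}\|\loss\|_{\lip}<\infty$ via the total-variation bound on the jumps of $\loss'_+$, and then obtain a contradiction from the everywhere-positive Lebesgue density of $Y$. Both proofs invoke exactly the same three hypotheses — $\|\loss\|_{\lip}<\infty$, $\argmin\loss=\{0\}$, and the continuous full-support law of $Y$ — but your approach isolates a cleaner intermediate fact (distinct proximal parameters for a convex Lipschitz function can agree only on a set of finite Lebesgue measure), while the paper's is more pointwise, exploiting only the behavior of the subgradient at $+\infty$.
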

\begin{proof} 
Let $\mu_{*}$ and $\mu_{**}$ be Lagrange multipliers. By \Cref{lm:optimization_to_system_noreg}, we must have 
$$
v_{*}=\prox[\kappa_{*}\loss](\alpha_{*} G + Z)-Z = \prox[\kappa_{**}\loss](\alpha_{*} G +Z)-Z
$$
where $\alpha_*=\|v_*\|/\sqrt{1-\delta^{-1}}$, $\kappa_{*}=\|v_*\|/\mu_{*}$ and $\kappa_{**}=\|v_*\|/\mu_{**}$. Below we show $\kappa_{*}=\kappa_{**}$. 
Letting $Y=\alpha_* G +Z$, the above display reads to $\prox[\kappa_*\loss](Y) =\prox[\kappa_{**}\loss](Y)$. This means that the intersection  
\begin{align*}
    \kappa_{**} \partial \loss (\prox[\kappa_{**}\loss](Y))
    \cap
    \kappa_* \partial \loss(\prox[\kappa_* \loss](Y))
\end{align*}
is non-empty because it contains the real
$Y-\prox[\kappa_*\loss](Y)=Y-\prox[\kappa_{**}\loss](Y)$.
Thus, we have 
$$
\textstyle
\kappa_{**} \min\partial \loss (\prox[\kappa_{**}\loss](Y))
\le \kappa_* \max\partial \loss (\prox[\kappa_*\loss](Y))
\le \kappa_* \sup_{x>0} \max \partial \loss(x) = \kappa_* L_+
$$
where we have taken $L_+ := \sup_{x>0} \max\partial \loss(x) > 0$. 
Since $\loss$ is convex and Lipschitz, thanks to the monotonicity of the subdifferential
($x<y\Rightarrow \max\partial \loss(x)\le \min\partial\loss(y)$),
$$
\lim_{n\to\infty} \min \partial \loss(n) = \sup_{x>0}\min \partial \loss(x) = \sup_{x>0} \max\loss(x) = L_+>0. 
$$
In the event $\{Y>n+\max\{\kappa_*,\kappa_{**}\}\|\loss\|_{\lip}\}$ which has positive probability
because $Y = \alpha_* G+Z$ has a continuous distribution, we have 
$$
\prox[\kappa_{**}\loss](Y) \ge Y - \kappa_{**}\|\loss\|_{\lip} \ge n + \max\{\kappa_*, \kappa_{**}\} \|\loss\|_{\lip}-\kappa_{**}\|\loss\|_{\lip} \ge n
$$
so that the monotonicity of the subdifferential implies 
$$
\textstyle
\kappa_{**} \min\partial\loss(n) \le \kappa_{**} \min\partial\loss(\prox[\kappa_{**}\loss](Y)) \le \kappa_* L_+
\qquad \text{ for any fixed }n. 
$$ 
Taking $n\to+\infty$ gives $\kappa_{**} L_+\le \kappa_* L_+$ and $\kappa_{**}\le \kappa_*$.
Exchanging the role of $\kappa_*$ and $\kappa_{**}$ in the previous argument
gives the reverse inequality and $\kappa_*=\kappa_{**}$. 
\end{proof}

\subsection{Proof of \Cref{lm:nonzero_noreg}}\label{proof:lm:nonzero_noreg}
We proceed by contradiction. If $v=0\in\mh$ is the minimizer, then \Cref{lm:lagrange_noreg} implies that there exists an associated Language multiplier $\mu>0$ such that $0\in \argmin_{v\in\mh} \ml(v) + \mu \mg(v)$, or equivalently, 
$$
0 \le \E[\loss(v+Z)-\loss(Z)] + \mu \bigl(\E[v^2]^{1/2}-\E[vG]/\sqrt{1-\delta^{-1}}\bigr) \quad \text{for all $v\in\mh$}. 
$$ 
Multiplying the both sides by $\lambda = \sqrt{1-\delta^{-1}}/\mu >0$ and denoting \( f(\cdot)=\lambda (\loss(\cdot + Z)-\loss(Z))\), we have
\begin{equation}\label{eq:ma_minorant}
0 \le \ma(v) : = \E[f(v)] + \E[v^2]^{1/2} \sqrt{1-\delta^{-1}} - \E[vG] \quad\text{for all $v\in \mh$}. 
\end{equation}
Now we parametrize $v\in\mh$ as $v_t = \prox[t f](t G) \in \mh$ for all $t>0$ and show $\ma(v_t) < 0$ for sufficiently small $t>0$. 
Note that $t^{-1} (tG - v_t) \in \partial f(v_t)$ implies
$$
- f(v_t) = f(0) - f(v_t) \ge t^{-1}(tG-v_t) (0-v_t) = -G v_t + t^{-1} v_t^2.
$$
Then, noting that $\E[v_tG]$ is cancelled out, $\ma(v_t)$ is bounded from above as 
\begin{align}\label{eq:ma_upper_bound}
\ma(v_t) &\le \E[Gv_t - t^{-1}v_t^2] + \E[v_t^2]^{1/2} \sqrt{1-\delta^{-1}} - \E[v_tG] \le - t \cdot \|t^{-1} v_t\| \bigl(
\|t^{-1} v_t\| - \sqrt{1-\delta^{-1}} 
\bigr).
\end{align}
Now we identify the limit of $\|v_t/t\|$ as $t\to 0+$. 
Note that the Moreau envelope constructed function
\( t\mapsto \env_f(tG, t) = \frac{1}{2t}(tG-v_t)^2 + f(v_t)\)
has derivative
\[
-\frac{1}{2 t^2}(tG - v_t)^2 + \frac{1}{t}G (tG - v_t)
= \frac 1 2 \Bigl[G^2 - \Bigl(\frac{v_t}{t} \Bigr)^2\Bigr],
\]
which is increasing in \( t \) because the Moreau envelope $\env_f(x,y)$ is jointly convex in $(x, y)\in \R\times \R_{>0}$ (cf.  \cite[Lemma D.1]{thrampoulidis2018precise}). 
This means that \( v_t^2/t^2 \) is non-increasing in \( t \) and hence 
\( v_t^2/t^2 \) must have a non-negative limit as \( t\to 0+ \) by the monotone convergence theorem. Now we identify the limit of $t^{-1}v_t$:
\begin{itemize}
\item 
If \( G\in \partial f(0) \) then \( v_t = 0 \) satisfies
\( v_t/t + \partial f(v_t) \ni G \).
\item
If \( G>\max \partial f(0) \)
then \( v_t/t > 0 \) eventually so
\( \partial f(v_t) \to \max \partial f(0) \).
\item
If \( G<\min\partial f(0) \)
then \( v_t/t < 0 \) eventually so
\( \partial f(v_t) \to \min \partial f(0) \).
\end{itemize}
Thus, the square of $(v_t/t)\in G-\partial f(v_t)$ converges pointwise as follows
\[
(v_t/t)^2
\to
(G-\max {\partial} f(0))_+^2
+
(\min {\partial} f(0)-G)_+^2.
\]
Furthermore,
$(v_t/t)^2$ is uniformly bounded from above by the integrable random variable
$(|G| + \lambda \|\loss\|_{\lip})^2$ since $f(\cdot)=\lambda(\loss(\cdot + Z)-\loss(Z))$ is $\lambda\|\loss\|_{\lip}$-Lipschitz and $G-t^{-1}v_t\in\partial f(v_t)$.   
Thus, the dominated convergence theorem yields
\begin{align*}
\lim_{t\to +0} \E[(v_t/t)^2]& = \E[(G- \max \partial f(0))_+^2 + (\min\partial f(0)-G)_+^2] \\
& = \E[(G- \lambda \max \partial\loss(Z))_+^2 + (\lambda \min\partial \loss(Z)-G)_+^2] && \text{ by $f(\cdot)=\lambda(\loss(\cdot+Z)-\loss(Z))$}
\\
&= \E[\dist(G, \lambda \partial \loss(Z))^2],
\end{align*}
where we have used $\dist(\cdot, S) = \inf_{x\in\R} |\cdot-x|$ for the last equality. 
Combined with the upper bound of $\ma(v_t)$ in \eqref{eq:ma_upper_bound}, we obtain
\begin{align*}
\ma(v_t) &\le - t \|\dist(G, \lambda \partial \loss(Z))\| \bigl(\|\dist(G, \lambda \partial \loss(Z))\| - \sqrt{1-\delta^{-1}}\bigr) + o(t).
\end{align*}
Here, the leading term is strictly positive since 
$\|\dist(G, \lambda \partial \loss(Z))\| \ge \inf_{t>0} \|\dist(G, t \partial \loss(Z))\| > \sqrt{1-\delta^{-1}}$ by the assumption $\delta<\delta_{\mathsf{perfect}}$. Thus, there exists some $t_0 > 0$ such that 
$\ma(v_{t_0}) < 0$, a contradiction with \eqref{eq:ma_minorant}. Therefore, $0\in\mh$ cannot be the minimizer of $\min_{\mg(v)\le 0}\ml(v)$. 

\subsection{Proof of \Cref{lm:coercive_noreg}}\label{proof:lm:coercive_noreg}

\begin{lemma}\label{lm:coercive_noreg_detail}
    If $\delta>1$ and $\loss:\R\to\R$ is coercive in the sense of \eqref{eq:coercive_condition},  
     $\ml$ is coercive on the subset $\{\mg(v)\le 0\}$;
     more precisely, we have
     \begin{equation}\label{eq:l_coercive_noreg}
    \ml(v) \ge C_\delta  a \|v\|- 2 \|\loss\|_{\lip} \mathsf{q}\Bigl({C}_\delta\frac{a^2}{\|\loss\|_{\lip}^2}\Bigr) - b  \quad \text{for all $v\in\mh$ such that $\mg(v)\le 0$},  
     \end{equation}
    where $C_\delta>0$ is a constant depending on $\delta$ only, $(a,b)$ is any constant satisfying \eqref{eq:coercive_condition}, 
     and $\mathsf{q}(x) := \inf \{q>0 : \PP(|Z|\ge q) \le x\}$. 
\end{lemma}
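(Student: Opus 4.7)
The plan is to combine (i) an $L^1$ lower bound $\E[|v|]\gtrsim_\delta \|v\|$ forced by the constraint $\mg(v)\le 0$ with (ii) the coercivity $\loss(x)\ge a|x|-b$, truncating the noise at a threshold $q$ so that no moment assumption on $Z$ is required. Without loss of generality I take $\loss(0)=0$ (subtracting a constant leaves $\ml$ unchanged); then $\loss\ge 0$ since $\argmin\loss=\{0\}$, and the Lipschitz condition gives $a\le \|\loss\|_{\lip}$.

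Step 1 is the small-ball estimate already sketched in the excerpt: from $\mg(v)\le 0$ one has $\sqrt{1-\delta^{-1}}\|v\|\le \E[Gv]$, and splitting $G$ at level $t$ followed by Cauchy--Schwarz yields $\sqrt{1-\delta^{-1}}\|v\|\le \E[G^2\mathbf{1}\{|G|>t\}]^{1/2}\|v\|+t\E[|v|]$. Choosing $t=t_\delta$ with $\E[G^2\mathbf{1}\{|G|>t_\delta\}]\le (1-\delta^{-1})/4$ produces $\E[|v|]\ge c_\delta\|v\|$ where $c_\delta:=\sqrt{1-\delta^{-1}}/(2t_\delta)$ depends only on $\delta$. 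Step 2 is a pointwise estimate: on $\{|Z|\le q\}$, coercivity gives $\loss(v+Z)\ge a|v|-aq-b$ and the Lipschitz bound at $0$ gives $\loss(Z)\le \|\loss\|_{\lip}q$, so $\loss(v+Z)-\loss(Z)\ge a|v|-2\|\loss\|_{\lip}q-b$ (using $a\le \|\loss\|_{\lip}$); on $\{|Z|>q\}$, the global Lipschitz bound $|\loss(v+Z)-\loss(Z)|\le \|\loss\|_{\lip}|v|$ gives $\loss(v+Z)-\loss(Z)\ge -\|\loss\|_{\lip}|v|$. Integrating and applying Cauchy--Schwarz $\E[|v|\mathbf{1}\{|Z|>q\}]\le \|v\|\sqrt{\PP(|Z|>q)}$ together with $a+\|\loss\|_{\lip}\le 2\|\loss\|_{\lip}$ produces
\[
\ml(v)\ \ge\ a\E[|v|]\ -\ 2\|\loss\|_{\lip}\|v\|\sqrt{\PP(|Z|>q)}\ -\ 2\|\loss\|_{\lip}q\ -\ b.
\]

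Step 3 optimizes $q$. Using $\E[|v|]\ge c_\delta\|v\|$ from Step 1, I choose $q=\mathsf{q}\bigl(a^2c_\delta^2/(16\|\loss\|_{\lip}^2)\bigr)$, which by definition of $\mathsf{q}$ forces $2\|\loss\|_{\lip}\sqrt{\PP(|Z|>q)}\le ac_\delta/2$ and absorbs the middle term into half of $a\E[|v|]$; this gives $\ml(v)\ge (ac_\delta/2)\|v\|-2\|\loss\|_{\lip}\mathsf{q}(a^2c_\delta^2/(16\|\loss\|_{\lip}^2))-b$. Setting $C_\delta:=\min(c_\delta/2,\,c_\delta^2/16)$ and using that $\mathsf{q}$ is non-increasing (shrinking its argument only enlarges $\mathsf{q}$, which only weakens the claimed lower bound) yields the advertised form with a single constant $C_\delta$ depending only on $\delta$.

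The only subtle point, which dictates the shape of the constant in the statement, is the two-sided truncation needed because $\E[|Z|]$ is not assumed finite under \Cref{as:noreg}: rather than paying $\E[|Z|\mathbf{1}\{|Z|>q\}]$ from a naive bound $\loss(Z)\le \|\loss\|_{\lip}|Z|$, I use the global Lipschitz bound $\loss(v+Z)-\loss(Z)\ge -\|\loss\|_{\lip}|v|$ on $\{|Z|>q\}$ so that the correction is only $\|v\|\sqrt{\PP(|Z|>q)}$, which is exactly what the quantile $\mathsf{q}(\cdot)$ is designed to control. The small-ball estimate of Step 1 (itself the consequence of $\mg(v)\le 0$) is then the mechanism that lets this $\|v\|\sqrt{\PP(|Z|>q)}$ correction be absorbed by $a\E[|v|]\ge ac_\delta\|v\|$.
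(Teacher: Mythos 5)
Your proof is correct and follows the same overall strategy as the paper---derive an $L^1$ lower bound from the constraint $\mg(v)\le 0$, split the noise at a quantile threshold $q$, use coercivity on $\{|Z|\le q\}$ and the Lipschitz bound on $\{|Z|>q\}$, then optimize $q$. The one genuine difference is that you bypass the Paley--Zygmund small-ball step entirely: the paper first converts $\E[|v|]\ge c_\delta\|v\|$ into a small-ball probability $\PP(|v|\ge C_\delta\|v\|)\ge C_\delta$ and then bounds $\E[I\{|Z|\le\mathsf{q}(\epsilon)\}|v|]$ from below via a union bound, whereas you write $\E[|v|\mathbf{1}\{|Z|\le q\}]=\E[|v|]-\E[|v|\mathbf{1}\{|Z|>q\}]$ and control the subtracted term directly by Cauchy--Schwarz, $\E[|v|\mathbf{1}\{|Z|>q\}]\le\|v\|\sqrt{\PP(|Z|>q)}$. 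This is a real streamlining: it removes an intermediate inequality, and your correction term decays like $\sqrt{\epsilon}$ rather than $\epsilon$, which only makes the bookkeeping lighter. The resulting constant $C_\delta=\min(c_\delta/2,\,c_\delta^2/16)$ differs numerically from the paper's $C_\delta^4/16$, but the lemma only asserts dependence on $\delta$, so this is immaterial. Note that the paper does reuse the Paley--Zygmund small-ball form verbatim in the regularized analogue (its Lemma on the $L^1$--$L^2$ comparison for the pair $(v,w)$), so that machinery is not wasted effort in the paper as a whole, but for this unregularized lemma your shortcut is perfectly adequate.
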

\begin{proof}
Taking $C_\delta>0$ large enough that 
$\PP(|G|>C_\delta) \le (1-\delta^{-1})/4$, we have that for all $v\in \{\mg(v)\le 0\}$, 
\begin{align*}
    \E[v^2]^{1/2} &\le \E[vG]/\sqrt{1-\delta^{-1}} && \text{ by $\mg(v)\le0$}\\
    &\le \E\bigl[|v||G| I\{|G|> C_\delta\} + |v|C_\delta\bigr]/\sqrt{1-\delta^{-1}}\\
    &= \E[v^2]^{1/2}/2 + \E[|v|] C_\delta/\sqrt{1-\delta^{-1}} && \text{ by the Cauchy--Schwarz inequality}. 
\end{align*}
This gives 
$\E[v^2]^{1/2} \le \E[|v|] \tilde C_\delta$ with $\tilde C_\delta = 2C_\delta/\sqrt{1-\delta^{-1}}>0$.  
Combined with the Paley--Zygmund inequality, we have
\begin{align*}
   \PP\bigl(|v| > \E[v^2]^{1/2}/(2\tilde C_\delta)\bigr) &\ge \PP\bigl(|v|> \E[|v|]/2\bigr) && \text{ by $\E[v^2]^{1/2} \le \E[|v|] \tilde C_\delta$}\\
   &\ge (1-2^{-1})^2 \E[|v|]^2/\E[v^2] && \text{ by the Paley--Zygmund inequality}\\
   &\ge 3/4 \cdot  (\tilde C_\delta )^{-2}  && \text{ by $\E[v^2]^{1/2} \le \E[|v|] \tilde C_\delta$} 
\end{align*}
Letting $C_\delta  = \min(1, (2\tilde{C}_\delta)^{-1}, 3/4\cdot \tilde{C}_\delta^{-2}) \le 1$ in the above display, we obtain the claim below:
\begin{equation}\label{eq:l1_l2_noreg}
\exists C_\delta \in (0,1], \quad \forall v\in \{v\in\mh: \mg(v)\le 0\}, \quad \PP(|v|> C_\delta  \E[v^2]^{1/2}) \ge C_\delta.    
\end{equation}
Now we bound $\ml(v)$ from below using \eqref{eq:l1_l2_noreg}. 
For some $\epsilon>0$ to be specified later, we decompose $\ml(v)$ into two terms
$$
\ml(v) = \E\bigl[I\{|Z|\ge \mathsf{q}(\epsilon) \} (\loss(Z+v) - \loss(Z))\bigr] + \E\bigl[I\{|Z|\le \mathsf{q}(\epsilon) \}(\loss(Z+v) - \loss(Z))\bigr],
$$
where $\mathsf{q}(x) := \inf \{t>0 : \PP(|Z|\ge t) \le x\}$. 
By the Cauchy--Schwarz inequality and Lipschitz condition of $\loss$, 
the first term can be bounded from below as 
$$
\E\bigl[I\{|Z|\ge \mathsf{q}(\epsilon) \} (\loss(Z+v) - \loss(Z))\bigr]\ge - \E\bigl[I\{|Z|\ge \mathsf{q}(\epsilon) \} \|\loss\|_{\lip}  |v|\bigr] \ge - \sqrt{\epsilon} \|\loss\|_{\lip} \|v\|.
$$
For the second term, the coercivity $\loss(\cdot) -\loss(0) \ge a |\cdot| -b$ leads to 
\begin{align*}
    &\E\bigl[I\{|Z|\le \mathsf{q}(\epsilon) \}(\loss(Z+v) - \loss(Z))\bigr]\\
    &\ge \E\bigl[I\{|Z|\le \mathsf{q}(\epsilon) \}(\loss(Z+v) -\loss(0))\bigr] - \|\loss\|_{\lip} \mathsf{q}(\epsilon)\\
    &\ge \E\bigl[I\{|Z|\le \mathsf{q}(\epsilon) \}
    (a |v+Z|-b) \bigr]
    -  \|\loss\|_{\lip} \mathsf{q}(\epsilon)  && \text{by $\loss(\cdot)-\loss(0) \ge a |\cdot|-b$}\\
    &\ge  a \E\bigl[I\{|Z|\le \mathsf{q}(\epsilon) \}
    |v|\bigr] - aQ(\epsilon)  - b
    -  \|\loss\|_{\lip}\mathsf{q}(\epsilon)  && \text{by $|v+Z| \ge |v|-|Z|$}\\
    &\ge  a \E\bigl[I\{|Z|\le \mathsf{q}(\epsilon) \}
    |v|\bigr] - 2\|\loss\|_{\lip} \mathsf{q}(\epsilon)  - b && \text{by $a\le \|\loss\|_{\lip}$}.
\end{align*}
Putting the above displays together, we have the following:
\begin{equation}\label{eq:l_bound_rho}
    \forall\epsilon>0, \quad 
\forall v\in \mh, \quad 
\ml(v) \ge -\sqrt{\epsilon} \|\loss\|_{\lip}\|v\| + a \E\bigl[I\{|Z|\le \mathsf{q}(\epsilon)\} |v|\bigr] - 2\|\loss\|_{\lip} \mathsf{q}(\epsilon) - b
\end{equation}
Using the union bound for $\PP(|Z|>\mathsf{q}(\epsilon))\le \epsilon$ and $\PP(|v|\ge C_\delta\|v\|)\ge C_\delta$ in 
\eqref{eq:l1_l2_noreg}, it follows that
\begin{align*}
    \E\bigl[I\{|Z|\le \mathsf{q}(\epsilon)\} |v|\bigr]\ge C_\delta\|v\| \cdot \E\bigl[I\{|Z|\le \mathsf{q}(\epsilon)\} I\{|v|\ge C_\delta \|v\|\} \bigr] \ge C_\delta \|v\| (C_\delta - \epsilon).
\end{align*}
Substituting this lower bound to \eqref{eq:l_bound_rho}, we have that for all $\epsilon\in (0,1)$, 
 \begin{align*}
    \ml(v)
    &\ge -  \sqrt{\epsilon} \|\loss\|_{\lip} \|v\| + a C_\delta  \|v\| (C_\delta  -\epsilon) -  2\|\loss\|_{\lip} \mathsf{q}(\epsilon)  -b\\
    &= \|v\| \bigl(aC_\delta ^2 - \sqrt{\epsilon}\|\loss\|_{\lip} - a C_\delta \epsilon\bigr) -2 \|\loss\|_{\lip}\mathsf{q}(\epsilon) - b\\
    &= \|v\| \bigl(aC_\delta ^2 - 2\sqrt{\epsilon}\|\loss\|_{\lip}\bigr) -2 \|\loss\|_{\lip}\mathsf{q}(\epsilon) - b. && \text{by $a\le \|\loss\|_{\lip}$, $C_\delta \le 1$ and $\epsilon \le \sqrt{\epsilon}$}.
 \end{align*}
 If we take $\epsilon\in(0,1)$ such that 
 $\sqrt{\epsilon} = aC_\delta^2/(4\|\loss\|_{\lip}) (\le 1/4)$, we have
 $$
\ml(v) \ge \|v\|\frac{aC_\delta^2}{2} - 2\|\loss\|_{\lip} \mathsf{q}\bigl(
    \frac{a^2C_\delta^4}{16\|\loss\|_{\lip}^2}
\bigr) - b.
 $$
 Taking $C_\delta' = \min(C_\delta^2/2, C_\delta^4/16) = C_\delta^4/16$ in the above display gives the desired lower bound of $\ml(v)$. 
\end{proof}

\section{Proof for \Cref{sec:reg}}
\subsection{Set up for infinite-dimensional optimization problem}
\begin{lemma}\label{lm:lg_basic_reg}
    Suppose $\loss:\R\to\R$ and $\reg:\R\to\R$ are convex and Lipschitz. Then the maps $(\ml, \mg)$ defined by
    \begin{align*}
        \ml: \mh \to \R, \quad &(v,w) \mapsto \delta\E[\loss(v+Z)-\loss(Z)] + \E[\reg(w+X)-\reg(X)]\\
        \mg: \mh \to \R, \quad &(v,w)\mapsto \mt(v,w) - \delta^{-1/2}\E[wH]
    \end{align*}
    with 
    $$
    \mt(v,w) := \bigl\{(\|w\|-\E[vG])_+^2 + \E[\proj(v)^2]\bigr\}^{1/2} \quad \text{with} \quad \proj(v) =v-\E[vG]G
    $$
    are convex, Lipschitz, and finite valued. 
    Furthermore, 
    $\mg$ is Fr\'echet differentiable at $(v,w)$ if $\mt(v,w)>0$ and $\|w\|>0$, in the sense that
    $$
    \mg(v+h,w+\eta)=\mg(v,w) + \E[\nabla_v\mg(v,w)h + \nabla_w\mg(v,w)\eta] + o(\|h\|+\|\eta\|), 
    $$
    where $\nabla_v\mg(v,w)$ and $\nabla_{w}\mg(v,w)$ are the Fr\'echet derivatives given by 
    \begin{align*}
        \nabla_v\mg(v, w) &= \frac{-(\|w\|-\E[vG])_+ G + v - \E[vG]G}{\mt(v,w)}, \\ 
        \nabla_w\mg(v, w)&=
        \frac{(1-\|w\|^{-1}\E[vG])_+w}{\mt(v,w)} - \delta^{-1/2}H
    \end{align*}
\end{lemma}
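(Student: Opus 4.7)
The plan is straightforward for $\ml$: convexity is inherited from $\loss$ and $\reg$ being convex functions of affine expressions of $(v,w)$, and then averaged; finiteness follows from $\ml(0,0)=0$; and Lipschitzness
\begin{equation*}
|\ml(v,w)-\ml(v',w')|\le \delta \|\loss\|_{\lip}\|v-v'\| + \|\reg\|_{\lip}\|w-w'\|
\end{equation*}
follows from Jensen and Cauchy--Schwarz applied to the Lipschitz differences. For $\mg = \mt - \delta^{-1/2}\E[wH]$, the linear part is clearly Lipschitz, so the work reduces to analyzing $\mt$. The non-obvious point is convexity of $\mt$, since neither $(\cdot)_+^2$ nor the outer square root is easy to handle directly. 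The key step I would use is the variational identity
\begin{equation*}
\sqrt{s_+^2+t^2} = \sup_{\alpha,\beta\ge 0,\ \alpha^2+\beta^2\le 1}\bigl(\alpha s+\beta t\bigr) \qquad\text{for all }s\in\R,\ t\ge 0,
\end{equation*}
which one verifies by splitting on the sign of $s$ (Cauchy--Schwarz gives $\sqrt{s^2+t^2}$ when $s\ge 0$, while $\alpha=0$ is optimal when $s<0$). Applying it with $s=\|w\|-\E[vG]$ and $t=\|\proj(v)\|\ge 0$ writes $\mt$ as a supremum over $(\alpha,\beta)\ge 0$ of the convex expression $\alpha(\|w\|-\E[vG]) + \beta\|\proj(v)\|$, which is convex in $(v,w)$ since $\|w\|$ is a norm, $-\E[vG]$ is linear, and $\|\proj(v)\|$ is a seminorm, all multiplied by nonnegative coefficients. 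Hence $\mt$ is convex. Lipschitzness of $\mt$ follows from the fact that $(s,t)\mapsto \sqrt{s_+^2+t^2}$ is $1$-Lipschitz on $\R^2$ (its gradient has norm $\le 1$) together with Lipschitzness of the inner maps $(v,w)\mapsto \|w\|-\E[vG]$ and $v\mapsto \|\proj(v)\|$, and finiteness follows from $\mg(0,0)=0$.

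For Fréchet differentiability at points where $\mt(v,w)>0$ and $\|w\|>0$, the plan is to apply the chain rule in three stages, which elegantly sidesteps the awkward crease $a:=\|w\|-\E[vG]=0$. First, $a$ is Fréchet differentiable on $\{\|w\|>0\}$ with partial gradients $(-G, w/\|w\|)$, since $w\mapsto \|w\|$ is differentiable away from $0$. Second, the scalar map $t\mapsto t_+^2$ is $C^1$ on \emph{all} of $\R$ with derivative $2t_+$ (continuous at $t=0$), so the composition $a_+^2$ is Fréchet differentiable at every point with $\|w\|>0$. Third, $\|\proj(v)\|^2 = \E[v^2]-\E[vG]^2$ is a smooth quadratic form with gradient $2\proj(v)$. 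Combining the three shows $F(v,w):=a_+^2+\|\proj(v)\|^2=\mt(v,w)^2$ is Fréchet differentiable on $\{\|w\|>0\}$, and since $\sqrt{\cdot}$ is $C^1$ on $(0,+\infty)$, $\mt=\sqrt{F}$ is Fréchet differentiable on $\{\|w\|>0\}\cap\{\mt>0\}$ with
\begin{equation*}
\nabla_v\mt = \frac{-a_+ G + \proj(v)}{\mt},\qquad \nabla_w\mt = \frac{a_+\, w/\|w\|}{\mt}.
\end{equation*}
Using the identity $a_+/\|w\|=(1-\|w\|^{-1}\E[vG])_+$ (valid for $\|w\|>0$) recovers the stated $\nabla_w\mt$, and $\nabla_w\mg=\nabla_w\mt - \delta^{-1/2}H$ adds the trivial gradient of the linear term $-\delta^{-1/2}\E[wH]$. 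The main subtlety to watch is the behavior exactly at $a=0$: a naive Taylor expansion would require separate treatment of $a>0$, $a<0$, and $a=0$, but the observation that $t\mapsto t_+^2$ is $C^1$ (not just continuous) at $t=0$ makes the chain-rule argument go through uniformly and is what makes this approach clean.
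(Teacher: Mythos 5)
Your proof is correct, and it differs from the paper's in a genuinely interesting way on both nontrivial points. For the convexity of $\mt$, the paper composes the convex, coordinatewise-nondecreasing scalar map $(a,b)\mapsto(a_+^2+b_+^2)^{1/2}$ with the convex $\mh$-valued map $(v,w)\mapsto(\|w\|-\E[vG],\ \|\proj(v)\|)$ and invokes the composition rule; you instead use the dual representation $\sqrt{s_+^2+t^2}=\sup_{\alpha,\beta\ge 0,\ \alpha^2+\beta^2\le 1}(\alpha s+\beta t)$, which exhibits $\mt$ directly as a pointwise supremum of convex functions. The latter buys you two things at once: convexity is immediate, and the $1$-Lipschitz bound drops out of the same representation (via $|\alpha(s-s')+\beta(t-t')|\le\|(s-s',t-t')\|$), whereas the paper handles Lipschitzness by a separate three-stage composition of Lipschitz maps. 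The paper's composition argument is arguably more routine, but it does require carefully stating that the outer function must be coordinatewise nondecreasing on the relevant cone. For Fréchet differentiability, the paper is terse (``the derivative easily follows from the Pythagorean theorem''), while you give the actual mechanism: pass to $F=\mt^2$, use that $t\mapsto t_+^2$ is $C^1$ with derivative $2t_+$ to handle the crease at $\|w\|=\E[vG]$ uniformly without case-splitting, and finish with the chain rule for $\sqrt{\cdot}$ on $\{\mt>0\}$. Your computed gradients of $\mt$ match the stated formulas after the rewriting $a_+\,w/\|w\|=(1-\|w\|^{-1}\E[vG])_+\,w$ (valid when $\|w\|>0$), and adding the linear term's gradient gives $\nabla_w\mg$. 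This is a more self-contained account of the part the paper glosses over, and the $C^1$-at-the-kink observation is exactly the right reason the argument needs no separate treatment of the sign of $\|w\|-\E[vG]$.
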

\begin{proof}
    The convexity of $\ml$ immediately follows from the convexity of $\loss$ and $\reg$. $\ml$ is finite valued and Lipschitz since $\ml(0,0)=0$ and 
\begin{align*}
&|\ml(v,w)-\ml(v',w')| \\
&\le \delta \E[|\loss(v+Z)-\loss(v'+Z)|] + \E[|\reg(w+X)-\reg(w'+X)|] &&\text{by the triangle ineq.}\\
&\le \delta \|\loss\|_{\lip} \E[|v-v'|] + \|\reg\|_{\lip} \E[|w-w'|] \\
&\le (\delta\|\loss\|_{\lip} + \|\reg\|_{\lip}) (\|v-v'\| + \|w-w'\|) &&\text{by Jensen's ineq.}\\
&\le \sqrt{2}(\delta\|\loss\|_{\lip} + \|\reg\|_{\lip}) \sqrt{\|v-v'\|^2 + \|w-w'\|^2} &&\text{by $a+b \le \sqrt{2(a^2+b^2)}$}
\end{align*}
for all $(v,w), (v',w')\in \mh$. 
    Below we prove the claims for $\mg$. 
    The function \( \mathcal T(v,w) \) is the composition
    of the Lipschitz functions
    \begin{align}
        &\mh\to\R^2, && (v,w)\mapsto (\|w\|-\E[vG], \|\proj(v)\|),
        \\
        &\R^2\to\R_{\ge 0}^2, && (a,b)\mapsto (a_+,b_+),
        \\
        &\R_{\ge 0}^2\to \R, && (a,b)\mapsto \|(a,b)\|_2=\{a^2+b^2\}^{1/2}
    \end{align}
    so that \( (v,w) \to \mt(v,w) \) is Lipschitz.
    This implies that
    $\mg$ is Lipschitz,
    and finite valued at every \( (v,w)\in\mh \) thanks to
    \( \mg(0,0)=0 \).
    Next, we show that $(v, w)\mapsto \mg(v,w)=\mt(v,w)-\delta^{-1/2}\E[Hw]$ is jointly convex. Since $\E[Hw]$ is linear in $w$, it suffices to show the convexity of $(v,w)\mapsto \mt(v,w)$.
    Let us define $a(v, w) := \|w\|_2 - \E[vG]$ and $b(v, w) := \|\proj(v)\|$ so that $\mt(v,w) =  \sqrt{a(v,w)_+^2 + b(v,w)_+^2}$. Denoting $v_t=tv' + (1-t)v'$ and $w_t =t w + (1-t)w'$ for all $t\in[0,1]$, the convexity of $(v,w)\mapsto a(v,w)$ and $(v,w)\mapsto b(v,w)$ yields
    $$
    \begin{pmatrix}
      a(v_t, w_t)\\
      b(v_t, w_t)
    \end{pmatrix} \le \begin{pmatrix}
      t a(v, w) + (1-t) a(v', w')\\
      t b(v, w) + (1-t) b(v', w')
    \end{pmatrix} = t \begin{pmatrix}
      a(v, w)\\
      b(v, w)
    \end{pmatrix} + (1-t)\begin{pmatrix}
      a(v',w')\\
      b(v',w')
    \end{pmatrix}, 
    $$
    where the inequality holds coordinate-wisely. If we apply the convex and increasing map
    $\|(\cdot)_+\|_2^2 : \R^2 \to \R$, $(a, b) \mapsto (a_+^2 + b_+^2)^{1/2}$ to the above inequality, we are left with 
    $$
    \sqrt{a(v_t, w_t)_+^2 + b(v_t, w_t)_+^2} \le t\sqrt{a(v,w)_+^2 + b(v,w)_+^2} + (1-t) \sqrt{a(v',w')_+^2 + b(v',w')_+^2}
    $$
    and $\mt(v_t,w_t)\le t\mt(v,w)+(1-t)\mt(v',w')$. This means that $(v,w)\mapsto \mt(v,w)$ is jointly convex, thereby completing the proof of the joint convexity of $(v,w)\mapsto \mg(v,w)$. The derivative easily follows from the Pythagorean theorem $\|\proj(v)\|^2 = \E[v^2]- \E[vG^2]$. 
\end{proof}

\begin{lemma}\label{lm:lagrange_reg}
    $(v_*,w_*)\in \mh$ solves the constrained optimization problem
    $$
    \min_{v,w\in\mh} \ml(v,w) \quad \text{subject to} \quad \mg(v,w)\le 0
    $$
    if and only if there exists an associated Lagrange multiplier $\mu_*\ge 0$ such that the KKT condition
    \begin{align}\label{eq:KKT_reg}
        -\mu_* \partial\mg(v_*, w_*)\cap \partial \ml(v_*, w_*)\ne \emptyset, \quad \mu_* \mg(v_*,w_*)=0, \quad \text{and} \quad \mg(v_*, w_*)\le 0
    \end{align}
    is satisfied. Furthermore, if $\argmin_x\loss(x)=\argmin_x \reg(x) =\{0\}$ and $\PP(Z\ne 0)>0$ then the Lagrange multiplier $\mu_*$ must be always strictly positive and the constraint $\mg(v, w)\le 0$ is binding, i.e., $\mg(v_{*}, w_*)=0$.  
\end{lemma}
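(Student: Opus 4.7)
The plan is to adapt the proof of \Cref{lm:lagrange_noreg} to the product Hilbert space $\mh = \mh_Z \times \mh_X$. For the if-and-only-if statement, I would verify Slater's constraint qualification and then invoke the same convex Lagrange duality theorem used in the unregularized case; the convexity, finiteness and Lipschitz properties of $(\ml, \mg)$ needed to apply this theorem are already provided by \Cref{lm:lg_basic_reg}. A natural Slater point is $(v_0, w_0) := (2G, H) \in \mh$: direct calculation with $\E[v_0 G] = 2$, $\|\proj(v_0)\| = 0$ and $\|w_0\| = 1$ gives $\mt(v_0, w_0) = ((1-2)_+^2 + 0)^{1/2} = 0$ while $\delta^{-1/2}\E[H w_0] = \delta^{-1/2} > 0$, so $\mg(v_0, w_0) = -\delta^{-1/2} < 0$.

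For the strict positivity of the Lagrange multiplier, my plan is a contradiction argument mirroring the second half of \Cref{lm:lagrange_noreg}. If $\mu_* = 0$, the first KKT condition collapses to $0 \in \partial \ml(v_*, w_*)$, so $(v_*, w_*)$ is an unconstrained global minimizer of $\ml$ over $\mh$. Since $\ml(v,w) = \delta\E[\loss(v+Z)-\loss(Z)] + \E[\reg(w+X)-\reg(X)]$ is additively decoupled, the minimization separates, and I would replay the truncation argument of \Cref{lm:lagrange_noreg} in each component, using $v_n := -Z I\{\loss(Z)\le n\}$ and $w_n := -X I\{\reg(X)\le n\}$ (each in $\mh$ by coercivity of $\loss$ and $\reg$, which follows from $\argmin = \{0\}$). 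Either the infimum of $\ml$ is $-\infty$, contradicting the existence of $(v_*, w_*)$, or $\E[\loss(Z)], \E[\reg(X)] < +\infty$ and the minimizer property combined with $\argmin\loss = \argmin\reg = \{0\}$ forces $v_* = -Z$ and $w_* = -X$ almost surely. Substituting back and using $G \ind Z$ and $H \ind X$ gives $\E[v_* G] = 0$, $\proj(v_*) = -Z$ and $\E[H w_*] = 0$, and therefore
$$
\mg(v_*, w_*) \;=\; \sqrt{\|X\|^2 + \|Z\|^2} \;-\; 0 \;>\; 0
$$
under $\PP(Z \ne 0) > 0$, contradicting $\mg(v_*, w_*) \le 0$. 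Complementary slackness $\mu_* \mg(v_*, w_*) = 0$ then forces the constraint to bind.

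The step I expect to require the most care is the case analysis dispatching the possibilities $\E[\loss(Z)] = +\infty$, $\E[\reg(X)] = +\infty$, $\E[Z^2] = +\infty$, or $\E[X^2] = +\infty$: in each of these the unconstrained infimum is either $-\infty$ or not attained inside $\mh$, and each sub-case contradicts the assumed existence of the minimizer $(v_*, w_*) \in \mh$, but the bookkeeping must be handled carefully to avoid a spurious substitution. Only the fully finite branch actually reaches the substitution step, where the decisive inputs are the two independence identities $\E[Z G] = 0 = \E[H X]$, which are the direct analogues of the single-variable identity used at the end of the unregularized proof.
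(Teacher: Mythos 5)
Your proof is correct and takes essentially the same approach as the paper: the same Slater point $(2G, H)$, the same truncation sequences $v_n = -ZI\{\loss(Z)\le n\}$, $w_n = -XI\{\reg(X)\le n\}$, and the same final substitution giving $\mg(v_*,w_*) = \sqrt{\E[X^2]+\E[Z^2]} > 0$. The extra caution you flag at the end is not actually needed: once the lemma's hypothesis posits a minimizer $(v_*,w_*)\in\mh$ and the argument forces $v_*=-Z$, $w_*=-X$, membership in $\mh$ automatically yields $\E[Z^2], \E[X^2]<\infty$, so the only genuine case split is on whether $\E[\loss(Z)]$ and $\E[\reg(X)]$ are finite, exactly as in \Cref{lm:lagrange_noreg}.
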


\begin{proof}
Note that the set $\{(v,w)\in\mh: \mg(v,w) < 0\}$ is not empty since
$$
\mg(2G, H) = \{(\|H\|_2 - 2 \E[G^2])_+^2 + \E[\proj(2G)^2]\}^{1/2}-\delta^{-1/2} \E[H^2] = -\delta^{-1/2} <0. 
$$
Thus, the if and only if part follows from \Cref{lm:Lagrange}. 
It remains to show the positiveness $\mu_*>0$.  We proceed by contradiction. Suppose $\mu_*=0$. Then, $(v_*, w_*)$ is the minimizer of $\min_{v,w} \ml(v,w)$. Let us take $u_n = -I\{|\loss(Z)| \le n\}Z$ and $w_n = -I\{|\reg(X)| \le n\}X$ so that $v_n$ and $w_n$ are bounded in $L_2$ by the coercivity of $\loss$ and $\reg$ (see the proof of \Cref{lm:lagrange_noreg}). Note that $\ml(v_n, w_n)$ can be written as
$$
\ml(v_n, w_n) = \delta\E[I\{|\loss(Z)| \le n\} (\loss(0)-\loss(Z))] + \E[I\{|\reg(X)|\le n\} (\reg(0)-\reg(X))].
$$
Then the same argument in the proof of \Cref{lm:lagrange_noreg} yields $\E[\reg(X)] <+\infty$, $\E[\loss(Z)]<+\infty$, and $\lim_{n\to\infty} \mathcal{L}(v_n, w_n)=\delta \E[\loss(0)-\loss(Z)] + \E[\reg(0)-\reg(X)]$. Combined with $(v_*, w_*)\in \argmin_{(v,w)}\ml(v,w)$, we have
\begin{align*}
\delta \E[\loss(v_*+Z)-\loss(0)] + \E[\reg(w_*+X)-\reg(0)] \le 0.
\end{align*}
Thanks to $\{0\} = \argmin_x \loss(x) = \argmin_x \reg(x)$, the above display gives $v_*=-Z$ and $w_*=-X$.  Substituting this to the constraint $\mg(v_*, w_*)\le 0$, using the independence of $(G,Z)$ and $(H,Z)$, we are left with
\begin{align*}
& 0 \ge \mg(v_*, w_*) = \sqrt{(\E[(-X)^2]^{1/2} -\E[-WG])_+^2 +\E[\proj(-Z)^2]} - \delta^{-1/2} \E[-XH] = \sqrt{\E[X^2] + \E[Z^2]},
\end{align*}
so in particular $\E[Z^2]=0$, a contradiction with $\PP(Z \ne 0) >0$. Therefore, $\mu_*$ must be strictly positive. 
\end{proof}

\begin{lemma}\label{lm:vG_decrease}
    For any $v\in\mh$ and any $t\in [0, \pi/2]$, define $v_t\in\mh$ as 
    $$
    v_t(G, Z) := \E[v(G\cos t+ \tilde{G}\sin t, Z)|G, Z] \quad \text{where} \quad \tilde{G}\sim N(0,1) \ind (G, Z). 
    $$
    Then, we have
    $$
    \E[v_t G] = \E[v G] \cos t, \quad \|\proj(v_t)\| \le \|\proj(v)\|, \quad \E[\loss(v_t+Z)-\loss(v+Z)] \le 0. 
    $$
\end{lemma}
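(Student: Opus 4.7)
The key observation is that the map $P_t:\mh\to\mh$ defined by $P_t(v)(G,Z) := \E[v(G\cos t+\tilde G \sin t, Z) \mid G, Z]$ is a conditional expectation operator (with respect to the $\sigma$-algebra generated by $(G,Z)$), so it is a contraction in $L^2$, commutes with measurable functions of $Z$, and interacts nicely with the Gaussian $G$. The plan is to exploit these three facts in sequence.

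For the first identity, write $G' := G\cos t + \tilde G \sin t$ and note that $(G',Z)$ has the same joint distribution as $(G,Z)$ since $\tilde G$ is independent of $(G,Z)$. Then by the tower property and the independence of $G$ and $Z$,
\begin{equation*}
\E[v_t G] = \E[G\, \E[v(G',Z)\mid G, Z]] = \E[G\, v(G',Z)] = \E[\E[G\mid G',Z]\, v(G',Z)] = \cos t\cdot \E[G'v(G',Z)] = \cos t\cdot \E[vG],
\end{equation*}
using the bivariate Gaussian formula $\E[G\mid G'] = \cos t\cdot G'$.

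For the second inequality, decompose $v = \E[vG]\, G + \proj(v)$, apply the linear operator $P_t$, and use $P_t(G) = \cos t\cdot G$ to obtain $v_t = \cos t\, \E[vG]\, G + P_t(\proj(v))$. Since $\proj(v_t) = v_t - \E[v_tG]\, G = v_t - \cos t\, \E[vG]\, G = P_t(\proj(v))$, the bound reduces to the contraction property of the conditional expectation $P_t$ in $L^2$, i.e., $\|P_t(\proj(v))\| \le \|\proj(v)\|$ by Jensen's inequality.

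For the third inequality, use that $Z$ is measurable with respect to the conditioning $\sigma$-algebra, so $v_t + Z = \E[v(G',Z)+Z\mid G,Z]$. Conditional Jensen's inequality applied to the convex function $\loss$ gives $\loss(v_t+Z) \le \E[\loss(v(G',Z)+Z)\mid G,Z]$, and taking unconditional expectation yields $\E[\loss(v_t+Z)] \le \E[\loss(v(G',Z)+Z)] = \E[\loss(v+Z)]$, the last equality again by the distributional equality $(G',Z)\stackrel{d}{=}(G,Z)$.

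None of the three steps is really an obstacle once the Mehler-type representation $P_t$ is identified; the only subtlety worth spelling out carefully in the final write-up is that one must verify $v_t\in\mh$, which follows from Jensen and $\E[v_t^2]\le \E[v^2]<\infty$, and that $P_t G = \cos t\cdot G$ together with the identity $\E[G\mid G']=\cos t\cdot G'$ rests on standard properties of the bivariate Gaussian that should be invoked explicitly.
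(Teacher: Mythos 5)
Your proof is correct and follows essentially the same route as the paper's: Gaussian rotation invariance of $(G_t,Z)\stackrel{d}{=}(G,Z)$ for the first identity, Jensen (contractivity of conditional expectation) for the second, and conditional Jensen for $\loss$ in the third. The one genuine streamlining is your decomposition $v=\E[vG]G+\proj(v)$ followed by the observation $\proj(v_t)=P_t(\proj(v))$, which avoids the paper's direct expansion of $\E[\proj(v_t)^2]$; this is a cleaner way to invoke the contraction but does not change the substance.
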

\begin{proof}
Let us define $G_t:=G\cos t +\tilde G\sin t$ and $\dot G_t = -G\sin t + \tilde G \cos t$ 
so that $v_t=\E[v(G_t, Z)|G, Z]$, \( G= G_t \cos t - \dot G_t\sin t \) and $G_t\ind \dot{G}_t$. Then have 
$$
\E[v_t(G,Z)G]=\E[v(G_t,Z)G]
= \E[v(G_t,Z)(G_t \cos t - \dot G_t \sin t)]
=\E[v(G_t,Z)G_t]\cos t.
$$
Since $(G_t, Z)=^d (G, Z)$, this gives $\E[v_t G]=\E[vG]\cos{t}$. Furthermore, \begin{align*}
    \E[\proj(v_t)^2] &= \E\Bigl[
    \bigl(v_t(G,Z)-\E[G v_t(G,Z)]G\bigr)^2
    \Bigr]\\
    &=
    \E\Bigl[
    \bigl(v_t(G,Z)-\E[Gv(G,Z)]G\cos t\bigr)^2
    \Bigr] &&\text{by} \E[v_t G]=\E[vG]\cos t
    \\&=
    \E\Bigl[
    \Bigl(
        \E\Bigl[
    v(G_t,Z)-\E[Gv(G,Z)]G_t
    \mid G,Z \Bigr]
    \Bigr)^2
    \Bigr]
        &&\text{by def. of $v_t$ and $\E[G_t|G,Z]=G\cos t$}
    \\&\le
    \E\Bigl[
    \bigl(
    v(G_t,Z)-\E[G v(G,Z)]G_t
    \bigr)^2
    \Bigr]
        &&\text{Jensen's ineq.}
    \\&= \E[\proj(v)^2] 
        &&\text{since $G_t=^d G$}. 
    \end{align*}
    Finally, using $v(G,Z)=^d v(G_t,Z)$, we find
\begin{align*}
    &\E[\loss(Z+v_t(G,Z))-\loss(Z+v(G,Z))]
    \\&
    =
    \E
    \{\loss(Z+v_t(G,Z))-\loss(Z+v(G_t,Z))
    \}
        &&\text{equality in distribution}
    \\&=
    \E\{
    \loss(Z+v_t(G,Z))-
    \E[
    \loss(Z+v(G_t,Z))
    \mid G,Z]
\} &&\text{iterated expectations}
    \\&\le
    \E\{
    \loss(Z+v_t(G,Z))-
    \loss(Z+
    \E[
    v(G_t,Z)
    \mid G,Z]
    )
\} &&\text{conditional Jensen's inequality}
    \\&=0&&\text{by definition of $v_t$}.
\end{align*}
This finishes the proof.
\end{proof}

\subsection{Proof of \Cref{lm:equivalence_potential_reg}}\label{proof:lm:equivalence_potential_reg}
\begin{lemma}\label{lm:M1_M2_equivalence}
    Fix $\alpha\ge 0$ and consider the optimization problem:
    \begin{align}\label{eq:M1_alpha}
        \min_{v,w} \mathcal{L}(v,w) \text{ subject to }
    \begin{cases}
    \|w\|\le \alpha , \\
     \|\alpha G - v\|\le \E[Hw]/\sqrt \delta.
    \end{cases}        
    \end{align}
    Then this optimization problem admits a minimizer $(v_\alpha, w_\alpha)$. Furthermore, this solution also solves the optimization problem with the relaxed condition
    \begin{align}\label{eq:M2_alpha}
    \min_{v,w} \mathcal{L} (v,w) \text{ subject to }
        \begin{cases}
        \|w\|\le \alpha , \\ \{(\alpha-\E[vG])_+^2+\|\proj(v)\|^2\}^{1/2}\le \E[Hw]/\sqrt \delta
        \end{cases}.
    \end{align}
\end{lemma}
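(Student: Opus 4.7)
The plan is to treat the two claims separately. For the existence of a minimizer of \eqref{eq:M1_alpha}, I would verify the hypotheses of the standard Hilbert-space existence result \cite[Proposition 11.15]{bauschke2017correction}, which is already invoked elsewhere in the paper. The feasible set is nonempty, since the pair $(\alpha G,0)\in\mh$ is admissible; convex and closed by continuity and convexity of $(v,w)\mapsto\|w\|$, $(v,w)\mapsto\|\alpha G-v\|$, and linearity of $(v,w)\mapsto \E[Hw]$; and bounded because the Cauchy--Schwarz bound $\E[Hw]\le\|w\|\le\alpha$ together with the triangle inequality yields $\|v\|\le\alpha+\alpha/\sqrt\delta$. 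Since $\ml$ is continuous and convex on $\mh$ by \Cref{lm:lg_basic_reg}, existence of a minimizer $(v_\alpha,w_\alpha)$ follows.

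For the second assertion, the key identity is the Pythagorean decomposition
\[
\|\alpha G-v\|^2=(\alpha-\E[vG])^2+\|\proj(v)\|^2.
\]
Since $a^2\ge a_+^2$ for every real $a$, the constraint in \eqref{eq:M1_alpha} is strictly stronger than the one in \eqref{eq:M2_alpha}, so the feasible set of \eqref{eq:M1_alpha} is contained in that of \eqref{eq:M2_alpha} and $\inf\eqref{eq:M2_alpha}\le\inf\eqref{eq:M1_alpha}$. The reverse inequality is the substantive part: I would prove it by modifying any $(v,w)$ feasible for \eqref{eq:M2_alpha} into a pair feasible for \eqref{eq:M1_alpha} with smaller or equal $\ml$. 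When $\E[vG]\le\alpha$ the two constraints coincide and no modification is needed. When $\E[vG]>\alpha\ge 0$, I apply the Gaussian rotation gadget from \Cref{lm:vG_decrease} with $t\in(0,\pi/2)$ chosen so that $\cos t=\alpha/\E[vG]$; then $\E[v_tG]=\alpha$, and combining the Pythagorean identity with $\|\proj(v_t)\|\le\|\proj(v)\|$ yields
\[
\|\alpha G-v_t\|=\|\proj(v_t)\|\le\|\proj(v)\|\le\E[Hw]/\sqrt\delta,
\]
where the last inequality uses $(\alpha-\E[vG])_+=0$ in this regime. Hence $(v_t,w)$ is feasible for \eqref{eq:M1_alpha}, and the third conclusion of \Cref{lm:vG_decrease} gives $\ml(v_t,w)\le \ml(v,w)$. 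This yields $\inf\eqref{eq:M1_alpha}\le\inf\eqref{eq:M2_alpha}$, hence equality; and since $(v_\alpha,w_\alpha)$ is feasible for both problems, it also minimizes \eqref{eq:M2_alpha}.

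The main (modest) obstacle is recognizing that \Cref{lm:vG_decrease} is the right tool: it delivers exactly the three ingredients needed simultaneously, namely controlled shrinkage of $\E[vG]$ by the factor $\cos t$, non-increase of $\|\proj(v)\|$, and non-increase of the loss portion $\delta\E[\loss(v+Z)-\loss(Z)]$ of $\ml$, while leaving $w$ untouched. Once this is identified, the tight equality $(\alpha-\E[v_tG])^2=0$ produced by the rotation makes the feasibility verification for the original problem immediate and the two-step inequality chain collapses to the desired identification of minimizers.
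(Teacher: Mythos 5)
Your proposal is correct, and at the top level it follows the same template as the paper's proof: a Bauschke--Combettes existence argument for \eqref{eq:M1_alpha}, followed by a Gaussian-rotation reduction via \Cref{lm:vG_decrease} that turns any \eqref{eq:M2_alpha}-feasible pair $(v,w)$ into an \eqref{eq:M1_alpha}-feasible pair with no larger $\ml$. The existence step is essentially identical. The substantive difference is in the case split and the rotation target, and here your version is actually the sound one. You split on $\E[vG]\le\alpha$ versus $\E[vG]>\alpha$ and rotate to $\E[v_tG]=\alpha$, which zeroes out the $(\alpha-\E[v_tG])^2$ term in the Pythagorean decomposition of $\|\alpha G-v_t\|^2$, so feasibility of $(v_t,w)$ for \eqref{eq:M1_alpha} follows immediately from $\|\proj(v_t)\|\le\|\proj(v)\|$ together with the \eqref{eq:M2_alpha} constraint (in which $(\alpha-\E[vG])_+=0$ in this regime). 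The paper instead splits on $\E[vG]\le\|w\|$ versus $\E[vG]>\|w\|$ and rotates to $\E[\tilde vG]=\|w\|$, and as written this leaves a gap: in the regime $\|w\|<\E[vG]\le\alpha$ the rotation gives $\|\alpha G-\tilde v\|^2=(\alpha-\|w\|)^2+\|\proj(\tilde v)\|^2$, and $(\alpha-\|w\|)^2$ can exceed the available budget $(\E[Hw]/\sqrt\delta)^2$ even though $(v,w)$ satisfied the \eqref{eq:M2_alpha} constraint. For instance, with $\delta=1.1$, $\alpha=10$, $w=H$ and $v=9.5\,G$, the pair $(v,w)$ is \eqref{eq:M2_alpha}-feasible (indeed already \eqref{eq:M1_alpha}-feasible), yet rotating $v$ down to $\E[\tilde vG]=\|w\|=1$ yields $\|\alpha G-\tilde v\|\ge 9$, far above $\E[Hw]/\sqrt\delta\approx 0.95$. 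Your version repairs exactly this, and also absorbs the $\alpha=0$ boundary into the general argument, where the paper treats $\alpha=0$ separately. One cosmetic slip on your side: at $\alpha=0$ with $\E[vG]>0$ the needed rotation angle is $t=\pi/2$, which lies outside your stated $t\in(0,\pi/2)$ but is covered by \Cref{lm:vG_decrease}, whose range is $t\in[0,\pi/2]$.
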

\begin{proof}
    Note that the set of $(v,w)\in \mh$ satisfying the constraint \eqref{eq:M1_alpha} is closed and convex in $\mh$, as well as bounded since the Cauchy--Schwarz inequality gives $\|w\|\le \alpha$ and 
    $$
    \|v\|\le \|\alpha G-v\| + \|\alpha G\| \le \E[Hw]/\sqrt{\delta} + \alpha \le \alpha(1/\sqrt{\delta} + 1). 
    $$
    Then \cite[Proposition 11.15]{bauschke2017correction} implies that there exists a solution $(v_*, w_*)$ to (2). 
    Next, we show that the solution to \eqref{eq:M1_alpha} also solves \eqref{eq:M2_alpha}. If $\alpha=0$, the constraint in \eqref{eq:M1_alpha} gives $v=w=0$ so that $(0,0)$ is the unique solution. 
    On the other hand, the constraint of \eqref{eq:M2_alpha} with $\alpha=0$ is equivalent to $(v,w)=(-tG, 0)$ for $t\ge 0$. Since the map $\R_{\ge 0}\ni t\mapsto \ml(-tG, 0) = \delta\E[\loss(-tG+Z)-\loss(Z)]$ is increasing thanks the convexity of $\loss$ and Jensen's inequality, we find that $(v,w) = (0,0)$ also solves \eqref{eq:M2_alpha}. 
    Let us consider the case $\alpha>0$. Let $(v_\alpha, w_\alpha)$ be the solution to \eqref{eq:M1_alpha}. 
    Now we fix $(v,w)\in \mh$ satisfying the constraint in \eqref{eq:M2_alpha}. If $(v,w)$ further satisfy $\E[vG]\le \|w\|$, combined the first condition $\|w\|\le \alpha$, the term $(\alpha-\E[vG])$ inside the second constraint is nonnegative. Thus, the second constraint yields $\|\alpha G-v\| \le \E[Hw]/\sqrt{\delta}$ so that $(v,w)$ satisfies the all constraints of \eqref{eq:M1_alpha}. Otherwise, i.e., if $\E[vG] > \|w\|$, then \Cref{lm:vG_decrease} with $t=\arccos({\|w\|/\E[vG]})$ implies that 
    there exists a $\tilde{v}$ such that 
    $$
    \E[\tilde{v} G] = \|w\| ,\quad  \|\proj(\tilde{v})\|\le \|\proj(v)\|,  \quad \E[\loss(\tilde{v}+Z)-\loss(v+Z)] \le 0. 
    $$
    Thus, the above display implies that 
    the constraint in \eqref{eq:M1_alpha} is satisfied by the pair $(\tilde{v}, w)$ and $\ml(v,w)\ge \ml(\tilde{v}, w)$. For these reasons, we must have 
    \begin{align*}
        \ml(v,w) &\ge \text{ Objective {value} of \eqref{eq:M1_alpha}} =  \ml(v_\alpha, w_\alpha) 
    \end{align*}
    for all $(v,w)$ satisfying the constraint in \eqref{eq:M2_alpha}. 
    Since $(v_\alpha, w_\alpha)$ satisfies the constraint \eqref{eq:M2_alpha}, the above display implies that $(v_\alpha, w_\alpha)$ solves \eqref{eq:M2_alpha}. 
\end{proof}

Now let us define 
\begin{align*}
    \mathsf{M}(\alpha) &= \sup_{\beta > 0, \tau_h>0}\inf_{\tau_g > 0} \mathsf{D}(\alpha, \beta, \tau_h, \tau_g),\\
\text{where}  \quad \mathsf{D}(\alpha, \beta, \tau_h, \tau_g) &:= 
    \frac{\beta\tau_g}{2} + \delta \mathsf{L}(\alpha, \frac{\tau_g}{\beta})
 - \left\{\begin{array}{ll}
    \frac{\alpha\tau_h}{2} + \frac{\alpha\beta^2}{2\tau_h} - \mathsf{R}\Bigl(\frac{\alpha\beta}{\tau_h},\frac{\alpha}{\tau_h}\Bigr) & \alpha>0\\
    0 & \alpha=0
\end{array}\right..
\end{align*}
Recall that $\mathsf{L}$ and $\mathsf{R}$ are jointly convex functions over $\R\times \R_{>0}$ defined as 
\begin{align*}
    \forall c\in\R, \ \tau>0, \quad \begin{split}
        \mathsf{L}(c, \tau) &:= \E[\env_\loss(cG + Z; \tau)-\loss(Z)]\\
        \mathsf{R}(c, \tau) &:= \E[\env_\reg(cH + X; \tau)-\reg(X)]
    \end{split}
\end{align*}

\begin{lemma}\label{lm:M_M1_equivalence}
    $\mathsf{M}(\alpha)$ equals the objective value of \eqref{eq:M1_alpha} for all $\alpha\ge 0$. 
\end{lemma}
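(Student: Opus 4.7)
My plan mirrors the structure of \Cref{lm:N_check_N}: I rewrite $\mathsf D$ as an explicit min over $(v,w)\in\mh$ using the Moreau envelope definitions, then apply Lagrangian duality to identify it with the constrained problem \eqref{eq:M1_alpha}. Using $\env_f(x;\tau)=\min_u \tfrac{(x-u)^2}{2\tau}+f(u)$ with substitutions $u=v+Z$ and $u=w+X$, expanding the resulting square in the $\reg$ term, and canceling the $\tfrac{\alpha\beta^2}{2\tau_h}$ term (which appears in both the expanded envelope and in $\mathsf D$ with opposite signs) while combining $-\tfrac{\alpha\tau_h}{2}$ with $\tfrac{\tau_h\|w\|^2}{2\alpha}$ into $\tfrac{\tau_h(\|w\|^2-\alpha^2)}{2\alpha}$, one obtains
\[
\mathsf D(\alpha,\beta,\tau_h,\tau_g)=\min_{v,w\in\mh}\Bigl[\tfrac{\beta\tau_g}{2}+\tfrac{\delta\beta\|v-\alpha G\|^2}{2\tau_g}+\tfrac{\tau_h(\|w\|^2-\alpha^2)}{2\alpha}-\beta\E[Hw]+\ml(v,w)\Bigr].
\]
Swapping $\inf_{\tau_g>0}$ with $\min_{v,w}$ (their variables are independent) and using $\inf_{\tau_g>0}\{\tfrac{\beta\tau_g}{2}+\tfrac{\delta\beta c^2}{2\tau_g}\}=\beta\sqrt{\delta}\,|c|$ with $c=\|v-\alpha G\|$ yields
\[
\inf_{\tau_g>0}\mathsf D=\min_{v,w}\Bigl[\ml(v,w)+\beta\bigl(\sqrt{\delta}\|v-\alpha G\|-\E[Hw]\bigr)+\tfrac{\tau_h(\|w\|^2-\alpha^2)}{2\alpha}\Bigr].
\]

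The upper bound $\mathsf M(\alpha)\le\min\eqref{eq:M1_alpha}$ then follows at once: for any feasible $(v,w)$ of \eqref{eq:M1_alpha}, both added terms are $\le 0$ for every $\beta,\tau_h>0$, so the bracketed expression is $\le\ml(v,w)$ for all $(\beta,\tau_h)$. Taking $\sup_{\beta,\tau_h}$ and then the minimum over feasible $(v,w)$ gives the bound.

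The reverse inequality $\mathsf M(\alpha)\ge\min\eqref{eq:M1_alpha}$ is the main obstacle, as it requires an infinite-dimensional sup--min/min--sup interchange. My plan is to use Lagrange multipliers directly. Slater's condition for \eqref{eq:M1_alpha} holds with, e.g., $(v_0,w_0)=(\alpha G,\tfrac{\alpha}{2}H)$ for $\alpha>0$, giving strict feasibility in both constraints, so by the argument developed in \Cref{lm:lagrange_reg} there exist multipliers $\mu_1,\mu_2\ge 0$ such that the minimizer $(v_*,w_*)$ minimizes the linear-norm Lagrangian $\tilde F(v,w):=\ml(v,w)+\mu_1(\|w\|-\alpha)+\mu_2\bigl(\sqrt{\delta}\|v-\alpha G\|-\E[Hw]\bigr)$ with complementary slackness. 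In the generic case $\mu_1,\mu_2>0$, I set $\beta=\mu_2$ and $\tau_h=\mu_1$; then the factorization $\|w\|^2-\alpha^2=(\|w\|-\alpha)(\|w\|+\alpha)$ yields the key identity
\[
F(v,w):=\ml(v,w)+\beta\bigl(\sqrt{\delta}\|v-\alpha G\|-\E[Hw]\bigr)+\tfrac{\tau_h(\|w\|^2-\alpha^2)}{2\alpha}=\tilde F(v,w)+\tfrac{\mu_1(\|w\|-\alpha)^2}{2\alpha}.
\]
Since $(\|w\|-\alpha)^2\ge 0$ with equality at $w_*$ (by complementary slackness $\|w_*\|=\alpha$), $F\ge\tilde F$ pointwise and equality holds at $(v_*,w_*)$, hence $\min_{v,w}F=\tilde F(v_*,w_*)=\ml(v_*,w_*)=\min\eqref{eq:M1_alpha}$, so $\inf_{\tau_g}\mathsf D(\alpha,\mu_2,\mu_1,\tau_g)=\min\eqref{eq:M1_alpha}$ and $\mathsf M(\alpha)\ge\min\eqref{eq:M1_alpha}$. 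Slack cases ($\mu_1=0$ or $\mu_2=0$) are dealt with by letting $\tau_h\to 0+$ or $\beta\to 0+$ in the sup, using concavity of the map $\tau_h\mapsto\inf_{\tau_g}\mathsf D$ to pass limits. The case $\alpha=0$ is handled separately: the constraints force $(v,w)=(0,0)$ so $\min\eqref{eq:M1_alpha}=0$, while $\mathsf D(0,\beta,\tau_h,\tau_g)=\tfrac{\beta\tau_g}{2}+\delta\mathsf L(0,\tau_g/\beta)$ reduces to the unregularized expression and $\mathsf M(0)=0$ by the argument of \Cref{lm:N_check_N}(1).
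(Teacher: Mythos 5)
Your proof takes a genuinely different and considerably cleaner route to the lower bound $\mathsf{M}(\alpha) \ge \min\eqref{eq:M1_alpha}$. The paper first passes to the relaxed problem \eqref{eq:M2_alpha} via \Cref{lm:M1_M2_equivalence}, drops the positive part, parametrizes $w$ by $\tau_h$ as $w(\tau_h)=\prox[\tfrac{\alpha}{\tau_h}\reg](X+\tfrac{\alpha\beta_*}{\tau_h}H)-X$, and then runs a somewhat delicate argument on a concave function $\mathsf N(\tau_h)$ plus a small-$\tau_h$ asymptotic analysis of the proximal operator to produce a stationary $\tau_h^\star$ with $\|w(\tau_h^\star)\|=\alpha$. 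Your observation is that by writing the constraints of \eqref{eq:M1_alpha} with the \emph{linear} functions $\|w\|-\alpha$ and $\sqrt{\delta}\|v-\alpha G\|-\E[Hw]$ and then plugging in $\beta=\mu_2$, $\tau_h=\mu_1$, the algebraic identity $\frac{\mu_1(\|w\|^2-\alpha^2)}{2\alpha}=\mu_1(\|w\|-\alpha)+\frac{\mu_1(\|w\|-\alpha)^2}{2\alpha}$ shows that the inner objective of $\inf_{\tau_g}\mathsf D$ equals the Lagrangian plus a nonnegative remainder vanishing at $(v_*,w_*)$. This replaces the entire $\tau_h^\star$ construction by a one-line algebraic factorization, and makes the choice $\tau_h=\mu_1$ conceptually transparent. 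The upper bound is the same as the paper's, and your $\alpha=0$ reduction to \Cref{lm:N_check_N}(1) matches the paper.

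There is, however, one genuine error: the claim that the slack case $\mu_2=0$ is handled by letting $\beta\to 0+$ is wrong. With $\mu_2=0$ (and $\tau_h=\mu_1$ fixed), the inner minimand is
$F_\beta(v,w)=\tilde F(v,w)+\frac{\mu_1(\|w\|-\alpha)^2}{2\alpha}+\beta\bigl(\sqrt{\delta}\|v-\alpha G\|-\E[Hw]\bigr)$,
and the only available lower bound, $F_\beta\ge \ml(v_*,w_*)+\beta\bigl(\sqrt{\delta}\|v-\alpha G\|-\E[Hw]\bigr)$, is useless because the last parenthesis is unbounded below (take $\E[Hw]\to+\infty$); concavity of $\beta\mapsto\inf_{\tau_g}\mathsf D$ does not rescue this. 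The correct resolution is what the paper does: prove $\mu_2>0$ always holds under \Cref{as:reg}. Sketch: if $\mu_2=0$ then $(v_*,w_*)$ minimizes $\ml(v,w)+\mu_1(\|w\|-\alpha)$ jointly, so $v_*$ minimizes $\E[\loss(v+Z)]$ unconstrained, forcing $v_*=-Z$ by $\argmin\loss=\{0\}$ (use the truncation $v_n=-ZI\{\loss(Z)\le n\}$ if $\E[Z^2]=+\infty$); meanwhile $w_*$ solves an optimization depending only on $X$, so $w_*\ind H$ and $\E[Hw_*]=0$, contradicting $\|v_*-\alpha G\|=\sqrt{\alpha^2+\|Z\|^2}>0\le\E[Hw_*]/\sqrt{\delta}=0$ since $\PP(Z\ne 0)>0$. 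You should include this (it is the same mechanism as \Cref{lm:lagrange_reg}), and you should also state explicitly that a minimizer $(v_*,w_*)$ of \eqref{eq:M1_alpha} exists (the constraint set is closed, convex, and bounded). The $\mu_1=0$ case you can indeed dispatch by $\tau_h\to 0+$, since $\inf_{\tau_g}\mathsf D(\alpha,\mu_2,\tau_h,\tau_g)\ge \tilde F(v_*,w_*)-\tfrac{\tau_h\alpha}{2}$ there.
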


\begin{proof}
Note that $\mathsf{M}(0)=0$ by \Cref{lm:N_check_N}-(1), while the objective value of \eqref{eq:M1_alpha} is also $0$ since the constraint is reduced to $v=w=0$ when $\alpha=0$. Below we fix $\alpha>0$. 
For any $(v, w)\in \mh$ satisfying the constraint in \eqref{eq:M1_alpha} and for all $\beta > 0$, $\tau_h, \tau_g>0$, we have that
\begin{align*}
    \frac{\beta\tau_g}{2} + \delta \mathsf{L}(\alpha, \frac{\tau_g}{\beta})
    &= \frac{\beta\tau_g}{2} + \delta \E\Bigl[\min_{x\in\R} \Bigl(\frac{\beta}{2\tau_g}(x-\alpha G)^2 + \loss(x+Z)-\loss(Z)\Bigr)\Bigr]\\
    &\le \beta(\frac{\tau_g}{2} + \frac{\delta \|v-\alpha G\|^2}{2 \tau_g}) + \delta \E[\loss(v+Z)-\loss(Z)]
\end{align*}
and 
\begin{align*}
    -\frac{\alpha\tau_h}{2} - \frac{\alpha\beta^2}{2\tau_h} + \mathsf{R}\Bigl(\frac{\alpha\beta}{\tau_h},\frac{\alpha}{\tau_h}\Bigr)
    &=  -\frac{\alpha\tau_h}{2} - \frac{\alpha\beta^2}{2\tau_h} + \E\Bigl[\min_{x\in\R} 
    \Bigl(
        \frac{\tau_h}{2\alpha}(x-\frac{\alpha\beta}{\tau_h}H)^2 + \reg(x+X)-\reg(X)
    \Bigr)
    \Bigr] \\
    &\le -\frac{\alpha\tau_h}{2} - \frac{\alpha\beta^2}{2\tau_h} + \E\Bigl[
    \Bigl(
        \frac{\tau_h}{2\alpha}(w-\frac{\alpha\beta}{\tau_h}H)^2 + \reg(w+X)-\reg(X)
    \Bigr)
    \Bigr] \\
    &=\frac{\tau_h}{2\alpha}(-\alpha^2 + \|w\|^2) - \beta\E[Hw] + \E[\reg(w+X)-\reg(X)]\\
    &\le -\beta\E[Hw] + \E[\reg(w+X)-\reg(X)] \qquad \text{using } \|w\|\le \alpha
\end{align*}
Combining the above displays, we have 
\begin{align*}
    \mathsf{M}(\alpha) &\le \sup_{\beta > 0} \inf_{\tau_g>0} \beta(\frac{\tau_g}{2} + \frac{\delta \|v-\alpha G\|^2}{2 \tau_g}) + \delta \E[\loss(v+Z)-\loss(Z)] - \beta\E[Hw] + \E[\reg(w+X)-\reg(X)]\\
    &= \sup_{\beta > 0, \tau_h>0} \beta\sqrt{\delta} \|v-\alpha G\| -\beta\E[Hw] + \delta\E[\loss(v+Z)-\loss(Z)]+\E[\reg(w+X)-\reg(X)]\\
    &=\ml(v,w) + \sup_{\beta > 0} \beta\sqrt{\delta} (\|v-\alpha G\|-\E[Hw]/\sqrt{\delta})\\
    &\le \ml(v,w) \qquad \text{using } \|v-\alpha G\| \le \E[Hw]/\sqrt{\delta} 
\end{align*}
for all $(v,w)$ satisfying the constraint in \eqref{eq:M1_alpha}. This implies that $\mathsf{M}(\alpha)$ is less than the objective value of \eqref{eq:M1_alpha}. 

It remains to show the reverse inequality $\mathsf{M}(\alpha) \ge \text{\eqref{eq:M1_alpha}}$. Since \eqref{eq:M1_alpha}$=$\eqref{eq:M2_alpha} by \Cref{lm:M1_M2_equivalence}, it suffices to show  $\mathsf{M}(\alpha) \ge \text{\eqref{eq:M2_alpha}}$. Let $(v_\alpha, w_\alpha)$ be the solution to \eqref{eq:M2_alpha}.
Now we claim that there exists Lagrange multipliers $\mu_1 \ge 0$ and $\mu_2>0$ such that $(v_\alpha, w_\alpha)$ solves 
$$
\min_{v,w} \ml(v,w) + \mu_1 \mg_1(v,w) + \mu_2 \mg_2(v,w), \ \text{where} \ \begin{cases}
    \mg_1(v,w) := \|w\|^2-\alpha^2\\
    \mg_2(v,w) := \{(\alpha-\E[vG])_+^2+\|\proj(v)\|^2\}^{1/2}- \tfrac{\E[Hw]}{\sqrt \delta}
\end{cases} 
$$
As for the existence of nonnegative Lagrange multipliers $(\mu_1, \mu_2)$, by \Cref{lm:Lagrange}, it suffices to check $ \{\mg_1(v,w) < 0\} \cap \{\mg_2(v,w) < 0\} \neq \emptyset$. Indeed, $(v,w)=(2\alpha G, 2^{-1}\alpha H)$ satisfies $\mg_1(v,w)<0$ and $\mg_2(v,w)\le 0$. Let us show $\mu_2>0$. We proceed by contradiction. If $\mu_2$=0, then $(v_\alpha, w_\alpha)$ must solve
$$
v_\alpha \in\argmin_{v} \E[\loss(v+Z)-\loss(Z)], \quad w_\alpha \in \argmin_{w}\E[\reg(w+X)-\reg(X) + \mu_1 w^2].
$$
Then by the same argument in \Cref{lm:lagrange_reg} (by taking the truncated one $v_n=-Z I\{\loss(Z)\le n\}$ if necessary), we must have $v_\alpha = -Z$. The same argument applies to $w$, and we must have $w_\alpha = \prox[\mu_1^{-1} \reg] (X)-X$ (we write $w_\alpha=-X$ if $\mu_1=0$). Since $(v_\alpha, w_\alpha)$ satisfies the second constraint $\mg_2\le 0$, substituting $(v_\alpha, w_\alpha) = (-Z, \prox[\mu_1^{-1} \reg] (X)-X)$ and using the independence $v_\alpha \ind G$, $w_\alpha\ind H$, we obtain $0 \ge \mg_2(v_\alpha, w_\alpha) = \{\alpha^2 + \|Z\|^2\}^{1/2}$. This gives $\PP(Z=0)=1$, a contradiction with the assumption $\PP(Z\ne 0)>0$ in \Cref{as:reg}. This finishes the proof of $\mu_2>0$. 
Now, letting $\beta_*=\mu_2/\sqrt{\delta}>0$, we have the upper estimate:
\begin{align*}
    \text{\eqref{eq:M1_alpha}} &= \min_{v,w} \ml(v,w) + \mu_1(\|w\|^2-\alpha^2) + \beta_* \Bigl(\sqrt{\delta} \{(\alpha-\E[vG])_+^2+\|\proj(v)\|^2\}^{1/2}- \E[Hw]\Bigr), \\
    &\le  \inf_{v,w} \ml(v,w) + \mu_1(\|w\|^2-\alpha^2) + \beta_* (\sqrt{\delta} \|v-\alpha G\| - \E[Hw]) \quad \text{dropping $()_+$}\\
    &= \inf_{v,w} \inf_{\tau_g>0}  \ml(v,w) + \mu_1(\|w\|^2-\alpha^2) + \frac{\beta_*\tau_g}{2} + \frac{\beta_*\delta}{2\tau_g}\|v-\alpha G\|^2  - \beta_*\E[Hw]\\
    &= \inf_{\tau_g>0} \frac{\beta_*\tau_g}{2} + \delta \mathsf{L}(\alpha, \frac{\tau_g}{\beta_*}) + \inf_{w} \E\bigl[\reg(w+X)-\reg(X) - \beta_* w H\bigr] + \mu_1(\|w\|^2-\alpha^2)
\end{align*}
Note that for all $w\in\mh_X$ and $\tau_h>0$, we have 
\begin{align*}
\E[\reg(w+X) - \reg(X) - \beta_*w H] &=  - \frac{\tau_h\|w\|^2}{2\alpha} - \frac{\alpha\beta_*^2}{2\tau_h} + \E\Bigl[
\frac{\tau_h}{\alpha} (\frac{\alpha\beta_*}{\tau_h} H- w)^2 + \reg(w+X) - \reg(X)
\Bigr].
\end{align*}
Let us parametrize $w$ by $\tau_h>0$ as
\begin{align}\label{eq:w_tau_h}
    w(\tau_h) := \prox\Bigl[\frac{\alpha}{\tau_h}\reg\Bigr] \Bigl(X+\frac{\alpha\beta_*}{\tau_h} H\Bigr) - X \in \mh_X. 
\end{align}
Then we have 
$$
\E[\reg(w(\tau_h)+X) - \reg(X) - \beta_* w(\tau_h) H] =  - \frac{\tau_h\|w(\tau_h)\|^2}{2\alpha} - \frac{\alpha\beta_*^2}{2\tau_h} + \mathsf{R}(\frac{\alpha\beta_*}{\tau_h}, \frac{\alpha}{\tau_h})
$$
for all $\tau_h>0$. Combining this and the upper bound of $\text{\eqref{eq:M1_alpha}}$, we get 
$$
\text{\eqref{eq:M1_alpha}} \le
\inf_{\tau_g>0} \frac{\beta_*\tau_g}{2} + \delta \mathsf{L}(\alpha, \frac{\tau_g}{\beta_*}) 
- \frac{\tau_h\|w(\tau_h)\|^2}{2\alpha} - \frac{\alpha\beta_*^2}{2\tau_h} + \mathsf{R}(\frac{\alpha\beta_*}{\tau_h}, \frac{\alpha}{\tau_h}) +  \mu_1(\|w(\tau_h)\|^2-\alpha^2)
$$
Suppose that there exists a $\tau_h^\star >0$  such that $\|w(\tau_h^\star )\|^2=\alpha$. Then, taking $\tau_h=\tau_h^\star$ in the last display, 
\begin{align*}
\text{\eqref{eq:M1_alpha}}& \le \inf_{\tau_g>0} \frac{\beta_*\tau_g}{2} + \delta \mathsf{L}\Bigl(\alpha, \frac{\tau_g}{\beta_*}\Bigr) 
- \frac{\tau_h^\star \alpha}{2} - \frac{\alpha\beta_*^2}{2\tau_h^\star} + \mathsf{R}\Bigl(\frac{\alpha\beta}{\tau_h^\star}, \frac{\alpha}{\tau_h^\star}\Bigr) \le \mathsf{M}(\alpha),
\end{align*}
which concludes the proof. Therefore, it suffices to show the existence of $\tau_h=\tau_h^\star>0$ such that $w(\tau_h)$ defined in \eqref{eq:w_tau_h} satisfies $\|w(\tau_h)\|^2=\alpha^2$. Let us consider the concave function $\mathsf{N}(\tau_h)$:
$$
\mathsf{N}: \R_{>0} \to \R, \quad \tau_h \mapsto \min_{w\in\mh_X} \E\Bigl[\reg(w+X) - \reg(X) - \beta_*w H + \frac{\tau_h w^2}{2\alpha}\Bigr] -  \frac{\tau_h \alpha}{2}
$$
Since the minimization is achieved at $w=w(\tau_h)$, the derivative of $\mathsf{N}(\tau_h)$ is given by
\[
\partial_{\tau_h} \mathsf{N}(\tau_h)
 = \frac{1}{2\alpha} (\|w(\tau_h)\|^2 - \alpha^2).
\]
Furthermore, if we take $w=0$ in the minimization, we get the upper estimate $\mathsf{N}(\tau_h) \le - \frac{\tau_h\alpha}{2}$, which in particular means $\mathsf{N}(\tau_h) \to - \infty$ as $\tau_h\to + \infty$. Thus, if the derivative $\partial_{\tau_h} \mathsf{N}(\tau_h)
= \frac{1}{2\alpha} (\|w(\tau_h)\|^2 - \alpha^2)$ is positive for sufficiently small $\tau_h>0$, this means that there exists a finite stationary point $\tau_h^\star$ satisfying $\|w(\tau_h^\star)\|^2 = \alpha^2$. 

Below we show that the scaled norm $\tau_h \|w(\tau_h)\|$ converges to a positive value {as $\tau_h\to 0$}. 
Let us rewrite $w(\tau_h) = p(\epsilon):=\prox[\epsilon^{-1} \reg] (X+\epsilon^{-1}\beta_* H) - X$ by the re-parametrization $\tau_h \mapsto \epsilon = \alpha \tau_h$. Note
\begin{align*}
& p(\epsilon) \in \argmin_{w\in \R} 2^{-1}{(\epsilon^{-1}\beta_* H-w)^2} + \epsilon^{-1}\reg(w+X)
= \argmin_{w\in \R} 2^{-1} {(\beta_* H- \epsilon w)^2} + \epsilon \reg(w+X).
\end{align*}
Thus, we have
$$
\epsilon p(\epsilon) \in \argmin_{v\in\R} 2^{-1}(\beta_* H - v)^2 + \reg_\epsilon(v) = \prox[\reg_\epsilon](\beta_* H) \quad \text{ with } \quad \reg_\epsilon (\cdot) = \epsilon \reg(\cdot/\epsilon +X).
$$
Let us take $d_{-}$ and $d_{+}$ such that
$$
\lim_{t\to 0+} t \reg(1/t) \to d_+\quad  \text{and} \quad \lim_{t\to 0-} t \reg(1/t) \to d_{-} \quad \text{for some $d_-, d_+\in\R$}.
$$
Note that such $d_{-}$ and $d_{+}$ exist because $(\reg(x)-\reg(0))/(x-0)$ is monotone and bounded due to the convexity and Lipschitz condition of $\reg$. By the Lipschitz condition of $\reg$, $\reg_\epsilon$ converges to the quantile function $\reg_0$ defined below pointwisely:
$$
\reg_\epsilon(x) \to d_{+} x I\{x\ge 0\} + d_{-} x I\{x<0\} =: \reg_0(x). 
$$
Note that the map $v\mapsto 2^{-1}(\beta_* H - v)^2 + g(v)$ is convex and coercive for $g= \reg_\epsilon$ and $g=\reg_0$, so the pointwise convergence $\reg_\epsilon\to \reg_0$ implies the convergence of the corresponding minimizer, namely, $\epsilon p(\epsilon) \to \prox[\reg_0](\beta_* H)$. Noting $|\epsilon p(\epsilon)| \le \beta_* H + \|\reg_\epsilon\|_{\lip} = \beta_* H + \|\reg\|_{\lip}$, the dominated convergence theorem gives us that $\|\epsilon p(\epsilon)\|\to \|\prox[\reg_0](\beta_*H)\|$. Now we suppose the limit is $0$. Then, we must have $\prox[\reg_0](\beta_*H)=0$ pointwisely, which gives $\beta_* H \in \partial \reg_0(0)$ and $\beta_*|H|\le \|\reg_0\|_{\lip} <+\infty$, a contradiction with the unboundedness of $H\sim N(0,1)$. This finishes the proof of $\epsilon\|p(\epsilon)\| \to \|\prox[\reg_0](\beta_*H)\| > 0$. Recalling $\epsilon=\alpha\tau_h$ and $p(\epsilon) = w(\tau_h)$, this means  $\|w(\tau_h)\| \sim \frac{c}{\tau_h}$ as $\tau_h\to 0^{+}$ for a positive constant $c>0$ and hence $\partial_{\tau_h}\mathsf{N}(\tau_h) = \frac{1}{2\alpha}(\|w(\tau_h)\|^2 - \alpha^2)\to +\infty$ as $\tau_h\to 0^+$. 
\end{proof}

\begin{lemma}
    $\mathsf{M}(\alpha)$ is minimized at $\alpha_*\ge 0$ if and only if there exists a minimizer $(v_*, w_*)$ of 
    $$
    \min_{v,w\in\mh} \ml(v,w) \quad \text{subject to} \quad \mg(v,w)\le 0
    $$
    with $\|w_*\|=\alpha_*>0$.
\end{lemma}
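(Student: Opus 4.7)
The plan is to combine \Cref{lm:M_M1_equivalence} and \Cref{lm:M1_M2_equivalence} to identify $\mathsf{M}(\alpha)$ with the minimum value of the relaxed problem \eqref{eq:M2_alpha}, then swap the order of the double minimization $\min_{\alpha \ge 0} \min_{(v, w) \in F(\alpha)}$, where $F(\alpha)$ denotes the feasible set of \eqref{eq:M2_alpha}. The key observation is that for fixed $(v, w) \in \mh$, the smallest $\alpha$ consistent with the first constraint $\alpha \ge \|w\|$ is $\alpha = \|w\|$, and the quantity $\sqrt{(\alpha - \E[vG])_+^2 + \|\proj(v)\|^2}$ appearing in the second constraint is non-decreasing in $\alpha \ge 0$. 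Therefore, $(v, w)$ is feasible in \eqref{eq:M2_alpha} for some $\alpha \ge 0$ if and only if it is feasible at $\alpha = \|w\|$, which is exactly the condition $\mg(v, w) \le 0$. This yields the identity $\min_{\alpha \ge 0} \mathsf{M}(\alpha) = \min_{(v, w) : \mg(v, w) \le 0} \ml(v, w)$.

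For the backward direction, suppose $(v_*, w_*)$ minimizes the constrained problem \eqref{eq:const_optim_reg} with $\|w_*\| = \alpha_*$. Then $(v_*, w_*)$ is feasible in \eqref{eq:M2_alpha} at $\alpha = \alpha_* = \|w_*\|$, so $\mathsf{M}(\alpha_*) \le \ml(v_*, w_*)$. Conversely, for every $\alpha \ge 0$, any minimizer of \eqref{eq:M2_alpha} at $\alpha$ satisfies $\mg \le 0$ by the same correspondence, so $\mathsf{M}(\alpha) \ge \min_{\mg \le 0} \ml = \ml(v_*, w_*)$. Combining, $\mathsf{M}(\alpha_*) = \min_{\alpha \ge 0} \mathsf{M}(\alpha)$.

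For the forward direction, suppose $\alpha_* > 0$ minimizes $\mathsf{M}$. I would revisit the proof of \Cref{lm:M_M1_equivalence}: there, the concave auxiliary function $\mathsf{N}(\tau_h)$ was shown to admit a stationary point $\tau_h^\star > 0$ at which $\partial_{\tau_h} \mathsf{N}(\tau_h^\star) = (\|w(\tau_h^\star)\|^2 - \alpha_*^2)/(2\alpha_*) = 0$, giving $\|w(\tau_h^\star)\| = \alpha_*$ for $w(\tau_h^\star) = \prox[\tfrac{\alpha_*}{\tau_h^\star}\reg](X + \tfrac{\alpha_*\beta_*}{\tau_h^\star} H) - X$. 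Setting $w_* = w(\tau_h^\star)$ and taking the associated $v_* = \prox[\tfrac{\tau_g^\star}{\beta_*}\loss](\alpha_* G + Z) - Z$ (with $\tau_g^\star$ the analogous stationary point from the $\inf_{\tau_g}$), the pair $(v_*, w_*)$ is feasible in \eqref{eq:M1_alpha} at $\alpha_*$ with objective value equal to $\mathsf{M}(\alpha_*)$. Since $\mathsf{M}(\alpha_*) = \min_{\alpha \ge 0} \mathsf{M}(\alpha) = \min_{\mg(v, w) \le 0} \ml(v, w)$ and $\mg(v_*, w_*) \le 0$ (which follows from feasibility in \eqref{eq:M1_alpha} combined with $\|w_*\| = \alpha_*$, since the two inequality constraints there coincide with $\mg \le 0$ when $\alpha = \|w\|$), the pair $(v_*, w_*)$ is the desired minimizer of the constrained problem with $\|w_*\| = \alpha_*$.

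The main obstacle is the forward direction: it is easy to obtain \emph{some} minimizer of the constrained problem via the identity $\min_\alpha \mathsf{M}(\alpha) = \min_{\mg \le 0} \ml$, but such a minimizer a priori only satisfies $\|w_*\| \le \alpha_*$. Extracting a minimizer saturating $\|w_*\| = \alpha_*$ requires the specific construction from \Cref{lm:M_M1_equivalence} based on the stationary point $\tau_h^\star$, whose existence in turn relied on the unboundedness conditions in \Cref{as:reg}-(2) to guarantee $\|w(\tau_h)\| \to +\infty$ as $\tau_h \to 0^+$.
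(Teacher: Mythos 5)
Your plan for the value identity $\min_{\alpha\ge0}\mathsf{M}(\alpha)=\min_{\mg(v,w)\le0}\ml(v,w)$ and your backward direction are correct and essentially match the paper: both swap the order of $\min_\alpha$ with the inner minimization and observe that $(v,w)$ is feasible in \eqref{eq:M2_alpha} for some $\alpha$ iff it is feasible at $\alpha=\|w\|$, which is exactly $\mg(v,w)\le 0$.

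The forward direction is where you diverge from the paper, and the fix you propose has a gap. You want to set $w_*=w(\tau_h^\star)$ from the construction inside Lemma \ref{lm:M_M1_equivalence} and claim $(v_*,w_*)$ is a feasible minimizer of \eqref{eq:M1_alpha} at $\alpha_*$. But in that proof $w(\tau_h)$ is the argmin of the auxiliary function with quadratic weight $\tfrac{\tau_h}{2\alpha}\|w\|^2$, not of the actual Lagrangian, whose quadratic weight carries the multiplier $\mu_1$. The two coincide only when $\mu_1=\tau_h^\star/(2\alpha)$, and that identification is never made; $\tau_h^\star$ is chosen so that $\|w(\tau_h^\star)\|=\alpha$, purely to annihilate the term $\mu_1(\|w\|^2-\alpha^2)$ in an upper bound for the dual. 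So you cannot conclude that $w(\tau_h^\star)$ equals the primal minimizer $w_\alpha$, you do not check the coupled constraint $\|v_*-\alpha_*G\|\le\E[Hw_*]/\sqrt{\delta}$, and you do not verify $\ml(v_*,w_*)=\mathsf{M}(\alpha_*)$. The paper's route is more direct and does not pass through $\tau_h^\star$ at all: since $\mathcal V(\alpha,w)$ shrinks as $\alpha$ grows, $\alpha\mapsto\min_{v\in\mathcal V(\alpha,w)}\ml(v,w)$ is non-decreasing on $[\|w\|,\infty)$, so in the swapped order $\min_w\min_{\alpha\ge\|w\|}\min_v$ the inner $\alpha$-optimum is attained at $\alpha=\|w\|$, which directly gives $\alpha_*=\|w_*\|$ for the optimal $(v_*,w_*)$. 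Your observation that a naively obtained minimizer only yields $\|w_*\|\le\alpha_*$ is a legitimate concern, but the right way to close it is the monotonicity-in-$\alpha$ argument the paper uses (or, alternatively, the binding of $\mg(v_*,w_*)=0$ from Lemma \ref{lm:lagrange_reg}), not reconstruction via $\tau_h^\star$.
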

\begin{proof}
By \Cref{lm:M1_M2_equivalence} and \Cref{lm:M_M1_equivalence}, $\mathsf{M}(\alpha)$ can be represented as
\begin{align*}
    \mathsf{M}(\alpha) = \min_{w:\|w\| \le \alpha}\min_{v\in \mathcal{V}(\alpha, w)} \ml(v,w)
\end{align*}
where $\mathcal{V} (\alpha, w)$ is the subset of $\mh_{Z}$ given by 
$$
\mathcal{V} (\alpha, w) := \left\{v\in\mh_{Z}:
   \{(\alpha - \E[vG])_+^2 + \|\proj(v)\|^2\}^{1/2} \le {\E[Hw]}/{\sqrt{\delta}}
\right\}
$$
By switching the order of $\min_{w:\|w\|\le \alpha}$ and $\min_{\alpha\ge 0}$, we have
$$
\min_{\alpha\ge 0} \mathsf{M}(\alpha) = \min_{w} \min_{\alpha \ge \|w\|} \min_{v\in\mathcal{V}(\alpha, w)} \ml(v,w)
$$
Since the sequence $\{\mathcal{V}(\alpha, w)\}_{\alpha\ge 0}$ is decreasing in the sense that $
\mathcal{V}(\alpha, w)\supset \mathcal{V}(\alpha', w)$ for all $\alpha \le \alpha'$, 
the map $[\|w\|,+\infty)\ni \alpha \mapsto \min_{v \in \mathcal{V}(\alpha, w)} \ml(v,w)$ is increasing for each $w$.  Therefore, the optimal $\alpha_*$ is given by $\alpha_*=\|w\|$ for each $w$, and substituting this to the last display gives us that
$$
    \min_{\alpha\ge 0} \mathsf{M}(\alpha) = \min_{v,w\in\mh} \ml(v,w) \quad \text{subject to} \quad  \{(\|w\| - \E[vG])_+^2 + \|\proj(v)\|^2\}^{1/2} \le {\E[Hw]}/{\sqrt{\delta}}.
$$
This concludes the proof. 
\end{proof}

\subsection{Proof of \Cref{lm:equivalence_system_reg}}\label{proof:lm:equivalence_system_reg}

\begin{lemma}\label{lm:remove_plus}
   Let $(v_*,w_*)$ be a solution to the optimization
    $$
    \min_{v,w\in\mh} \ml(v,w) \quad \text{subject to} \quad \mg(v,w)\le 0.
    $$
    If $\E[v_* G] > \|w_*\|$, then we can find another $v_{**}$ satisfying $\E[v_{**}G]=\|w_{**}\|$ 
    such that $(v_{**}, w_*)$ also solves the above optimization problem.
\end{lemma}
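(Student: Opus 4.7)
The plan is to use Lemma~\ref{lm:vG_decrease} applied to $v=v_*$ with a specific angle chosen so that the new inner product with $G$ matches $\|w_*\|$ exactly. Define $t_* := \arccos(\|w_*\|/\E[v_*G])$, which is well-defined and lies in $[0,\pi/2)$ thanks to the hypothesis $\E[v_*G] > \|w_*\| \ge 0$. Set $v_{**} := (v_*)_{t_*}$ following the construction in Lemma~\ref{lm:vG_decrease}, i.e.\ $v_{**}(G,Z) = \E[v_*(G\cos t_* + \tilde G \sin t_*, Z)\mid G,Z]$ with $\tilde G\sim N(0,1)\ind (G,Z)$. By the first conclusion of that lemma, $\E[v_{**}G]=\E[v_*G]\cos t_* = \|w_*\|$, which is exactly the equality we want to produce.

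Next, I would verify that $(v_{**},w_*)$ remains feasible for the constraint $\mg\le 0$. Since $w_*$ is unchanged, it suffices to show $\mt(v_{**},w_*)\le \mt(v_*,w_*)$. Under $\E[v_*G]>\|w_*\|$ the positive part $(\|w_*\|-\E[v_*G])_+$ vanishes, so
\[
\mt(v_*,w_*) = \|\proj(v_*)\|.
\]
Under $\E[v_{**}G]=\|w_*\|$ the analogous positive part is also zero, so
\[
\mt(v_{**},w_*) = \|\proj(v_{**})\|.
\]
The second conclusion of Lemma~\ref{lm:vG_decrease} gives $\|\proj(v_{**})\|\le \|\proj(v_*)\|$, which combined with the two displays yields $\mt(v_{**},w_*)\le \mt(v_*,w_*)$ and hence $\mg(v_{**},w_*)\le \mg(v_*,w_*)\le 0$.

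Finally, I would compare objective values. Since $w_*$ is unchanged, the regularizer term in $\ml$ is identical; for the loss term, the third conclusion of Lemma~\ref{lm:vG_decrease} gives $\E[\loss(v_{**}+Z)-\loss(v_*+Z)]\le 0$. Therefore $\ml(v_{**},w_*)\le \ml(v_*,w_*)$, and by optimality of $(v_*,w_*)$ equality must hold, so $(v_{**},w_*)$ is another solution satisfying $\E[v_{**}G]=\|w_*\|$ as required.

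There is no real obstacle here: the proof is essentially a one-line application of Lemma~\ref{lm:vG_decrease} at the particular angle that exactly cancels the excess of $\E[v_*G]$ over $\|w_*\|$. The only thing that requires a moment's thought is noting that the $(\cdot)_+$ in the definition of $\mt$ is already zero at $(v_*,w_*)$ under the hypothesis, which is what makes replacing $v_*$ by $v_{**}$ harmless for the constraint.
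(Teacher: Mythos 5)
Your proof is correct and takes essentially the same route as the paper: both apply \Cref{lm:vG_decrease} at the angle $t_*=\arccos(\|w_*\|/\E[v_*G])$ to rotate $v_*$ to a $v_{**}$ with $\E[v_{**}G]=\|w_*\|$, then check feasibility via the monotonicity of $\|\proj(\cdot)\|$ and optimality via the third inequality of that lemma. (One trivial slip: since $\|w_*\|/\E[v_*G]\in[0,1)$, the angle lies in $(0,\pi/2]$, not $[0,\pi/2)$ --- but either endpoint is admissible in \Cref{lm:vG_decrease}, so nothing changes.)
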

\begin{proof}
\Cref{lm:vG_decrease} with $t=\arccos(\|w_*\|/\E[v_*G])$ implies that there exists a ${v}_{**} \in\mh_{Z}$ such that
$$
\E[{v}_{**}{G}]=\|w_*\|, \quad \|\proj(v_{**})\|\le\|\proj(v_{**})\|,\quad \E[\loss(v_{**}+Z)-\loss(v+Z)]\le0
$$
Note that $\ml(v_{**},w_{*})\le \ml(v_{*}, w_*)$ thanks to the third inequality and 
\begin{align*}
    \mg(v_{**}, w_{*}) &= \|\proj(v_{**})\|-\E[Hw_{*}]/\sqrt{\delta}\\
    &\le \|\proj(v_{*})\| - \E[Hw_{*}]/\sqrt{\delta}\\
    &\le \sqrt{(\|w_{*}\|-\E[v_{*}G])_+^2 + \|\proj(v_{*})\|^2} - \E[Hw_*]/\sqrt{\delta}\\
    &=\mg(v_*,w_*) \le 0.
\end{align*}
Therefore, $(v_{**}, w_*)$ satisfies the condition in the optimization problem. 
\end{proof}

\begin{lemma}\label{lm:t_positive}
Suppose that one of the following conditions is satisfied:
\begin{itemize}
\item $\reg$ is differentiable
\item $\reg$ has a finite number of non-differentiable points and $X$ is unbounded in the sense that $\PP(|X|>M)>0$ for all $M>0$. 
\end{itemize}
Then, for any solution $(v_*, w_*)$ to the optimization problem $\min_{\mg(v,w)\le 0}\ml(v,w)$, if $\|w_*\| > 0$ and $\E[v_* G]\le \|w_*\|$, we must have $\mt(v_*, w_*) = \bigl\{(\|w_*\|-\E[v_*G])_+^2 + \E[\proj(v_*)^2]\bigr\}^{1/2} > 0$. 
\end{lemma}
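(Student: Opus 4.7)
The plan is to argue by contradiction: assume $\mt(v_*,w_*)=0$. Since $\mt=\sqrt{(\|w_*\|-\E[v_*G])_+^2+\|\proj(v_*)\|^2}$, this together with the hypothesis $\E[v_*G]\le\|w_*\|$ forces $\E[v_*G]=\|w_*\|$ and $\proj(v_*)=0$, hence $v_*=\|w_*\|G$. By \Cref{lm:lagrange_reg} the Lagrange multiplier $\mu_*$ is strictly positive and the constraint is binding, so $0=\mg(v_*,w_*)=\mt(v_*,w_*)-\delta^{-1/2}\E[w_*H]=-\delta^{-1/2}\E[w_*H]$, giving $\E[w_*H]=0$.

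Next, I compute the partial $w$-subdifferential of $\mg$ at $(v_*,w_*)$. Substituting $v_*=\|w_*\|G$ reduces $\mt(v_*,w)$ to $(\|w\|-\|w_*\|)_+=\max(0,\|w\|-\|w_*\|)$, whose two branches coincide at $w_*$. Standard max-subdifferential calculus then yields $\partial_w\mt(v_*,w_*)=\{\alpha w_*/\|w_*\|:\alpha\in[0,1]\}$, and subtracting the linear term $\delta^{-1/2}\E[wH]$ gives
\[
\partial_w\mg(v_*,w_*)=\bigl\{\alpha w_*/\|w_*\|-\delta^{-1/2}H:\alpha\in[0,1]\bigr\}.
\]
The KKT condition \eqref{eq:KKT_reg_main} provides $(\xi_v,\xi_w)\in\partial\mg(v_*,w_*)$ with $-\mu_*(\xi_v,\xi_w)\in\partial\ml(v_*,w_*)$; since the $w$-coordinate of any joint subgradient lies in the partial subdifferential, there is $\alpha\in[0,1]$ such that $\zeta_L:=\mu_*\delta^{-1/2}H-\mu_*\alpha w_*/\|w_*\|$ satisfies $\zeta_L(H,X)\in\partial\reg(w_*(H,X)+X)$ almost surely.

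If $\alpha=0$, the inclusion forces $|\mu_*\delta^{-1/2}H|\le\|\reg\|_{\lip}$ almost surely, contradicting $\mu_*>0$ and the unboundedness of $H\sim\N(0,1)$. If $\alpha>0$, it rearranges to the proximal identity $w_*+X=\prox[a\reg](bH+X)$ with $a=\|w_*\|/(\mu_*\alpha)>0$ and $b=\|w_*\|/(\alpha\sqrt\delta)>0$. Since the map $h\mapsto\prox[a\reg](bh+X)-X$ is $b$-Lipschitz in $h$ and $H\ind X$, Stein's lemma applied conditionally on $X$ yields
\[
\E[Hw_*]=b\,\E\bigl[(\prox[a\reg])'(bH+X)\bigr],
\]
with $(\prox[a\reg])'\in[0,1]$ almost everywhere. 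A contradiction with $\E[w_*H]=0$ will follow from showing this expectation is strictly positive.

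This strict positivity is where the dichotomy in the second item of \Cref{as:reg} enters. Under the alternative that $\reg$ is differentiable, $\prox[a\reg]$ is the inverse of the strictly increasing continuous map $x\mapsto x+a\reg'(x)$, hence strictly increasing, so $\{(\prox[a\reg])'>0\}$ has positive Lebesgue measure (otherwise $\prox[a\reg](b)-\prox[a\reg](a)=\int_a^b(\prox[a\reg])'\,dy=0$ for some $a<b$, contradicting strict monotonicity); since $bH+X$ admits a density positive on all of $\R$ (convolution with the $\N(0,b^2)$ density), the expectation is strictly positive. Under the alternative that $\reg$ has finitely many non-differentiable points $y_1,\dots,y_k$ and $X$ is unbounded, the zero set $\{(\prox[a\reg])'=0\}$ equals the finite union $\bigcup_i[y_i+a\reg'_-(y_i),y_i+a\reg'_+(y_i)]$, which is bounded thanks to $\reg$ being Lipschitz; the unboundedness of $X$ together with $H\ind X$ and the full support of $H$ gives $\PP(bH+X\notin \text{this bounded set})>0$, again forcing the expectation to be strictly positive. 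The main technical subtleties are the partial subdifferential calculation at the degenerate point where $\mt$ vanishes and the verification that the flat region of $\prox[a\reg]$ is avoided by $bH+X$ with positive probability; everything else is routine.
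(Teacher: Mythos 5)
Your proposal is correct and takes a genuinely different route from the paper's proof. Both arguments start the same way: assume $\mt(v_*,w_*)=0$, deduce $v_*=\|w_*\|G$ from $\E[v_*G]\le\|w_*\|$ and $\proj(v_*)=0$, and deduce $\E[w_*H]=0$ from the binding constraint. After that the paths diverge. The paper's proof first shows $w_*$ is independent of $H$ by observing that $\mg(\|w\|G,w)=0$ for every $w\in\{w:\E[wH]=0\}$, hence $w_*$ minimizes $\ml(\|w\|G,w)$ over this subspace; differentiating and using the strict positivity of $\xi_*=\delta\alpha_*^{-1}\E[G\loss'(\alpha_*G+Z)]$ gives $w_*=\prox[\xi_*^{-1}\reg](X)-X$. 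It then constructs an explicit two-parameter perturbation $(v_t,w_t)$ whose first-order effect makes $\ml$ decrease and $\mg$ strictly negative, which requires showing that $\reg$ is differentiable at $w_*+X$ on an event of positive probability (this is precisely where the dichotomy in \Cref{as:reg}-(2) enters). Your route instead works directly from the KKT condition: you compute the partial $w$-subdifferential of $\mg$ at the degenerate point where $\mt$ vanishes (the max-rule gives the segment $\{\alpha w_*/\|w_*\|-\delta^{-1/2}H:\alpha\in[0,1]\}$, and the $w$-component of any joint subgradient lies in the partial subdifferential), rule out $\alpha=0$ by unboundedness of $H$, rewrite the resulting inclusion as the proximal identity $w_*+X=\prox[a\reg](bH+X)$ with $a,b>0$, and apply Gaussian integration by parts to conclude $\E[Hw_*]=b\,\E[(\prox[a\reg])'(bH+X)]>0$, contradicting $\E[Hw_*]=0$. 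The Stein-lemma approach is arguably more economical than the perturbation construction, and it is robust to the infinite-dimensional degeneracy because the partial subdifferential at the kink is computed exactly.

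Two minor remarks. First, your identification of $\{(\prox[a\reg])'=0\}$ with the finite union of closed intervals $[y_i+a\reg'_-(y_i),y_i+a\reg'_+(y_i)]$ is not quite exact in full generality (a Lipschitz strictly increasing map can have a.e.\ derivative zero on a fat-Cantor-type set away from its jumps), but this does not affect the argument: as you already note in case~(a), strict monotonicity of $\prox[a\reg]$ together with the FTC forces $\{(\prox[a\reg])'>0\}$ to have positive Lebesgue measure, and that alone suffices. Second, and relatedly, your proof never actually needs the unboundedness of $X$ in case~(b): once $\alpha>0$, the variable $bH+X$ has a conditional $\N(X,b^2)$ density positive on all of $\R$, so it hits $\{(\prox[a\reg])'>0\}$ with positive probability regardless of whether $X$ is bounded. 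So your argument establishes the conclusion under slightly weaker hypotheses than the statement requires, which is fine — the paper's perturbation construction keeps $w_*$ a function of $X$ alone, which is why it must move $|X|$ past the kinks and hence needs the unboundedness; your construction injects Gaussian spread through $bH$, which circumvents that need.
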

\begin{proof}
We proceed by contradiction. Suppose $\mt(v_*, w_*)=0$. Combined with the assumption $\E[v_*G]\le \|w_*\|$, this gives $v_*= \|w_*\| G$. Furthermore, since the constraint $\mg\le 0$ is binding (\Cref{lm:lagrange_reg}), we have 
 $\mg(v_*, w_*)=\mt(v_*, w_*)-\E[H w_*]/\sqrt{\delta}=0$ and $\E[w_* H]=0$. Below, we denote $\alpha_{*} = \|w_{*}\|>0$ so that $v_{*}=\alpha_{*}G$.

{Step 1, Identify ${w_{*}}$:}
Let $\mathcal{H}_X^\perp =\{w\in \mh_X: \E[wH]=0\}$ be the orthogonal complement of $\mh_X$. Note that $w_*\in \mathcal{H}_X^\perp$ and $\mg(\|w\|G,w)=0$ for all $w\in\mh_X^\perp$. Since $(v_*, w_*)=(\|w_*\| G, w_*)$ solves $\min_{\mg(v,w)\le 0} \ml(v,w)$, we have 
\begin{equation}\label{eq:w_*_optimality}
w_{*} \in  \argmin_{w\in\mh_X^\perp} \ml(\|w\|G, w) = \delta\E[\loss(\|w\| G+Z)-\loss(Z)] + \E[\reg(w+X)-\reg(X)]. 
\end{equation}
Now we claim that $w\mapsto \E[\loss(\|w\| G+Z)-\loss(Z)]$ is convex and differentiable at $w_{*}$. Note that it is the composition of the two convex maps $f_2\circ f_1(w)$ where 
$$
f_1:\mh_X^\perp \to\R_{\ge 0}, \ w\mapsto \|w\|, \qquad f_2: \R_{\ge 0}\to \R, \  x \mapsto \E[\loss(xG+Z)-\loss(Z)].
$$
Then, $w\mapsto \E[\loss(\|w\| G+Z)-\loss(Z)]$ is convex since 
$f_2$ is nondecreasing
by conditional Jensen's inequality. For the differentiability, $f_1$ is differentiable at $w\ne 0$, while $f_2$ is differentiable at $x>0$ since $xG+Z$ has a continuous distribution and $\loss$ is Lipschitz. Thus, $w\mapsto \E[\loss(\|w\| G+Z)-\loss(Z)]$ is differentiable at 
$w_{*}(\ne 0)$ and the derivative at $w_{*}$ is given by
$$
\E\bigl[G \loss'(\|w_{*}\|G+Z)\bigr] \frac{w_{*}}{\|w_{*}\|} = \alpha_*^{-1} \E[G\loss'(\alpha_* G +Z)] w_{*}.
$$
Then, the first-order optimality condition for $w_{*}$ in \eqref{eq:w_*_optimality} gives
\begin{equation}\label{eq:xi_*}
- \xi_{*} w_{*} \in \partial \reg(w_{*}+X)  \quad \text{where}\quad 
\xi_{*} = \delta \alpha_*^{-1} \E[G\loss'(\alpha_* G +Z)]
\end{equation}
Now we verify that $\xi_{*}$ is strictly positive. 
For $\tilde{G}\sim N(0,1)$ being independent of $G$ and $X$, 
\begin{align*}
\delta^{-1}\alpha_* \xi_{*} =\E[G \loss'(\|w_{*}\| G + Z)]= 2^{-1} \E\bigl[\bigl\{\loss'(\alpha_{*} G + X) - \loss'(\alpha_{*} \tilde{G} + X)\bigr\}(G-\tilde{G})\bigr] =: 2^{-1}\E[D],
\end{align*}
where $D =\{\loss'(\alpha_{*} G + X) - \loss'(\alpha_{*} \tilde{G} + X)\}(G-\tilde{G})$ is nonnegative since $x\mapsto \loss'(\alpha_{*} x + X)$ is nondecreasing. Furthermore, $D$ is strictly positive on the event $\{\alpha_{*} G + X > c\} \cap \{\alpha_{*} \tilde{G} + X < -c\}$ for some $c>0$ such that ${\loss}'(-c) < 0 < {\loss}'(c)$, and we can always find such $c>0$ because $\loss$ is convex and $\{0\}=\argmin_x\loss(x)$.  
Since this event has a positive probability, we must have $\E[D] > 0$ and hence $\xi_{*} = 2^{-1}\delta \alpha_{*}^{-1}\E[D] > 0$. Multiplying the both sides in \eqref{eq:xi_*} by $\xi_{*}>0$, we have $-w_{*}\in \xi_{*}^{-1}\partial \reg(w_{*}+X)$, or equivalently,
$$
w_{*} = \prox[\xi_{*}^{-1}\reg](X) - X
$$
and $w_*$ is independent of $H$. 

{Step 2, Obtain a contradiction:}
First, we claim that the event 
$$
\Omega=\{\text{$\reg$ is differentiable at $w_{*}+X$}\}
$$ has a positive probability. 
Note that $|w_{*} + X| \ge |X|-|w_{*}| \ge  |X|-\xi_{*}^{-1} \|\reg\|_{\lip}$ by 
$w_{*} = \prox[\xi_{*}^{-1}\reg](X) - X$, while $\PP(|X|>M)>0$ for any $M>0$ and the set of nondifferentiable points of $\reg$ is finite by the assumption. Thus, taking $M$ larger than
$
(\xi_{*}^{-1} \|\reg\|_{\lip} + \max \{|x|: \text{$\reg$ is not differentiable at $x$}\})
$
gives an event of positive probability for which $\reg$ is continuous. 
Letting $p=\PP(\Omega)>0$ for this event, we define $(v_t, w_t)$ as
$$
v_t := \prox[t\loss][atG+v_{*} + Z] - (Z+v_{*}) \quad \text{with} \quad a=\E[\loss'(\alpha_*G+Z)], \qquad w_t := t b H\tfrac{I\{\Omega\}}{p}
$$
for all $t>0$, where $b$ is a nonnegative constant specified later.  
Note that $w_{*}$ and $I\{\Omega\}$ are independent of $H$. 
Now verify that $\ml(v_t+v_{*}, w_t+w_{*})-\ml(v_{*}, w_{*})<0$ and $\mg(v_t+v_{*}, w_t+w_{*})<0$ for a sufficiently small $t>0$. 
For the term involving the regularizer $\reg$, since $\reg$ is differentiable at $w_{*}+X$ on the event $\Omega$, the dominated convergence theorem gives
\begin{align*}
&\E[\reg(w_{t}+w_{*}+X) - \reg(w_{*}+X)]\\
&=\E\bigl[I\{\Omega\}\bigl(\reg(w_{*}+X+tbH/p)-\reg(w_{*}+X)\bigr)\bigr]&&\text{$w_t$ is $0$ on the event $\Omega^c$}\\
&=tbp^{-1}\E\bigl[I\{\Omega\} \reg'(w_{*}+X)H\bigr] + o(t) &&\text{by dominated convergence}\\
&= tbp^{-1} \cdot 0 + o(t) = o(t) &&\text{by the independence of $(H,X)$}. 
\end{align*}
For the perturbation involving $\loss$,  
noting $t^{-1}(taG-v_t) \in \partial \loss(v_t+v_*+Z)$, we have 
\begin{align*}
    \loss(v_t+v_*+Z)-\loss(v_*+Z) \le (aG - \tfrac{v_t}{t})v_t,
\end{align*}
and $t^{-1} v_t \to aG-\loss'(\alpha_* G+Z)$, almost surely.
Since $|t^{-1}v_t|$ is uniformly bounded by the square-integrable random variable $|aG| + \|\loss\|_{\lip}$, 
the dominated convergence theorem yields
\begin{align*}
    \E[G v_t/t] \to a- \E[G\loss'(\alpha_*G+Z)] = 0, \quad \|\proj(v_t/t)\|^2 \to \|\proj(\loss'(\alpha_* G+Z))\|^2.
\end{align*}
Therefore, we have 
$$
\E[\loss(v_t+v_*+Z)-\loss(v_*+W_*)] \le t\E[(\alpha_*G-\tfrac{v_t}{t})\tfrac{v_t}{t}] = - t \|\proj(\loss'(\alpha_* G+Z))\|^2 + o(t). 
$$
Putting the above displays together, we observe that the increment of the objective function is bounded from above as
\begin{align*}
\ml(v_{*}+v_t, w_t+w_{*})-\ml(v_{*}, w_{*}) &\le -t \delta \|\proj(\loss'(\alpha_* G+Z))\|^2 + o(t).
\end{align*}
For the constraint term, noting that $v_{*}=\alpha_{*} G$, $\|w_{*}\|=\alpha_{*}>0$, and $w_{*}$ is independent of $H$, and using $\E[v_tG]=o(t)$ and $\E[\proj(v_t)^2]=t^2 \E[|\proj(\loss'(\alpha_*G+Z))^2]+ o(t^2)$, we have
\begin{align*}
&\mg(v_t+v_{*}, w_t+w_{*})\\
&= \bigl\{(\|w_t+w_{*}\|-\|w_{*}\|- \E[v_t G])_+^2 + \E[\proj(v_t)^2]\bigr\}^{1/2} -\delta^{-1/2}\E[Hw_t]\\
&= \Bigl\{(\sqrt{\alpha_{*}^2 + t^2 b^2/p}-\alpha_{*} - o(t) )_+^2 + t^2 \|\proj(\loss'(\alpha_* G+Z))\|^2 + o(t^2)\Bigr\}^{1/2} - t {\delta}^{-1/2} b \\
&= \bigl\{
    o(t^2) + t^2 \|\proj(\loss'(\alpha_* G+Z))\|^2
\bigr\}^{1/2} - t\delta^{-1/2} b\\
&=  o(t) + t \bigl(\|\proj(\loss'(\alpha_* G+Z))\| - \delta^{-1/2} b\bigr).
\end{align*}
Thus, if we take $b=2\sqrt{\delta}\|\proj(\loss'(\alpha_* G+Z))\|$, we have $\mg(v_t+v_{*}, w_t+w_{*}) = -t \|\proj(\loss'(\alpha_* G+Z))\| + o(t)$. In summary, we have shown that
\begin{align*}
\ml(v_{*}+v_t, w_t+w_{*})&\le \ml(v_{*}, w_{*}) -t \delta \|\proj(\loss'(\alpha_* G+Z))\|^2 + o(t),\\ \mg(v_t+v_{*}, w_t+w_{*})&\le -t \|\proj(\loss'(\alpha_* G+Z))\| + o(t).
\end{align*}
as $t\to 0+$. Now we verify that $\|\proj(\loss'(\alpha_* G+Z))\|$ is strictly positive. If $\|\proj(\loss'(\alpha_* G+Z))\|=0$, then there exists some real $s$ such that  $\loss'(\alpha_*G+Z)=s G$, but we must have $s=0$ since $\loss'(\alpha_*G+Z)$ is bounded while $G$ is unbounded. This in turn gives $\loss'(\alpha_*G+Z)=0$, a contradiction since the support of $\alpha_{*}G +Z$ is $\R$ while $\loss$ is not constant. Thus, $\|\proj(\loss'(\alpha_* G+Z))\|$ is strictly positive. Then, we can find a sufficiently small $t=t'>0$ such that $ \ml(v_{*}+v_{t'}, w_{t'}+w_{*}) < \ml(v_{*}, w_{*}) $ and $\mg(v_{*}+v_{t'}, w_{t'}+w_{*}) <0$. This is a contradiction with the fact that $(v_*, w_*)$ is a minimizer. This concludes the proof.
\end{proof}

\begin{lemma}\label{lm:remove_plus_strict}
    If $(v_*, w_*)$ solves \eqref{eq:const_optim_reg} with $w_*\ne 0$, then we must have $\E[w_* G] < \|w_*\|$. 
\end{lemma}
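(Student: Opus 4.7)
The strategy is to reduce to the boundary case $\E[v_* G] = \|w_*\|$ and derive a contradiction there by combining the KKT condition with a Gaussian covariance inequality. First, suppose for contradiction that $\E[v_* G] \ge \|w_*\|$. If strictly larger, \Cref{lm:remove_plus} produces another minimizer $(v_{**}, w_*)$ with $\E[v_{**} G] = \|w_*\|$; otherwise take $v_{**} = v_*$. Either way we obtain a solution $(v_{**}, w_*)$ to \eqref{eq:const_optim_reg} with $w_* \ne 0$ and $\E[v_{**} G] = \|w_*\|$. \Cref{lm:t_positive} then gives $\|\proj(v_{**})\| = \mt(v_{**}, w_*) > 0$, so by \Cref{lm:lg_basic_reg} the Fréchet derivatives of $\mg$ exist at $(v_{**}, w_*)$ and simplify at this boundary to $\nabla_v \mg = \proj(v_{**})/\|\proj(v_{**})\|$ and $\nabla_w \mg = -H/\sqrt{\delta}$.

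Next, I would apply \Cref{lm:lagrange_reg}, which supplies a strictly positive Lagrange multiplier $\mu_*$ such that some $\ell_* \in \partial \loss(v_{**} + Z)$ satisfies $\delta\, \ell_* = -\mu_* \proj(v_{**})/\|\proj(v_{**})\|$. Setting $\tau = \delta \|\proj(v_{**})\|/\mu_* > 0$ and $\alpha = \|w_*\| > 0$, and using $\E[v_{**} G] = \alpha$, this rewrites as $\tau \ell_* = \alpha G - v_{**}$, which is exactly the characterization of the proximal operator, yielding
\[
v_{**} + Z = \prox[\tau \loss](\alpha G + Z), \qquad \ell_* = (\alpha G + Z - \prox[\tau\loss](\alpha G + Z))/\tau.
\]
Pairing with $G$ and using $\E[GZ] = 0$, I get $\alpha = \E[v_{**} G] = \alpha - \tau \E[G \ell_*]$, forcing $\E[G \ell_*] = 0$.

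The contradiction comes from showing that this expectation is in fact strictly positive. Write $\ell_* = \ell(\alpha G + Z)$ with $\ell(u) = (u - \prox[\tau\loss](u))/\tau$; since $\prox[\tau \loss]$ is $1$-Lipschitz and monotone, $\ell$ is non-decreasing. Since $\argmin \loss = \{0\}$, one has $\min\partial \loss(y) > 0$ for $y > 0$ and $\max\partial \loss(y) < 0$ for $y < 0$; combined with $\prox[\tau\loss](u) \to \pm \infty$ as $u \to \pm \infty$, this shows $\ell$ is strictly positive for large positive arguments and strictly negative for large negative arguments, hence non-constant. Since $\alpha > 0$, conditionally on every $z$, $G \mapsto \ell(\alpha G + z)$ is a non-constant non-decreasing function of $G$, so Chebyshev's covariance inequality for monotone functions yields $\E[G \ell(\alpha G + z)] = \operatorname{Cov}(G, \ell(\alpha G + z)) > 0$. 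Integrating over $Z$ gives $\E[G \ell_*] > 0$, contradicting $\E[G \ell_*] = 0$.

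The main obstacle is confirming that $\mg$ is genuinely Fréchet differentiable at the kink $\E[v G] = \|w\|$, since the $(\cdot)_+$ in the definition of $\mt$ is not smooth there. The resolution is a direct first-order expansion: writing $A = \|w + \eta\| - \E[(v + h) G]$ and $B = \|\proj(v + h)\|$, at the boundary one has $A = O(\|(h,\eta)\|)$ and $B = \|\proj(v_{**})\| + O(\|h\|)$, so $A_+^2/B = O(\|(h,\eta)\|^2)$ and the kink contributes only at second order. This legitimizes the single-valued gradient from \Cref{lm:lg_basic_reg} and, crucially, the passage from the KKT inclusion to the clean proximal identity for $v_{**}$; without it the extraction of $\E[G\ell_*] = 0$ via one-sided directional derivatives would be considerably more delicate.
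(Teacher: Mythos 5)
Your proof is correct, but it proceeds by a genuinely different route than the paper's. Both arguments reduce (via \Cref{lm:remove_plus}) to the boundary case $\E[v_{**}G]=\|w_*\|$, confirm $\mt(v_{**},w_*)>0$ from \Cref{lm:t_positive}, and invoke the differentiability of $\mg$ there; but the paper then reads off the $w$-component of the KKT inclusion, $\mu_*\delta^{-1/2}H\in\partial_w\ml(v_{**},w_*)\subset\partial\reg(w_*+X)$, and derives the contradiction in one line from the boundedness of $\partial\reg$ (Lipschitz $\reg$) against the unboundedness of $H$. You instead read off the $v$-component, recover the proximal representation $v_{**}+Z=\prox[\tau\loss](\alpha G+Z)$, pair the stationarity equation with $G$ to force $\E[G\ell_*]=0$, and then contradict this via Chebyshev's covariance inequality applied to the monotone, non-constant score $\ell$. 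Both arguments are valid; the paper's is shorter and leans on a cruder obstruction (a bounded subdifferential cannot absorb an unbounded Gaussian), while yours extracts more structure from $v_{**}$ and needs $\argmin\loss=\{0\}$ to guarantee $\ell$ is non-constant plus a strict-covariance step to close the argument. One minor point: your closing discussion of the kink in $(\cdot)_+$ is a fine sanity check, but it re-derives what \Cref{lm:lg_basic_reg} already asserts (Fréchet differentiability whenever $\mt>0$ and $\|w\|>0$, including the boundary $\E[vG]=\|w\|$, precisely because $x\mapsto x_+^2$ is $C^1$ with vanishing derivative at $0$); also $A_+^2/B$ should be $A_+^2$ inside $\mt^2=A_+^2+B^2$, though the intended point that the $A_+^2$ contribution is second order is right.
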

\begin{proof}
We proceed by contradiction. Suppose $\E[w_* G] \ge \|w_*\|$. Then, by \Cref{lm:remove_plus}, we can find some $v_{**}\in\mh_Z$ such that $\E[v_{**}G]=\|w_*\|$ and $(v_{**}, w_*)$ is a minimizer. Then, we must have $\mt(v_{**}, w_{*})>0$ by  \Cref{lm:t_positive}. 
Let us take a Lagrange multiplier $\mu_*>0$ satisfying \eqref{eq:KKT_reg} so that 
$$
-\mu_* \partial_w \mg(v_{**},w_{*}) \cap \partial_w \ml(v_{**}, w_{*}) \ne \emptyset. 
$$
Here, by $\mt(v_*, w_*)>0$ and \Cref{lm:lg_basic_reg}, $(v,w)\mapsto \mg(v,w)$ is differentiable at $(v_{**}, w_{*})$ with the derivative with respect to $w$ given by 
$$
\nabla_{w}\mg(v_{**}, w_{*}) = \frac{\bigl(1-\|w_*\|^{-1}{\E[v_{**}G]}\bigr)}{{\mt(v_{**}, w_*)}}  w_* - \delta^{-1/2} H = -\delta^{-1/2} H,
$$
where we have used $\E[v_{**}G]=\|w_*\|$. Combined with the previous display, we are left with
$$
\mu_*  \delta^{-1/2} H \in \partial \ml_{w} (v_{**}, w_{*})
$$
However, since $\mu_*>0$ and $\reg$ is Lipschitz, this gives $|H|\le \mu_*^{-1} \delta^{1/2}\|\reg\|_{\lip}$ with probability $1$, but this is a contradiction since $H=^d N(0,1)$ is not bounded. This concludes that $\E[w_* G] < \|w_*\|$ must hold. 
\end{proof}

\begin{lemma}\label{lm:positive}
    If $(v_*, w_*)$ solves $\min_{\mg(v,w)\le 0}\ml(v,w)$ with $w_*\ne 0$, then it always satisfies 
    $$
    \E[v_* G] < \|w_*\|, \quad \mt(v_*, w_*) >0 \quad \text{and} \quad \|\proj(v_*)\|>0.
    $$
\end{lemma}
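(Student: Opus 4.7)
The first two conclusions follow with almost no work from results already in place. The bound $\E[v_*G]<\|w_*\|$ is the content of \Cref{lm:remove_plus_strict} (whose statement reads $\E[w_*G]<\|w_*\|$ but whose proof, which reduces to \Cref{lm:remove_plus}, shows the intended quantity is $\E[v_*G]$). With this strict inequality and $\|w_*\|>0$, the hypotheses of \Cref{lm:t_positive} are met and immediately give $\mt(v_*,w_*)>0$.

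For the third conclusion, I plan to argue by contradiction. Suppose $\|\proj(v_*)\|=0$, so that $v_*=cG$ with $c:=\E[v_*G]$; writing $\alpha_*:=\|w_*\|>0$, the first part gives $c<\alpha_*$. Since $\mt(v_*,w_*)>0$ and $\|w_*\|>0$, \Cref{lm:lg_basic_reg} shows $\mg$ is Fr\'echet differentiable at $(v_*,w_*)$, and because $c<\alpha_*$ the positive part inside $\nabla_v\mg$ opens, so $\nabla_v\mg(v_*,w_*)=(v_*-\alpha_*G)/\mt(v_*,w_*)$. Combining the $v$-component of the KKT inclusion from \Cref{lm:lagrange_reg}, the strictly positive Lagrange multiplier $\mu_*>0$ provided there, and the pointwise identification $\partial_v\ml(v_*,w_*)=\delta\,\partial\loss(v_*+Z)$, I obtain $\tfrac{\mu_*}{\delta\mt(v_*,w_*)}(\alpha_*G-v_*)\in\partial\loss(v_*+Z)$, which is exactly the defining inclusion of the proximal operator with parameter $\kappa_*:=\delta\mt(v_*,w_*)/\mu_*>0$. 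This yields $v_*=\prox[\kappa_*\loss](\alpha_*G+Z)-Z$, and equating with the assumed $v_*=cG$ gives, with $y:=\alpha_*G+Z$,
\[
    \prox[\kappa_*\loss](y)=\tfrac{c}{\alpha_*}\,y+\bigl(1-\tfrac{c}{\alpha_*}\bigr)Z.
\]
Since the left-hand side is a deterministic function of $y$ and $1-c/\alpha_*\ne 0$ (by $c<\alpha_*$), this forces $Z$ to be $\sigma(y)$-measurable.

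When $\law(Z)$ is not a point mass, this is ruled out by a short independence argument: if $Z=g(\alpha_*G+Z)$ almost surely for some measurable $g$, one fixes a value $z$ in the support of $\law(Z)$ and uses the independence of $G$ and $Z$ together with $G\sim N(0,1)$ to force $g\equiv z$ Lebesgue-a.e.\ on $\R$; repeating with a distinct $z'$ in the support yields a contradiction. The main obstacle is the residual degenerate case where $Z=c_0$ almost surely with $c_0\ne 0$ (permitted by \Cref{as:reg}). My plan is to close this by direct analysis of the prox identity, which now reads $\prox[\kappa_*\loss](u)=(c/\alpha_*)u+(1-c/\alpha_*)c_0$ for every $u\in\R$; inverting the proximal characterization $(u-v)/\kappa_*\in\partial\loss(v)$ at $v=\prox[\kappa_*\loss](u)$ forces $\partial\loss$ to be a nontrivial affine function on $\R$, so $\loss$ is a strictly convex quadratic centered at $c_0$. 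This contradicts the Lipschitz assumption on $\loss$ (and separately $\argmin\loss=\{0\}$, which would force $c_0=0$). The sub-subcase $c=0$ collapses $\prox[\kappa_*\loss]$ to the constant $c_0$, so $u-c_0\in\kappa_*\partial\loss(c_0)$ for all $u\in\R$, forcing $\partial\loss(c_0)=\R$ and again contradicting Lipschitzness.
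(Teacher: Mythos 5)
Your treatment of the first two conclusions matches the paper (and you correctly flag the typographical slip in \Cref{lm:remove_plus_strict}, whose statement writes $\E[w_*G]$ while its proof establishes $\E[v_*G]<\|w_*\|$). For the third conclusion, however, you take a genuinely different and considerably more laborious route than the paper.

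Both proofs start the same way: KKT with a strictly positive multiplier $\mu_*$, the Fr\'echet derivative $\nabla_v\mg(v_*,w_*)=(v_*-\alpha_*G)/\mt(v_*,w_*)$, and the pointwise identification of $\partial_v\ml$ with $\delta\,\partial\loss(v_*+Z)$. From there the paper stops short: since $\loss$ is $\|\loss\|_{\lip}$-Lipschitz, the inclusion $-\mu_*\nabla_v\mg(v_*,w_*)\in\delta\,\partial\loss(v_*+Z)$ yields the pointwise bound $|v_*-\alpha_* G|\le\mu_*^{-1}\delta\|\loss\|_{\lip}\mt(v_*,w_*)$ almost surely. Substituting $v_*=tG$ makes the left side $|t-\alpha_*|\,|G|$, and unboundedness of $G$ forces $t=\alpha_*$, hence $v_*=\alpha_*G$ and $\mt(v_*,w_*)=0$, a one-line contradiction with the already-established $\mt(v_*,w_*)>0$. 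No case analysis, no appeal to the structure of $\prox$.

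You instead push the KKT inclusion all the way to the proximal representation $v_*=\prox[\kappa_*\loss](\alpha_*G+Z)-Z$, rewrite $\prox[\kappa_*\loss](y)=(c/\alpha_*)y+(1-c/\alpha_*)Z$, conclude $Z$ must be $\sigma(\alpha_*G+Z)$-measurable, rule that out by independence when $\law(Z)$ is non-degenerate, and then run two more subcases ($c\ne 0$, $c=0$) when $Z$ is a.s.\ a nonzero constant. All of these steps check out: the measurability argument is sound because the conditional law of $Z$ given $\alpha_*G+Z$ has the Gaussian kernel as density and so cannot be degenerate unless $\mu_Z$ is; the $c\ne 0$ subcase correctly forces $\loss$ to be quadratic (also note $\prox$ is nondecreasing, so $c\ge 0$ automatically, removing any worry about $c<0$); and the $c=0$ subcase forces $\partial\loss(c_0)=\R$. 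But the paper's argument buys you exactly the same conclusion while sidestepping the entire case split: the single pointwise Lipschitz bound already controls all possible noise laws at once. When writing this up it would be worth adopting the paper's shortcut, as the extra machinery you invoke (the prox representation, the independence argument, the dichotomy on $\mu_Z$) adds no reach here and has more places where a reader might demand additional justification.
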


\begin{proof}
    $ \E[v_* G] < \|w_*\|$ and $\mt(v_*, w_*) >0$ immediately follows from 
\Cref{lm:t_positive} and \Cref{lm:remove_plus_strict}. Let us take a Lagrange multiplier $\mu_*>0$ satisfying \eqref{eq:KKT_reg} so that 
$$
-\mu_* \partial_v \mg(v_{*},w_{*}) \cap \partial_v \ml(v_{*}, w_{*}) \ne \emptyset. 
$$
By \Cref{lm:lg_basic_reg}, $v\mapsto \mg(v, w_*)$ is differentiable at $v=v_*$ with the derivative given by 
$$
\nabla_v\mg(v_*, w_*) 
= \frac{-(\|w_*\|-\E[v_*G]) G + v_* - \E[v_*G] G}{\mt(v_*,w_*)} = \frac{-\|w_*\|G + v_*}{\mt(v_*,w_*)}. 
$$
Since $v\mapsto \ml(v, w_*)$ is $\delta \|\loss\|_{\lip}$-Lipschitz, the above two displays give 
$$
|-\|w_*\| G + v_*| \le \mu_*^{-1}\mt(v_*, w_*) \delta \|\loss\|_{\lip}
$$
with probability $1$. Suppose $\|\proj(v_*)\|=0$. Then, there exists some $t\in\R$ such that $v_*=tG$. Substituting this to the previous display, we are left with $|t-\|w_*\|| \cdot |G| \le \mu_*^{-1}\mt(v_*, w_*) \delta \|\loss\|_{\lip}$. Since $G\sim N(0,1)$ is unbounded, we must have $t=\|w_*\|$ and $v_*=\|w_*\|G$. This in turn gives $\mt(v_*, w_*)=0$, a contradiction with $\mt(v_*, w_*)>0$. Thus, we must have $\|\proj(v)\|>0$.
\end{proof}

\begin{lemma}\label{lm:optimization_to_system_reg}
    Suppose that $(v_*, w_*)$ solves \eqref{eq:const_optim_reg} with $\|w_*\| >  0$. Let us take a 
     Lagrange multiplier satisfying the KKT condition \eqref{eq:KKT_reg} and define the positive scalars as follows 
    \begin{equation}\label{eq:scalar_*_reg}
        \alpha_* = \|w_*\|, \quad \beta_* = \frac{\mu_*}{\sqrt{\delta}}, \quad \kappa_* = \frac{\delta}{\mu_*}\mt(v_*, w_*), \quad \nu_* = \mu_* \frac{1-\|w_*\|^{-1}\E[v_*G]}{\mt(v_*, w_*)}
    \end{equation}
    (note that the positiveness of $\kappa_*$ and $\nu_*$ follows from \Cref{lm:positive}). 
    Then, $v_{*}$ and $w_{*}$ take the form of 
    $$v_* = \prox[\kappa_* \loss](\alpha_* G+Z)-Z\quad  \text{ and } \quad w_* = \prox[\nu_*^{-1} \reg](\nu_*^{-1}\beta_* H+X)-X
    $$
    and the above $(\alpha_{*}, \beta_*, \kappa_*, \nu_*)$ solve the nonlinear system of equations \eqref{eq:system_reg}; below we restate the system for convenience. 
\begin{align}
\alpha^2 &= \E \bigl[
    \bigl(
        \prox[\nu^{-1} \reg] (\nu^{-1}\beta H + X) - X
    \bigr)^2
    \bigr], \label{eq:system_reg_1}\\
    \beta^2 \kappa^2 &=  \delta  \cdot \E\bigl[\bigl(\alpha G + Z - \prox[\kappa\loss](\alpha G + Z)\bigr)^2\bigr], \label{eq:system_reg_2}
    \\
    \nu\alpha\kappa &= \delta \cdot \E\bigl[G\cdot \bigl(\alpha G + Z - \prox[\kappa](\alpha G + Z)\bigr)\bigr], \label{eq:system_reg_3}
    \\
    \kappa\beta &= \E\bigl[H \cdot \bigl(\prox[\nu^{-1}\reg] (\nu^{-1}\beta H + X)-X\bigr)\bigr]. \label{eq:system_reg_4}
  \end{align}
\end{lemma}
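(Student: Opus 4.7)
The plan is to apply the KKT machinery directly. First, by \Cref{lm:positive} we have $\mt(v_*,w_*)>0$ and $\E[v_*G]<\|w_*\|$, so both $\kappa_*$ and $\nu_*$ are well-defined positive scalars. Combined with $\|w_*\|>0$, the hypotheses of \Cref{lm:lg_basic_reg} are met, so $\mg$ is Fr\'echet differentiable at $(v_*,w_*)$. Moreover, by \Cref{lm:lagrange_reg}, the Lagrange multiplier $\mu_*>0$ is strictly positive and the constraint is binding: $\mg(v_*,w_*)=0$, equivalently $\sqrt{\delta}\,\mt(v_*,w_*)=\E[Hw_*]$.

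Given Fr\'echet differentiability, the KKT inclusion $-\mu_*\partial \mg(v_*,w_*)\cap \partial \ml(v_*,w_*) \ne \emptyset$ decomposes componentwise into pointwise almost-sure inclusions via $\partial_v\ml(v,w)=\delta\,\partial\loss(v+Z)$ and $\partial_w\ml(v,w)=\partial\reg(w+X)$. Plugging in the gradient formulas from \Cref{lm:lg_basic_reg} and using $(\|w_*\|-\E[v_*G])_+=\alpha_*-\E[v_*G]$, the $v$-component simplifies to $\nabla_v\mg(v_*,w_*)=(v_*-\alpha_*G)/\mt(v_*,w_*)$, yielding
\[
\kappa_*^{-1}(\alpha_*G-v_*)\in \partial\loss(v_*+Z).
\]
By the first-order characterization of the proximal operator, this identifies $v_*+Z$ pointwise as $\prox[\kappa_*\loss](\alpha_*G+Z)$, giving $v_*=\prox[\kappa_*\loss](\alpha_*G+Z)-Z$. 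Similarly, $\nabla_w\mg(v_*,w_*)=(\nu_*/\mu_*)w_*-\delta^{-1/2}H$, so the $w$-component inclusion becomes $\beta_*H-\nu_*w_*\in\partial\reg(w_*+X)$, giving $w_*=\prox[\nu_*^{-1}\reg](\nu_*^{-1}\beta_*H+X)-X$.

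It then remains to verify the four equations by direct substitution. Equation \eqref{eq:system_reg_1} is immediate from $\alpha_*=\|w_*\|$ and the proximal identity for $w_*$. Equation \eqref{eq:system_reg_4} reduces to $\kappa_*\beta_*=\sqrt{\delta}\,\mt(v_*,w_*)=\E[Hw_*]$, which is precisely the binding condition. For \eqref{eq:system_reg_3}, the right-hand side is $\delta\,\E[G(\alpha_*G-v_*)]=\delta(\alpha_*-\E[v_*G])$, while the left-hand side computes to $\nu_*\alpha_*\kappa_*=\delta(\|w_*\|-\E[v_*G])$, matching because $\alpha_*=\|w_*\|$. For \eqref{eq:system_reg_2}, the Pythagorean identity $\E[v_*^2]=\E[v_*G]^2+\|\proj(v_*)\|^2$ gives $\E[(\alpha_*G-v_*)^2]=(\alpha_*-\E[v_*G])^2+\|\proj(v_*)\|^2$; since $\alpha_*>\E[v_*G]$ by \Cref{lm:positive}, this equals $\mt(v_*,w_*)^2$, and multiplying by $\delta$ recovers $\beta_*^2\kappa_*^2=\delta\,\mt(v_*,w_*)^2$.

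The argument contains no substantive obstacle; the two care-points worth emphasizing are (i) invoking \Cref{lm:positive} both to drop the $(\cdot)_+$ inside $\mt$ and to guarantee Fr\'echet differentiability of $\mg$ at $(v_*,w_*)$, and (ii) tracking the factor $\delta$ inherited from the weighting of $\loss$ in $\ml$ when converting the $v$-subdifferential inclusion into the proximal identity with scale $\kappa_*=\delta\,\mt(v_*,w_*)/\mu_*$.
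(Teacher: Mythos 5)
Your proposal is correct and follows essentially the same route as the paper: invoke \Cref{lm:positive} to drop the positive part inside $\mt$ and obtain Fr\'echet differentiability, substitute the gradient formulas of \Cref{lm:lg_basic_reg} into the KKT inclusion to recover the proximal forms of $v_*$ and $w_*$, then verify \eqref{eq:system_reg_1}--\eqref{eq:system_reg_4} by direct substitution using the binding condition $\mg(v_*,w_*)=0$. The only place where the paper is slightly more careful is the passage from the Hilbert-space inclusion $-\mu_*\nabla_v\mg(v_*,w_*)\in\partial_v\ml(v_*,w_*)$ to the pointwise proximal identity: rather than asserting $\partial_v\ml(v,w)=\delta\,\partial\loss(v+Z)$ outright (which as an equality of sets requires a measurable-selection result for subdifferentials of integral functionals), the paper rewrites the inclusion as the statement that $(v_*,w_*)$ minimizes $\ml(v,w)+\frac{\delta}{2\kappa_*}\E[(v-\alpha_*G)^2]+\frac{\nu_*}{2}\E[(w-\beta_*H/\nu_*)^2]$, whose pointwise minimizer inside the expectation yields the proximal form directly. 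This sidesteps the question of equality versus inclusion for the integral subdifferential; the final conclusion is the same.
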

\begin{proof}
    As we have shown in the proof of \Cref{lm:remove_plus_strict} and \Cref{lm:positive}, the map $(v, w)\mapsto \mg(v,w)$ is Fr\'echet differentiable at $(v_*, w_*)$ with the derivatives given by 
    \begin{align*}
        \nabla_v\mg(v_*, w_*) 
        = \frac{-\|w_*\|G + v_*}{\mt(v_*,w_*)}, \quad 
        \nabla_w\mg(v_*,w_*) 
        = \frac{\bigl(1-\|w_*\|^{-1}{\E[v_*G]}\bigr)}{{\mt(v_*, w_*)}}  w_* - \delta^{-1/2} H. 
      \end{align*}
    Thus, by the definition of $(\alpha_*, \beta_*, \kappa_*, \nu_*)$, the KKT condition $-\mu_*\partial \mg(v_*, w_*)\cap \partial\ml(v_*, w_*)\ne 0$ in \eqref{eq:KKT_reg} reads to 
  \begin{align*}
    -(\delta/\kappa_*) \cdot (-\alpha_*G+v_*) \in \partial_v \ml(v_*,w_*), \quad - (\nu_*w_* -\beta_* H) \in \partial_w \ml(v_*,w_*).
  \end{align*}
    This implies that $(v_*, w_*)$ minimizes the convex function
  \begin{align*}
    \mh\ni (v,w)&\mapsto  \ml(v,w) + \frac{\delta}{2\kappa_*} \E\bigl[(v-\alpha_*G)^2\bigr] + \frac{\nu_*}{2}\E\Bigl[(w-\frac{\beta_*}{\nu_*} H)^2\Bigr]\\
    &=\delta \E\Bigl[
        \loss(v+Z)-\loss(Z) + \frac{\delta}{2\kappa_*}(v-\alpha_*G)^2
    \Bigr] + \E\Bigl[
        \reg(w+X)-\reg(X) + \frac{\nu_*}{2}(w-\frac{\beta_*}{\nu_*}H)^2
    \Bigr], 
  \end{align*}
  and hence we obtain the form of $(v_*, w_*)$:
    $$v_* = \prox[\kappa_* \loss](\alpha_* G+Z)-Z \quad  \text{ and } \quad w_* = \prox[\nu_*^{-1} \reg](\nu_*^{-1}\beta_* H+X)-X.
    $$
Let us prove that the positive scalars $(\alpha_*,\beta_*, \kappa_*, \nu_*)$ defined in \eqref{eq:scalar_*_reg} solve the nonlinear system of equations. 
The first equation \eqref{eq:system_reg_1} immediately follows from 
$\alpha_* = \|w_*\|$. 
 For the fourth \eqref{eq:system_reg_4}, the definition $\beta_*=\mu_*\delta^{-1/2}$, $\kappa_*=\mu_*^{-1}\delta \mt(v_*, w_*)$ and the binding condition $\mg(v_*, w_*)=0$ lead to 
\begin{align*}
  \kappa_*\beta_* = \delta^{1/2} \mt(v_*,w_*) = \E[w_*H]
\end{align*}
so that \eqref{eq:system_reg_4} is satisfied. Using $\kappa_*\beta_* = \delta^{1/2} \mt(v_*,w_*)$ in the above display, since $\mt(v,w)=\|\|w\| G - v\|$ when $\E[vG]\le \|w\|$ and $\|w_*\|=\alpha_*$, we have $\kappa_* \beta_* = \delta^{1/2} \mt(v_*,w_*) = \delta^{1/2} \|\alpha_* G - v_*\|$ so that the second equation \eqref{eq:system_reg_2} is satisfied. Finally, for the third equation \eqref{eq:system_reg_3}, it holds that 
\begin{align*}
  \nu_*\alpha_*\kappa_* &= \delta \alpha_* (1 - \|w_*\|^{-1} \E[v_*G]) && \text{ by $\kappa_* = \mu_*^{-1} \delta \mt(v_*, w_*)$ and $\nu_* = \mu_* \frac{1- \|w_*\|^{-1}\E[v_*G]}{\mt(v_*,w_*)},$}\\
  &= \delta (\alpha_* - \E[v_*G]) &&\text{ by $\|w_*\|=\alpha_*$}\\
  &= \delta \E[G(\alpha_* G-v_*)] && \text{ by $\E[G^2] = 1$}
\end{align*}
so that \eqref{eq:system_reg_3} is satisfied.  This finishes the proof. 
\end{proof}

\begin{lemma}\label{lm:system_to_optimization_reg}
    If $(\alpha_*, \beta_*, \kappa_*, \nu_*)\in\R_{>0}^4$ is a solution to the nonlinear system of equations \eqref{eq:system_reg_1}-\eqref{eq:system_reg_4}, then $(v_*, w_*)$ defined by 
\begin{align*}
    v_* &= \prox[\kappa_*\loss](\alpha_* G +Z)-Z, &&w_*=\prox[\nu_*^{-1}\reg] (\nu_*^{-1}\beta_* H +X)-X
\end{align*}
satisfies $\|w_*\|=\alpha_*>0$ and the KKT condition \eqref{eq:KKT_reg} with $\mu_{*}=\beta_*\sqrt{\delta}$, so that $(v_*, w_*)$ solves $\min_{\mg(v,w)\le 0}\ml(v,w)$ with $w_*\ne 0$. 
\end{lemma}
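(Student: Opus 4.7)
\textbf{Plan for Lemma \ref{lm:system_to_optimization_reg}.} The strategy mirrors the unregularized case in Lemma \ref{lm:system_to_optimization_noreg}: I will directly verify the KKT condition \eqref{eq:KKT_reg} with $\mu_* = \beta_* \sqrt{\delta}$, and then invoke Lemma \ref{lm:lagrange_reg} to conclude that $(v_*,w_*)$ solves the constrained optimization problem. The identity $\|w_*\| = \alpha_* > 0$ follows immediately from \eqref{eq:system_reg_1}. From the definition of the proximal operator, the prox identities $\kappa_*^{-1}(\alpha_* G - v_*) \in \partial \loss(v_*+Z)$ and $\beta_* H - \nu_* w_* \in \partial \reg(w_*+X)$ hold pointwise, which translate into the subgradient inclusions $(\delta/\kappa_*)(\alpha_* G - v_*) \in \partial_v \ml(v_*, w_*)$ and $\beta_* H - \nu_* w_* \in \partial_w \ml(v_*, w_*)$.

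The next step is to show that the constraint is binding, $\mg(v_*, w_*) = 0$. Equation \eqref{eq:system_reg_3} rearranges as $\E[v_* G] = \alpha_* - \nu_*\alpha_*\kappa_*/\delta$, which is strictly less than $\alpha_* = \|w_*\|$ by positivity of $(\nu_*,\alpha_*,\kappa_*,\delta)$; this activates the positive parts in the definition of $\mt$. Expanding $\|\alpha_* G - v_*\|^2 = \alpha_*^2 - 2\alpha_*\E[v_*G] + \|v_*\|^2$ and using $\|\proj(v_*)\|^2 = \|v_*\|^2 - \E[v_* G]^2$ yields the identity
\[
\mt(v_*,w_*)^2 = (\alpha_*-\E[v_*G])^2 + \|\proj(v_*)\|^2 = \|\alpha_* G - v_*\|^2,
\]
which combined with \eqref{eq:system_reg_2} gives $\mt(v_*, w_*) = \beta_*\kappa_*/\sqrt{\delta} > 0$. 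Equation \eqref{eq:system_reg_4} then yields $\E[Hw_*] = \kappa_*\beta_*$, and hence $\mg(v_*, w_*) = \mt(v_*, w_*) - \delta^{-1/2}\E[Hw_*] = 0$.

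Since $\|w_*\| > 0$ and $\mt(v_*, w_*) > 0$, Lemma \ref{lm:lg_basic_reg} grants Fréchet differentiability of $\mg$ at $(v_*, w_*)$. Substituting $\mt(v_*, w_*) = \beta_*\kappa_*/\sqrt{\delta}$ and the identity $1 - \alpha_*^{-1}\E[v_*G] = \nu_*\kappa_*/\delta$ into the gradient formulas from Lemma \ref{lm:lg_basic_reg}, a direct computation gives
\[
-\mu_* \nabla_v \mg(v_*, w_*) = \tfrac{\delta}{\kappa_*}(\alpha_* G - v_*), \qquad -\mu_* \nabla_w \mg(v_*, w_*) = \beta_* H - \nu_* w_*,
\]
which match exactly the subgradients of $\ml$ identified in the first paragraph. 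This verifies $-\mu_*\partial \mg(v_*, w_*) \cap \partial \ml(v_*, w_*) \ne \emptyset$; combined with $\mg(v_*, w_*) = 0$, the full KKT condition \eqref{eq:KKT_reg} holds, so Lemma \ref{lm:lagrange_reg} yields the conclusion.

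I do not anticipate any substantive obstacle: the proof is essentially a matching exercise where the four scalar equations of \eqref{eq:system_reg} are precisely engineered to produce the four identities needed for KKT (binding of $\mg$, the correct magnitude of $\mt$, and the two subgradient inclusions with the prescribed multiplier $\mu_* = \beta_*\sqrt{\delta}$). The only careful point is ensuring the positive-part operators in the definition of $\mt$ and its gradient are active at $(v_*, w_*)$, which is handled by the strict inequality $\E[v_* G] < \alpha_*$ obtained from \eqref{eq:system_reg_3}.
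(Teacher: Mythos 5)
Your proposal is correct and follows essentially the same path as the paper's proof: verify that the four scalar equations, rewritten through the definitions of $v_*$ and $w_*$, give $\|w_*\|=\alpha_*$, $\|w_*\|^{-1}\E[v_*G]=1-\nu_*\kappa_*/\delta<1$, $\mt(v_*,w_*)=\beta_*\kappa_*/\sqrt{\delta}>0$ and $\mg(v_*,w_*)=0$, then substitute into the Fréchet gradient formulas of \Cref{lm:lg_basic_reg} to match the proximal-operator subgradient inclusions and conclude via \Cref{lm:lagrange_reg}. The only difference is that you spell out the Pythagorean computation $\mt(v_*,w_*)^2=\|\alpha_* G-v_*\|^2$ that the paper leaves implicit.
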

\begin{proof}
By the definition of $v_*$ and $w_*$, the nonlinear system of equations \eqref{eq:system_reg_1}-\eqref{eq:system_reg_4} reads to 
\begin{align*}
    \alpha_*^2  = \E[w_*^2],  \quad \beta_*^2\kappa_*^2 = \delta \E[(\alpha_* G - v_*)^2], \quad  \kappa_*\beta_* = \E[H w_*], \quad \nu_*\alpha_*\kappa_* = \delta \E[G (\alpha_* G-v_*)].
  \end{align*}
Then, it easily follows from the above display that 
  \begin{equation*}
 \|w_*\| > 0, \quad 1 - \|w_*\|^{-1} \E[v_*G]=\frac{\nu_*\kappa_*}{\delta}>0, \quad 
  \mt(v_*, w_*)=\frac{\beta_*\kappa_*}{\sqrt{\delta}} \quad \text{and} \quad \mg(v_*, w_*)=0. 
  \end{equation*}
Recall that the map $(v,w)\mapsto\mg(v,w)$ is differentiable if  $\mt(v, w)>0$ and $\|w\|>0$ (see \Cref{lm:lg_basic_reg}). Noting $\|w_*\|>\E[v_*G]$, the derivative formula gives 
\begin{align*}
    -\sqrt{\delta} \beta_*\cdot  \nabla_v\mg(v_*, w_*) 
        &= -\sqrt{\delta} \beta_*  \cdot \frac{-\|w_*\|G + v_*}{\mt(v_*,w_*)} = \delta \kappa_*^{-1} { (\alpha_* G - v_*)}\\
        -\sqrt{\delta}\beta_* \cdot \nabla_w\mg(v_*,w_*) 
        &= -\sqrt{\delta}\beta_* \cdot \Bigl(\frac{\bigl(1-\|w_*\|^{-1}{\E[v_*G]}\bigr)}{{\mt(v_*, w_*)}}  w_* - \delta^{-1/2} H.\Bigr) = -\nu_*w_* + \beta_* H. 
\end{align*}
  Since $(v_*, w_*)$ satisfies $\kappa_*^{-1}  (\alpha_* G-v_*) \in \partial \loss(v_*+Z)$ and ${\beta_*} H - \nu_* w_* \in \partial \reg(w_* + X)$ by the definition of proximal operator, using $\partial \reg(w+X)\subset \partial_w \ml(v,w)$ and $\delta \partial \loss(v+Z) \subset \partial_v \ml(v,w)$, we are left with
  $$
  -\sqrt{\delta}\beta_* \nabla_v \mg(v_*,w_*) \subset \partial \ml(v_*,w_*), \quad   - \sqrt{\delta}\beta_*  \mg(v_*,w_*) \subset \partial_w \ml(v,w). 
  $$
  This means that the KKT condition is satisfied with $\mu_* = \sqrt{\delta}\beta_*>0$. 
\end{proof}

\subsection{Proof of \Cref{lm:unique_reg}}\label{proof:lm:unique_reg}

\begin{lemma}\label{lm:proportional_reg}
Suppose $(v_*, w_*)$ and $(v_{**}, w_{**})$ solve $\min_{\mg(v,w)\le 0}$. If  $w_{*}$ and $w_{**}$ are both nonzero, then they must be proportional, i.e.,  $(v_{**}, w_{**})=t(v_{*}, w_{*})$ for some $t>0$. 
\end{lemma}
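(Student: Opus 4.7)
The plan is to adapt the argument of Lemma~\ref{lm:propotional_noreg} to the two-variable setting. Lemma~\ref{lm:optimization_to_system_reg} gives associated positive scalars $(\alpha_*, \beta_*, \kappa_*, \nu_*)$ and the proximal representations of $v_*, w_*$, and hence the subgradient inclusions $\alpha_*G - v_*\in\kappa_*\partial\loss(v_*+Z)$ and $\beta_*H - \nu_*w_*\in\partial\reg(w_*+X)$; Lemma~\ref{lm:positive} guarantees that $\|\proj(v_*)\|$, $a_{**}:=\|w_{**}\|-\E[Gv_{**}]$, and $b_{**}:=\|\proj(v_{**})\|$ are all strictly positive.

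Step~1. Convexity of $\ml$ combined with the subgradient inclusions gives
\[
\ml(v,w)-\ml(v_*,w_*)\ge\tfrac{\delta}{\kappa_*}\E[(\alpha_*G-v_*)(v-v_*)]+\E[(\beta_*H-\nu_*w_*)(w-w_*)]
\]
for all $(v,w)\in\mh$. Applied at $(v_{**}, w_{**})$ with $\ml(v_{**},w_{**})=\ml(v_*,w_*)$, after expanding and substituting the four identities from the system~\eqref{eq:system_reg} (namely $\|w_*\|=\alpha_*$, $\E[Hw_*]=\kappa_*\beta_*$, $\E[v_*G]=\alpha_*(1-\delta^{-1}\nu_*\kappa_*)$, and $\E[(\alpha_*G-v_*)^2]=\delta^{-1}\beta_*^2\kappa_*^2$), this reduces to
\[
\tfrac{\delta}{\kappa_*}\bigl(\alpha_*\E[Gv_{**}]-\E[v_*v_{**}]\bigr)+\beta_*\E[Hw_{**}]-\nu_*\E[w_*w_{**}]\le0. \qquad(\star)
\]

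Step~2. To see that $(\star)$ is actually an equality, decompose $v_{**}=\E[Gv_{**}]G+\proj(v_{**})$, so that $\E[v_*v_{**}]=\E[v_*G]\E[Gv_{**}]+\E[\proj(v_*)\proj(v_{**})]$. Applying Cauchy--Schwarz to $\E[\proj(v_*)\proj(v_{**})]\le\|\proj(v_*)\|b_{**}$ and $\E[w_*w_{**}]\le\alpha_*\|w_{**}\|$, together with the identity $\tfrac{\delta}{\kappa_*}\|\proj(v_*)\|=\sqrt\delta\sqrt{\beta_*^2-\delta^{-1}\alpha_*^2\nu_*^2}$ (deduced from $\mt(v_*,w_*)=\kappa_*\beta_*/\sqrt\delta$ and the definition of $\mt$), rearranges $(\star)$ into $\nu_*\alpha_*a_{**}+\sqrt\delta\sqrt{\beta_*^2-\delta^{-1}\alpha_*^2\nu_*^2}\,b_{**}\ge\beta_*\E[Hw_{**}]$. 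A two-dimensional Cauchy--Schwarz with the vector $(\nu_*\alpha_*,\sqrt\delta\sqrt{\beta_*^2-\delta^{-1}\alpha_*^2\nu_*^2})$, whose Euclidean norm equals $\sqrt\delta\beta_*$, upper-bounds the left side by $\sqrt\delta\beta_*\sqrt{a_{**}^2+b_{**}^2}$; the constraint $\mg(v_{**},w_{**})\le0$ lower-bounds the right side by the same quantity $\sqrt\delta\beta_*\sqrt{a_{**}^2+b_{**}^2}$. Chaining these bounds forces every inequality in the argument to be an equality.

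Step~3. The equality cases in the two standard Cauchy--Schwarz inequalities yield $w_{**}=c_2w_*$ and $\proj(v_{**})=c_1\proj(v_*)$ for some $c_1,c_2>0$; the equality case of the two-dimensional Cauchy--Schwarz gives $(a_{**},b_{**})=\lambda(\nu_*\alpha_*,\sqrt{\delta(\beta_*^2-\delta^{-1}\alpha_*^2\nu_*^2)})$ with $\lambda>0$. Matching the expression for $b_{**}$ coming from $\proj(v_{**})=c_1\proj(v_*)$ gives $\lambda=c_1\kappa_*/\delta$; matching the saturated constraint $\sqrt{a_{**}^2+b_{**}^2}=\lambda\sqrt\delta\beta_*=\delta^{-1/2}\E[Hw_{**}]=\delta^{-1/2}c_2\kappa_*\beta_*$ gives $\lambda=c_2\kappa_*/\delta$, hence $c_1=c_2=:t>0$. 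A final check that $\E[Gv_{**}]=\|w_{**}\|-a_{**}=t\alpha_*(1-\delta^{-1}\nu_*\kappa_*)=c_1\E[v_*G]$ confirms $v_{**}=tv_*$ and $w_{**}=tw_*$. I expect the main obstacle to be identifying the correct pair of vectors for the two-dimensional Cauchy--Schwarz in Step~2: the pairing is forced by the identity $\mt(v_*,w_*)^2=\delta^{-1}\beta_*^2\kappa_*^2$ combined with the $G$-decomposition of $v_{**}$, and is the essential new ingredient compared with the unregularized proof.
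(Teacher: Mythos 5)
Your proof is correct and follows essentially the same route as the paper's. Both start from the subgradient/first-order inequality for $\ml$ at $(v_*,w_*)$, substitute the four system equations, and then chain three Cauchy--Schwarz steps (one for $\E[w_*w_{**}]$, one for $\E[\proj(v_*)\proj(v_{**})]$, and a two-dimensional one) against the saturated constraint $\mg(v_{**},w_{**})=0$; the only cosmetic difference is that you phrase the 2D Cauchy--Schwarz with the vector $(\nu_*\alpha_*,\sqrt{\delta(\beta_*^2-\delta^{-1}\alpha_*^2\nu_*^2)})$, which is $(\delta/\kappa_*)$ times the paper's vector $\bigl((\|w_*\|-\E[Gv_*])_+,\|\proj(v_*)\|\bigr)$, and you pin down the proportionality constant via the single parameter $\lambda$ rather than the paper's $(s,t,u)$ bookkeeping.
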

\begin{proof}
By \Cref{lm:optimization_to_system_reg},  $(v_*, w_*)$ takes the form of 
$$
v_{*}=\prox[\kappa_*\loss](\alpha_*G+Z)-Z, \quad w_* = \prox[\nu_*^{-1}\reg](\nu_*^{-1}\beta_*H +X)-X.
$$
where $(\alpha_*, \beta_*, \kappa_*, \nu_*)$ is the associated solution to the nonlinear system \eqref{eq:system_reg_1}-\eqref{eq:system_reg_4}. Using the form of $(v_*, w_*)$, the nonlinear system reads to 
\begin{align}\label{eq:4_equation_prox}
    \alpha_*^2  = \E[w_*^2],  \quad \beta_*^2\kappa_*^2 = \delta \E[(\alpha_* G - v_*)^2], \quad  \kappa_*\beta_* = \E[H w_*], \quad \nu_*\alpha_*\kappa_* = \delta \E[G (\alpha_* G-v_*)].    
\end{align}
By the definition of proximal operator, we have $\kappa_*^{-1}(\alpha_* G - v_*)\in \partial\loss(v_*+Z)$ and $\nu_{*}(\nu_*^{-1}\beta_* H - w_*)\in \partial \reg(w_*+X)$, which implies that, 
  pointwise, the scalar realization of $(v_*, w_*)$ minimizes the convex function
\begin{align*}
  (x,y)\mapsto \delta \Bigl[\loss(x+Z) + \frac{{(x-\alpha_* G)^2} - {(x-v_*)^2}}{2\kappa_*}\Bigr] +  \reg(y+X) + \frac{\nu_*}{2}\Bigl\{\Bigl(y-\frac{\beta_*}{\nu_*} H\Bigr)^2 - {(y-w_*)^2}\Bigr\}
\end{align*}
since the derivatives of $x\mapsto (x-v_*)^2$ and $y\mapsto (y-w_*)^2$ are zeros at $(x,y)=(v_*, w_*)$.
Thus, again pointwise, we have
\begin{align*}
  &\delta \Bigl[\loss(v+Z) + \frac{{(v-\alpha_* G)^2} - {(v-v_*)^2}}{2\kappa_*}\Bigr] +  \reg(w+X) + \frac{\nu_*}{2}\Bigl\{\Bigl(w-\frac{\beta_*}{\nu_*} H\Bigr)^2 - {(w-w_*)^2}\Bigr\}\\
  &\ge \delta \Bigl[\loss(v_*+Z) + \frac{{(v_*-\alpha_* G)^2}}{2\kappa_*}\Bigr] +  \reg(w_*+X) + \frac{\nu_*}{2}\Bigl(y-\frac{\beta_*}{\nu_*} H\Bigr)^2
\end{align*}
for all $(v,w)\in \mh$. 
Taking expectation, we are left with 
\begin{align*}
    \ml(v,w) - \ml(v_*, w_*) &= \delta\E[\loss(v+Z)-\loss(v_*+Z)] + \E[\reg(w+X)-\reg(w_*+X)] \\
    &\ge  \frac{\delta}{2\kappa_*} \E\bigl[
        (\alpha_* G-v_*)^2 - (\alpha_* G-v)^2 + (v_*-v)^2
     \bigr] \\
     &+ \frac{\nu_*}{2} \E\Bigl[
        \Bigl(w_*-\frac{\beta_*}{\nu_*} H\Bigr)^2 - \Bigl(w-\frac{\beta_* H}{\nu_*}\Bigr)^2 + (w-w_*)^2
      \Bigr].   
\end{align*} 
Now we simplify the RHS using \eqref{eq:4_equation_prox}. For the first term involving $(v, v_*)$, expanding the square $(v-v_*)^2 = (\alpha_* G - v_* - (\alpha_* G - v))^2$ and using  $\delta \E[(\alpha_* G-v_*)^2]=\beta_*^2\kappa_*^2$ in \eqref{eq:4_equation_prox}, we have
\begin{align*}
    \frac{\delta}{2\kappa_*} \E\bigl[(\alpha_*G-v_*)^2 -(\alpha_* G-v)^2 + (v_*-v)^2\bigr] &= \frac{\delta}{2\kappa_*}\E\bigl[ 2(\alpha_*G-v_*)^2 - 2(\alpha_* G - v_*)(\alpha_*G-v)\bigr]\\
    &= \frac{\delta}{2\kappa_*}\Bigl(2\frac{\beta_*^2\kappa_*^2}{\delta}
    - 2\E[(\alpha_*G - v_*)(\alpha_*G-v)]
    \Bigr)\\
    &=\beta_*^2\kappa_* -  \kappa_{*}^{-1}\delta \E\bigl[(\alpha_* G - v_*)(\alpha_*G-v)\bigr].
\end{align*}
For the second term involving $(w,w_*)$, expanding the squares so that $w^2$ and $(\nu_*^{-1}\beta_*H)^2$ are cancelled out, using $\E[w_*^2]=\alpha_*^2$, $\E[w_* H]=\kappa_*\beta_*$ in \eqref{eq:4_equation_prox}, we have
\begin{align*}
    \frac{\nu_*}{2} \E\Bigl[
    \Bigl(w_*-\frac{\beta_*}{\nu_*} H\Bigr)^2 - \Bigl(w-\frac{\beta_*}{\nu_*}H\Bigr)^2 + (w-w_*)^2
  \Bigr] &= \frac{\nu_*}{2} \E\Bigl[
    2w_*^2 - 2\frac{\beta_*}{\nu_*} w_*H +  2\frac{\beta_*}{\nu_*} wH - 2ww_*
  \Bigr]\\
  &=\nu_*\alpha_*^2 - \kappa_*\beta_*^2 + \beta_*\E[ wH] - \nu_*\E[ww_*].
\end{align*}
Putting the above displays together, noting that $\kappa_*\beta_*^2$ is canceled out, we are left with 
\begin{align}\label{eq:L_Xi_bound}
  \ml(v,w)-\ml(v_*,w_*) \ge - (\delta/\kappa_*) \E\bigl[(\alpha_* G-v_*)(\alpha_* G-v)\bigr]
+ \nu_* \alpha_*^2 - \nu_* \E[w_*w] + \beta_* \E[wH].
\end{align}
for all $(v,w)\in\mh$. Using the equations \eqref{eq:4_equation_prox} and the decomposition of the inner product
\begin{equation}
    \E[v_1 v_2]= \E[\proj(v_1)\proj(v_2)] + \E[Gv_1]\E[Gv_2] \quad \text{with} \quad \proj(v)=v -\E[vG]G, 
    \label{proj_decomposition}
\end{equation}
the terms on the RHS of \eqref{eq:L_Xi_bound} (except for the right-most term $\beta_*\E[wH]$) are bounded from below as 
\begin{align*}
    &\phantom{ {} = } - (\delta/\kappa_*) \E\bigl[(\alpha_* G-v_*)(\alpha_* G-v)\bigr]
+ \nu_* \alpha_*^2 - \nu_* \E[w_*w]\\
&= - (\delta/\kappa_*)\bigl(
    \E[\proj(v_*)\proj(v)]
     +(\alpha_*-\E[Gv_*])
      (\alpha_*-\E[Gv])
      \bigr)
    + \nu_*\alpha_*^2 - \nu_* \E[w_*w]
&& \text{by \eqref{proj_decomposition}}\\
&= - (\delta/\kappa_*) \bigl(
    \E[\proj(v_*)\proj(v)]
     +\alpha_* \nu_*(\kappa_*/\delta)
      (\alpha_*-\E[Gv])
      \bigr)
    + \nu_*\alpha_*^2 - \nu_* \E[w_*w] &&\text{by $\nu_*\alpha_*\kappa_* = \delta\E[G(\alpha_* G - v_*)]$} \\
&= -(\delta/\kappa_*) \bigl(
    \E[\proj(v_*)\proj(v)]
     -\alpha_* \nu_*(\kappa_*/\delta)\E[Gv]
      \bigr) - \nu_*\E[w_*w]\\
&\ge -(\delta/\kappa_*) \bigl(
        \E[\proj(v_*)\proj(v)]
         -\alpha_* \nu_*(\kappa_*/\delta)\E[Gv]
          \bigr) - \nu_*\|w_*\||w\| && \text{CS for $\E[w_*w]\le \|w_*\|\|w\|$}\\
&=-(\delta/\kappa_*) \bigl(
    \E[\proj(v_*)\proj(v)]
     +\alpha_* \nu_*(\kappa_*/\delta) (\|w\|-\E[vG])
      \bigr) && \text{using } \|w_*\|=\alpha_*  \\
& = - (\delta/\kappa_*) \bigl(
    \E[\proj(v_*)\proj(v)]
     + (\|w_*\|-\E[Gv_*])_+(\|w\| - \E[Gv])
      \bigr) && \text{by \eqref{eq:4_equation_prox}} \\
&\ge - (\delta/\kappa_*) \bigl(
    \E[\proj(v_*)\proj(v)]
     + (\|w_*\|-\E[Gv_*])_+(\|w\| - \E[Gv])_+
  \bigr) && \text{using } y_+x_+\ge y_+ x.
\end{align*}
In the right column where justifications are provided,
CS stands for the Cauchy--Schwarz inequality.
Therefore, since $\mt(v, w) = \{(\|w\| - \E[vG])_+^2 + \E[\proj(v)^2]\}^{1/2}$ by definition,
\begin{align*}
    &\phantom{ {} = } \ml(v,w)-\ml(v_*,w_*) \\
     &\ge - (\delta/\kappa_*) \bigl[
    \E[\proj(v_*)\proj(v)]
     + (\|w_*\|-\E[Gv_*])_+(\|w\| - \E[Gv])_+
      \bigr] + \beta_*\E[wH]\\
    &\ge -(\delta/\kappa_*) \bigl(\|\proj(v_*)\|\|\proj(v)\| + (\|w_*\|-\E[Gv_*])_+(\|w\| - \E[Gv])_+ \bigr) + \beta_*\E[wH] &&\text{CS for $\E[\proj(v_*)\proj(v)]$}\\
    &\ge - (\delta/\kappa_*) {\mt(v_*, w_*)}\cdot {\mt(v, w)} +\beta_*\E[wH] &&\text{CS}
    \\
    &= - \beta_*  {\mt(v, w)} + \beta_*\E[wH] &&\text{${\mt(v_*,w_*)}=\beta_*\kappa_*/\sqrt{\delta}$}\\
    &=-\beta_*\mg(v,w). 
\end{align*}
for all $(v,w)\in\mh$.  
Applying this inequality with $(v, w)=(v_{**}, w_{**})$, 
 since $\ml(v_{**}, w_{**})=\ml(v_*, w_*)$ and $\mg(v_{**}, w_{**})=0$, we are left with 
 $$
0 = \ml(v_{**}, w_{**}) -\ml(v_*, w_*)\ge -\beta_*\mg(v_{**}, w_{**}) = 0
 $$
so that the Cauchy--Schwarz inequality holds with equality. The equality case gives that 
\begin{align*}
w_{**} &=tw_{*} &&\text{for some real $t\ge 0$}, \\
\proj(v_{**}) &= u \proj(v_{*}) &&\text{for some real $u\ge 0$}, \\
\begin{pmatrix}
    \|\proj(v_{**})\|\\
    (\|w_{**}\|-\E[G v_{**}])_+ 
\end{pmatrix} &= s \begin{pmatrix}
    \|\proj(v_{*})\|\\
    (\|w_{*}\|-\E[G v_{*}])_+
\end{pmatrix} &&\text{for some real $s\ge 0$}. 
\end{align*}
Since $\|\proj(v_{*})\|>0$ by \Cref{lm:positive}, the second and the third equations give $u=s>0$. Furthermore, we must have $t=s$ since 
\begin{align*}
t \E[H w_*] &=
\E[H w_{**}] &&\text{by $w_{**}=t w_*$}\\
&= \sqrt{\delta} \sqrt{(\|w_{**}\|-\E[v_{**}G])_+^2 + \|\proj(v_{**})\|^2} &&\text{by $\mg(v_{**}, w_{**})=0$}\\
&= s \sqrt{\delta} \sqrt{(\|{w}_*\|-\E[{v}_*G])_+^2 + \|\proj({v}_*)\|^2} && \text{by $\begin{pmatrix}
\|\proj(\tilde v)\|\\
(\|\tilde w\|-\E[G \tilde v])_+ 
\end{pmatrix} = s \begin{pmatrix}
\|\proj(v_*)\|\\
(\|w_*\|-\E[G v_*])_+
\end{pmatrix}$}\\
&= s \E[H w_*] &&\text{by $\mg(v_*,w_*)=0$}
\end{align*}
and $\E[H w_*]=\kappa_*\beta_*>0$. Substituting $t=s=u> 0$ to the previous display, noting $\|w_{*}\| > \E[v_*G]$ and $\|w_*\|>\E[v_*G]$ by \Cref{lm:positive}, we obtain
$(v_{**},w_{**}) = t(v_*, w_*)$. 
\end{proof}

\begin{proof}[Proof of \Cref{lm:unique_reg}-(1)]
\end{proof}

\begin{lemma}[Uniqueness of $w_*$]
    If $(v_*, w_*)$ solves $\min_{\mg(v,w)\le0}\ml(v,w)$, then $w_*$ is unique. 
\end{lemma}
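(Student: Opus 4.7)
My plan is to split the argument into cases based on whether $w_*$ and $w_{**}$ vanish, leveraging the proportionality Lemma~\ref{lm:proportional_reg} together with the prox identity contradiction already developed in Lemma~\ref{lm:unique_v_*}. Let $(v_*, w_*)$ and $(v_{**}, w_{**})$ be any two minimizers of $\min_{\mg(v,w)\le 0}\ml(v,w)$, and distinguish the cases: (a) both $w_*, w_{**}\ne 0$; (b) exactly one of them vanishes; (c) both vanish.

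In the principal case (a), Lemma~\ref{lm:proportional_reg} immediately gives $(v_{**}, w_{**}) = t(v_*, w_*)$ for some $t>0$, and it remains only to show $t=1$. For this I invoke Lemma~\ref{lm:optimization_to_system_reg} at each minimizer to write $v_* + Z = \prox[\kappa_*\loss](\alpha_* G + Z)$ and $v_{**} + Z = \prox[\kappa_{**}\loss](t\alpha_* G + Z)$, with $\alpha_* = \|w_*\|$ and positive scalars $\kappa_*, \kappa_{**}$. Substituting $v_{**}=tv_*$ and rearranging produces exactly the identity \eqref{eq:partial_prox_identity} used in the unregularized uniqueness proof. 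Because that derivation uses only $\argmin\loss=\{0\}$, the firm non-expansiveness of the proximal operator, and $\PP(Z\ne 0)>0$ — all still in force under Assumption~\ref{as:reg} — the same case analysis on the signs of $\alpha_* G + Z$ and $t\alpha_* G + Z$ forces $t=1$, and hence $w_{**}=w_*$.

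For case (b), say $w_* = 0 \ne w_{**}$, I exploit convexity of $\ml$ and of $\{\mg \le 0\}$ (both guaranteed by Lemma~\ref{lm:lg_basic_reg}) to observe that the midpoint $(\bar v, \bar w) := \tfrac12(v_* + v_{**}, w_{**})$ is itself a minimizer and satisfies $\bar w = w_{**}/2 \ne 0$. Applying case (a) to $(\bar v, \bar w)$ and $(v_{**}, w_{**})$ — both with nonzero second coordinate — yields $\bar w = w_{**}$, whence $w_{**}/2 = w_{**}$ and $w_{**}=0$, a contradiction. Case (c) is immediate. The main obstacle I anticipate is verifying that the prox identity contradiction of Lemma~\ref{lm:unique_v_*} transfers to the present setting; but since that argument concerns only the loss/noise pair $(\loss, Z)$ and never involves the regularizer or the signal distribution, the translation to the regularized problem is essentially cosmetic.
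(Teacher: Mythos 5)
Your proposal is correct and follows essentially the same route as the paper's own proof: the paper also reduces to the case where both $w$-coordinates are nonzero by passing to the midpoint when necessary, invokes \Cref{lm:proportional_reg} for proportionality and \Cref{lm:optimization_to_system_reg} for the prox representations, and then transfers the identity-based argument of \Cref{lm:unique_v_*} verbatim to conclude $t=1$. The only difference is organizational (your explicit three-case split versus the paper's "without loss of generality... taking the intermediate point if necessary"), and your observation that the \Cref{lm:unique_v_*} argument depends only on $(\loss,Z,G)$ and never touches the regularizer is exactly the point the paper uses implicitly.
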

\begin{proof}
    We proceed by contradiction. Suppose there exists another minimizer $(v_{**}, w_{**})$ such that $w_{*}\ne w_{**}$. Since at least $w_*$ or $w_{**}$ are nonzero, we may assume $w_{*}\ne 0$ without loss of generality.  Then, taking the intermediate point $(\tilde{v}_{**},\tilde{w}_{**})=((v_{*}+v_{**})/2, (w_{*}+w_{**})/2)$ if necessary, we may also assume that both of $w_*$ and $w_{**}$ are nonzero. By \Cref{lm:optimization_to_system_reg}, $(v_*, w_*)$ take the form of
\begin{align*}
v_* &= \prox[\kappa_*\loss](\alpha_* G +Z)-Z, &&w_*=\prox[\nu_*^{-1}\reg] (\nu_*^{-1}\beta_* H +X)-X
\end{align*}
where $(\alpha_*, \beta_*, \kappa_*, \nu_*)$ are the associated positive scalars, and the same things hold for $(v_{**}, w_{**})$ with $(\alpha_{**}, \beta_{**}, \kappa_{**}, \nu_{**})$. 
By \Cref{lm:proportional_reg}, $(v_{*}, w_{*})=t(v_{**}, w_{**})$ for some $t>0$. Using $\|w_*\| = \alpha_*$ and $\|w_{**}\|=\alpha_{**}$, we have 
$$
{\alpha}_{**} = t \alpha_{*} \quad \text{and}\quad \prox[\kappa_{**}\loss](\alpha_{**}G+Z)-Z = t\cdot \bigl(\prox[\kappa_{*}\loss](\alpha_{*}G+Z)-Z \bigr)
$$
Then the same argument in the proof of \Cref{lm:unique_v_*} gives $t=1$ and $w_{*}=w_{**}$, which is a contradiction with $w_{*}\ne w_{**}$. Thus, $w_*$ is unique. 
\end{proof}

\begin{lemma}[Uniqueness of $v_*$ and Lagrange multiplier $\mu_*$]
    If $w_*\ne 0$ then $v_{*}$  and the Lagrange multiplier $\mu_{*}$ satisfying the KKT condition \eqref{eq:KKT_reg} are also unique.  
\end{lemma}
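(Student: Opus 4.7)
The plan is to handle $v_*$ first using the proportionality Lemma~\ref{lm:proportional_reg} together with the already-established uniqueness of $w_*$ from \Cref{lm:unique_reg}-(1), and then handle $\mu_*$ by a subdifferential-monotonicity argument modeled exactly on the unregularized proof of \Cref{lm:unique_lagrange_noreg}. Both steps exploit that when $w_*\ne 0$, \Cref{lm:optimization_to_system_reg} gives the explicit proximal representations $v_* = \prox[\kappa_*\loss](\alpha_* G + Z) - Z$ and $w_* = \prox[\nu_*^{-1}\reg](\nu_*^{-1}\beta_* H + X) - X$, with the four scalars $(\alpha_*, \beta_*, \kappa_*, \nu_*)$ determined from $\mu_*$ via \eqref{eq:scalar_*_reg}.

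For the uniqueness of $v_*$, suppose $(v_*, w_*)$ and $(v_{**}, w_{**})$ are both minimizers of $\min_{\mg(v,w)\le 0}\ml(v,w)$. By the first part of \Cref{lm:unique_reg}, already proved, we have $w_{**}=w_*$, and by hypothesis $w_*\ne 0$. The proportionality lemma, \Cref{lm:proportional_reg}, then implies $(v_{**}, w_{**}) = t(v_*, w_*)$ for some real $t>0$. The equality $w_* = t w_*$ with $w_*\ne 0$ forces $t=1$, and hence $v_{**} = v_*$.

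For the uniqueness of the Lagrange multiplier, fix the unique $(v_*, w_*)$ with $w_*\ne 0$ and suppose $\mu_*, \mu_{**}>0$ both satisfy the KKT condition \eqref{eq:KKT_reg}. Applying \Cref{lm:optimization_to_system_reg} to each Lagrange multiplier produces positive scalars $(\alpha_*, \beta_*, \kappa_*, \nu_*)$ and $(\alpha_*, \beta_{**}, \kappa_{**}, \nu_{**})$ (same $\alpha_* = \|w_*\|$) satisfying the nonlinear system and the two identities
\[
v_* = \prox[\kappa_*\loss](\alpha_* G + Z) - Z = \prox[\kappa_{**}\loss](\alpha_* G + Z) - Z.
\]
Setting $Y = \alpha_* G + Z$, the real $Y - \prox[\kappa_*\loss](Y) = Y - \prox[\kappa_{**}\loss](Y)$ lies in $\kappa_*\,\partial\loss(\prox[\kappa_*\loss](Y)) \cap \kappa_{**}\,\partial\loss(\prox[\kappa_{**}\loss](Y))$, so the argument in the proof of \Cref{lm:unique_lagrange_noreg} applies verbatim: using $L_+ = \sup_{x>0}\max\partial\loss(x) > 0$, the Lipschitz bound $\prox[\kappa\loss](Y) \ge Y - \kappa\|\loss\|_{\lip}$, and the positive probability of $\{Y > n + \max(\kappa_*, \kappa_{**})\|\loss\|_{\lip}\}$ (which holds since $Y$ has unbounded support), we deduce $\kappa_* = \kappa_{**}$. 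From the formulas \eqref{eq:scalar_*_reg}, $\mu_* = \delta\,\mt(v_*, w_*)/\kappa_*$, so $\mu_*$ depends only on $\kappa_*$ and the fixed pair $(v_*, w_*)$, giving $\mu_* = \mu_{**}$.

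The steps are largely mechanical once the infrastructure developed earlier is in place; the only content that is not mere bookkeeping is the transplant of the subdifferential-monotonicity argument from the unregularized \Cref{lm:unique_lagrange_noreg}. The main obstacle I anticipate is verifying that the hypotheses of that argument are truly available here, namely that the random variable $Y = \alpha_* G + Z$ has unbounded support and that $\argmin\loss = \{0\}$ yields $L_+>0$, both of which are given by \Cref{as:reg}. Once this is checked the regularizer side and the determination of $(\beta_*, \nu_*)$ come for free from the nonlinear system, since $\beta_* = \mu_*/\sqrt\delta$ and $\nu_*$ is the third formula in \eqref{eq:scalar_*_reg}.
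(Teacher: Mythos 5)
Your proof is correct and follows essentially the same route as the paper: the proximal representations from \Cref{lm:optimization_to_system_reg}, the proportionality from \Cref{lm:proportional_reg}, and the subdifferential-monotonicity argument transplanted from the unregularized \Cref{lm:unique_lagrange_noreg}. Your deduction of $v_*=v_{**}$ is in fact slightly more direct than the paper's: rather than re-invoking the $t=1$ argument used in the proof of uniqueness of $w_*$, you observe that the already-established equality $w_{**}=w_*$ together with the proportionality $w_{**}=t\,w_*$ and $w_*\ne 0$ forces $t=1$ immediately.
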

\begin{proof}
The argument in the previous proof gives $v_{*}=v_{**}$. Let $\mu_{*}$ and $\mu_{**}$ be Lagrange multipliers satisfying the KKT condition. Then, by \Cref{lm:optimization_to_system_reg}, we must have 
$$
v_{*} = \prox[\kappa_{*}\loss](\alpha_{*} G +Z) - Z = \prox[\kappa_{**}\loss](\alpha_* G + Z) - Z. 
$$
where $\alpha_*=\|w_*\|$, $\kappa_{*}=\delta \mt(v_{*}, w_{*})/\mu_{*}>0$ and $\kappa_{**}=\delta\mt(v_*, w_*)/\mu_*>0$. Then the same argument in the proof of \Cref{lm:unique_lagrange_noreg} gives $\kappa_{*}=\kappa_{**}$ and $\mu_{*}=\mu_{**}$. This finishes the proof. 
\end{proof}

\subsection{Proof of \Cref{lm:nonzero_reg}}\label{proof:lm:nonzero_reg}

We proceed by contradiction. Suppose $w_*=0$. Then the constraint $\mg(v_*, w_*)\le 0$ with $w_*=0$ gives $v_*=0$ so that $(0, 0)$ solves the constrained optimization problem. 
Taking the Lagrange multiplier $\mu>0$ satisfying the KKT condition \eqref{eq:KKT_reg}, we have 
$(0, 0)\in \argmin_{(v,w)\in\mh} \ml(v,w)+\mu\mg(v,w)$. Noting $\ml(0,0)=\mg(0,0)=0$, this reads to 
\begin{align}\label{eq:contradiction}
0 \le \ml(v, w) + \mu \mg(v,w) \quad \text{for all $(v,w)$}.     
\end{align}
Removing the positive part $(\cdot)_+$ inside $\mt(v,w)=\{(\|w\|-\E[vG])_+^2 + \|\proj(v)\|^2\}^{1/2}$, 
\begin{align*}
    \ml(v,w)+\mu\mg(v,w) &\le \E[\delta(\loss(v+Z)-\loss(Z)) + \reg(w+X)-\reg(X)]\\
    &+ \mu \sqrt{\|w\|^2 + \|v\|^2 - 2\|w\| \E[vG]} -\mu \delta^{-1/2} \E[Hw]
\end{align*}
Now, for all $(s, t)\in \R_{>0}^2$, let us take  $(v_s, w_t)\in\mh$ as
$$
v_s = \prox[s a \loss](sG+Z)-Z, \quad w_t = \prox[t b \reg](t H+X)-X, 
$$
where $(a, b)$ are positive constants to be specified later. 
By the definition of  proximal operator, we have $(as)^{-1} (sG-v_s) \in \partial \loss(v_s+Z) $ and $(bt)^{-1}(tH-w_t)\in \partial \reg(w_t+X)$ so that 
\begin{align*}
    \loss(v_s+Z)-\loss(Z) \le \frac{sG-v_s}{as} v_s, \quad 
   \reg(w_t+X)-\reg(X) \le \frac{tH-w_t}{bt} w_t
\end{align*}
Then, if we take $b^{-1} = \mu \delta^{-1/2}$, noting that $\E[Hw_t]$ is cancelled out, 
\begin{align}\label{eq:A_v_t}
\ml(v_s, w_t) + \mu\mg(v_s, w_t) \le \E\bigl[\frac{\delta}{a} \bigl(G v_s -\frac{v_s^2}{s}\bigr) - \frac{\mu}{\sqrt{\delta}t}w_t^2\bigr] + \mu \sqrt{\|w_t\|^2 +\|v_s\|^2 - 2 \|w_t\| \E[v_sG]}. 
\end{align}
Let us identify the limit of $v_s/s$ and $w_t/t$ as $s,t\to 0+$. By the same argument in the proof of \Cref{lm:nonzero_noreg}, $(G-v_s/s) \in a\partial \loss(v_s+Z) $ and  $(H-w_t/t)\in b \partial \reg(w_t+X)$ give 
\begin{align*}
    \lim_{s\to 0+} v_s/s &= (G-a\max\partial \loss(Z))_+ - (a\min\partial \loss(Z)-G)_+\\
    \lim_{t\to 0+} w_t/t &= (H-b\max\partial \reg(X))_+ - (b\min\partial \reg(X)-H)_+.
\end{align*}
Moreover, $w_t/t$ and $v_s/s$ are uniformly bounded from above as $|v_s/s| \le |G| + a \|\loss\|_{\lip}$ and $|w_t/t| \le |H| + b \|\reg\|_{\lip}$ by the Lipschitz condition of $\loss$ and $\reg$, and hence 
the dominated convergence theorem yields
\begin{align*}
    \lim_{s \to 0+} \|v_s/s\| &= \|\dist(G, a\partial \loss(Z))\| =: \mathsf{d}_\loss(a)\\
    \lim_{t\to 0+} \|w_t/t\| &= \|\dist(H, b\partial \reg(X))\| =: \mathsf{d}_\reg(b)\\
    \lim_{s\to 0+}\E[Gv_s/s] &= \E[G(G-a\max\partial \loss(Z))_+ - G(a\min\partial \loss(Z)-G)_+] =: \mathsf{k}(a)
\end{align*}
Thus, if we take the limit $s, t\to 0+$ with $t/s=\mathsf{d}_\reg(b)^{-1}$, \eqref{eq:A_v_t} is reduced to 
$$
s^{-1} \bigl(\ml(v_s, w_t)+\mu\mg(v_s, w_t)\bigr) \le  \frac{\delta \mathsf{k}(a)}{a}  - \frac{\delta \mathsf{d}_\loss^2(a)}{a} - \frac{\mu \mathsf{d}_\reg(b)}{\sqrt{\delta}}+ \mu\sqrt{1 + \mathsf{d}_\loss^2(a) - 2 \mathsf{L}(\alpha)} + o(1)
$$
Using the identity $\sqrt{x} = \inf_{\tau>0} (x/(2\tau) + \tau/2)$ for the right-most term and taking $\tau>0$ such that $\mu /\tau = \delta/a$, $\delta \mathsf{k}(a)/a$ on the left-most term is canceled out and the leading term on the RHS is bounded from above by 
$$
    - \frac{\delta \mathsf{d}_\loss^2(a)}{a} - \frac{\mu \mathsf{d}_\reg(b)}{\sqrt{\delta}} + \frac{\delta}{2a} (1+\mathsf{d}_\loss^2 (a)) + \frac{\mu^2 a}{2\delta } =  \frac{\delta}{2a} \Bigl(-\mathsf{d}_\loss^2(a) - \frac{2\mu a \mathsf{d}_\reg(b)}{\delta \sqrt{\delta}} 
    + \frac{\mu^2 a^2}{\delta^2}
    +1 \Bigr).
$$
Taking $a=\sqrt{\delta} \mathsf{d}_\reg(b)/\mu$, the RHS  is reduced to 
$$
\frac{\delta}{2a} \bigl(-\mathsf{d}_\loss^2(a)- \frac{2\mathsf{d}_{\reg}^2(b)}{\delta} + \frac{\mathsf{d}_\reg^2(b)}{\delta} 
+1 \bigr) =  - \frac{\delta}{2a} \bigl(\mathsf{d}_\loss^2(a)+ \delta^{-1} \mathsf{d}_\reg^2(b) -1 \bigr).
$$
In summary, we have shown that there exist some positive constants $(a, b, r)$ such that $v_s = \prox[s a \loss](sG+Z)-Z$ and $w_t = \prox[t b \reg](t H+xX)-X$ with $t/s=r$ satisfy 
$$
\ml(v_s, w_t) + \mu\mg(v_s, w_t) < - s \cdot \frac{\delta}{2a} \bigl(\mathsf{d}_\loss^2(a)+ \delta^{-1} \mathsf{d}_\reg^2(b) -1 \bigr) + o(s)
$$
Here, noting $\mathsf{d}_\loss^2(a)\ge \inf_{t> 0} \E[\dist(G, t\partial\loss(Z))^2]$ and $\mathsf{d}_\reg^2(b) \ge \inf_{t>0} \E[\dist(H, t\partial \reg(X))^2]$, 
the leading term is bounded from below as 
$$
\tfrac{\delta}{2a} \bigl(\mathsf{d}_\loss^2(a)+ \delta^{-1} \mathsf{d}_\reg^2(b) -1 \bigr)
\ge \tfrac{\delta}{2a} (\inf_{t>0} \E[\dist(G, t\partial\loss(Z))^2] + \delta^{-1} \inf_{t>0} \E[\dist(H, t\partial \reg(X))^2] - 1),
$$
and the RHS is strictly positive
thanks to the assumption $\delta <\delta_{\mathsf{perfect}}$. Hence, we can find a sufficiently small $s', t'>0$ such that $\ml(v_s, w_t) + \mu\mg(v_s, w_t) < 0$, a contradiction with \eqref{eq:contradiction}. Therefore, we must have $w_*\ne 0$. 

\subsection{Proof of \Cref{lm:coercive_reg}}\label{proof:lm:coercive_reg}

\begin{lemma}\label{lm:l1_l2}
    There exists a constant $\tilde{C}_\delta\in (0,1]$ such that
    $$
    \PP\Bigl(|v|+|w|\ge \tilde{C}_\delta \bigl(\E[v^2]^{1/2}+\E[w^2]^{1/2}\bigr)\Bigr) \ge \tilde{C}_\delta \quad \text{for all $(v,w)\in\mh$ such that } \mg(v,w)\le 0
    $$
    \end{lemma}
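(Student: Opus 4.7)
The idea mirrors the unregularized argument leading to \eqref{eq:l1_l2_noreg}. The plan is to first show a comparison of $L^2$ and $L^1$ norms of the form
$$\|v\|+\|w\| \le K(\delta)\bigl(\E[|v|]+\E[|w|]\bigr) \quad \text{whenever } \mg(v,w)\le 0,$$
and then to apply the Paley--Zygmund inequality to the non-negative random variable $|v|+|w|$ to extract a small-ball property.

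First, from $\mg(v,w)\le 0$, i.e., $\mt(v,w)\le \delta^{-1/2}\E[Hw]$, I would split into two inequalities by bounding each of the two terms under the square root defining $\mt$:
$$
(\|w\|-\E[vG])_+ \le \delta^{-1/2}\E[Hw], \qquad \|\proj(v)\| \le \delta^{-1/2}\E[Hw].
$$
Since the positive part dominates the signed quantity, the first inequality gives $\|w\|\le \E[vG] + \delta^{-1/2}\E[Hw]$. Combining with the Pythagorean identity $\|v\|^2=\E[vG]^2+\|\proj(v)\|^2$ and the bound $\sqrt{a^2+b^2}\le|a|+|b|$, I obtain
$$\|v\|+\|w\| \le 2|\E[vG]| + 2\delta^{-1/2}\E[Hw].$$

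Next, for any threshold $C>0$, I would use a standard truncation on $G$ and $H$: by Cauchy--Schwarz,
$$|\E[vG]|\le C\,\E[|v|] + \|v\|\cdot \E[G^2 I\{|G|>C\}]^{1/2},$$
and similarly for $\E[Hw]$. Choosing $C=C(\delta)$ large enough that $\E[G^2 I\{|G|>C\}]^{1/2}\le 1/8$ and $\E[H^2 I\{|H|>C\}]^{1/2}\le \sqrt{\delta}/8$ (possible since $G,H\sim N(0,1)$ are square-integrable), the terms $\|v\|/4$ and $\|w\|/4$ can be absorbed into the left-hand side, yielding $\|v\|+\|w\| \le K(\delta)(\E[|v|]+\E[|w|])$ with an explicit constant depending only on $\delta$.

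Finally, with $S:=|v|+|w|$, this gives $\E[S]\ge K(\delta)^{-1}(\|v\|+\|w\|)$, while $\E[S^2]\le 2(\|v\|^2+\|w\|^2)\le 2(\|v\|+\|w\|)^2$. The Paley--Zygmund inequality applied at level $\theta=1/2$ then yields
$$\PP\bigl(S \ge \tfrac12 K(\delta)^{-1}(\|v\|+\|w\|)\bigr) \ge \PP(S \ge \tfrac12 \E[S]) \ge \frac{1}{8K(\delta)^2}.$$
Taking $\tilde C_\delta := \min\{1,(2K(\delta))^{-1},(8K(\delta)^2)^{-1}\}$ finishes the proof. The only delicate point is choosing the truncation level $C=C(\delta)$ cleanly so that both Gaussian tails contribute a fraction that can be absorbed, which is routine thanks to the integrability of $G^2$ and $H^2$.
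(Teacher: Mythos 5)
Your proof is correct. The key move is the uniform bound
$$
\|v\|+\|w\| \le 2|\E[vG]| + 2\delta^{-1/2}\E[Hw] ,
$$
which you obtain from the crude estimate $\sqrt{a^2+b^2}\le |a|+|b|$ applied to the two nonnegative quantities under the square root in $\mt(v,w)$, together with the Pythagorean identity and the observation that $(\|w\|-\E[vG])_+\ge \|w\|-\E[vG]$ and $\E[Hw]\ge 0$ (the latter being forced by $\mg(v,w)\le 0$ since $\mt\ge 0$). This is a genuinely different route from the paper's. The paper splits into two cases according to the sign of $\|w\|-\E[vG]$: when it is nonpositive the positive part vanishes and the constraint directly controls $\|\proj(v)\|$; when it is positive the constraint becomes a full quadratic inequality $\E[w^2]+\E[v^2]\le 2\|w\|\E[vG]+\delta^{-1}\E[Hw]^2$ which requires a more careful rearrangement (using $2(a^2+b^2-ab)\ge(a+b)^2$ and Jensen's inequality). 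Your single inequality sidesteps the case distinction entirely, at the cost of a slightly looser constant $K(\delta)$. Both approaches then conclude identically with a Gaussian-tail truncation to absorb $\|v\|$ and $\|w\|$ into the left-hand side, followed by Paley--Zygmund applied to $S=|v|+|w|$. The truncation levels and the integrability checks you invoke are all standard; your proof is a legitimate, shorter alternative.
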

\begin{proof}
    It suffices to show that there exists a constant $C_\delta>0$ such that
    \begin{equation}\label{eq:l1_l2_reg}
        \E[|v|^2]^{1/2} + \E[w^2]^{1/2} \le C_\delta (\E[|v|] + \E[|w|]) \quad \text{for all $(v,w)\in\mh$ such that $\mg(v,w)\le 0$}, 
    \end{equation}
    since the lower bound of the tail probability easily follows from the Paley--Zygmund inequality (see the argument in \Cref{lm:coercive_noreg_detail}). 

    First, let us consider the case of ${\E[w^2]^{1/2} \le \E[vG]}$. In this case, the constraint $\mg(v,w)\le 0$ reads to $\E[v^2] - \E[vG]^2 \le \delta^{-1} \E[Hw]^2$. Using the Cauchy--Schwarz inequality for $\E[wH]$,  we are left with 
        $$
       \E[v^2]^{1/2} \le \sqrt{\E[vG]^2 + \delta^{-1} \E[Hw]^2} \le \sqrt{\E[vG]^2 + \delta^{-1} \E[w^2]} \le \sqrt{1+\delta^{-1}} |\E[vG]|,
        $$
        For some $M>0$ to be specified later, the Cauchy--Schwarz inequality for $\E[|v| \cdot |G|I\{|G|>M\}]$ yields
        \begin{align*}
        |\E[vG]| \le \E\bigl[|v| |G| I\{|G|\ge M\}\bigr] + M \E[|v|] 
        &\le \E[v^2]^{1/2} \cdot \E\bigl[|G|^2I\{|G|\ge M\}\bigr]^{1/2} + M\E[|v|].
        \end{align*}
        If we take $M=M_\delta>0$ such that $\sqrt{1+\delta^{-1}} \E[|G|^2 I\{|G|>M_\delta \}]^{1/2} \le 1/2$, combined with $\E[v^2]^{1/2} \le \sqrt{1+\delta^{-1}} |\E[vG]|$, we obtain
        $$
        \E[v^2]^{1/2} \le \sqrt{1+\delta^{-1}}\Bigl( \E[v^2]^{1/2} \cdot \E\bigl[|G|^2I\{|G|\ge M_{\delta}\}\bigr]^{1/2} + M_{\delta}\E[|v|]\Bigr)
        \le \tfrac{1}{2}\E[v^2]^{1/2} + \sqrt{1+\delta^{-1}} M_\delta \E[|v|],
        $$
        so that $\E[v^2]^{1/2} \le 2 \sqrt{1+\delta^{-1}} M_\delta \E[|v|]$. Thus, combined with $\E[w^2]^{1/2} \le \E[vG]$, we obtain
        $$
        \E[v^2]^{1/2} + \E[w^2]^{1/2}
        \le \E[v^2]^{1/2} + \E[vG] \le 2 \E[v^2]^{1/2} \le 4 \sqrt{1+\delta^{-1}} M_\delta \E[|v|],
        $$
        which finishes the proof of \eqref{eq:l1_l2_reg} for the case ${\E[w^2]^{1/2}\le \E[vG]}$. 
        
        Next, we consider the other case ${\E[w^2]^{1/2}\ge \E[vG]}$. In this case the constraint $\mg(v,w)\le 0$ reads to 
        $$
        \E[w^2] + \E[v^2] \le 2 \E[w^2]^{1/2} \E[vG] + \delta^{-1}\E[Hw]^2.
        $$
        By the same argument based on the Cauchy--Schwarz inequality, we can find positive constants $M$ and $M_\delta$ such that
        $$
        |\E[vG]| \le \E[v^2]^{1/2}/4 + M \E[|v|],\quad 
        \delta^{-1/2} |\E[Hw]| \le \E[w^2]^{1/2}/4 + M_\delta \E[|w|]. 
        $$
        Substituting these bounds to the RHS of the previous display, using $(a+b)^2 \le 2(a^2+b^2)$, we are left with
        $$
        \E[w^2] + \E[v^2] \le 2^{-1} \E[w^2]^{1/2} \E[v^2]^{1/2} + 2M\E[w^2]^{1/2}\E[|v|] + 2^{-1}\E[w^2] + 2M_\delta \E[|w|]^2,
        $$
        or equivalently,
        $$
        \text{LHS} \coloneqq
        2^{-1}\E[w^2] + \E[v^2] - 2^{-1} \E[w^2]^{1/2}\E[v^2]^{1/2} \le 2M \E[w^2]^{1/2} \E[|v|] + 2M_\delta \E[|w|]^2
            \eqqcolon\text{RHS}.
        $$
        Noting that 
        \begin{align*}
        \text{LHS} &\ge 2^{-1} (\E[w^2]+ \E[v^2]-\E[w^2]^{1/2}\E[v^2]^{1/2})\\
        &\ge 
        4^{-1} (\E[w^2]^{1/2}+\E[v^2]^{1/2})^2  && \text{using $2(a^2+b^2-ab)\ge (a+b)^2$,} \\
        \text{RHS} &\le 2M \E[w^2]^{1/2}\E[|v|] + 2M_\delta \E[w^2]^{1/2} \E[|w|] &&\text{by Jensen's ineq. for $\E[|w|]\le \E[w^2]^{1/2}$},\\
        &\le 2(M+ M_\delta) \E[w^2]^{1/2}(\E[|v|]+\E[|w|])\\
        &\le 2(M+ M_\delta) (\E[w^2]^{1/2}+\E[v^2]^{1/2}) (\E[|v|]+\E[|w|]), 
        \end{align*}
        multiplying the RHS and the LHS by $4 (\E[|v|]+\E[|w|])^{-1}$, we obtain
        \begin{align*}
        \E[v^2]^{1/2}+\E[w^2]^{1/2} \le 8(M+ M_\delta) (\E[|v|]+\E[|w|]).
        \end{align*}
        This finishes the proof of \eqref{eq:l1_l2_reg} for the case $\E[w^2]^{1/2}>\E[vG]$, thereby completing the proof of \Cref{lm:l1_l2}. 
    \end{proof}

\begin{lemma}\label{lm:coercive_reg_detail}
Suppose $\loss$ and $\reg$ are convex, Lipschitz, Lipschitz, and coercive in the sense of \eqref{eq:coercive_condition}, i.e.,
$\loss(\cdot)-\loss(0) \ge a_\loss|\cdot| - b_\loss$ and $\reg(\cdot)-\reg(0)\ge a_\reg|\cdot| - b_\reg$
for some $(a_\loss, b_\loss)\in\R_{>0}\times \R_{\ge 0}$ and  $(a_\reg, b_\reg)\in\R_{>0}\times \R_{\ge 0}$. Then, there exists a  positive constant $C_\delta$ depending on $\delta$ only  such that for all $(v,w)\in\mh$ satisfying $\mg(v,w)\le 0$, $\ml(v,w)$ is bounded from below as 
$$
\ml(v,w ) \ge C_\delta (\delta a_\loss\wedge a_\reg) (\|v\|+\|w\|) - 2\bigl(\delta \|\loss\|_{\lip} +\|\reg\|_{\lip}\bigr) \mathsf{q}\Bigl(
\frac{C_\delta(\delta a_\loss\wedge a_\reg)^2}{(\delta \|\loss\|_{\lip}+\|\reg\|_{\lip})^2}
\Bigr) -(\delta b_\loss + b_\reg), 
$$
where $\mathsf{q}: \R_{>0} \to \R_{>0}$ is the nonincreasing map $\epsilon \mapsto $ 
$
\mathsf{q}(\epsilon)= \inf\{q>0: \PP(|Z|> q)\le\epsilon\} + \inf\{q>0: \PP(|X|> q)\le \epsilon\}. 
$
\end{lemma}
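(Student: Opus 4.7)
The plan is to mirror the unregularized coercivity bound \Cref{lm:coercive_noreg_detail}, with the joint small-ball control \Cref{lm:l1_l2} replacing its univariate counterpart. I would split $\ml(v,w) = \delta\E[\loss(v+Z)-\loss(Z)] + \E[\reg(w+X)-\reg(X)]$ along the truncation events $A_\loss = \{|Z|\le Q_\loss(\epsilon)\}$ and $A_\reg = \{|X|\le Q_\reg(\epsilon)\}$, where $Q_\loss(\epsilon)$ and $Q_\reg(\epsilon)$ are the upper $\epsilon$-quantiles of $|Z|$ and $|X|$ (so that $Q_\loss(\epsilon)+Q_\reg(\epsilon)=\mathsf{q}(\epsilon)$) and $\epsilon\in(0,1)$ will be tuned at the end.

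On $A_\loss^c$, the Lipschitz bound together with Cauchy--Schwarz and $\PP(A_\loss^c)\le \epsilon$ gives $\delta\E[I_{A_\loss^c}(\loss(v+Z)-\loss(Z))]\ge -\sqrt{\epsilon}\,\delta\|\loss\|_{\lip}\|v\|$. On $A_\loss$, the coercivity bound $\loss(v+Z)-\loss(0)\ge a_\loss|v+Z|-b_\loss$, the triangle inequality $|v+Z|\ge |v|-|Z|$, the Lipschitz control $|\loss(Z)-\loss(0)|\le \|\loss\|_{\lip}|Z|$, and the elementary fact $a_\loss\le \|\loss\|_{\lip}$ combine to give $\delta\E[I_{A_\loss}(\loss(v+Z)-\loss(Z))]\ge \delta a_\loss\,\E[I_{A_\loss}|v|] - 2\delta\|\loss\|_{\lip} Q_\loss(\epsilon) - \delta b_\loss$. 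The identical chain applied with $(H,X,w,\reg)$ in place of $(G,Z,v,\loss)$ handles the regularizer summand.

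The main step is converting these $L^1$-type lower bounds into a joint $L^2$ bound in $\|v\|+\|w\|$. Using $\delta a_\loss,\,a_\reg\ge (\delta a_\loss)\wedge a_\reg$ and the pointwise inequality $I_{A_\loss}|v|+I_{A_\reg}|w|\ge I_{A_\loss\cap A_\reg}(|v|+|w|)$, the weighted sum is at least $((\delta a_\loss)\wedge a_\reg)\,\E[I_{A_\loss\cap A_\reg}(|v|+|w|)]$. Intersecting with the small-ball event $B=\{|v|+|w|\ge \tilde C_\delta(\|v\|+\|w\|)\}$ supplied by \Cref{lm:l1_l2} and applying the union bound $\PP(A_\loss\cap A_\reg\cap B)\ge \tilde C_\delta - 2\epsilon$, I would obtain
\[\ml(v,w)\ge (\|v\|+\|w\|)\bigl[((\delta a_\loss)\wedge a_\reg)\,\tilde C_\delta\,(\tilde C_\delta-2\epsilon)_+ - \sqrt{\epsilon}\,(\delta\|\loss\|_{\lip}+\|\reg\|_{\lip})\bigr] - 2(\delta\|\loss\|_{\lip}+\|\reg\|_{\lip})\,\mathsf{q}(\epsilon) - (\delta b_\loss+b_\reg).\]

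To finish, I would choose $\sqrt{\epsilon}$ proportional to $((\delta a_\loss)\wedge a_\reg)/(\delta\|\loss\|_{\lip}+\|\reg\|_{\lip})$ with a small enough absolute prefactor depending only on $\tilde C_\delta$ (hence only on $\delta$), so that both $\epsilon\le \tilde C_\delta/4$ and the bracketed coefficient is at least a fixed constant multiple of $(\delta a_\loss)\wedge a_\reg$. This yields the stated inequality after relabeling the resulting dimensionless constant as $C_\delta$. The only genuinely non-routine input beyond the unregularized case is the joint $L^1$-$L^2$ equivalence on the constraint set, and this has already been packaged in \Cref{lm:l1_l2}; the remainder of the argument is careful bookkeeping of constants.
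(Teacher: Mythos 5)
Your proposal follows the paper's proof essentially line by line: the same single-variable truncation bound (the paper's \eqref{eq:l_bound_rho}), the same pointwise inequality $I_{A_\loss}|v|+I_{A_\reg}|w|\ge I_{A_\loss\cap A_\reg}(|v|+|w|)$ combined with \Cref{lm:l1_l2} and a union bound, and the same final tuning of $\sqrt{\epsilon}\propto((\delta a_\loss)\wedge a_\reg)/(\delta\|\loss\|_{\lip}+\|\reg\|_{\lip})$. The argument is correct and matches the paper's, up to notation.
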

\begin{proof}
Recall that we have shown in \eqref{eq:l_bound_rho} that
\begin{align*}
\E[\loss(v+Z)-\loss(Z)] \ge -\sqrt{\epsilon} \|\loss\|_{\lip} \|v\| +a_\loss \E\bigl[
I\{|Z|\le \mathsf{q}_Z(\epsilon)\} |v|
\bigr] - 2\|\loss\|_{\lip} \mathsf{q}_Z(\epsilon) - b_\loss
\end{align*}
for all $\epsilon>0$, where $\mathsf{q}_Z(\epsilon) = \inf\{q>0:\PP(|Z|>q)\le \epsilon\}$. The same argument for $(\reg, X)$ yields
\begin{align*}
\E[\reg(w+X)-\loss(X)] \ge -\sqrt{\epsilon} \|\reg\|_{\lip} \|w\| +a_\reg \E\bigl[
I\{|X|\le \mathsf{q}_X(\epsilon)\} |w|
\bigr] - 2\|\reg\|_{\lip} \mathsf{q}_X(\epsilon) - b_\reg,
\end{align*}
where $\mathsf{q}_X(\epsilon) = \inf\{q>0:\PP(|X|>q)\le \epsilon\}$.
Putting the above displays together, noting $\mathsf{q} = \mathsf{q}_X + \mathsf{q}_Z$, $\ml(v,w)$ is bounded from below as 
\begin{align}
\ml(v,w) &\ge -\sqrt{\epsilon} (\delta\|\loss\|_{\lip}+\|\reg\|_{\lip})(\|v\|+\|w\|)\nonumber \\
&\quad+ \underbrace{(\delta a_\loss \wedge a_\reg)\E\bigl[I\{|Z|\le \mathsf{q}_Z(\epsilon)\}I\{|X|\le \mathsf{q}_X(\epsilon)\} (|v|+|w|)\bigr]}_{\Xi(\epsilon)} \nonumber 
\\
&\quad -2 (\delta \|\loss\|_{\lip}+\|\reg\|_{\lip}) \mathsf{q}(\epsilon) - \delta b_\loss - b_\reg\nonumber \\
&= -\sqrt{\epsilon} (\delta\|\loss\|_{\lip}+\|\reg\|_{\lip})(\|v\|+\|w\|) + \Xi(\epsilon) -2 (\delta \|\loss\|_{\lip}+\|\reg\|_{\lip}) \mathsf{q}(\epsilon) - \delta b_\loss - b_\reg.  \label{eq:L_vw_lowerbound_Xi}
\end{align}
By \Cref{lm:l1_l2}, there exists a positive constant $C_\delta \in (0,1]$ such that $\PP(|v|+|w| \ge C_\delta (\|v\|+\|w\|))\ge C_\delta$. Then, the expectation in $\Xi(\epsilon)$ is bounded from below as 
\begin{align*}
&\E\bigl[I\{|Z|\le \mathsf{q}_Z(\epsilon)\}I\{|X|\le \mathsf{q}_X(\epsilon)\} (|v|+|w|)\bigr] \\
&\ge C_\delta (\|v\|+\|w\|) \E\Bigl[
    I\{|Z|\le \mathsf{q}_Z(\epsilon)\} I\{|X|\le \mathsf{q}_X(\epsilon)\} I\{|v|+|w| \ge C_\delta (\|v\|+\|w\|)\}
\Bigr] \\
&\ge C_\delta (\|v\|+\|w\|) \Bigl(\PP\bigl(|v|+|w| \ge C_\delta (\|v\|+\|w\|)\bigr) - \PP\bigl(|Z|>\mathsf{q}_Z(\epsilon)\bigr) - \PP\bigl(|X|>\mathsf{q}_X(\epsilon)\bigr)\Bigr) &&\text{by the union bound}
\\
&\ge C_\delta  (\|v\|+\|w\|) (C_\delta - 2\epsilon)\\
&\ge (C_\delta^2 - 2\epsilon) (\|v\|+\|w\|)   && \text{by $C_\delta \le 1$}. 
\end{align*}
Thus, for all $\epsilon\in (0,1)$,
\begin{align*}
\Xi(\epsilon) &\ge (\delta a_\loss \wedge a_\reg)(C_\delta^2 - 2\epsilon) (\|v\|+\|w\|)  \\
&\ge
\bigl\{(\delta a_\loss \wedge a_\reg) C_\delta^2 - 2 \epsilon (\delta \|\loss\|_{\lip}\wedge \|\reg\|_{\lip}) \bigr\}(\|v\|+\|w\|) && \text{by $a_\loss\le \|\loss\|_{\lip}$ and $a_\reg\le \|\reg\|_{\lip}$} \\
&\ge
\bigl\{(\delta a_\loss \wedge a_\reg) C_\delta^2 - 2 \sqrt{\epsilon} (\delta \|\loss\|_{\lip}\wedge \|\reg\|_{\lip}) \bigr\}(\|v\|+\|w\|). && \text{$\sqrt{\epsilon} \ge \epsilon$ since $\epsilon\in(0,1)$}
\end{align*}
Combined with \eqref{eq:L_vw_lowerbound_Xi}, we are left with 
$$
\ml(v,w) \ge \bigl\{(\delta a_\loss \wedge a_\reg) C_\delta^2 - 3 \sqrt{\epsilon} (\delta \|\loss\|_{\lip}\wedge \|\reg\|_{\lip}) \bigr\}(\|v\|+\|w\|) -2 (\delta \|\loss\|_{\lip}+\|\reg\|_{\lip}) \mathsf{q}(\epsilon) - \delta b_\loss - b_\reg
$$
for all $\epsilon\in(0,1)$. Taking $\epsilon = \bigl\{\tfrac{C_\delta^2(\delta a_\loss\wedge a_\reg)}{6(\delta \|\loss\|_{\lip}+\|\reg\|_{\lip})}\bigr\}^2 \le \tfrac{1}{36} < 1$, we get 
\begin{align*}
\ml(v,w) \ge  \frac{(\delta a_\loss \wedge a_\reg)C_\delta^2}{2} (\|v\|+\|w\|) - 2(\delta \|\loss\|_{\lip}+ \|\reg\|_{\lip})\mathsf{q}\Bigl(
\frac{C_\delta^4(\delta a_\loss\wedge a_\reg)^2}{36(\delta \|\loss\|_{\lip}+\|\reg\|_{\lip})^2}
\Bigr) - \delta b_\loss -b_\reg, 
\end{align*}
which finishes the proof. 
\end{proof}

\section{Phase transition based on convex geometry}\label{sec:convex_geometry}
\begin{lemma}\label{lm:limit_dist_cone}
    Let $h:\R\to\R$ be a convex and Lipschitz function with $\argmin_{x} h(x)=\{0\}$. 
    Let $\bg\in \R^m$ and $\bw\in \R^{m}$ be independent random vectors with iid marginals
    \begin{align*}
        \bg = (g_i)_{i=1}^{m} \iid G =^d N(0,1), \quad \bw = (w_i)_{i=1}^{m} \iid W 
    \end{align*}
    for some distribution $W$ with $\PP(W\ne 0)>0$. Then we have 
    $$
    \tfrac{1}{m}\E\Bigl[\dist\bigl(\bg, \cone(\partial {h}(\bw)) \bigr)^2|\bw\Bigr] \to^p \inf_{t>0} \E\bigl[\dist(G, t\partial h(W))^2\bigr] \quad \text{as} \quad m\to+\infty,
    $$
    where $\partial h(\bw) = \bigtimes_{i=1}^{m} \partial h (w_i) \subset \R^m$ and $\cone(\partial h(\bw)) = \cup_{t\ge 0}\partial(t h (\bm{z}))$. 
\end{lemma}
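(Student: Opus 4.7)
The plan is to express
\[
\dist(\bg, \cone(\partial h(\bw)))^2 = \inf_{t\ge 0}\sum_{i=1}^m \dist(g_i, t\partial h(w_i))^2,
\]
using that $\cone(\partial h(\bw))=\bigcup_{t\ge 0}t\,\partial h(\bw)$ together with the product structure of $\partial h(\bw)$. Writing $X_m := m^{-1}\dist(\bg, \cone(\partial h(\bw)))^2$ and $c^*:= \inf_{t>0}\E[\dist(G, t\partial h(W))^2]$, I will prove $X_m\to c^*$ in $L^1$ under the joint law of $(\bg,\bw)$; conditional Jensen then gives $\E\bigl[|\E[X_m\mid \bw]-c^*|\bigr]\to 0$, which implies the claimed convergence in probability of $\E[X_m\mid\bw]$ to $c^*$.

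Introduce the convex function $F_m(t) := m^{-1}\sum_{i=1}^m \dist(g_i, t\partial h(w_i))^2$ on $[0,\infty)$ and its target $\bar f(t) := \E[\dist(G, t\partial h(W))^2]$. For any interval $I=[a,b]$ the map $t\mapsto \dist(g, tI)^2=(g-tb)_+^2+(ta-g)_+^2$ is convex in $t$, so both $F_m$ and $\bar f$ are convex; the integrable envelope $\dist(g_i, t\partial h(w_i))^2 \le (|g_i|+t\|h\|_{\lip})^2$ yields pointwise almost-sure convergence $F_m(t)\to \bar f(t)$ at each fixed $t\ge 0$ by the iid LLN. Moreover $\bar f$ is coercive: the hypothesis $\argmin h=\{0\}$ forces $0\notin\partial h(w)$ whenever $w\ne 0$, so $\dist(0,\partial h(W))>0$ on the positive-probability event $\{W\ne 0\}$, giving $\dist(G,t\partial h(W))^2\sim t^2\dist(0,\partial h(W))^2\to\infty$ and thus $\bar f(t)\to\infty$ as $t\to\infty$ by Fatou. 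Uniform integrability of $\{X_m\}$ is immediate from $0\le X_m\le F_m(0)=m^{-1}\|\bg\|^2$ combined with $\sup_m\E[(m^{-1}\|\bg\|^2)^2]<\infty$, so the $L^1$ convergence reduces to proving almost-sure convergence $X_m\to c^*$.

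The core step is to upgrade the pointwise LLN for the convex $F_m$ to convergence of the infimum over $t\ge 0$. Two classical facts drive this. First, pointwise convergence of a sequence of convex functions on a dense subset of an open interval to a convex limit upgrades to uniform convergence on compact subintervals; applied to a countable dense $\{t_k\}\subset(0,\infty)$ on which $F_m(t_k)\to \bar f(t_k)$ almost surely for every $k$, this yields uniform convergence of $F_m$ to $\bar f$ on any $[a,T]\subset(0,\infty)$ almost surely. Second, by coercivity of $\bar f$ I choose $T>0$ so large that $\bar f(T)>c^*+2$, and pick any $t_0>0$ with $\bar f(t_0)\le c^*+1$ (which exists by definition of $c^*$); pointwise LLN then gives $F_m(T)>F_m(t_0)$ eventually, and convexity of $F_m$ forces $F_m$ to be strictly increasing on $[T,\infty)$, so $\inf_{t\ge 0}F_m(t)=\inf_{t\in[0,T]}F_m(t)$. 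The boundary point $t=0$ is handled by the explicit derivative bound $|F_m'(t)|\le 4\|h\|_{\lip}\,m^{-1}\sum_i|g_i|+4\|h\|_{\lip}^2\,t$, which yields an equi-Lipschitz family on $[0,T]$ and hence uniform convergence on $[0,T]$ including the endpoint $0$. Combining, $X_m=\inf_{[0,T]}F_m\to\inf_{[0,T]}\bar f=c^*$ almost surely. The main obstacle I anticipate is this boundary handling, in particular the degenerate regime $c^*=1$ where the minimum of $\bar f$ is attained or only approached as $t\to 0^+$: this is precisely why the equi-Lipschitz bound near $t=0$ is essential, rather than a plain appeal to uniform convergence on a compact subset of the open interior.
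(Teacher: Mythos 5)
Your proposal is correct, but it takes a genuinely different route from the paper's proof in two ways. First, to relate the conditional expectation $\E[\dist(\bg,\cone(\partial h(\bw)))^2\mid\bw]$ to the un-conditioned quantity $\dist(\bg,\cone(\partial h(\bw)))^2$, the paper applies the Gaussian Poincar\'e inequality (via the gradient bound of \cite[(B.7)--(B.9)]{amelunxen2014living}) to show the two differ by $O_P(\sqrt m)$; you instead prove $L^1$ convergence of $X_m:=m^{-1}\dist(\bg,\cone(\partial h(\bw)))^2$ to $c^*$ and pass to the conditional expectation by conditional Jensen, which requires the uniform-integrability bound $X_m\le F_m(0)=m^{-1}\|\bg\|^2$ with $\sup_m\E[(m^{-1}\|\bg\|^2)^2]<\infty$. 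Second, for the convergence $\inf_{t\ge 0}F_m(t)\to\inf_{t>0}\bar f(t)$, the paper simply cites \cite[Lemma B.1]{thrampoulidis2018precise} after establishing pointwise LLN convergence and coercivity of $\bar f$, whereas you re-derive this step from first principles: uniform convergence of convex functions on compacts from pointwise convergence on a dense set, truncation to $[0,T]$ via coercivity and the convexity argument $F_m(T)>F_m(t_0)\Rightarrow F_m$ increasing on $[T,\infty)$, and an explicit equi-Lipschitz bound $|F_m'(t)|\le 4\|h\|_{\lip}(m^{-1}\sum_i|g_i|)+4\|h\|_{\lip}^2 t$ to control the boundary $t=0$ (which is needed since the limiting minimizer can sit at or escape to $0$). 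Your route is more self-contained and buys a slightly stronger ($L^1$, rather than in-probability) statement for the conditional expectation; the paper's route is shorter by delegating the two nontrivial steps to citations. One small point worth making explicit: your truncation gives $\inf_{[0,T]}\bar f$ as the limit, and you should note this equals $c^*=\inf_{t>0}\bar f(t)$ because $\bar f(0)=\E[G^2]=1=\lim_{t\to 0^+}\bar f(t)\ge\inf_{t>0}\bar f(t)$, so including the endpoint $t=0$ does not lower the infimum.
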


\begin{proof}
By the explicit gradient identities \cite[(B.7)-(B.9)]{amelunxen2014living},
the Euclidean norm of the gradient of 
\(\bm g \mapsto \text{dist}(\bg, \cone(\partial {h}(\bw)) )^2 \) is bounded by \( 2\|\bm g\|_2 \). Thus, conditionally on \( \bw \), the Gaussian Poincar\'e inequality \cite[Theorem 3.20]{boucheron2013concentration} yields
$$
\E[\text{dist}(\bg,  \cone(\partial {h}(\bw)) )^2|\bw] = {\dist}(\bg, \cone(\partial {h}(\bw)))^2 + O_P(\sqrt{m}).
$$
By the identity ${\dist}(\bg, \cone(\partial {h}(\bw)))^2=\inf_{t>0} \dist(\bg, t\partial h(\bw))^2$ thanks to $\cone(\partial h(\bw))=\cup_{t>0} t\partial h(\bw)$, the above display reads to 
\begin{align*}
    \tfrac{1}{m}\E[\text{dist}(\bg,  \cone(\partial {h}(\bw)) )^2|\bw] &= \inf_{t>0} J_n(t) + O_P(m^{-1/2})
\end{align*}
where $J_m:(0,+\infty)\to\R$ is the stochastic convex function defined by
$$
t\mapsto J_m(t) \coloneqq  \tfrac 1 m \text{dist}(\bm g, t \partial h(\bw))^2.
$$
By the Law of Large Number and observing that the square distance is separable, we have the pointwise convergence
$$
J_m(t) \to^P \E[\dist(G, t\partial h(W))^2]=:J(t)
$$
for each $t>0$. Now we claim that the map $t\mapsto J(t)$ is coercive. By the triangle inequality, $\dist(G, t\partial h(W))$ is bounded from below as 
\begin{align*}
    \dist(G, t\partial h(W)) = \inf_{s \in \partial h(W)} |ts-G| \ge t \cdot \inf_{s\in\partial h(W)} |s| - |G| = b(W) \cdot t - |G|, 
\end{align*}
where $b(W)=\inf_{s\in\partial h(W)}|s|$. Note that $b(W)$ is bounded since $h$ is Lipschitz, and strictly positive on the event $\{W\ne 0\}$ since $\{0\}=\argmin_x h(x)$. Therefore, $J(t)$ is bounded from below by 
\begin{align*}
    J(t) &\ge \E[I\{W\ne 0,\  t > b(W)^{-1}|G| \} (b(W) \cdot t  - |G|)^2]\\
    &= t^2 \E[b(W)^2 I\{W\ne 0, \ t > b(W)^{-1}|G| \}] + O(t)\\
    &= t^2 \E[b(W)^2I\{W\ne 0\}] + o(t^2) && \text{using dominated convergence}
\end{align*}
as $t\to+\infty$. Since $\E[b(W)^2I\{W\ne 0\}] >0$, we have $J(t)\to+\infty$ as $t\to+\infty$. Thus, combined with the pointwise convergence $J_m(t)\to^P J(t)$, \cite[Lemma B.1]{thrampoulidis2018precise} gives $\inf_{t>0}J_m(t)\to^P \inf_{t>0} J(t)$. This finishes the proof. 
\end{proof}

\subsection{Proof of \Cref{prop:phase_transition_noreg}}\label{proof:prop:phase_transition_noreg}
By the assumption of linear response $\bm{y} = \bm{A}\bm{x}_0 + \bm{z}$ and the KKT condition for the unregularized M-estimator $\hat{\bm{x} }\in \argmin_{\bm{x}} \sum_{i=1}^n \loss(y_i - \bm{e}_i^\top \bm{A}\bm{x})$, 
the perfect recovery $\hat{\bm{x}} = \bm{x}_0$ holds if and only if 
$$
\bm{A}^\top \partial \loss(\bm{z}) =\bm{0}_p
$$
where $\partial \loss(\bm z) = \bigtimes_{i=1}^n \partial\loss(z_i) \subset \R^n$. 
Since the components of \( \bm z \) are iid with the same
distribution as \( Z \), by the assumption $\PP(Z\ne 0) > 0$, the event \(\{\bm z \ne \bm 0\} = \cup_{i=1}^n \{z_i\ne 0\}\) holds with probability approaching one
exponentially fast. In this event, since \( \loss \) is minimized only at $0$, we have
\({{\partial\loss}(\bm z) \not\ni \bm 0}\) and perfect recovery
happens if and only if
\begin{equation}
    \label{perfect_noreg}
    \ker(\bm{A}^\top)\cap \cone(\partial {\loss}(\bm{z}))  \ne \{0\}. 
\end{equation}
Since \( \bm A \) has iid normal entries independent of \( \bm z \),
the subspace \( \ker(\bm{A}^\top) \) is a rotationally invariant random subspace
in \( \R^n \) with dimension $n-p$. Therefore, \cite[Theorem I]{amelunxen2014living}
implies that for \( \eta\in (0,1/2) \), conditionally on \( \bm z \), it holds that
\begin{equation}\label{phase_recovery_noreg_prelim}
\begin{aligned}
    \statdim( \cone(\partial {\loss}(\bm{z})) ) + n- p \ge n + n^{1/2+\eta} & \Rightarrow
    \PP( \eqref{perfect_noreg} \mid \bm z) \to 1,
    \\
    \statdim(\cone(\partial {\loss}(\bm{z})) ) + n- p \le n - n^{1/2+\eta} & \Rightarrow
    \PP( \eqref{perfect_noreg} \mid \bm z) \to 0
\end{aligned}
\end{equation}
almost everywhere. 
Here, $\statdim(\cone(\partial {\loss}(\bm{z})))$ is the \emph{statistical dimension}
    of $\cone(\partial {\loss}(\bm{z}))$, which can be written explicitly as, thanks to \cite[Proposition 3.1]{amelunxen2014living}, 
\begin{equation*}
    \statdim(\cone(\partial {\loss}(\bm{z}))) = n - \E[\text{dist}(\bg, \cone(\partial {\loss}(\bm{z})))^2|\bm z] \quad  \text{for}\quad \bm{g}\sim \N(\bm{0}_n, I_n), \quad \bm{g}\ind \bm{\bm{z}}
\end{equation*} 
Combined with the convergence $n^{-1} \E[\text{dist}(\bg, \cone(\partial {\loss}(\bm{z})))^2|\bm z] \to^p \inf_{t>0} \E[\dist(G, t\partial\loss(Z))^2]$ by \Cref{lm:limit_dist_cone}, we have 
\begin{equation}
    \tfrac 1 n \statdim(\cone(\partial\loss(\bm{z}))) \to^P 1-\inf_{t>0} \E[\dist(G, t \partial \loss(Z))^2]. 
    \label{eq:limit_statdim_noreg}
\end{equation}
Dividing by \( n \) the left-hand side of the implications
in \eqref{phase_recovery_noreg_prelim} and noting $\lim \tfrac{n}{p}=\delta$, we obtain 
\begin{align*}
    \textstyle
    1  - \inf_{t>0}\E[\dist(G,t\partial \loss(Z))^2] - \delta^{-1} >0 
    &\Rightarrow 
        \PP(\text{perfect recovery \eqref{perfect_noreg}}) \to 1,\\
    \textstyle
    1  - \inf_{t>0}\E[\dist(G,t\partial \loss(Z))^2] - \delta^{-1} < 0 
    &\Rightarrow
        \PP(\text{perfect recovery \eqref{perfect_noreg}})  \to 0.
\end{align*}
This finishes the proof. 
\subsection{Proof of \Cref{prop:phase_transition_reg}}\label{proof:prop:phase_transition_reg}
By the KKT conditions, the perfect recovery happens
for some \( \lambda>0 \)
if and only if
$$
\bm A^\top \partial  \loss(\bm z) \cap 
\bigl(
{\lambda}
\partial  \reg (\bm x_0) \bigr) \ne \emptyset \quad \text{for some} \quad \lambda > 0. 
$$
Note that the event \(\{\bm z \ne \bm 0 \}\) holds exponentially large probability thanks to $\PP(Z\ne 0)>0$. In this event, since \( \loss \) is minimized only at $0$, we have
\({{\partial\loss}(\bm z) \not\ni \bm 0}\) and perfect recovery
happens for some \( \lambda>0 \) if and only if
\begin{equation}
\bm A^\top \text{cone}(\partial \loss(\bm z))
\cap 
\text{cone}(\partial \reg(\bm x_0))
\ne \{\bm{0}_p\}.
\label{eq:condition_perfect_recovery_reg_cones}
\end{equation}
Theorem 1.1 in
\cite{han2022gaussian} extends results of \cite{amelunxen2014living}
and gives the following phase transition
for \eqref{eq:condition_perfect_recovery_reg_cones}.
Conditionally on \( (\bm z,\bm x_0) \),
\begin{equation}\label{phase_recovery_reg_prelim}
\begin{split}
\sqrt{\statdim(\text{cone}(\partial \loss(\bm z)))}
-
\sqrt{p-\statdim(\text{cone}(\partial \reg(\bm x_0)))}
> n^{1/4}
&\Rightarrow
    \PP( \eqref{eq:condition_perfect_recovery_reg_cones} \mid
    \bm x_0, \bm z ) \to 1,
\\
\sqrt{\statdim(\text{cone}(\partial \loss(\bm z)))}
-
\sqrt{p-\statdim(\text{cone}(\partial \reg(\bm x_0)))}
< n^{1/4}
&\Rightarrow
    \PP( \eqref{eq:condition_perfect_recovery_reg_cones} \mid
    \bm x_0, \bm z ) \to 0
\end{split}    
\end{equation} 
where the convergence of conditional probabilities hold
almost surely. Here, by the same argument in the proof of \eqref{eq:limit_statdim_noreg}, it holds that 
\begin{align*}
\tfrac 1 n \statdim(\text{cone}(\partial \loss(\bm z)))
&\to^P 1- \inf_{t>0}\E[\dist(G,t\partial \loss(Z))^2],
\\ 
\tfrac 1 p \statdim(\text{cone}(\partial \reg(\bm x_0)))
&\to^P 1- \inf_{s>0}\E[\dist(H,s\partial \reg(X))^2],
\end{align*}
Thus, dividing by \( \sqrt n \) the left-hand side of the implications
in \eqref{phase_recovery_reg_prelim} and noting $\lim \tfrac{n}{p}=\delta$, we are left with
\begin{align*}
1- \inf_{t>0}\E[\dist(G,t\partial \loss(Z))^2]
-
\delta^{-1}
\inf_{s>0}\E[\dist(H,s\partial \reg(X))^2]
> 0
&\Rightarrow
    \PP( \eqref{eq:condition_perfect_recovery_reg_cones}) \to 1,
\\
1- \inf_{t>0}\E[\dist(G,t\partial \loss(Z))^2]
-
\delta^{-1}
\inf_{s>0}\E[\dist(H,s\partial \reg(X))^2]
< 0
&\Rightarrow
    \PP( \eqref{eq:condition_perfect_recovery_reg_cones}) \to 0.
\end{align*}
This establishes the phase transition for perfect recovery. 

\section{Explicit upper bound of solution $\alpha_*$}\label{sec:upper_bound_alpha}

\subsection{Unregularized M-estimation}\label{subsec:upper_bound_alpha_noreg}
The coercivity of the map $v\mapsto \ml(v)$ over the constraint $\{v\in\mh:\mg(v)\le 0\}$ (\Cref{lm:coercive_noreg_detail}) and the representation $\alpha_*=\|v_*\|/\sqrt{1-\delta^{-1}}$ lead to the following corollary. 
\begin{corollary}\label{cor:bound_noreg}
    The solution 
    $\alpha_*$ to \eqref{eq:system_noreg} is bounded from above as 
    \begin{align*}
        \alpha_*  \le  
            \frac{\|\loss\|_{\lip}}{ac_\delta } \mathsf{q}\Bigl(
                c_\delta \frac{a^2}{\|\loss\|_{\lip}^2}
            \Bigr) + \frac{b}{a c_\delta } 
    \end{align*}
    where $\mathsf{q}(x) = \inf\{q>0: \PP(|Z|>q)\le x\}$ and $(a, b)\in\R_{>0}\times\R_{\ge 0}$ is any pair satisfying \eqref{eq:coercive_condition}. 
\end{corollary}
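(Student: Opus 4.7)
The plan is that this corollary should follow almost directly by combining the coercivity estimate from \Cref{lm:coercive_noreg_detail} with the identification $\alpha_*=\|v_*\|/\sqrt{1-\delta^{-1}}$ established in \Cref{lm:equivalence_potential_noreg}. First I would observe that $v=0\in\mh$ is feasible for the problem $\min_{v\in\mh:\mg(v)\le 0}\ml(v)$, since $\mg(0)=0$, and moreover $\ml(0)=\E[\loss(Z)-\loss(Z)]=0$. Consequently any minimizer $v_*$ of this problem, whose existence is guaranteed by the coercivity argument following \Cref{lm:coercive_noreg}, must satisfy $\ml(v_*)\le \ml(0)=0$.

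Next, I would apply \Cref{lm:coercive_noreg_detail} with the constants $(a,b)$ from the coercivity condition \eqref{eq:coercive_condition} to the feasible point $v_*$ itself, obtaining
\[
0 \;\ge\; \ml(v_*) \;\ge\; C_\delta\, a\, \|v_*\| - 2\|\loss\|_{\lip}\,\mathsf{q}\!\Bigl(C_\delta\frac{a^2}{\|\loss\|_{\lip}^2}\Bigr) - b,
\]
where $C_\delta>0$ depends only on $\delta$. Solving this inequality for $\|v_*\|$ gives an explicit upper bound
$
\|v_*\| \le (C_\delta a)^{-1}\bigl[\,2\|\loss\|_{\lip}\,\mathsf{q}(C_\delta a^2/\|\loss\|_{\lip}^2)+b\,\bigr].
$
Dividing by $\sqrt{1-\delta^{-1}}$ and invoking $\alpha_*=\|v_*\|/\sqrt{1-\delta^{-1}}$ from \Cref{lm:equivalence_potential_noreg}(2) yields a bound of exactly the form in the statement, once the constant $c_\delta$ is taken to be, say, $c_\delta := \tfrac12 C_\delta\sqrt{1-\delta^{-1}}$ (or any smaller positive constant, using that $\mathsf{q}$ is nonincreasing so that $\mathsf{q}(C_\delta a^2/\|\loss\|_{\lip}^2)\le \mathsf{q}(c_\delta a^2/\|\loss\|_{\lip}^2)$).

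There is no real obstacle here: all of the content has been packaged into \Cref{lm:coercive_noreg_detail} and \Cref{lm:equivalence_potential_noreg}, and the corollary is essentially a bookkeeping step. The only thing to be a bit careful about is making sure the constant $c_\delta$ is chosen so that both occurrences (outside and inside $\mathsf{q}$) can be taken equal; monotonicity of $\mathsf{q}$ allows this at the cost of a potentially loose but still explicit constant depending only on $\delta$.
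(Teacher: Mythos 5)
Your proposal is correct and follows essentially the same route as the paper: the paper also obtains this corollary by combining the explicit coercivity bound of \Cref{lm:coercive_noreg_detail} with the identity $\alpha_*=\|v_*\|/\sqrt{1-\delta^{-1}}$ from \Cref{lm:equivalence_potential_noreg}, using $\ml(v_*)\le \ml(0)=0$ to solve for $\|v_*\|$, and absorbing the factor $\sqrt{1-\delta^{-1}}$ into the constant $c_\delta$ via monotonicity of $\mathsf{q}$.
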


\begin{exam}[Quantile loss]\label{exam:quantile}
Let us consider the quantile loss
$$
\rho_q(x) := q(x)_+ - (1-q) (x)_{-}\quad \text{for $q\in (0,1)$}, 
$$
and let $a_*(q)$ be the solution to the nonlinear system \eqref{eq:system_noreg} with $\loss$ being the quantile loss $\rho_q$.
Noting $\|\rho_q\|_{\lip}=\max(q, 1-q) \le 1$ and $\rho_q(\cdot)\ge \min(q, 1-q)|\cdot|$, using that $\mathsf{q}(\cdot)$ is nondecreasing, \Cref{cor:bound_noreg} gives
$$
\alpha_*(q) \le \frac{\|\rho_q\|_{\lip}}{c_\delta \{ q\wedge (1-q)\}} \mathsf{q}\bigl(
    c_\delta \frac{q^2\wedge(1-q)^2}{\|\rho_q\|_{\lip}^2}
\bigr) \le  \frac{\mathsf{q}\bigl(c_\delta\cdot (q^2 \wedge (1-q)^2)\bigr)}{c_\delta \cdot (q\wedge (1-q))}
$$
for all $q\in(0,1)$.
\end{exam}

\subsection{Regularized M-estimation}\label{subsec:upper_bound_alpha_reg}
The coercivity of the map $(v, w) \mapsto \ml(v, w)$ over the constraint $\{(v, w)\in\mh:\mg(v, w)\le 0\}$ (\Cref{lm:coercive_reg_detail}) and the representation $\alpha_*=\|w_*\|$ yield the corollary below.
\begin{corollary}\label{cor:bound_reg}
    The solution $\alpha_*$ to the nonlinear system \eqref{eq:system_reg} is bounded from above as
    \begin{align*}
        \alpha_* \le \frac{b_\loss + b_\reg}{c_\delta(a_\loss \wedge a_\reg)} + \frac{\|\loss\|_{\lip} + \|\reg\|_{\lip}}{c_\delta(a_\loss \wedge b_\loss)} \mathsf{q}\Bigl(\frac{c_\delta (a_\loss^2 \wedge b_\loss^2)}{\|\loss\|_{\lip}^2 + \|\reg\|_{\lip}^2}\Bigr)
    \end{align*}
    where $\mathsf{q}(t) :=\inf\{q>0: \PP(|Z|\ge q) \le t\} + \inf\{q>0: \PP(|X|\ge q) \le t\}$, and $(a_\loss, b_\loss)\in \R_{>0}\times \R_{\ge 0}$ are any constants such that $\loss(\cdot)-\loss(0)\ge a_\loss|\cdot|-b_\loss$, and $(a_\reg, b_\reg)\in \R_{>0}\times \R_{\ge 0}$ are any constants such that $\reg(\cdot)-\reg(0)\ge a_\reg|\cdot|-b_\reg$, 
  \end{corollary}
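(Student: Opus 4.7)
The plan is to combine the identification $\alpha_* = \|w_*\|$ provided by \Cref{lm:equivalence_system_reg}-(2) with the coercivity estimate in \Cref{lm:coercive_reg_detail}, evaluated at the minimizer of the infinite-dimensional problem \eqref{eq:const_optim_reg}. The key observation is that $(0,0)\in\mh$ is always feasible, since $\mg(0,0) = \mt(0,0) - \delta^{-1/2}\E[H\cdot 0] = 0 \le 0$, and $\ml(0,0)=0$. Consequently, if $(v_*,w_*)$ is a minimizer of $\min_{\mg(v,w)\le 0}\ml(v,w)$ (whose existence is guaranteed by \Cref{lm:coercive_reg} and the standard argument after it), then $\ml(v_*,w_*) \le 0$.

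First I would invoke \Cref{lm:coercive_reg_detail} to obtain a lower bound of the form
$$
0 \;\ge\; \ml(v_*,w_*) \;\ge\; C_\delta(\delta a_\loss \wedge a_\reg)\bigl(\|v_*\|+\|w_*\|\bigr) - 2\bigl(\delta\|\loss\|_{\lip}+\|\reg\|_{\lip}\bigr)\,\mathsf{q}\!\left(\tfrac{C_\delta(\delta a_\loss \wedge a_\reg)^2}{(\delta\|\loss\|_{\lip}+\|\reg\|_{\lip})^2}\right) - (\delta b_\loss + b_\reg).
$$
Since $\|v_*\|+\|w_*\| \ge \|w_*\|$ and $\alpha_* = \|w_*\|$ by \Cref{lm:equivalence_system_reg}, rearranging this inequality immediately yields the desired bound, up to the choice of constants. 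Specifically, dividing by the positive factor $C_\delta(\delta a_\loss \wedge a_\reg)$ and recognizing the two contributions (the Lipschitz-dependent term controlled by $\mathsf{q}$, and the offset term controlled by $b_\loss+b_\reg$) gives an explicit upper estimate for $\alpha_*$.

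No genuine obstacle arises: the argument is a direct corollary of the coercivity bound. The only care required is notational bookkeeping between the constants as they appear in \Cref{lm:coercive_reg_detail} and as restated in the corollary (the factors of $\delta$ attached to $a_\loss$, $b_\loss$, and $\|\loss\|_{\lip}$ come from the weight $\delta$ on $\E[\loss(v+Z)-\loss(Z)]$ in $\ml$, which is absorbed into the constant $c_\delta$ appearing in the corollary statement). Once the constants are aligned, the bound follows in one line from the inequality $\ml(v_*,w_*)\le 0$.
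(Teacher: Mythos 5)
Your proposal is correct and matches the paper's argument exactly: the paper itself states before the corollary that it follows from the coercivity bound of \Cref{lm:coercive_reg_detail} together with the representation $\alpha_*=\|w_*\|$, and the observation that $\ml(v_*,w_*)\le \ml(0,0)=0$ because $(0,0)$ is feasible is precisely the elementary step that closes the argument. The only caveat, which you flag implicitly by saying "up to the choice of constants," is that the constant bookkeeping in the corollary's display (e.g., the appearances of $a_\loss\wedge b_\loss$) does not literally match the constants produced by \Cref{lm:coercive_reg_detail} and appears to contain a typo in the paper; your derivation of the form of the bound is nonetheless sound.
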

  
  \begin{exam}[L1 loss and L1 regularizer]
    When the loss is  L1 loss $\loss(x)=|x|$ and the regularizer $\reg$ is the L1 loss $\lambda |x|$ with some regularization parameter $\lambda>0$, taking $b_\loss=b_\reg$ and $a_\loss=1$, $a_\reg=\lambda$ in the above display, we obtain that
    \begin{align*}
    \alpha_*(\lambda) \le \frac{\mathsf{q}(c_\delta \min(1, \lambda^2))}{c_\delta \min(1,\lambda)} 
    \end{align*}
  \end{exam}

\section{Miscellaneous useful facts}\label{sec:misc}
\begin{lemma}\label{lm:convex_coercive}
    If a convex function $f:\R\to\R$ satisfies $\{0\}=\argmin_x f(x)$, then $f$ is coercive in the sense that
    there exists some constants $a_f>0$ and $b_f\ge 0$ such that 
    \begin{equation}\label{eq:coercive_condition}
        f(x) -f(0) \ge a_f|x|-b_f \quad \text{for all $x\in\R$}. 
    \end{equation}
\end{lemma}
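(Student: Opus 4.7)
The plan is to exploit the monotonicity of the slopes of a convex function together with the uniqueness of the minimizer to obtain a one-sided linear lower bound on each half-line, and then to absorb the bounded region in between into the additive constant $b_f$.

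First, since $\{0\} = \argmin_x f(x)$, we can pick any $x_+ > 0$ and $x_- < 0$; by the uniqueness of the minimizer, $f(x_+) - f(0) > 0$ and $f(x_-) - f(0) > 0$. Define the two positive slopes
\[
s_+ := \frac{f(x_+) - f(0)}{x_+} > 0,
\qquad
s_- := \frac{f(x_-) - f(0)}{-x_-} > 0.
\]
By convexity of $f$, the function $x \mapsto (f(x) - f(0))/x$ is non-decreasing on $(0,\infty)$, so for every $x \ge x_+$ we have $f(x) - f(0) \ge s_+ x$. Symmetrically, $x \mapsto (f(x) - f(0))/x$ is non-increasing on $(-\infty, 0)$, so for every $x \le x_-$ we have $f(x) - f(0) \ge s_- \cdot |x|$.

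Next, set $a_f := \min(s_+, s_-) > 0$. On the complementary bounded interval $[x_-, x_+]$, the convex function $f$ is continuous hence bounded, so the quantity
\[
b_f := \sup_{x \in [x_-, x_+]} \bigl(a_f |x| - (f(x) - f(0))\bigr)
\]
is finite and non-negative (the supremum is $\ge 0$ since at $x = 0$ the bracket equals $0$). By construction, $f(x) - f(0) \ge a_f|x| - b_f$ on $[x_-, x_+]$. Combining with the two half-line bounds (for which the additive term $b_f$ is harmless since $s_\pm \ge a_f$), we obtain the desired inequality on all of $\R$.

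There is essentially no obstacle here: the only ingredients are the monotonicity of slopes of convex functions, the strict-positivity of $f - f(0)$ off the origin, and continuity on a compact interval. The slight care needed is to make sure the two one-sided linear minorants can be glued together, which is handled by taking $a_f$ to be the smaller of the two slopes and letting $b_f$ absorb the discrepancy on the central interval.
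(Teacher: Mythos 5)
Your proof is correct and takes essentially the same approach as the paper: both exploit convexity and the uniqueness of the minimizer to obtain positive outward slopes at one point on each side of the origin, then combine the two linear minorants with $a_f$ the smaller slope and $b_f$ absorbing the slack. The paper's only shortcut is to use the subgradient inequalities at $\pm 1$, which already yield affine minorants valid on all of $\R$, so the separate patching of the middle interval that you do via continuity on $[x_-, x_+]$ is avoided; otherwise the two arguments are the same.
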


\begin{proof}
    We assume $f(0)=0$ without loss of generality. For any choice 
    of subdifferential $\mathsf{d}(\cdot)\in\partial f(\cdot)$, evaluating the subdifferential at $1$ and $-1$, we have
    $$
    f(x)\ge f(1) + (x-1) \mathsf{d}(1)\quad \text{and} \quad f(x) \ge f(-1) + (x+1) \mathsf{d}(-1), 
    $$
    for all $x\in\R$. Noting $\mathsf{d}(-1) < 0 < \mathsf{d}(1)$ by the monotonicity of the subdifferential and $\{0\}=\argmin_x f(x)$, the previous display yields 
    \begin{align*}
        f(x) &\ge \min(|\mathsf{d}(1)|, |\mathsf{d}(-1)|) |x| - \max (|\mathsf{d}(1) - f(1)|, |\mathsf{d}(-1) + f(-1)|). 
    \end{align*}
    If we take  $a_f=\min(|\mathsf{d}(\pm1)|)>0$ and $b_f=\max(|\mathsf{d}(\pm1)|-\pm f(\pm 1)|) \ge 0$ in the above display, we obtain the coercivity of $f$. 
\end{proof}

\begin{prop}\label{prop:lipschitz_necessary}
    If the convex $\loss:\R\to\R$ is not Lipschitz then there exists a noise distribution $Z$ such that the solution $\alpha_*$ to the nonlinear system \eqref{eq:system_noreg} is not finite. 
  \end{prop}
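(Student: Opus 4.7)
The plan is to exhibit, for every convex loss $\loss$ that is not Lipschitz (with $\argmin\loss=\{0\}$ as in \Cref{as:noreg}), a discrete noise distribution $Z$ such that the right-hand side of the first equation in \eqref{eq:system_noreg} equals $+\infty$ for every $(\alpha,\kappa)\in\R_{>0}^2$, which rules out any finite solution. First I would reduce to the case $\loss'_+(x)\to+\infty$ as $x\to+\infty$: failure of the Lipschitz property forces $\loss'_+(+\infty)=+\infty$ or $\loss'_-(-\infty)=-\infty$, and the second case reduces to the first by replacing $(Z,\loss)$ by $(-Z,\loss(-\,\cdot))$ and using the symmetry $G\leftrightarrow -G$ of \eqref{eq:system_noreg}.

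The key deterministic estimate I would establish is
\begin{equation*}
u-\prox[\kappa\loss](u)\ \ge\ \min\bigl(u/2,\ \kappa\loss'_+(u/2)\bigr)\qquad\text{for all } u>0, \ \kappa>0.
\end{equation*}
The proof is a case split on $p:=\prox[\kappa\loss](u)\in[0,u]$. If $p\le u/2$, then $u-p\ge u/2$; if $p>u/2$, then $\kappa^{-1}(u-p)\in\partial\loss(p)$, and monotonicity of $\partial\loss$ gives $\kappa^{-1}(u-p)\ge\loss'_+(u/2)$. I would also use that $\phi_\kappa(u):=u-\prox[\kappa\loss](u)$ is nondecreasing on $\R$, which is immediate from $\prox[\kappa\loss]$ being $1$-Lipschitz and nondecreasing.

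With this in hand, I would construct $Z$ as follows. Since $\loss'_+(x)\to+\infty$, inductively pick $x_n\to+\infty$ satisfying $\loss'_+(x_n/4)\ge 4^n$ and $x_n\ge 4^n$, and set $\PP(Z=x_n)=2^{-n}$ for $n\ge 1$. Fix any $(\alpha,\kappa)\in\R_{>0}^2$ and any $n$ with $x_n\ge 4\alpha$. On the event $\{|G|\le 1\}$, the monotonicity of $\phi_\kappa$ combined with the key estimate gives
\begin{equation*}
\phi_\kappa(\alpha G+x_n)^2\ \ge\ \phi_\kappa(x_n-\alpha)^2\ \ge\ \min\bigl(x_n/4,\,\kappa\loss'_+(x_n/4)\bigr)^2\ \ge\ \min(1/16,\kappa^2)\cdot 16^n,
\end{equation*}
since $(x_n-\alpha)/2\ge x_n/4\ge 4^n/4$ and $\loss'_+((x_n-\alpha)/2)\ge \loss'_+(x_n/4)\ge 4^n$. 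Summing against the law of $Z$,
\begin{equation*}
\E\bigl[\phi_\kappa(\alpha G+Z)^2\bigr]\ \ge\ \PP(|G|\le 1)\cdot \min(1/16,\kappa^2)\sum_{n:\,x_n\ge 4\alpha}2^{-n}\cdot 16^n\ =\ +\infty.
\end{equation*}
Therefore the identity $\alpha^2=\delta\E[\phi_\kappa(\alpha G+Z)^2]$ cannot hold with finite $\alpha$ for any $\kappa>0$, so \eqref{eq:system_noreg} admits no finite solution.

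The main delicate point will be that the distribution $Z$ must be chosen independently of $(\alpha,\kappa)$. The $\min$ structure in the key estimate is what makes a universal choice possible: the $u/2$ branch is $\kappa$-free and controls the minimum when $\kappa$ is large, while the $\kappa\loss'_+(u/2)$ branch controls it when $\kappa$ is small; by arranging both $x_n$ and $\loss'_+(x_n/4)$ to grow as $4^n$, each branch squared beats the decay $2^{-n}$ of the masses by a factor $8^n$, so the series diverges simultaneously for every $\kappa>0$ and every $\alpha>0$ (only finitely many initial terms are ever removed by the cutoff $x_n\ge 4\alpha$).
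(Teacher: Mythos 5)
Your proof is correct and mirrors the paper's own argument: the identical key estimate $u-\prox[\kappa\loss](u)\ge\min(u/2,\kappa\loss'_+(u/2))$ obtained by the same case split, the same use of monotonicity of $u\mapsto u-\prox[\kappa\loss](u)$ to handle the $\alpha G$ perturbation, and the same construction of $Z$ as point masses at locations where $r(x)=\min(x,\loss'_+(x))$ grows fast enough to make the series diverge despite the decaying weights. The only cosmetic difference is that the paper conditions on $\{G>0,\,Z>0\}$ so that $\alpha G+Z>Z$ and the cutoff $x_n\ge 4\alpha$ is unnecessary, whereas you condition on $\{|G|\le1\}$; both work.
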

\begin{proof}
For the proximal operator 
\begin{equation*}
    \mathsf{p}(\cdot):=\prox[\kappa\loss](\cdot),
\end{equation*}
note that $\mathsf{p} (x)x \ge \mathsf{p} (x)^2$ since $\mathsf{p} (x)=\prox[\kappa\loss](x)$ is firmly non-expansive and $\mathsf{p} (0)=0$ by $\{0\} = \argmin_x\loss(x)$. This gives that $x-\mathsf{p} (x)\ge 0$ if $x>0$. Moreover, $x-\mathsf{p} (x)$ is nondecreasing since $\kappa^{-1}(x-\mathsf{p} (x))$ is equal to the derivative $\env_\loss'(x; \kappa)$ of the Moreau envelope $\env_\loss(x; \kappa)=(x-\mathsf{p} (x))^2/(2\kappa)+\loss(\mathsf{p} (x))$, which is convex in $x$. Then, if $(\alpha, \kappa)$ is a solution to the nonlinear system \eqref{eq:system_noreg}, noting $(\alpha G + Z)>Z>0$ on the event $\{G>0, Z>0\}$, 
the first equation in \eqref{eq:system_noreg} gives that
$\alpha^2$ is bounded from below as 
\begin{align*}
    \alpha^2 
    &\ge {\delta} \E\bigl[
    I\{Z> 0\} I\{G>0\} 
    \bigl({(\alpha G + Z)-\prox[\kappa\loss](\alpha G +Z)}\bigr)^2 \bigr] \\
    &\ge {\delta} \E[
     I\{Z>0\} I\{G>0\}
    (Z-\prox[\kappa\loss](Z))^2].
\end{align*}
Here, $Z-\prox[\kappa\loss](Z)$ is no smaller than $\min(\tfrac{Z}{2}, \kappa\max\partial\loss(\tfrac{Z}{2}))$ since if $Z-\prox[\kappa\loss](Z)<\tfrac{Z}{2}$ then we have $\tfrac{Z}{2} < \prox[\kappa\loss](Z)$, which in turn gives 
\begin{equation*}
    Z-\prox[\kappa\loss](Z) \ge
    \kappa\min\partial \loss (\prox[\kappa\loss](Z))
    > \kappa\max\partial\loss(Z/2)
\end{equation*}
by the monotonicity of the subdifferential.
Thus, using the independence of $(G,Z)$ and $\PP(G>0)=1/2$,
we have shown that
$\alpha$ is bounded from below as 
$$
\alpha^2 \ge \tfrac \delta 2 \min(1, \kappa) \E\bigl[\min\bigl(\tfrac{W_+}{2}, \max\partial\loss(\tfrac{W_+}{2})\bigr)^2\bigr].
$$
If \( \loss \) is not Lipschitz on \( \R_{\ge 0} \)
then \( \max \partial \loss(x)\to+\infty \) as \( x\to+\infty \).
The minimum of two nondecreasing functions is still nondecreasing,
so \( r(x)=\min\{x, \max\partial \loss(x)\} \) is also nondecreasing
and \( r(x)\to+\infty \) as \( x\to+\infty \).
We can thus always construct a distribution for \( Z \)
such that the right-hand side of the previous display is infinite.
For instance, if \( w_k>0 \) are such that \( r(w_k)\ge k \),
we define the point masses
\( \PP(Z=w_k) \asymp 1/k^2 \) for \( k\in \mathbb{N}\). 
\end{proof}

\begin{lemma}\label{lm:Lagrange}
    Let $\mh$ be a Hilbert space and let $\ml:\mh\to\R$ and $\mg_{1}, \dots, \mg_m:\mh\to\R$ be finite-valued convex functions over $\mh$. Suppose that exists a point $v_0\in \mh$ such that $\cap_{i=1}^m \{\mg_i(v_0) < 0\}\ne \emptyset$. Then the claims $(1)$-$(2)$ below are equivalent:
    \begin{enumerate}
        \item $v_*\in\mh$ is a solution to the constrained optimization problem
        $$
        \min_{v\in\mh} \ml(v) \quad \text{subject}\quad \mg_i(v)\le 0 \quad  (i=1, \dots, m)
        $$
        \item There exists some nonnegative scalars $ (\mu_i)_{i=1}^m \in \R_{\ge 0}^m$ and some subdifferential $(u_i)_{i=1}^m \in \partial \mg_i$ such that 
        \begin{align*}
           - \sum_{i=1}^m \mu_i u_i\in \partial \ml(v_*) \quad\text{and}\quad \begin{dcases}
            \mg_i(v_*)\le 0\\
            \mu_i \mg_i = 0 
           \end{dcases} \quad \text{for all} \quad i\in [m]. 
        \end{align*}
    \end{enumerate}
\end{lemma}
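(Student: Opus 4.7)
The plan is to prove this as the convex Karush--Kuhn--Tucker theorem for a convex program with finitely many scalar constraints in a Hilbert space, with the Slater-type condition $\bigcap_{i=1}^m\{\mg_i(v_0)<0\}\ne\emptyset$ serving as the constraint qualification. The direction (2) $\Rightarrow$ (1) is routine: for any feasible $v$, the subgradient inequalities $\mg_i(v)\ge \mg_i(v_*)+\langle u_i,v-v_*\rangle$ and $\ml(v)\ge \ml(v_*)+\langle -\sum_i\mu_i u_i, v-v_*\rangle$ combine with $\mu_i\ge 0$, $\mg_i(v)\le 0$ and complementary slackness $\mu_i\mg_i(v_*)=0$ to yield $\ml(v)\ge \ml(v_*)$.

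For the harder direction (1) $\Rightarrow$ (2), the key move is to reduce the infinite-dimensional problem to a finite-dimensional separation argument in $\R^{m+1}$. I would introduce the perturbation set
$$C=\{(r,s)\in\R\times\R^m:\exists v\in\mh\text{ with }\ml(v)-\ml(v_*)\le r\text{ and }\mg_i(v)\le s_i\ \forall i\},$$
which is convex by convexity of $\ml$ and the $\mg_i$. The optimality of $v_*$ forces $C$ to be disjoint from the open negative orthant $N=\{(r,s):r<0,\ s_i<0\ \forall i\}$, since any point in $N\cap C$ would correspond to a strictly feasible $v$ with $\ml(v)<\ml(v_*)$. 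Applying the finite-dimensional separating hyperplane theorem then yields a nonzero $(\lambda_0,\lambda_1,\dots,\lambda_m)\in\R^{m+1}$ with $\lambda_0\ge 0$ and each $\lambda_i\ge 0$ (the unboundedness of $N$ in each negative direction forces nonnegativity) such that
$$\lambda_0\bigl(\ml(v)-\ml(v_*)\bigr)+\sum_{i=1}^m\lambda_i\mg_i(v)\ge 0\qquad\forall v\in\mh.$$
The Slater condition rules out $\lambda_0=0$: otherwise, substituting $v=v_0$ would give $\sum_i\lambda_i\mg_i(v_0)\ge 0$, contradicting $\mg_i(v_0)<0$ together with $(\lambda_1,\dots,\lambda_m)\ne 0$, $\lambda_i\ge 0$. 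Dividing through by $\lambda_0>0$ and setting $\mu_i=\lambda_i/\lambda_0$ shows that $v_*$ minimizes the Lagrangian $v\mapsto\ml(v)+\sum_i\mu_i\mg_i(v)$; evaluating the resulting inequality at $v_*$ forces each nonpositive term $\mu_i\mg_i(v_*)$ to vanish, giving complementary slackness.

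To finish, I would convert the Lagrangian minimization into the subdifferential inclusion by invoking the Moreau--Rockafellar sum rule. Since each $\ml,\mg_i$ is finite-valued on the entire Hilbert space $\mh$, each is continuous (a finite-valued convex function on a Hilbert space is locally Lipschitz on the interior of its domain, which is all of $\mh$), and therefore $\partial\bigl[\ml+\sum_i\mu_i\mg_i\bigr](v_*)=\partial\ml(v_*)+\sum_i\mu_i\partial\mg_i(v_*)$ without further qualification. The minimality of $v_*$ then reads $0\in\partial\ml(v_*)+\sum_i\mu_i\partial\mg_i(v_*)$, and choosing representatives $u_i\in\partial\mg_i(v_*)$ with $-\sum_i\mu_iu_i\in\partial\ml(v_*)$ yields exactly (2). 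The main subtlety to track is that $\mh$ is infinite-dimensional, but since only finitely many constraints are present and the separation is carried out in the finite-dimensional image space $\R^{m+1}$, no Hahn--Banach machinery in $\mh$ itself is required; the continuity of finite-valued convex functions likewise removes the usual worries about the convex subdifferential sum rule.
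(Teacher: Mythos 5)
Your proof is correct in substance but takes a genuinely different route from the paper. The paper disposes of the lemma in one line by citing \cite[Proposition~27.21]{bauschke2017correction} (the general Lagrangian duality / KKT theorem in Hilbert spaces) and then verifying the Slater-type qualification (since $\ml$ and the $\mg_i$ are finite-valued, $\operatorname{dom}\mg_i = \operatorname{dom}\ml = \mh$, so the required interiority condition holds trivially, and the nonemptiness of $\bigcap_i \{\mg_i < 0\}$ is assumed). You instead reprove the KKT theorem from scratch: the easy direction via subgradient inequalities, and the hard direction via the classical perturbation-set / separating-hyperplane argument in $\R^{m+1}$, finishing with Moreau--Rockafellar to split the subdifferential of the Lagrangian. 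Your approach is self-contained and transparent (and correctly exploits that the separation happens in a finite-dimensional image space, so no Hahn--Banach in $\mh$ is needed); the paper's is shorter but outsourced.

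One inaccuracy worth flagging: your parenthetical claim that ``a finite-valued convex function on a Hilbert space is locally Lipschitz on the interior of its domain'' is false in infinite dimensions without an additional hypothesis. A discontinuous linear functional on $\mh$ (which exists via a Hamel basis) is convex and finite everywhere yet nowhere continuous. The correct hypothesis is lower semicontinuity (or, equivalently, boundedness above on some neighborhood), and it is needed both to invoke the Moreau--Rockafellar sum rule and to make sense of the nonemptiness of $\partial\ml(v_*)$ demanded in (2). The paper's lemma statement omits lsc too, so this is as much a gap there as in your writeup; in the actual applications the paper establishes that $\ml,\mg$ are Lipschitz (\Cref{lm:lg_basic_noreg}, \Cref{lm:lg_basic_reg}), so the issue is harmless in context, but the general statement should really carry an lsc assumption for your continuity step (and the paper's citation) to be justified.
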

\begin{proof}
    By \cite[Proposition 27.21]{bauschke2017correction}, it suffices to show that the Slater condition below is satisfied:
    \begin{align*}
        \begin{cases}
            \{v\in\mh: \mg_i(v)\le 0\} \subset \text{int}\  \text{dom} \mg_i \\
            \text{dom} \ml \cap \bigcap_{i=1}^m \{v\in\mh: \mg_i(v) < 0\} \ne \emptyset
        \end{cases}
    \end{align*}
    where $\text{dom} f =\{v\in\mh: f(v)<+\infty\}$ is a domain for a function $f$. Indeed,  $\ml$ and $\mg$ are finite-valued so that $\text{dom} \mg_i=\text{dom} \ml = \mh$ for all $i \in[n]$. On the other hand, $\bigcap_{i=1}^m \{v\in\mh: \mg_i(v) < 0\}$ is nonempty by assumption. This means that the Slater condition is satisfied. 
\end{proof}
\end{document}